\newtheorem{theorem}{Theorem}[section]
\newtheorem{proposition}[theorem]{Proposition}
\newtheorem{lemma}[theorem]{Lemma}
\newtheorem{definition}[theorem]{Definition}
\newtheorem{remark}[theorem]{Remark}
\newtheorem*{theorem*}{Theorem}
\numberwithin{equation}{section}
\def\Xint#1{\mathchoice
{\XXint\displaystyle\textstyle{#1}}%
{\XXint\textstyle\scriptstyle{#1}}%
{\XXint\scriptstyle\scriptscriptstyle{#1}}%
{\XXint\scriptscriptstyle\scriptscriptstyle{#1}}%
\!\int}
\def\XXint#1#2#3{{\setbox0=\hbox{$#1{#2#3}{\int}$ }
\vcenter{\hbox{$#2#3$ }}\kern-.6\wd0}}
\def\dashint{\Xint-}
\newcommand{\supp}{\operatorname{supp}}
\newcommand{\essup}{\operatorname{esssup}}
\newcommand{\R}{{\mathbb R}}
\newcommand{\N}{{\mathbb N}}
\newcommand{\Z}{\mathbb{Z}}
\newcommand{\expect}{{\mathbb E}}
\newcommand{\ort}{{\mathcal O}}
\newcommand{\intort}{{\int_\ort}}
\renewcommand{\norm}[2]{\left\|{#1}\right\|_{#2}}
\renewcommand{\set}[1]{{\left\{#1\right\}}}
\newcommand{\probab}{{\mathbb P}}
\newcommand{\tprobab}{{\tilde\probab}}
\definecolor{JF1}{rgb}{0.85,0.1,0.1}
\definecolor{JF1i}{rgb}{0.1,0.8,0.2}
\definecolor{JF1e}{rgb}{0.95,0.7,0.1}
\newcommand{\WienerProcessSpace}{Q^{1/2}L^2(\domain)}
\newcommand{\E}{\mathbb{E}}
\newcommand{\Basis}{(\Omega,\mathcal{F},(\mathcal{F}_t)_{t\geq 0},\mathbb{P})}
\newcommand{\Basislimit}{(\tilde \Omega,\tilde{\mathcal{F}},(\tilde{\mathcal{F}}_t)_{t\geq 0},\tprobab)}
\newcommand{\uspace}{C^{\gamma,\gamma/4}(\domain\times [0,T_{max}])}
\newcommand{\pspace}{L^2([0,T_{max}];H^1_{per}(\domain))}
\newcommand{\Jspace}{L^2(\domain \times [0,T_{max}])}
\newcommand{\wspace}{C([0,T_{max}];L^2(\ort))}
\newcommand{\Ih}{\mathcal{I}_h}
\newcommand{\domain}{\mathcal{O}}
\newcommand{\tJ}{\tilde{J}}
\newcommand{\tJh}{\tilde{J}^h}
\newcommand{\tp}{\tilde{p}}
\newcommand{\tu}{\tilde{u}}
\newcommand{\tph}{\tilde{p}^h}
\newcommand{\tuh}{\tilde{u}^h}
\newcommand{\chitth}{\chi_{T_h}}
\newcommand{\uh}{u^h}
\newcommand{\Hper}[1]{H_{\text{per}}^#1(\domain)}
\newcommand{\Mh}{{\mathcal{M}_{h,v}}}
\newcommand{\tMh}{\widetilde{\mathcal{M}}_{h,v}}
\newcommand{\tM}{\widetilde{\mathcal{M}_v}}
\newcommand{\tMhq}{\widetilde{\mathcal{M}}_{h,v}^2}
\newcommand{\tMq}{\widetilde{\mathcal{M}_v}^2}
\newcommand{\Ph}{{\mathcal{P}_h}}
\newcommand{\Lh}{{L_h}}
\newcommand{\crossl}{\langle\langle}
\newcommand{\crossr}{\rangle\rangle}
\newcommand{\sqmob}{{M_2^h}}
\newcommand{\porad}{{A_\Delta^h}}
\newcommand{\porb}{{B_\Delta^h}}
\newcommand{\discint}{{h\sum_{i=1}^{L_h}}}
\newcommand{\tThInt}{{\int_0^{t\wedge T_h}}}
\newcommand{\Li}{{\mathcal L}}
\newcommand{\ui}{{u^{h}_{i}}}
\newcommand{\uip}{{u^{h}_{i+1}}}
\newcommand{\uim}{{u^{h}_{i-1}}}
\newcommand{\tui}{{{\tilde {u}}^h_i}}
\newcommand{\tuip}{{{\tilde {u}}^h_{i+1}}}
\newcommand{\tuim}{{{\tilde {u}}^h_{i-1}}}
\numberwithin{equation}{section}
\newcommand{\cO}{\mathcal{O}}
\newcommand{\dx}{\dif x}
\newcommand{\dt}{\dif t}
\newcommand{\ds}{\dif s}
\newcommand{\ti}[1]{\tilde{#1}}
\newcommand{\un}{u_{0}}
\newcommand{\Cstr}{C_{\text{Strat}}}
\newcommand{\intT}{{\int_0^{T}}}
\begin{document}
\title[]{Existence of nonnegative energy-dissipating solutions to a class of stochastic thin-film equations under weak slippage: Part I -- positive solutions}
\date{June 11, 2024}

\author[G.~Gr\"un]{G\"unther Gr\"un}
\address[G.~Gr\"un]{Friedrich--Alexander--Universität Erlangen--Nürnberg, Cauerstraße 11, 91058 Erlangen, Germany}
\email{gruen@math.fau.de}

\author[L.~Klein]{Lorenz Klein}
\address[L.~Klein]{Friedrich--Alexander--Universität Erlangen--Nürnberg, Cauerstraße 11, 91058~Erlangen, Germany}
\email{lorenz.klein@fau.de}

\keywords{stochastic thin-film equation; nonlinear thermal noise; nonnegativity-preserving scheme; stochastic partial differential equation; finite propagation}

\makeatletter
\@namedef{subjclassname@2020}{%
	\textup{2020} Mathematics Subject Classification}
\makeatother 
\subjclass[2020]{60H15, 76A20, 35G20, 35Q35, 35K65, 65M60 , 35R37} 
% 	60H15   	Stochastic partial differential equations (aspects of stochastic analysis)
%   76A20       Thin fluid films
%   35G20		Nonlinear higher-order PDEs
%   35Q35   	PDEs in connection with fluid mechanics
%   35K65       Degenerate parabolic pde
%   65M60   	Finite element, Rayleigh-Ritz and Galerkin methods for initial value and initial-boundary value problems involving PDEs
%   35R35   	Free boundary problems for PDEs

%   35R37       Moving boundary problems
%   65M12   	Stability and convergence of numerical methods for initial value and initial-boundary value problems involving PDEs
%   76D08   	Lubrication theory
%   35K35   	Initial value problem for higher-order parabolic equations
%   76D03		Existence, uniqueness, and regularity theory for incompressible viscous fluids
%   35R60 		PDEs with randomness, stochastic partial differential equations
%37L55			Infinite-dimensional random dynamical systems; stochastic equations

\begin{abstract}
For mobility exponents $n \in (2,3)$, we prove existence of strictly positive solutions to stochastic thin-film equations with singular effective interface potential and Stratonovich-type lower order terms. With the perspective of using these solutions in \cite{GruenKlein24Limit} to construct surface-tension-energy dissipating solutions to stochastic thin-film equations with compactly supported initial data, for which finite speed of propagation is shown in \cite{GruenKlein24FSOP}, we establish decay estimates on the sum of surface-tension  energy and effective interface potential -- without relying on further functionals involving initial data. 
Besides an extension of earlier techniques used in the case $n=2$ and a refinement of oscillation estimates for discrete solutions, the main analytical novelty of this paper is a discretization method  which shows nonnegativity for a finite-element counterpart of the integral $\intort (u^{n-2}u_{x})_x u_{xx} \dx$ under periodic boundary conditions in the parameter regime $n \in (2,3)$. 
This nonnegativity property serves to control It\^o-correction terms in the estimate for the decay of the surface-tension energy. This way, it is the key to obtain the desired decay estimates for the sum of surface-tension  energy and effective interface potential which permit to establish the singular limit of vanishing effective interface potential in \cite{GruenKlein24Limit}. 
\end{abstract}

\maketitle

\tableofcontents

\section{Introduction}
This is the first paper in a series of three papers which are concerned with the proof of existence and of finite speed of propagation for energy-dissipating solutions of stochastic thin-film equations of the class
\begin{align}\label{eq:stfeComp}
	du = -(u^{n} u_{xxx})_{x} \dt + (\Cstr+S) (u^{n-2} u_{x})_{x} \dt + (u^{n/2} dW_{Q}(t))_{x}
\end{align}
in the parameter regime $n \in (2,3)$.
For $S=0$, $\Cstr$ as in (H5) of Section~\ref{sec:discr}, a standard $Q$-Wiener process $W_{Q}(x,t) = \sum_{\ell \in \Z} \lambda_{\ell} g_{\ell}(x) \beta_{\ell}(t)$, $(g_{\ell})_{\ell \in \Z}$ the usual sine ($\ell >0$) and cosine ($\ell <0$) basis functions, and frequency balancing $\lambda_{\ell} = \lambda_{-\ell} \, \, \forall \ell \in \N$, this is exactly the surface-tension driven thin-film equation 
\begin{align}
	du = - (u^{n} u_{xxx})_{x} \dt + (u^{n/2}\circ dW_{Q}(t))_{x} \, ,
\end{align}
i.e. the basic stochastic thin-film equation with Stratonovich noise, while for 
$S=\Cstr$, it is the stochastic thin-film equation with Hänggi-Klimontovich (or backward It\^o) noise, see for example \cite{Karatzas2014} or \cite{Kuo2006}.
Note that \eqref{eq:stfeComp} can equivalently be written as
\begin{align}\label{eq:Enlog}
	du = -(u^{n}(u_{xx} + (\Cstr +S)u^{-1})_{x})_{x} \dt + (u^{n/2}dW_{Q}(t))_{x} \, .
\end{align} 
This suggests that the corresponding energy is given by $\mathcal{E}_{log}[u]:= \intort \frac{1}{2}u_{x}^{2} -\log  u \dx$ which would become singular for compactly supported initial data. 
Formal computations, however, show that \eqref{eq:stfeComp} dissipates the surface-tension energy $\intort \frac{1}{2} u_{x}^{2}\dx$ for appropriate choices of $\Cstr$, $S$, and $n\in(2,3)$ -- see Section~\ref{sec:discr} for more details.

Hence, it is the scope of this paper and the subsequent publication \cite{GruenKlein24Limit} to establish the existence of energy-dissipating solutions to \eqref{eq:stfeComp} subject to compactly supported initial data, i.e. martingale solutions which satisfy the energy estimate
\begin{align}\label{eq:EnEnt} \notag
	&\mathbb{E} \Bigg[ \sup_{t \in [0,T]} \left(\frac{1}{2} \intort u_{x}^{2}(t) \dx \right)^{q} \Bigg]
	+
	\mathbb{E} \Bigg[ \left(\intT \intort \left|\left(u^{\frac{n+2}{6}}\right)_{x}\right|^{6} \dx \ds \right)^{q} \Bigg]
	\\ 
	&+
	\mathbb{E} \Bigg[ \left(\intT \intort \left|\left(u^{\frac{n+2}{2}}\right)_{xxx}\right|^{2} \dx \ds \right)^{q} \Bigg]
	+
	\mathbb{E} \Bigg[ \left(\intT \intort \left|\left(u^{\frac{n}{4}}\right)_{x}\right|^{4} \dx \ds \right)^{q} \Bigg]
	\le C(\un,q,T) \, .
\end{align}
Weighted versions of this estimate will be a crucial tool for the proof of finite speed of propagation in \cite{GruenKlein24FSOP}.
To establish this existence result for energy-dissipating solutions, we apply a three-step approximation procedure. 
In the first step, we prove the existence of $\mathbb{P}$-almost surely strictly positive solutions to the equation
\begin{align}\label{eq:stfeSingPot}
	du=-(u^{n} (u_{xx} - F'(u))_{x})_x \dt+ (\Cstr+S) (u^{n-2} u_{x})_{x} \dt +\sum_{\ell \in \Z} \big(\lambda_\ell u^{n/2} g_\ell\big)_x \, d\beta_\ell(t)
\end{align}
with an effective interface potential $F$ generically given as $F = c_{F}u^{-p}$, $c_{F}>0$, $p>n$.
This is the topic of the present paper. 
In \cite{GruenKlein24Limit}, we show that for $c_{F} \to 0$ solutions to \eqref{eq:stfeSingPot} with fixed strictly positive initial data approach solutions of \eqref{eq:stfeComp}. 
By means of a decay estimate for the entropy $\int_\ort u^{2-n} \dx$, sufficient regularity is provided to apply Bernis inequalities \cite{BernisInequ1996, IntermediateScaling, GiacomelliShishkov}
to control third order derivatives of appropriate powers of solutions. The last step is about the passage to the limit from  strictly positive to merely nonnegative initial data -- resulting in an existence result for  energy-dissipating solutions subject to compactly supported initial data, satisfying in particular \eqref{eq:EnEnt}. 
\\
There is a vast literature on analysis, numerics, and modelling aspects of deterministic thin-film equations.
For weak existence theory see \cite{BernisFriedman, BerettaBertschDalPasso, BertozziPugh1996, Otto, BertschGiacomelliKarali, Mellet} and in higher dimensions \cite{DGGG, GruenCauchy} and the references therein. A corresponding classical theory has been developed in \cite{GiacomelliGnannKnuepferOtto, GiacomelliKnuepfer, GiacomelliKnuepferOtto, Gnann2015, Gnann2016, GIM2019} for zero contact angles and in \cite{Knuepfer, KnuepferMasmoudi2013, Knuepfer2015, KnuepferMasmoudi2015} for nonzero contact angles. Qualitative properties such as propagation rates or the occurrence of waiting times have been examined for example in \cite{HulshofShishkov, ThinViscous, WaitingTime,GruenOptimalRatePropagation,Grun2001b, LowerBounds, SupportPropagationThinFilm, UpperBoundsThinFilmWeakSlippage}. For numerical studies, we refer to \cite{Grun2000, ZhoBe,NumericsGruen} and the references therein.
\\
Stochastic thin-film equations have been derived by \cite{StoneMoro}  and \cite{MeckeRauscher}  to model the impact of thermal fluctuations on spreading or dewetting of very thin liquid films, see also \cite{Dur_n_Olivencia_2019} for numerical investigations. 
For a recent thermodynamically inspired approach to introduce fluctuations in thin-film flow using Gibbs measures related to surface-tension energies, we refer to \cite{GessOtto2023}.
\\
The analysis of $\eqref{eq:stfeSingPot}$ started with the work of \cite{FischerGruen2018} who established existence of $\mathbb{P}$-almost surely strictly positive solutions for $\Cstr+S=0$ and $n=2$ which corresponds to the assumption of a Navier-slip condition at the liquid-solid interface and conservative linear multiplicative noise.
In \cite{MetzgerGruen2022}, this result has been generalized to the two-dimensional setting, see also \cite{SauerbreyAgresti2024wellposedness} for recent results for arbitrary $n$ and the higher-dimensional case. 
To keep the presentation concise, we refer to \cite{DGGG} and \cite{Dareiotis2023solutions}  for an overview of the literature in the case of the stochastic thin-film equation without singular effective interface potential.
\\
Mathematically, this paper is inspired by the ideas of \cite{FischerGruen2018} for the case of $n = 2$ and linear multiplicative noise. 
To derive basic integral estimates, we discretize in space using linear finite elements and we obtain an energy-entropy estimate using It\^o's formula and an appropriate stopping time argument. 
The crucial new ingredient is the discretization of the porous-medium term $(u^{n-2}u_{x})_{x}$ which has to be consistent with the integration-by-parts formula
\begin{align}\label{eq:IntPartsCont}
	\intort (u^{n-2}u_{x})_{x} u_{xx} \dx = \intort u^{n-2} u_{xx}^{2} \dx - \frac{(n-2)(n-3)}{3} \intort u^{n-4} u_{x}^{4} \dx \, .
\end{align} 
For this, we discretize $-u^{n-2} u_{xx}$ and $-(n-2) u^{n-3} |u_{x}|^{2}$ separately (see \eqref{eq:definePorad} and \eqref{eq:definePorb}), however, at the expense that in the discrete setting mass is no longer conserved. It turns out that the new discretization ansatz generates positivity for discrete version of $-\int_\ort \left(u^{n-2}u_x\right)_xu^{-\alpha} \dx$ with positive parameters $\alpha$, too.
Therefore, it provides the nonnegativity properties one would expect from the continuous setting.
Concerning discrete masses, the deviation from the mass of initial data in the continuous setting turns out to be of order $O(h)T$ in expectation on the time interval $[0,T]$.

Let us give the outline of the paper.  
In Section~\ref{sec:discr}, we fix  details on the discretization and we formulate general assumptions on the interface potential, on initial data, noise, and parameters, see (H1)-(H5) below. 
Subsequently, in \eqref{eq:1} and \eqref{eq:2} we introduce the semi-discrete scheme to be used for approximation purposes. 
In particular, the definitions of stopping times to control energy and mass of discrete solutions are given there as well as our approach to  discretize  the porous-media-type terms, cf. \eqref{eq:definePorad} and \eqref{eq:definePorb}. 
\\
Solution concept and  main existence result are made precise in Section~\ref{sec:MainResult}.
\\
Section~\ref{sec:apriori} is mainly devoted to the derivation of the energy-entropy estimate.  Similarly as in \cite{FischerGruen2018}, we use a finite-element discretization which we prefer compared to other approaches not the least due to its perspective of practical numerics. 
As a preparation for the passage to the  limit $c_{F} \to 0$, we refine the Oscillation Lemma~4.1 of \cite{FischerGruen2018} to get optimal results on the ratio $\frac{\ui}{\uip}$ in the limit $h \to 0$.
Proposition~\ref{prop:integral1} states the discrete energy-entropy estimate. The strategy here is to use It\^o's formula in combination with a stopping time argument and a Gronwall argument. The discrete counterpart of
\eqref{eq:IntPartsCont} provides exactly the terms involving fourth powers of $u_x$ which are needed to absorb corresponding It\^o-terms\footnote{Note that these terms do not occur in the case of linear multiplicative noise -- corresponding to $n=2$.}.
Control of arbitrary moments is achieved by means of the Burkholder-Davis-Gundy inequality.  After the derivation of uniform H\"older-estimates for discrete solutions and improved regularity results on the pressure, we close this section with the aforementioned estimation of the deviation of discrete masses from the mass of initial data -- see 
Lemma~\ref{lem:EstDiscMass}. 
\\
We begin Section~\ref{sec:convergence} with convergence results for the scheme -- based on an application of  Jakubowski's theorem \cite{Jakubowski},  see Proposition~\ref{prop:conv1}. The remainder of this section is devoted to the limit passage in the deterministic terms and finally to the identification of the stochastic integral -- for the latter using 
methods presented and/or developed in \cite{BreitFeireislHofmanova,BrzezniakOndrejat,DebusscheHofmanovaVovelle,HofmanovaSeidler}. Finally, we provide the proof of the main result. 

In Section~\ref{sec:CalcPorMed}, we show that our approach of finite-element discretizations for $u^{n-2}u_{xx}$ and for $u^{n-3}u_x^2$ yields indeed the desired positivity results when tested with discrete versions of $-u_{xx}$ and of $u^{-\alpha}$, $\alpha>0$, respectively. The analysis in this section benefits a lot from the aforementioned estimates on oscillations of discrete solutions.   
\\
In the appendix, we provide some  technical results, including estimates for the It\^o-terms with respect to energy and entropy which occur in the derivation of the a-priori estimates, cf. Lemma~\ref{lem:appenHilfeSiebenA},  Lemma~\ref{lem:poroussquare}, and Lemma~\ref{lem:nineAnew}. As the regularization parameter $S$ only serves to control additional contributions in the It\^o-term related to the surface-tension energy, we invest some time to obtain optimal estimates on 
$$\sum_{\ell \in \mathbb{Z}} \lambda_{\ell}^{2} \int_{0}^{t\wedge T_{h}} \int_\ort \left|\partial_h^-((\sqmob(u^h)g_\ell)_x)\right|^2 \, dx \, ds $$
to get good lower bounds on $S$.
\\
\\
{\bf Notation.}
Throughout the paper, we use standard notation for Sobolev spaces and from stochastic analysis. The spatial domain $\ort$ is given by the interval $(0,L)$, and we abbreviate $I:=[0,T].$ The notation $a\wedge b$ stands for the minimum of $a$ and $b$, and $L_2(X,Y)$ denotes the set of Hilbert-Schmidt operators from $X$ to $Y$. For values of $\gamma_s, \gamma_t\in (0,1)$, $C^{\gamma_s,\gamma_t}(\bar\ort\times[0,T])$ denotes the space of continuous functions on $\bar\ort\times[0,T]$ which are H\"older-continuous with exponent $\gamma_s$ (respectively $\gamma_t$) with respect to space (respectively time). In particular, the exponent $\gamma$ will exclusively be used for H\"older properties related to the martingale solution for the stochastic thin-film equation. 
For a stopping time $T$, we write $\chi_T$ to denote the ($\omega$-dependent) characteristic function of the time interval $[0,T]$. 
The abbreviation $\overline{v}$ is used for the mean value of a function $v$ over a domain $\ort$. 
Further notation related to the discretization will be introduced in Section~\ref{sec:discr}.

\section{Preliminaries on the discretization}
\label{sec:discr}

In this section, we will introduce a semi-discrete scheme which will serve to obtain spatially discrete approximate solutions to the stochastic thin-film equation. Existence of those approximate solutions will be established in Section~\ref{sec:apriori} applying a stopping time argument to solutions of an appropriate system of ordinary stochastic differential equations.
\begin{itemize}
\item Given an integer fraction $h$ of a real number $L>0$, by $X_h$ we denote the space of periodic linear finite elements, i.e. the space of periodic continuous functions on $[0,L]$ that are linear on each of the intervals $[0,h]$, $[h,2h]$, $\dots$, $[L-h,L]$. 
\item By $e_i$, we denote the function in $X_h$ that equals $1$ at $x=ih$ and that vanishes for all other $x=kh$, $k\neq i$. 
\item Let $C_{per}([0,L])$ be the space of periodic continuous functions on $[0,L]$. By $\mathcal{I}_h:C_{per}([0,L])\rightarrow X_h$, we  denote the nodal interpolation operator uniquely defined by $(\Ih[\psi])(ih):=\psi(ih)$ for all $i\in\{1,\cdots, \Lh\}$ where $\Lh:=Lh^{-1}$ is the dimension of $X_h$.
\item On the Hilbert space $X_h$, we introduce the scalar product
\begin{align*}
(\phi^h,\psi^h)_h:=\sum_{i=1}^\Lh h\phi^h(ih)\psi^h(ih)
\end{align*}
and the corresponding norm
\begin{align*}
||\psi^h||_h:=\left(\sum_{i=1}^\Lh h |\phi^h(ih)|^2 \right)^{1/2}.
\end{align*}
Note that the norm $||\cdot ||_h$ is equivalent to the $L^2(\domain)$-norm on $X_h$, uniformly in $h$. With a slight misuse of notation, we will frequently abbreviate 
$(\Ih[\phi], \psi_h)_h$ for functions $\phi\in C_{per}([0,L])$ and $\psi_h\in X_h$ by $(\phi, \psi_h)_h$.
\item By $\partial_h^{+}$ and $\partial_h^{-}$, we denote the forward and backward difference quotient, respectively, i.e. $\partial_h^{+}f(x):=h^{-1}(f(x+h)-f(x))$ (with $f$ extended outside of $[0,L]$ by periodicity). 
\item
The discrete Laplacian $\Delta_h v^h$ of a function $v^h\in X_h$ is defined by
the variational formulation 
$$ (\Delta_h v^h, \psi^h)_h:=-\int_\ort v_x^h\cdot \psi_x^h dx \qquad \forall \psi^h\in X_h.$$
We note the identity $\Delta_h v^h=\partial_h^{+}(\partial_h^{-}u).$ 
\item Sometimes, we abbreviate $v_i:=v(ih)$ for functions $v\in C^0(\ort)$ and $i=1, \ldots, L_h.$
\end{itemize}
Now we are in the position to formulate the general assumptions on the data.

  \begin{itemize}
\item[(H1)] The mobility is given by $m(u)=u^n$ with $n \in (2,3)$.
\item[(H2)]
The effective interface potential $F(u)$ satisfies for $u > 0$, $c_{F} > 0 \,$, and some $p>n$
\begin{align*}
	F(u) = c_{F}u^{-p} \, .
\end{align*}
For non-positive $u$, we define $F(u):=+\infty.$
\item[(H3)] Let $\Lambda$ be a probability measure on $H^1_{per}(\ort)$ equipped with the Borel $\sigma$-algebra which is supported on the subset of strictly positive functions such that 
there is a positive constant $C$ with the property that
\begin{align*}
\essup_{v\in\supp\Lambda}\left\{E[\Ih[v]]+\left(\int_\ort v\,dx\right)+\left(\int_\ort v \,dx\right)^{-1}\right\}\leq C
\end{align*} 
for any $h>0$ with $E[u]:= \intort \frac{1}{2} |u_{x}|^{2} + F(u) \dx$. 
\item[(H4)] Let $(\Omega,\mathcal F, (\mathcal F_t)_{t\geq 0}, \probab)$ be a stochastic basis with a complete, right-continuous filtration such that
\begin{itemize}
\item $W$ is a $Q$-Wiener process on $\Omega$ adapted to $(\mathcal F_t)_{t\geq 0}$ which admits a decomposition of the form $W=\sum_{\ell \in \Z} \lambda_\ell g_\ell \beta_\ell$ for a sequence of independent standard Brownian motions $\beta_\ell$ and nonnegative real numbers $(\lambda_\ell)_{\ell\in\N}$.
Here $(g_{\ell})_{\ell \in \mathbb{Z}}$ are eigenfunctions of the Laplacian on $\ort$ subject to periodic boundary conditions,
	\begin{align}\label{def:L2-Basis}
		g_{\ell}(x) = 
		\begin{cases}
			\sqrt{\frac{2}{L}} \sin(\frac{2 \pi \ell x }{L})\quad &\ell >0,
			\\
			\frac{1}{\sqrt{L}}\quad &\ell=0, 
			\\
			\sqrt{\frac{2}{L}} \cos(\frac{2 \pi \ell x }{L})\quad &\ell<0.
		\end{cases}
	\end{align} 
\item the noise $W$ is colored in the sense that $\sum_{\ell\in \Z} \ell^4 \lambda_\ell^2 <\infty$, 
\item there exists an $\mathcal F_0$-measurable random variable $u_0$ such that $\Lambda=\probab\circ u_0^{-1}.$
\end{itemize}
\item[(H5)] The regularization parameter $S$ satisfies 
$$S > C_{Strat} \frac{3C_{osc}^{4-n}}{(1+C_{osc})^{n-4}}    \frac{n-2}{3-n} + C_{Strat} (C_{osc}^{n-2}-1)$$
with $C_{osc} = 1 + \sqrt{2c_{F}}$, cf. Lemma~\ref{lem:lowerbound}, and $C_{Strat} = \frac{1}{2} \frac{n^{2}}{4} \left(\frac{\lambda_{0}^{2}}{L} + \sum_{\ell=1}^{\infty} \frac{2\lambda_{\ell}^{2}}{L}\right)$.
\end{itemize}

\begin{remark}
The lower bound of $S$ tends to zero for $n \to 2$ and to infinity for $n \to 3$ which is in accordance with the behavior of the formally derived lower bound in the continuous setting $S_{opt} := C_{Strat} \frac{9}{4} \frac{(n-2)^{2}}{(3-n)(2n-3)}$ (cf. formula (3.33) in \cite{GruenKlein24Limit}). Moreover, for $c_{F} \to 0$, i.e. $C_{osc} \to 1$, we see that the second summand of the lower bound tends to zero.
Note that for values of $n$ sufficiently close to $2$, $S$ can be chosen equal to $\Cstr$. 
Hence, the case of a surface-tension driven stochastic thin-film equation with Hänggi-Klimontovich noise is covered.
\end{remark}

Let us define our scheme for approximation.
On a stochastic basis satisfying (H4), given a positive time $T_{max}$ and introducing $E_{max,h}:=\tfrac12  c_{F}h^{-(p-2)/(p+2)},$ we consider solutions
\begin{align*}
u^h &\in L^2(\Omega;C([0,T_{max}];X_h)),
\\
p^h &\in L^2(\Omega;L^{2}((0,T_{max});X_h))
\end{align*}
with $|| \essup_{t \in (0,T_{max})} ||p^{h}(t)||_{X_{h}}  ||_{L^{2}(\Omega)} < \infty$
to the system of stochastic differential equations
\begin{subequations}
\label{FaedoGalerkinScheme}
\begin{align}
\label{eq:1}
(u^h(T), \phi^h)_h
&=
(u_0, \phi^h)_h
-\int_0^{T\wedge T_h}\int_\domain M_h(u^h)p^h_x \phi^h_x \,dx\,dt
\\& \nonumber
\quad -(C_{Strat}+S)\int_0^{T\wedge T_h}\porad(u^h, \phi^h)dt
-(C_{Strat}+S)\int_0^{T\wedge T_h}\porb(u^h, \phi^h)dt
\\&
\nonumber
\quad -\sum_{|\ell| \le N_h} \int_0^{T\wedge T_h} \int_\domain \lambda_\ell \sqmob(u^h) g_\ell \phi^h_x \,dx\,d\beta_\ell
\qquad
\forall \phi^h\in X_h,
\\
\label{eq:2}
(p^h, \phi^h)_h &= \chi_{T_h} \int_\domain u^h_x \phi^h_xdx +\chi_{T_h} (F'(u^h), \phi^h)_h
\qquad
\forall \phi^h\in X_h \, .
\end{align}
\end{subequations}
Here $\chi_{T_h}=\chi_{T_h}(t)$ is an abbreviation for the characteristic function of the time interval $[0,T_h]$, where $T_h$ is the stopping time defined by 
$T_h:= T^{E}_{h} \wedge T^{M}_{h}$ with 
\begin{align*}
	T^{E}_{h} :=T_{max}\wedge\inf\{t\in [0,\infty):E_h[u^h(\cdot,t)]\geq E_{max,h}\}
\end{align*}
and
\begin{align*}
	T_{h}^{M} := T_{max}\wedge\inf\Big\{t\in [0,\infty):|\overline{\uh}(\cdot,t)-\overline{\uh}(\cdot,0)|\geq \frac{\overline{\uh}(\cdot,0)}{2}\Big\} \, ,
\end{align*}  
for $\overline{\uh}(t) = \overline{\uh}(\omega,t) := \fint_{\ort} \uh(\omega,t,x) \, dx$ for $(\omega,t) \in \Omega \times [0,T_{max}]$.
Moreover, $N_h\in \mathbb{N}$ is a cutoff for the noise for the purpose of discretization, subject only to the condition $N_h\rightarrow \infty$ for $h\rightarrow 0$. Furthermore, $M_h(u^h)$ and $\sqmob(u^h)$ are suitable modifications of the pointwise mobility $m(u^h)$ and of its root $\sqrt{m(u^h)}$, respectively, (see below).

The operators 
\begin{align}
\label{eq:definePorad}
\begin{split}
 \porad(u^h, v^h):=& -\tfrac{n-2}{6}\discint (\ui)^{n-3}\left(\left|\tfrac{\ui-\uim}{h}\right|^2+\left|\tfrac{\uip-\ui}{h}\right|^2\right)v_i^{h}\\
&-\tfrac{n-2}{6}\discint\bigg\{\left((\ui)^{n-3}+(\uip)^{n-3}\right)\tfrac{\uip-\ui}{h}
\\ & \qquad \qquad \qquad +
\left((\uim)^{n-3}+(\ui)^{n-3}\right)\tfrac{\ui-\uim}{h}\bigg\}
\tfrac{\uip-\uim}{2h} v^{h}_i
\end{split}
\end{align}
and
\begin{align}
\label{eq:definePorb}
\begin{split}
	\porb(u^h, v^h):=& -\left(\Ih[(u^h)^{n-2}]\Delta_h u^h, v^h\right)_h\\
	=& -\discint (\ui)^{n-2}\left(\tfrac{\uip-2 \ui+\uim}{h^2}\right)v_i^{h}
\end{split}
\end{align}
are discrete weak versions of $-(n-2)u^{n-3}|u_x|^2$ and $-u^{n-2}u_{xx}$, respectively. Their sum is a discrete weak version of $-(u^{n-2}u_x)_x$. 
  
Discrete initial data are computed by the  formula $u_0^h(\omega):=\Ih[u_0(\omega)].$ 
The following estimate can be established in a standard way.
\begin{gather}
\label{mean2}
\left|\overline{u_0^h}(\omega)-\overline{u_0}(\omega)\right|\leq C h \left(\int_\ort|(u_0)_x(\omega)|^2\right)^{1/2} \, ,
\end{gather}
where $\overline{v}$ denotes the mean value of a function $v$ over $\ort$.

The discrete mobility $M_h(u^h)$ is defined as follows: Choose $\sigma:=\tfrac12h^{2/(p+2)}$ and consider the shifted mobility $m_\sigma(u):=m(\max(\sigma,u)).$ Then, for an element $v^h\in X_h$, the discrete mobility $M_h(v^h)$ is given as the elementwise constant function defined by 
\begin{equation}
\label{eq:defmobility}
M_h(v^h)|_{(ih,(i+1)h]}:=\begin{cases}
m_\sigma(v_i^h) & \text{ if } v^h_i=v^h_{i+1}\\
\left(\fint_{v^h_i}^{v^h_{i+1}}\tfrac1{m_\sigma(s)}ds\right)^{-1} & \text{ if } v^h_i\neq v^h_{i+1}.
\end{cases}
\end{equation}
Its square-root $\sqrt{m(u)}$ is replaced on the discrete level by
\begin{equation}
  \label{eq:mobilityroot}
  \sqmob(u^h):= \Ih[|u^h|^{n/2}].
  \end{equation}
We note that by Lemma~\ref{lem:lowerbound} and our choice of $\sigma$, for $\uh$ satisfying \eqref{eq:assump} the discrete mobility $M_{h}(u^h)\left|_{(ih, (i+1)h)}\right.$  is given by
$(n-1)\tfrac{\uip -\ui}{(\ui)^{1-n}-(\uip)^{1-n}}$ whenever $\uip \neq \ui$.
Related to the discrete mobility $M_h(\cdot),$ we introduce the nonnegative discrete entropy density \begin{equation}
\label{eq:entropydef}
G_h(s):=\int_1^s\int_1^\mu \frac{1}{m_\sigma(\tau)}\,d\tau\,d\mu
\end{equation} 
and similarly $g_h(s):=G_h'(s)$.  For further reference, we note the identity
\begin{equation}
\label{eq:entropconsist}
\int_\ort M_h(u^h)p_x^h\partial_x\left(\Ih[g_h(u^h)]\right) \,dx
= \int_\ort p_x^h u_x^h \,dx
\end{equation}
which is commonly referred to as {\it entropy consistency of the discrete mobility}, cf. \cite{Grun2000} and \cite{ZhoBe}. 

\section{Solution concept and main result}
\label{sec:MainResult}
\begin{definition}
\label{DefinitionMartingaleSolution}
Let $\Lambda$ be a probability measure on $H^1_{per}(\ort)$. We will call a triple
\newline
 $(\Basislimit, \tu, \tilde W)$ a weak martingale solution to the stochastic thin-film equation \eqref{eq:stfeSingPot} with initial data $\Lambda$ on the time interval $[0,T]$ provided
\begin{itemize}
\item[i)] $\Basislimit$ is a stochastic basis with a complete, right-continuous filtration, 
\item[ii)] $\tilde W$ satisfies (H4) with respect to $\Basislimit$, 
\item[iii)]
the continuous $L^{2}(\cO)$-valued, $\ti{\mathcal{F}}_{t}$-adapted process
$\tu\in L^2(\tilde \Omega;L^2((0,T); H^2_{per}(\ort)))\cap L^2(\tilde\Omega; C^{\gamma, \gamma/4}(\bar{\ort}\times[0,T]))$ with $\gamma\in (0,1/2)$ is 
positive $\tprobab$-almost surely
and $\tu_{xxx} \in L^{2}([0,T] \times \ort)$ $\tprobab$-almost surely, \color{black}
\item[iv)] there exists an $\tilde{\mathcal F_0}$-measurable $H^1_{per}(\ort)$-valued random variable $\tu_0$ such that $\Lambda=\tprobab\circ \tu_0^{-1}$, and the equation
\begin{align}
\label{eq:weakformula}
\begin{split}
\int_\domain \tu(t) \phi\,dx =&\int_\domain \tu_0 \phi\,dx
+\int_0^t \int_\domain m(\tu) (\tu_{xx}-F'(\tu))_x \phi_x \, dx \,ds
\\&
- \int_{0}^{t} \intort (\tu^{n-2}\tu_{x}) \phi_{x} \, dx \, ds
-\sum_{\ell\in \Z} \lambda_\ell\int_0^t \int_\domain \sqrt{m(\tu)}g_\ell \phi_x \,dx \,d\beta_\ell
\end{split}
\end{align}
holds $\tprobab$-almost surely for all $t\in [0,T]$ and all $\phi\in H^1_{per}(\domain)$.
\end{itemize}
\end{definition}

We are going to establish the existence of a weak martingale solution via approximation by solutions to the semi-discrete scheme \eqref{FaedoGalerkinScheme}.
\begin{theorem}
\label{MainResult}
Let the assumptions (H1)-(H5) be satisfied and let $T_{max}>0$ be given. 
Assume $u^h$, $p^h$ (where $h\rightarrow 0$) to be a sequence of solutions to the Faedo-Galerkin scheme \eqref{FaedoGalerkinScheme} for the stochastic thin-film equation \eqref{eq:stfeSingPot} with $E_{max,h}=\tfrac{1}{2} c_{F} h^{-(p-2)/(p+2)}$. Let $0<\gamma<1/2$ be given.

Then there exist a stochastic basis $\Basislimit$ as well as processes ${\tilde u}^h$, ${\tilde p}^h$, and $\tu\in L^2(\tilde\Omega;L^2([0,T];H^1_{per}(\mathcal{O})))$ such that the following holds:
The processes $\tilde u^h$, $\tilde p^h$ have the same law as the processes $u^h$, $p^h$ and for a subsequence we $\tprobab$-almost surely have the convergence ${\tilde u}^h \rightarrow \tu$ strongly in $C^{\gamma,\gamma/4}(\domain\times [0,T_{max}])$ and $\sqrt{M_h(\tuh)}\tph_x\rightharpoonup -\tu^{\frac{n}{2}}\left(\tu_{xx}-F'(\tu)\right)_x$ weakly in $L^2(\ort\times[0,T_{max}]).$ 
Furthermore, $\tu$ is a weak martingale solution to the stochastic thin-film equation in the sense of Definition~\ref{DefinitionMartingaleSolution} satisfying the additional bound
\begin{align}\label{mainresultest}
	\nonumber
	&\mathbb{E}\Bigg[ \sup_{t\in [0,T_{max}]} E[\tu(t)]^{\bar p}
	+ \left(\int_0^{T_{max}}  \int_\domain \tu^{n} | \tp_x|^2\,dx \,ds \right)^{\bar{p}} 
	\\& \nonumber
	+\left(\int_0^{T_{max}}  \intort |\Delta\tu|^2 \, dx \,ds\right)^{\bar{p}} 
	+\left(\int_0^{T_{max}}  \intort \tu^{-p-2}  |\tu_x|^2  \,dx \,ds\right)^{\bar{p}} 
	\\& \nonumber
	+\left(\int_0^{T_{max}} \intort \tu^{n-4}\left|\tu_{x}\right|^4\, dx \,ds\right)^{\bar{p}}
	+\left(\int_0^{T_{max}} \intort \tu^{n-2}\left|\Delta \tu\right|^2\, dx \,ds\right)^{\bar{p}}
	\\&\nonumber
	+\left(\int_0^{T_{max}} \intort \tu^{n-p-4}\left|\tu_{x}\right|^2\, dx \,ds\right)^{\bar{p}}
	+\left(\int_0^{T_{max}} \intort \tu^{-2}\left|\tu_{x}\right|^2\, dx \,ds\right)^{\bar{p}}\Bigg]
	\\&
	\le C(\bar{p}, \tu_{0}, T_{max}) \, .
\end{align}
for any $\bar p\geq 1$. In particular, $\tu$ is positive $\tprobab$-almost surely.
\end{theorem}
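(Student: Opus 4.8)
The plan is to establish Theorem~\ref{MainResult} through the usual compactness-and-identification scheme for stochastic PDEs, built on top of the uniform a-priori estimates proven earlier in the paper. First, I would invoke the discrete energy-entropy estimate of Proposition~\ref{prop:integral1}: testing \eqref{eq:1} against the discrete pressure $p^h$ and using It\^o's formula on $E_h[u^h]$, the discrete counterpart of the integration-by-parts identity \eqref{eq:IntPartsCont} produces exactly the fourth-power-of-$u_x$ terms (via $\porad$ and $\porb$) needed to absorb the problematic It\^o corrections coming from the $u^{n/2}$-noise; the regularization parameter $S$, chosen as in (H5) using the oscillation estimates (the refined Oscillation Lemma and Lemma~\ref{lem:lowerbound}), handles the residual surface-tension It\^o contribution. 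A parallel It\^o computation on the entropy $G_h[u^h]$, using the entropy consistency \eqref{eq:entropconsist}, closes the system. A Gronwall argument plus the Burkholder-Davis-Gundy inequality then upgrades these to uniform bounds on arbitrary moments $\bar p\ge 1$, giving $h$-independent control of all the norms appearing on the left of \eqref{mainresultest} (at the discrete level, up to the stopping time $T_h$). Combined with the uniform H\"older estimates for $u^h$ and the improved pressure regularity (both announced in Section~\ref{sec:apriori}), and with Lemma~\ref{lem:EstDiscMass} showing the discrete mass deviates from $\overline{u_0}$ by only $O(h)T$ in expectation, one checks that the stopping times $T_h^E$ and $T_h^M$ satisfy $T_h\to T_{max}$ in probability as $h\to 0$.

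Next, I would set up tightness of the laws of $(u^h, p^h, \sqrt{M_h(u^h)}p^h_x, W, \text{stochastic integral})$ on a suitable path space: $C^{\gamma',\gamma'/4}(\bar\ort\times[0,T_{max}])$ for the solution (slightly better H\"older exponent $\gamma'>\gamma$ to get compactness into $C^{\gamma,\gamma/4}$), weak $L^2(\ort\times[0,T_{max}])$ for the flux and pressure, and the standard space for $W$. Since these spaces are not all metrizable, I would apply Jakubowski's theorem~\cite{Jakubowski} (as stated in Proposition~\ref{prop:conv1}) to obtain a new probability space $\Basislimit$ with random variables $\tu^h$, $\tp^h$, $\tilde W^h$ having the same laws, converging $\tprobab$-a.s.: $\tu^h\to\tu$ strongly in $C^{\gamma,\gamma/4}$, $\sqrt{M_h(\tu^h)}\tp^h_x\rightharpoonup J$ weakly in $L^2$, and $\tilde W^h\to\tilde W$. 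Strict positivity of $\tu$ follows because the discrete energy bound $E_h[u^h]<E_{max,h}=\tfrac12 c_F h^{-(p-2)/(p+2)}$ together with the singular potential $F(u)=c_F u^{-p}$ forces a uniform-in-$h$ positive pointwise lower bound on $u^h$ on $[0,T_h]$ (this is where the specific growth rate of $E_{max,h}$ in $h$ is engineered), which passes to the uniform limit $\tu$; equivalently the bound on $\int\tu^{-p-2}|\tu_x|^2$ plus H\"older continuity excludes zeros.

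Then comes identification of the limit equation. For the deterministic terms: the strong $C^{\gamma,\gamma/4}$ convergence of $\tu^h$ handles the nonlinear coefficients pointwise, the weak $L^2$ limits handle the flux, and one must show $J = -\tu^{n/2}(\tu_{xx}-F'(\tu))_x$, i.e. that the limit of the discrete pressure gradient flux is the expected continuous object — this uses the a-priori bounds on $\Delta\tu$ and $\tp_x$ to get weak $H^1$/$H^2$ convergence of the pressure and a standard product-of-weak-times-strong argument, together with the entropy consistency to correctly identify the mobility. The porous-medium terms $\porad(\tu^h,\cdot)+\porb(\tu^h,\cdot)$ converge to $-\int_0^t\int_\ort \tu^{n-2}\tu_x\phi_x$ by consistency of the discretization \eqref{eq:definePorad}-\eqref{eq:definePorb} and strong convergence; note the Stratonovich-correction constant $\Cstr$ cancels against an It\^o drift so that \eqref{eq:weakformula} only retains the single porous-medium term. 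For the stochastic integral, since we are on a new probability space one cannot directly identify it; instead I would use the martingale characterization method of \cite{BreitFeireislHofmanova, BrzezniakOndrejat, DebusscheHofmanovaVovelle, HofmanovaSeidler}: define the natural filtrations, show the limit process $M(t):=\int_\ort\tu(t)\phi - \int_\ort\tu_0\phi - (\text{deterministic terms})$ is a continuous square-integrable martingale with the correct quadratic variation $\sum_\ell\lambda_\ell^2(\int_0^t\int_\ort\sqrt{m(\tu)}g_\ell\phi_x)^2$ and correct cross-variation with $\tilde W$, and conclude $M(t)=-\sum_\ell\lambda_\ell\int_0^t\int_\ort\sqrt{m(\tu)}g_\ell\phi_x\,d\tilde\beta_\ell$ by a representation theorem. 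Finally, lower semicontinuity of all norms in \eqref{mainresultest} under the a.s./weak convergence, together with the uniform discrete bounds and $T_h\to T_{max}$, yields the estimate.

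The main obstacle is the absorption of the It\^o corrections in the energy estimate and the correct identification of the flux limit. Concretely: the $u^{n/2}$-noise generates It\^o terms that are \emph{not} present in the $n=2$ linear-multiplicative case treated in \cite{FischerGruen2018}, and controlling them requires that the discrete porous-medium discretization genuinely reproduces the positive quantity $\int\tu^{n-2}\tu_{xx}^2$ and, crucially, that the discrete analogue of $\int(u^{n-2}u_x)_x u_{xx}\,dx$ is nonnegative for $n\in(2,3)$ under periodic boundary conditions — this is the delicate combinatorial/algebraic fact proved in Section~\ref{sec:CalcPorMed}, and it is what makes the choice \eqref{eq:definePorad}-\eqref{eq:definePorb} (splitting $-u^{n-2}u_{xx}$ from $-(n-2)u^{n-3}u_x^2$ at the cost of discrete mass conservation) the right one. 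Getting the lower bound on $S$ in (H5) sharp enough to cover the Hänggi-Klimontovich case near $n=2$ demands the optimal oscillation estimates on $\ui/\uip$, and the passage from the discrete inequality to its continuous counterpart in the limit $h\to 0$ (where $C_{osc}\to 1$) must be done carefully. Everything else — tightness, Jakubowski, martingale identification — is by now standard machinery, modulo bookkeeping of the stopping times.
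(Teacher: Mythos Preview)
Your approach is essentially correct and matches the paper's: uniform estimates from Proposition~\ref{prop:integral1}, tightness, Jakubowski (Proposition~\ref{prop:conv1}), identification of deterministic limits (Lemma~\ref{lem:identificationJandp}, Lemma~\ref{lem:stochconv1}), martingale characterization of the stochastic integral (Lemmas~\ref{lem:stochconv2}--\ref{lem:stochconv4}), and lower semicontinuity plus Fatou for \eqref{mainresultest}.

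Two small corrections, neither fatal to the outline. First, the $(C_{Strat}+S)$ coefficient on the porous-medium drift does \emph{not} cancel against anything: both the scheme \eqref{FaedoGalerkinScheme} and the target equation \eqref{eq:stfeSingPot} are already written in It\^o form, so the Stratonovich correction is a genuine drift term that survives the limit intact---the paper's proof carries the factor $(C_{Strat}+S)$ through explicitly (see the display for $\tilde{\mathcal M}_v$ in the proof of Theorem~\ref{MainResult}). Second, strict positivity of $\tu$ is not obtained from a uniform-in-$h$ pointwise lower bound on $u^h$: Lemma~\ref{lem:lowerbound} only gives $\min u^h\ge h^{2/(p+2)}$, which degenerates as $h\to 0$. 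The correct mechanism (Lemma~\ref{LowerBound}) is that the $h$-uniform moment bound on $E_h[u^h]$ passes to $E[\tu]<\infty$ $\tprobab$-a.s.\ in the limit, and then finiteness of $\int_\ort F(\tu)\,dx = c_F\int_\ort \tu^{-p}\,dx$ together with continuity of $\tu$ forces $\tu>0$ everywhere---your alternative phrasing via the $\tu^{-p-2}|\tu_x|^2$ bound is closer to the actual argument.
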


\section{Analysis of the discrete scheme}
\label{sec:apriori}
\subsection{Stopping times, oscillation lemma, and existence of discrete solutions}

To establish a discrete counterpart of the combined energy-entropy estimate, we introduce discretization-adapted variants of the energy and  of the entropy. 
We set
\begin{align}
\label{DefinitionEh}
E_h[v]:=\int_\domain \frac{1}{2} |v_x|^2 + \mathcal{I}_h[F(v)] \,dx
\end{align}
and
\begin{align}
S_h[v]:=\int_\domain \mathcal{I}_h[G_h(v)] \,dx. \color{blue}
\end{align}
A bound on the energy $E_h[v]$ entails a lower bound on the thickness of the thin film as well as on its oscillation.
\begin{lemma}
\label{lem:lowerbound}
Let $u^h\in X_h$ be strictly positive. We then have the estimate
\begin{align}
\label{eq:lowerboundbyenergy}
\sup_{x\in \domain}~ (u^h)^{-1}
\leq C \left(\fint_\domain u^h \,dx\right)^{-1} +C E_h[u^h]^{2/(p-2)}.
\end{align}
If in addition the bound
\begin{equation}
  \label{eq:assump}E_h[u^h]\leq c_{F}\, h^{-\tfrac{p-2}{p+2}}
\end{equation}
holds, then we have the estimates
\begin{equation}
\label{eq:lowerbound}
\min_{x\in\ort} u^h\geq h^{\tfrac2{p+2}}
\end{equation}
and
\begin{equation}
\label{eq:lowerboundmean}
\fint_\domain u^h \,dx \geq h^{\tfrac{2}{p+2}} \, .
\end{equation}
Moreover, we have
\begin{equation}
\label{eq:oscillation}
\max\left\{
\frac{u^h(ih)}{u^h((i+1)h)}, \frac{u^h(ih)}{u^h((i-1)h)}\right\}\leq
 1 + \sqrt{2c_{F}} =: C_{osc}
\end{equation}
for all $i=1,\cdots, \Lh$.
\end{lemma}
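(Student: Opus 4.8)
The plan is to obtain everything from a single elementary device: controlling the pointwise values of a periodic finite-element function by its mean plus a term involving $\|u^h_x\|_{L^2}$, and then feeding in the definition of $E_h$ via (H2). First I would prove \eqref{eq:lowerboundbyenergy}. Fix $x_0\in\ort$ and let $y_0$ be a point where $u^h$ attains a value comparable to its mean, say $u^h(y_0)\ge \fint_\ort u^h\,dx$ (such a point exists by the mean value property of the integral, and since $u^h$ is continuous and piecewise linear we may take $y_0$ to be a node). By periodicity, $|u^h(x_0)-u^h(y_0)|\le \int_\ort|u^h_x|\,dx\le L^{1/2}\big(\int_\ort|u^h_x|^2\,dx\big)^{1/2}$. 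Hence $u^h(x_0)\ge u^h(y_0)-L^{1/2}\|u^h_x\|_{L^2}$. This is not yet in the right form because the right-hand side can be negative; the trick is that $\mathcal{I}_h[F(u^h)]$ in $E_h$ is itself a handle on $\min u^h$. Concretely, using $F(u)=c_Fu^{-p}$ and the nonnegativity of $\tfrac12|u^h_x|^2$, one has at any node $ih$ that $\tfrac{1}{L}\,c_F (u^h(ih))^{-p}\le \fint_\ort \mathcal{I}_h[F(u^h)]\,dx\le E_h[u^h]$, so $(u^h)^{-1}\le (L/c_F)^{1/p}E_h[u^h]^{1/p}$ at nodes, and by piecewise linearity the same bound (up to a constant) holds everywhere. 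Combining this ``direct'' bound with the ``gradient'' bound — interpolating so that when $\|u^h_x\|_{L^2}$ is large the energy is large and the direct bound wins, and when it is small the mean dominates — yields \eqref{eq:lowerboundbyenergy} with the exponent $2/(p-2)$ (the exponent arising from balancing $\|u^h_x\|_{L^2}^2\le 2E_h$ against the $F$-term, exactly as in the $n=2$ analogue in \cite{FischerGruen2018}).

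Next, the two lower bounds \eqref{eq:lowerbound} and \eqref{eq:lowerboundmean} follow by inserting the a-priori bound \eqref{eq:assump} into \eqref{eq:lowerboundbyenergy}. From \eqref{eq:lowerboundbyenergy}, $(\min_\ort u^h)^{-1}\le C(\fint u^h)^{-1}+CE_h[u^h]^{2/(p-2)}$. Suppose first $\fint_\ort u^h\,dx < h^{2/(p+2)}$; then there is a node with $u^h(ih)<h^{2/(p+2)}$, so $\mathcal{I}_h[F(u^h)]$ integrated over $\ort$ is at least $c L^{-1}c_F h^{-2p/(p+2)}$, which for $h$ small exceeds $c_F h^{-(p-2)/(p+2)}$ (compare exponents: $2p/(p+2)>(p-2)/(p+2)$ since $p>n>2$), contradicting \eqref{eq:assump}. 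Hence \eqref{eq:lowerboundmean} holds, and the same dichotomy applied nodewise gives \eqref{eq:lowerbound}: if $\min_\ort u^h<h^{2/(p+2)}$ the energy would again be forced above the threshold $c_F h^{-(p-2)/(p+2)}$. One must be a little careful that $\mathcal{I}_h[F(u^h)]$ is evaluated at nodes and $F$ is convex decreasing, so the nodal minimum of $u^h$ does control $\int_\ort\mathcal{I}_h[F(u^h)]$ from below; this is where the choice $E_{max,h}=\tfrac12 c_F h^{-(p-2)/(p+2)}$ rather than $c_F h^{-(p-2)/(p+2)}$ buys the needed slack.

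Finally, for the oscillation estimate \eqref{eq:oscillation} I would argue on a single element. On $[ih,(i+1)h]$ the function $u^h$ is affine, so $\int_{ih}^{(i+1)h}|u^h_x|^2\,dx = h^{-1}(u^h((i+1)h)-u^h(ih))^2$. Together with $\tfrac12\int_\ort|u^h_x|^2\,dx\le E_h[u^h]\le c_F h^{-(p-2)/(p+2)}$ this gives $|u^h((i+1)h)-u^h(ih)| \le \sqrt{2c_F}\, h^{1-\frac12-\frac{p-2}{2(p+2)}}=\sqrt{2c_F}\,h^{2/(p+2)}$. Dividing by $u^h((i+1)h)\ge h^{2/(p+2)}$ from \eqref{eq:lowerbound} (or by $u^h(ih)$ in the other direction) yields $u^h(ih)/u^h((i+1)h)\le 1+\sqrt{2c_F}$, and symmetrically for the $(i-1)$-neighbour. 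I expect the main obstacle to be purely bookkeeping with the exponents $\frac{p-2}{p+2}$, $\frac{2}{p+2}$, $\frac{1}{p}$ and ensuring the nodal-vs-pointwise passages for $\mathcal{I}_h[F(u^h)]$ are done cleanly using convexity of $F$; there is no deep difficulty, only the need to keep the factor-of-two slack in $E_{max,h}$ consistent throughout.
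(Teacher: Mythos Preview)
Your argument for \eqref{eq:lowerboundbyenergy} has a genuine gap. The ``direct bound'' you extract from the potential part of $E_h$ is miscounted: from a single node one only gets
\[
h\,c_F\,(u^h(ih))^{-p}\;\le\;\int_\ort \mathcal I_h[F(u^h)]\,dx\;\le\;E_h[u^h],
\]
not $\tfrac{1}{L}c_F(u^h(ih))^{-p}\le \fint\mathcal I_h[F(u^h)]\le E_h$. The missing factor $h$ turns your ``direct bound'' into $(u^h_i)^{-1}\le (E_h/(hc_F))^{1/p}$, which diverges as $h\to 0$ and therefore cannot be combined with the gradient bound to produce an $h$--independent estimate. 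No balancing of your two ingredients (mean-plus-gradient on one side, the $h$--dependent nodal bound on the other) will manufacture the exponent $2/(p-2)$ uniformly in $h$.

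The paper obtains \eqref{eq:lowerboundbyenergy} by a different device: it controls the total variation of the \emph{power} $(u^h)^{1-p/2}$, so that both pieces of $E_h$ enter via a single Cauchy--Schwarz step,
\[
\int_\ort\big|((u^h)^{1-p/2})_x\big|\,dx
\;\le\;C(p)\int_\ort (u^h)^{-p/2}|u^h_x|\,dx
\;\le\;C(p)\Big(h\sum_i (u^h_i)^{-p}\Big)^{1/2}E_h^{1/2}
\;\le\;C(p)\,E_h.
\]
This yields $(\min u^h)^{-(p-2)/2}\le(\max u^h)^{-(p-2)/2}+CE_h$, and raising to the power $2/(p-2)$ gives exactly the claimed $h$--independent term $E_h^{2/(p-2)}$; the mean enters only via $(\max u^h)^{-1}\le(\fint u^h)^{-1}$. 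This is the missing idea.

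Your arguments for \eqref{eq:lowerbound}, \eqref{eq:lowerboundmean} and \eqref{eq:oscillation} are essentially the paper's, but the same $h$--bookkeeping slip reappears in the contradiction for \eqref{eq:lowerbound}: with the correct factor one gets $E_h\ge hc_F(u^h_{i_0})^{-p}>c_F\,h\cdot h^{-2p/(p+2)}=c_F\,h^{-(p-2)/(p+2)}$, i.e.\ an \emph{exact} match with the threshold in \eqref{eq:assump}; there is no ``for $h$ small'' comparison of exponents to make. Your oscillation estimate is correct and coincides with the paper's proof.
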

\begin{proof}
To establish \eqref{eq:lowerboundbyenergy}, we estimate
\begin{align*}
\int_\domain |((u^h)^{-p/2+1})_x| \,dx
&=C(p)\int_\domain (u^h)^{-p/2} |u^h_x| \,dx
\\&
\leq 2C(p)\left(h\sum_{i=1}^{L_h} (u^h(ih))^{-p} \right)^{1/2} E_h[u^h]^{1/2}
\leq C(p) E_h[u^h]
\end{align*}
(where in the last step we have used Hypothesis (H2)), which implies
\begin{align*}
\sup_{x\in \domain} ~ (u^h)^{-1}
&\leq C\inf_{x\in \domain} ~(u^h)^{-1} + C E_h[u^h]^{2/(p-2)}
\\&
\leq C \left(\fint_\domain u^h \,dx\right)^{-1} + C E_h[u^h]^{2/(p-2)}.
\end{align*} 
Having established \eqref{eq:lowerboundbyenergy}, we may prove \eqref{eq:lowerbound} as follows.
If \eqref{eq:lowerbound} were not true, then there would exist $i_{0} \in \{1,\dots, L_{h}\}$ with $u_{i_{0}}^{h} < h^{\frac{2}{2+p}}$. Then
\begin{align*}
	E_{h}[u^{h}] = \frac{1}{2} \intort |u^{h}_{x}|^{2}\, dx + \discint c_{F} (\ui)^{-p} > c_{F} h (u_{i_{0}}^{h})^{-p} >  c_{F} hh^{-\frac{2p}{2+p}} = c_{F} h^{-\frac{p-2}{2+p}} \, 
\end{align*}  
which would contradict \eqref{eq:assump}.
\eqref{eq:lowerboundmean} easily follows from \eqref{eq:lowerbound}.
To prove \eqref{eq:oscillation}, w.l.o.g. it is sufficient to prove the inequality for the first term. 
For $x,y \in \ort$ we have by Hölder's inequality
\begin{align*}
	|\uh(x) - \uh(y)| = \left|\int_{y}^{x} \uh_{x}(s) \, ds\right| \le |x-y|^{\frac{1}{2}} \left(\int_{\ort} |\uh_{x}|^{2}\right)^{\frac{1}{2}} \, dx \, .
\end{align*}
Thus, we infer
\begin{align*}
	\sup_{x,y \in \ort } \frac{|\uh(x) - \uh(y)|}{|x-y|^{\frac{1}{2}}} \le \sqrt{2 E_{h}[\uh]} \, .
\end{align*}
Hence, 
\begin{align*}
	\left|\frac{u^h(ih)}{u^h((i+1)h)}-1\right|
	=\frac{|u^h(ih)-u^h((i+1)h)|}{u^h((i+1)h)}
	\leq  \frac{h^{1/2} \sqrt{2E_h[u^h]}}{h^{\tfrac{2}{2+p}}}
	\leq \sqrt{2c_{F}}.
\end{align*}
\color{black}
\end{proof}

\begin{remark}
	The positive solutions we aim to construct in Theorem~\ref{MainResult} will be used in \cite{GruenKlein24Limit} as approximations to establish the existence of nonnegative solutions in the limit $c_{F} \to 0$. Thus, we do not discuss an estimate like \eqref{eq:oscillation} with an $h$-dependent upper bound and its behavior for $h \to 0$.  
\end{remark}

Adapting the results of Lemma~4.3 of \cite{FischerGruen2018}, we get an existence result for discrete solutions. 

\begin{lemma}
	\label{ExistenceDiscrete}
	Let $T_{max}$ be  a positive real number and $E_{max,h}=\tfrac12 c_{F}  h^{-(p-2)/(p+2)}$. Then there exist a stopping time $T_{h}$ and stochastic processes $u^h\in L^2(\Omega;C([0,T_{max}];X_h))$, $p^h\in L^2(\Omega;L^{\infty}((0,T_{max});X_h))$ with the following properties:
	\begin{itemize}
		\item Almost surely, we have $T_{h}=T^{E}_{h} \wedge T^{M}_{h}$.
		\item Almost surely, the process $p^h$ solves \eqref{eq:2} for $t\leq T_{max}$ and is contained in $C([0,T_{h}];X_h)$.
		\item Almost surely, the process $u^h$ solves \eqref{eq:1} for $t\leq T_{h}$ and is constant for $t\in [T_{h},T_{max}]$ (and thus solves \eqref{eq:1} for $t\leq T_{max}$).
		\item 
		We have $\overline{\uh}(t) \in \left[\frac{\overline{\uh}(0)}{2}, \frac{3 \overline{\uh}(0)}{2}\right]$ for $t \le T_{h}$. 
	\end{itemize}
\end{lemma}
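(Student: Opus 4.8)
The plan is to obtain discrete solutions by a fixed-point/ODE argument on the finite-dimensional space $X_h$, exactly as in Lemma~4.3 of \cite{FischerGruen2018}, and then to insert the two stopping times $T_h^E$ and $T_h^M$ to guarantee the a-priori bounds. More precisely, for a fixed $h>0$ the coupled system \eqref{eq:1}--\eqref{eq:2} is, after eliminating $p^h$ via \eqref{eq:2} (note the matrix $((\cdot,\cdot))_h$ is invertible on $X_h$ and the map $v^h\mapsto \mathcal I_h[F'(v^h)]$ is smooth on the open set of nodally strictly positive elements of $X_h$, since $F$ is smooth on $(0,\infty)$), a system of It\^o SDEs with locally Lipschitz (indeed smooth) coefficients on the open manifold $\mathcal U_h:=\{v^h\in X_h:\ v^h(ih)>0\ \forall i\}$. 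Here one uses that the modified mobility $M_h(\cdot)$ defined in \eqref{eq:defmobility} and $\sqmob(\cdot)=\mathcal I_h[|\cdot|^{n/2}]$ are smooth on $\mathcal U_h$ because of the regularization by $\sigma=\tfrac12 h^{2/(p+2)}$, and that only finitely many noise modes $|\ell|\le N_h$ enter \eqref{eq:1}. Standard SDE theory then yields a unique maximal strong solution $u^h$ up to an explosion time $\tau_h$, at which either $\|u^h(t)\|_{X_h}\to\infty$ or $u^h(t)$ touches $\partial\mathcal U_h$, i.e.\ $\min_i u^h(ih,t)\to 0$.

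Next I would define the stopping time. Set
\[
T_h^E:=T_{max}\wedge\inf\{t\ge 0:\ E_h[u^h(\cdot,t)]\ge E_{max,h}\},\qquad
T_h^M:=T_{max}\wedge\inf\Big\{t\ge 0:\ |\overline{u^h}(t)-\overline{u^h}(0)|\ge \tfrac12\overline{u^h}(0)\Big\},
\]
and $T_h:=T_h^E\wedge T_h^M$. On $[0,T_h)$ the energy bound \eqref{eq:assump} of Lemma~\ref{lem:lowerbound} holds, so \eqref{eq:lowerbound} gives $\min_{\ort}u^h\ge h^{2/(p+2)}>0$; hence $u^h(t)$ stays in a compact subset of $\mathcal U_h$ on $[0,T_h)$, and in particular it cannot reach $\partial\mathcal U_h$ before $T_h$. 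Moreover, because the energy is controlled and $X_h$ is finite-dimensional with $\|\cdot\|_h$ equivalent to $\|\cdot\|_{L^2}$, one controls $\|u^h(t)\|_{X_h}$ on $[0,T_h)$ as well (the deviation of the mean is bounded by construction of $T_h^M$, and the gradient part of $u^h$ is bounded by $2E_{max,h}$), so the solution cannot explode before $T_h$ either. Therefore $\tau_h\ge T_h$, i.e.\ the solution exists on $[0,T_h]$; we then simply freeze it, $u^h(t):=u^h(T_h)$ for $t\in[T_h,T_{max}]$, which is admissible since the right-hand side of \eqref{eq:1} is cut off by $\chi_{T_h}$. The membership $u^h\in L^2(\Omega;C([0,T_{max}];X_h))$ follows from the pathwise continuity of the SDE solution together with the deterministic bound $\|u^h(t)\|_{X_h}^2\le C(E_{max,h},\overline{u^h}(0))$ valid on $[0,T_h]$ and hence on $[0,T_{max}]$ after freezing, combined with the moment bound on the initial datum coming from (H3)--(H4); in particular $E_h[u_0^h]\le C$ almost surely, so $E_{max,h}$ is not exceeded at $t=0$ for $h$ small and the stopping construction is non-degenerate.

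For $p^h$, one recovers it a posteriori from \eqref{eq:2}: on $[0,T_h]$, $\|p^h(t)\|_{X_h}\le C(\|u^h_x(t)\|_{L^2}+\|\mathcal I_h[F'(u^h(t))]\|_h)$, and since $u^h$ is bounded below by $h^{2/(p+2)}$ and in $C([0,T_h];X_h)$, the map $t\mapsto p^h(t)$ is continuous into $X_h$ on $[0,T_h]$; for $t\in(T_h,T_{max}]$ we have $\chi_{T_h}=0$ so $p^h(t)=0$. This gives $p^h\in C([0,T_h];X_h)$ and, using the energy bound, $\|\mathrm{esssup}_{t}\|p^h(t)\|_{X_h}\|_{L^2(\Omega)}<\infty$, hence $p^h\in L^2(\Omega;L^\infty((0,T_{max});X_h))$. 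Finally the last bullet, $\overline{u^h}(t)\in[\tfrac12\overline{u^h}(0),\tfrac32\overline{u^h}(0)]$ for $t\le T_h$, is immediate from the definition of $T_h^M$ and right-continuity of $t\mapsto\overline{u^h}(t)$.

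The main obstacle is the verification that the solution genuinely cannot leave $\mathcal U_h$ or explode before $T_h$: one must check carefully that the energy control on $[0,T_h)$ is itself available from the scheme before invoking the full energy-entropy estimate of Proposition~\ref{prop:integral1}. In fact the cleanest route, following \cite{FischerGruen2018}, is to note that $T_h^E$ is defined precisely as the first time the energy hits $E_{max,h}$, so on $[0,T_h)$ the bound $E_h[u^h]<E_{max,h}$ holds by definition of the stopping time (no a-priori estimate needed), which then feeds into Lemma~\ref{lem:lowerbound} to keep $u^h$ away from $\partial\mathcal U_h$ and bounded in $X_h$; the only subtlety is the standard one of showing the explosion time of the (smooth-coefficient) SDE is $\ge$ this stopping time, which is a routine localization argument using that the coefficients are locally Lipschitz on $\mathcal U_h$ and that the relevant sublevel set $\{v:\ E_h[v]\le E_{max,h},\ |\overline v-\overline{u^h(0)}|\le\tfrac12\overline{u^h(0)}\}$ is a compact subset of $\mathcal U_h$.
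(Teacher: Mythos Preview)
Your proposal is correct and follows essentially the same approach as the paper, which does not give an independent proof but simply states that the result follows by ``adapting the results of Lemma~4.3 of \cite{FischerGruen2018}''. Your write-up is precisely such an adaptation: reduction to a finite-dimensional SDE with locally Lipschitz coefficients on the open set of nodally positive elements of $X_h$, localization via the stopping times $T_h^E$ and $T_h^M$, use of Lemma~\ref{lem:lowerbound} to confine the solution to a compact subset of $\mathcal U_h$ before $T_h$, and freezing after $T_h$.
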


We note that by \eqref{mean2} and the bounds in $(H3)$ almost surely and for $t \in [0,T_{h}]$ the mean value of a solution $\uh$ satisfies  $\overline{\uh}(\omega,t) \in [c,C]\, ,$ 
where $0<c<C$ are constants independent of $h$, $\omega$ and $t$. As $\uh$ is constant on $(T_{h}, T_{max})$, the same holds true for all $t \in [0,T_{max}]$.

\subsection{Energy and entropy estimates}

We now demonstrate that our spatial semi-discretization preserves the combined energy-entropy estimate as long as the energy remains below a critical threshold and the  mass stays near the initial mass.
As before, we choose $E_{max,h}=\tfrac12 c_{F} h^{-(p-2)/(p+2)}$ to be the threshold energy. In particular, it becomes infinite in the limit $h\to 0.$

Writing $u^h(x,t)=\sum_{i=1}^\Lh a_i(t)e_i(x)$, we first note that \eqref{eq:1} may be rewritten as
\begin{align}
\label{eq:integral3}
da_i=\frac{1}{h} L_i(s) \,ds + \frac{1}{h} \Li_i(s)ds +  \sum_{|\ell|\le N_h} Z_i( \lambda_\ell g_\ell) d\beta_\ell,
\end{align}
where we have introduced
\begin{align}
\label{eq:integral4}
L_i(s):=-\chi_{T_h}(s)\int_\domain M_h(u^h(s)) p^h_x(s) (e_i^h)_x \,dx, 
\end{align}
\begin{equation}
\label{eq:defLi}
\Li_i(s):=-\chi_{T_h}(s) (C_{Strat}+S) \left\{\porad(u^h(s), e_i)+\porb(u^h(s), e_i)\right\},
\end{equation}
and $Z_i:L^2(\domain)\rightarrow \mathbb{R}$ defined by
\begin{align}
\label{eq:integral5}
Z_i(w):=\chi_{T_h}\frac{1}{h}\sum_{\ell \in \Z} \int_\domain \big( (g_\ell,w)_{L^2(\domain)} \sqmob(u^h) g_\ell \big)_x e_i^h \,dx \, .
\end{align}
For a given positive parameter $\kappa$, we consider the integral quantity
\begin{align}
\label{eq:integral13}
R(\kappa,h,s):=E_h[u^h(s)]+\kappa S_h[u^h(s)]\, .
\end{align}
Using It\^o's formula, we derive the following integral estimates.
\begin{proposition}
\label{prop:integral1}
Let $\bar p\geq 1$ be arbitrary and let $u^h, p^h$ be a solution to
 \eqref{eq:1} and \eqref{eq:2} for a parameter $0<h<1$.
Then, for sufficiently large 
$\kappa$ depending only on $(\lambda_\ell)_\ell$ and on $\bar p$, there exist a positive constant $C_1$ independent of $h$ and independent of initial data as well as a positive constant $\bar{C}$ depending only on $\bar p$, $(\lambda_\ell)_{\ell\in\N}$, $T_{max}$, and initial data such that for all $t\in [0,T_{max}]$ the following inequality holds:
\begin{align}\label{eq:proposition1}
	\nonumber
	&\mathbb{E}[\sup_{t\in [0,T_{max}]} R(t\wedge T_h)^{\bar p}]
	+ C_1
	\mathbb{E}\Bigg[\bigg(\int_0^{t\wedge T_h}  \int_\domain M_h(u^h(s)) | p_x^h(s)|^2\,dx \,ds \bigg)^{\bar{p}} \Bigg]
	\\& \nonumber
	+C_1 \kappa
	\mathbb{E}\Bigg[\bigg(\int_0^{t\wedge T_h}  \sum_{i=1}^\Lh \dashint_{u^h_{i-1}}^{u^h_i} |\tau|^{-p-2} \,d\tau \int_{(i-1)h}^{ih} |u^h_x(s)|^2 \,dx \,ds\bigg)^{\bar{p}} \Bigg]
	\\& \nonumber
	+C_1 C(S)\expect\Bigg[\bigg(\tThInt \discint(\ui)^{n-4}\left|\tfrac{\uip-\ui}{h}\right|^4(s)ds\bigg)^{\bar{p}}\Bigg]
	\\& \nonumber
	+C_1 \kappa
	\mathbb{E}\Bigg[\bigg(\int_0^{t\wedge T_h}  ||\Delta_hu^h(s)||_h^2  \,ds\bigg)^{\bar{p}} \Bigg]
	+C_1S\expect\Bigg[\bigg(\tThInt \discint(\ui)^{n-2}\left|(\Delta_h u^h)_i\right|^2(s)ds\bigg)^{\bar{p}}\Bigg]
	\\&\nonumber
	+C_1 S\expect\Bigg[\bigg(\tThInt \discint (\ui)^{n-p-4}\left|\tfrac{\uip-\ui}{h}\right|^2(s)ds\bigg)^{\bar{p}}\Bigg]
	\\&
	+C_1\kappa S\expect\Bigg[\bigg(\tThInt \discint (\ui)^{-2}\left|\tfrac{\uip-\ui}{h}\right|^2(s)ds\bigg)^{\bar{p}} \Bigg]
	\le
	\bar{C}(\bar{p},(\lambda_{\ell})_{\ell \in \Z}, u_{0}, T_{max}) \, .
\end{align}
\end{proposition}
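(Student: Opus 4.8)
The plan is to apply It\^o's formula to the functional $R(\kappa,h,s) = E_h[u^h(s)] + \kappa S_h[u^h(s)]$ evaluated along the solution process $(u^h,p^h)$, stopped at $T_h$, and then to combine the resulting terms in the spirit of the continuous energy-entropy estimate. First I would compute $dE_h[u^h]$ by testing \eqref{eq:1} with $\phi^h = \Ih[p^h] = p^h$ (recalling that $p^h$ is precisely the discrete $L^2_h$-representative of $-u^h_{xx} + F'(u^h)$, so that $(p^h,\phi^h)_h$ produces the energy derivative); this yields the dissipation term $-\int M_h(u^h)|p^h_x|^2\,dx\,ds$, the porous-medium contributions $-(\Cstr+S)(\porad + \porb)(u^h,p^h)$, a stochastic integral, and an It\^o-correction term $\tfrac12 \sum_\ell \lambda_\ell^2 \|\cdots\|^2$ coming from the quadratic variation of the noise. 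Simultaneously I would compute $d S_h[u^h]$ by testing \eqref{eq:1} with $\phi^h = \Ih[g_h(u^h)]$; by the entropy-consistency identity \eqref{eq:entropconsist} the mobility term pairs with the energy to yield $\|\Delta_h u^h\|_h^2$ and the $\kappa$-weighted coercive term involving $\dashint |\tau|^{-p-2}\,d\tau \int |u^h_x|^2$ (the latter from $F'$ against $g_h(u^h)$), plus its own It\^o-correction term.

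The second main step is the sign analysis of the porous-medium terms. Testing $\porad(u^h,\cdot) + \porb(u^h,\cdot)$ against $p^h = \Ih[-u^h_{xx} + F'(u^h)]$ splits into two pieces. Against $-u^h_{xx}$ this is exactly the discrete analog of $\int (u^{n-2}u_x)_x u_{xx}\,dx$, whose nonnegativity (up to lower-order controllable remainders producing the $S$-weighted terms $(\ui)^{n-4}|\partial_h^+ u^h|^4$ and $(\ui)^{n-2}|(\Delta_h u^h)_i|^2$) is precisely the content of Section~\ref{sec:CalcPorMed}; I would invoke those positivity results, which rely on the oscillation bound \eqref{eq:oscillation} from Lemma~\ref{lem:lowerbound}. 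Against $F'(u^h) = -c_F p\,(u^h)^{-p-1}$, i.e. against $-u^{-\alpha}$ with $\alpha = p+1 > 0$, the discretization again yields nonnegativity (again by Section~\ref{sec:CalcPorMed}), producing the $S$-weighted term $(\ui)^{n-p-4}|\partial_h^+ u^h|^2$. For the entropy part, testing the porous-medium terms against $g_h(u^h)$ is the discrete analog of testing $-(u^{n-2}u_x)_x$ against a primitive of $m_\sigma^{-1}$, which is sign-favorable and yields the $\kappa S$-weighted term $(\ui)^{-2}|\partial_h^+ u^h|^2$. The role of the constant $S$, chosen via (H5) with the oscillation constant $C_{osc}$, is exactly to dominate the It\^o-correction terms from the energy computation that are not themselves sign-definite; this is where the careful lower bound on $S$ enters.

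The remaining steps are more routine: after collecting signs, one has a pathwise inequality of the form $R(t\wedge T_h) + (\text{good dissipation terms}) \le R(0) + (\text{martingale}) + (\text{controllable remainders from the It\^o corrections})$. I would raise this to the power $\bar p$, take suprema in $t$, take expectations, and use the Burkholder--Davis--Gundy inequality on the stochastic integral; the BDG bound produces a term of the form $\expect[(\int M_h(u^h)|p^h_x|^2\cdots)^{\bar p/2}]$ times lower-order factors, which is absorbed into the left-hand dissipation by Young's inequality. The residual zero-order-in-$R$ It\^o terms (of the type $\int \lambda_\ell^2 (\cdots)$) are estimated, using the auxiliary Lemmas in the appendix (\ref{lem:appenHilfeSiebenA}, \ref{lem:poroussquare}, \ref{lem:nineAnew}) together with the oscillation bound, by a constant times $\int_0^{t\wedge T_h}(1 + R(s))\,ds$ plus a small multiple of the coercive terms; a Gronwall argument in the variable $\expect[\sup_{[0,t]} R(\cdot\wedge T_h)^{\bar p}]$ then closes the estimate, with the final constant depending on $\bar p$, $(\lambda_\ell)_\ell$, $T_{max}$, and — through (H3) and the energy of $\Ih[u_0]$ — on the initial data. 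I expect the main obstacle to be the interplay in the second step: simultaneously (i) extracting the correct sign from the discrete porous-medium operators when tested against $p^h$ (not against a single clean test function, but against the sum $-u^h_{xx} + F'(u^h)$), and (ii) verifying that the specific lower bound on $S$ from (H5) is exactly what is needed to absorb the non-sign-definite It\^o-correction terms, which requires tracking the oscillation-dependent constants through the appendix estimates with enough precision.
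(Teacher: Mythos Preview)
Your proposal is correct and follows essentially the same approach as the paper: apply It\^o's formula to $R=E_h+\kappa S_h$, use the entropy-consistency identity and the sign results of Section~\ref{sec:CalcPorMed} (Lemmas~\ref{lem:discgradlapl}--\ref{lem:discLapPow}) to extract the coercive terms from the porous-medium operators tested against $\Delta_h u^h$, $\Ih[F'(u^h)]$, and $\Ih[g_h(u^h)]$, control the It\^o-correction terms via the appendix Lemmas (in particular Lemma~\ref{lem:nineAnew}, which is where the precise lower bound on $S$ from (H5) is needed to absorb the discrete analogues of $\int u^{n-2}u_{xx}^2$ and $\int u^{n-4}u_x^4$), then take $\bar p$-th powers, suprema, expectations, apply BDG to the stochastic integrals, and close with Gronwall. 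Your anticipated obstacle---tracking the $C_{osc}$-dependent constants through Lemma~\ref{lem:nineAnew} precisely enough to verify that (H5) suffices---is indeed the technical heart of the argument.
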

\begin{proof}
Using the notation 
\begin{equation}
\label{eq:integral6}
\varphi(h,s):=\frac{1}{h}\sum_{i=1}^\Lh L_i(s) e_i, 
\end{equation}
\begin{equation}
\label{eq:integral601}
\psi(h,s):=\frac{1}{h}\sum_{i=1}^{L_h}\Li_i(s)e_i, 
\end{equation}
and
\begin{equation}
\label{eq:integral7}
\Phi(h,s)(w):=\sum_{i=1}^\Lh Z_i(w)e_i,
\end{equation}
we may rewrite \eqref{FaedoGalerkinScheme} as 
\begin{equation}
\label{eq:integral8}
du^h=\varphi(h,s)ds + \psi(h,s)ds+ \Phi(h,s)(dW^h_Q)
\end{equation}
with 
\begin{equation}
\label{eq:integral8a}
W_Q^h:=\sum_{|\ell|\le N_h} \lambda_\ell g_\ell \beta_\ell.
\end{equation}
By It\^o's formula, we deduce
\begin{align}
\nonumber
R(t\wedge T_h)
&= R(0)+ \kappa \int_0^{t\wedge T_h}  \frac{1}{h}\Big(g_h(u^h),\sum_{i=1}^\Lh L_i(s)e_i \Big)_h \,ds
\\&\nonumber
\quad+ \kappa \sum_{|\ell|\le N_h} \int_0^{t\wedge T_h}  \Big(g_h(u^h),\sum_{i=1}^\Lh Z_i(\lambda_\ell g_\ell)e_i \Big)_h \,d\beta_\ell
\\&\nonumber
\quad+  \int_0^{t\wedge T_h} \frac{1}{h}\Big(F'(u^h),\sum_{i=1}^\Lh L_i(s)e_i \Big)_h \,ds
\\&\nonumber
\quad+  \sum_{|\ell|\le N_h} \int_0^{t\wedge T_h} \Big(F'(u^h),\sum_{i=1}^\Lh Z_i(\lambda_\ell g_\ell)e_i \Big)_h \,d\beta_\ell
\\&\nonumber
\quad+ \int_0^{t\wedge T_h} \frac{1}{h} \int_\domain u^h_x \sum_{i=1}^\Lh L_i(s) (e_i)_x \,dx \,ds
\\&\nonumber
\quad+ \sum_{|\ell|\le N_h} \int_0^{t\wedge T_h} \int_\domain u^h_x \sum_{i=1}^\Lh Z_i(\lambda_\ell g_\ell) (e_i)_x \,dx \,d\beta_\ell
\\&\nonumber
\quad+\frac{\kappa}{2} \sum_{|\ell|\le N_h} \int_0^{t\wedge T_h}  \bigg(\frac{1}{m_\sigma(u^h)},\Big(\sum_{i=1}^\Lh Z_i(\lambda_\ell g_\ell)e_i\Big)^2\bigg)_h \,ds
\\&\nonumber
\quad+\frac{1}{2} \sum_{|\ell|\le N_h} \int_0^{t\wedge T_h} 
\bigg(F''(u^h),\Big(\sum_{i=1}^\Lh Z_i(\lambda_\ell g_\ell)e_i\Big)^2\bigg)_h \,ds
\\&\nonumber
\quad+\frac{1}{2} \sum_{|\ell|\le N_h} \int_0^{t\wedge T_h} 
\int_\domain \Big|\sum_{i=1}^\Lh Z_i(\lambda_\ell g_\ell) (e_i)_x\Big|^2 \,dx \,ds
\\&\nonumber
\quad+ \kappa\tThInt\frac1h\bigg(g_h(u^h), \sum_{i=1}^{L_h}\Li_i(s)e_i\bigg)_h ds\\ &\nonumber
\quad+\tThInt \frac1h\bigg(F'(u^h), \sum_{i=1}^{L_h}\Li_i(s) e_i\bigg)_h ds
\\&\nonumber
\quad + \tThInt\frac1h\int_\ort u_x ^h\sum_{i=1}^{L_h}\Li_i(s) (e_i)_x \, dx \, ds
\\ \label{eq:prop2}
&\quad =:R(0)+I_1+\ldots+I_{12}.
\end{align}

In the sequel, we will frequently take advantage of the identities
\begin{align}
\label{eq:prop3}
\Big(w,\sum_{i=1}^\Lh Z_i(g_\ell) e_i\Big)_h
=\chi_{T_h}\int_\domain (\sqmob(u^h) g_\ell)_x \mathcal{I}_h[w] \,dx\, ,
\end{align}
\begin{equation}
\label{eq:prop333}
\frac{1}{h}\Big(w,\sum_{i=1}^{L_h}\Li_i(s)e_i\Big)_h
=
-\chi_{T_h}(s) (C_{Strat}+S)\left\{\porad(u^h, \Ih [w])+\porb(u^h, \Ih [w])\right\} \, ,
\end{equation}
and
\begin{align}
\label{eq:prop4}
\frac{1}{h}\Big(w,\sum_{i=1}^\Lh L_i(s) e_i\Big)_h
=
-\chi_{T_h}(s)  \int_\domain M_h(u^h) p^h_x \mathcal{I}_h[w]_x \,dx
\end{align}
for all $w\in C^0_{per}(\ort)$, which can easily be deduced from \eqref{eq:integral4}, \eqref{eq:integral5} together with the fact that the $h^{-1/2}e_i$ form an orthonormal basis with respect to the scalar product $(\cdot,\cdot)_h$. 

For $I_{12}$, we compute
\begin{equation}
\label{eq:neupme231}
\begin{split}
\frac1h&\int_\ort u_x^h\sum_{i=1}^{L_h} \Li_i(s)(e_i)_x \, dx 
\\ &= 
 \chi_{T_h}(s) (C_{Strat}+S)
 \left\{\porad(u^h(s), \Delta_h u^h(s)) 
 + \porb(u^h(s), \Delta_h u^h(s)) \right\}.
\end{split}
\end{equation}
Indeed, observing
$$
\sum_{j=1}^{L_h} \Li_j(s) (e_j)_x\bigg|_{(ih, (i+1)h)}=\tfrac{\Li_{i+1}(s)-\Li_i(s)}{h}, 
$$
we find
$$\int_\ort u_x^h\sum_{j=1}^{L_h}\Li_j(s) \left(e_j\right)_xdx=\discint \tfrac{u_{i+1}^h-u_i^h}{h} \, \tfrac{\Li_{i+1}(s)-\Li_i(s)}{h}$$
which gives the result using discrete integration by parts and the definition of $\Li_i$.

Therefore, 
$$ I_{12}=  (C_{Strat}+S)\tThInt \left\{\porad(u^h(s), \Delta_h u^h(s))+\porb(u^h(s), \Delta_h u^h(s))\right\}ds.$$
Using Lemmas~\ref{lem:discgradlapl} and \ref{lem:disclaplap},  we find
\begin{align}
	\label{eq:discPmeSign}
	\begin{split}
		I_{12} 
		\leq 
		& - (C_{Strat}+S)\tfrac{|(n-2)(n-3)|}{3} \tfrac{(1+C_{osc})^{n-4}}{2} 
		\\
		&\qquad \cdot \tThInt \discint  (\ui)^{n-4} \left( \left|\tfrac{\uip -\ui}{h} \right|^{2} \left|\tfrac{\ui -\uim}{h} \right|^{2} + \left|\tfrac{\uip -\ui}{h} \right|^{4} \right) \, ds
		\\
		& - (C_{Strat}+S) \tThInt \discint (\ui)^{n-2}\left|(\Delta_h u^h)_i\right|^2 \, ds
	\end{split}
\end{align}
which  both are terms with a good sign.

Using \eqref{eq:prop333}, we may rewrite 
\begin{equation} I_{11}=- (C_{Strat}+S) \tThInt \left\{\porad(u^h(s), \Ih[F'(u^h(s))])+\porb(u^h(s), \Ih[F'(u^h(s))])\right\}ds.
\end{equation}
By Lemma~\ref{lem:discsingular}, we infer 
\begin{equation}
\label{eq:est1I12}
-\porad(u^h(s), \Ih[F'(u^h(s))])\leq -c_p(n-2)\discint(\ui)^{n-p-4}\left|\tfrac{\uip-\ui}{h}\right|^2(s).
\end{equation}
By Lemma~\ref{lem:discLapPow}, we get
$$
-\porb(u^h(s), -\Ih[(u^h(s))^{-p-1}])\leq -C\discint (\ui)^{n-p-4}\left|\tfrac{\uip-\ui}{h}\right|^2(s).$$
Hence, 
\begin{equation}
\label{eq:est2I12}
I_{11}\leq -C \tThInt \discint (\ui)^{n-p-4}\left|\tfrac{\uip-\ui}{h}\right|^2(s)\,ds, 
\end{equation}
which again is a good term.
In a similar spirit, we identify 
$$ I_{10}=- (C_{Strat}+S) \tThInt \left\{\porad(u^h, \Ih [g_h(u^h)])+\porb(u^h, \Ih [g_h(u^h)])\right\}(s) \,ds, $$
and we estimate using Lemma~\ref{lem:lemma68}
\begin{align}
\label{eq:est1I11}
\begin{split}
& -\porad(u^h(s), \Ih [g_h(u^h(s))])\\
& \qquad\qquad\leq c_{ent}\bigg(-\discint (\ui)^{-2}\left|\tfrac{\uip-\ui}{h}\right|^2+\varepsilon \discint (\ui)^{n-4}\left|\tfrac{\uip-\ui}{h}\right|^4 \\
&\qquad\qquad\qquad + h^2\sum_{i=1}^{L_h}(\ui)^{n-4}\left|\tfrac{\uip-\ui}{h}\right|^4+h^2\sum_{i=1}^{L_h}(\ui)^{n-4}\left|\tfrac{\uip-\ui}{h}\right|^2+C_{\varepsilon}\bigg)
\end{split}
\end{align}
for arbitrary $\varepsilon>0.$ Note that the first term on the right-hand side has a good sign while the other ones are either constant or may be -- for $\varepsilon$ and $h$ sufficiently small -- absorbed. 
Combining Lemma~\ref{lem:discLapPow} and Remark~\ref{lem:remark}, we find that 
$$ - \tThInt\porb(u^h(s), \Ih [g_h(u^h(s))])\, ds$$
produces terms which can be either absorbed or are of Gronwall-type.

We continue with estimates for the terms $I_1$ to $I_{9}.$ By \eqref{eq:prop4}, we infer
\begin{align*}
\frac{1}{h} \Big(g_h(u^h), \sum_{i=1}^\Lh L_i(s)e_i\Big)_h
=-\int_\domain M_h(u^h) p^h_x \mathcal{I}_h[g_h(u^h)]_x \,dx.
\end{align*}
Noting $I_{1}$ to be formally identical with the term $I_{1}$ in \cite{FischerGruen2018}, Proposition~4.4 for $\bar{p}=1$, we find
\begin{align}
	  \label{eq:prop5}
	  \begin{split}
	I_1
	&\leq - \kappa \int_0^{t\wedge T_h} ||\Delta_h u^h||_h^2 \,ds
	\\& ~~~
	-\kappa c_1 \int_0^{t\wedge T_h}  \sum_{i=1}^\Lh \dashint_{u^h_i}^{u^h_{i+1}} |s|^{-p-2}\,ds \int_{ih}^{(i+1)h} |u^h_x|^2 \,dx \,ds
	=:-I_{1a}+I_{1b}
	  \end{split}
\end{align}
Obviously, $I_{1a}$ and $I_{1b}$ have a good sign and may be used for absorption purposes.
Similarly, comparing with $I_{3}$ and $I_{5}$ in \cite{FischerGruen2018}, we obtain
\begin{align}
	\label{eq:prop6}
	I_3+I_5
	=-\int_0^{t\wedge T_h} \int_\domain M_h(u^h) |p^h_x|^2 \,dx \,ds.
\end{align}
\newline
Ad $I_7$: By Lemma~\ref{lem:aux1}, we find
\begin{align*}
	&\frac{\kappa}{2} \sum_{|\ell|\le N_h} \int_0^{t\wedge T_h}  \Big(\frac{1}{m_\sigma(u^h)},\Big(\sum_{i=1}^\Lh Z_i(\lambda_\ell g_\ell) e_i\Big)^2\Big)_h \,ds
	\\&
	\leq \kappa \sum_{|\ell|\le N_h} \lambda_\ell^2
	\int_0^{t\wedge T_h}
	\sum_{i=1}^\Lh \frac{1}{m_\sigma(u^h_i)}
	\int_{(i-1)h}^{ih} \Big|\frac{\sqmob(u^h)(x+h)-\sqmob(u^h)(x)}{h}\Big|^2 |g_\ell(x)|^2 \,dx\, ds\\
	& \quad + \kappa \sum_{|\ell|\le N_h} \lambda_\ell^2
	\int_0^{t\wedge T_h} \sum_{i=1}^\Lh \frac{1}{m_\sigma(u^h_i)} \int_{(i-1)h}^{ih} \left(\sqmob(u^h)(x+h)\right)^2 \Big|\frac{g_\ell(x+h)-g_\ell(x)}{h}\Big|^2 \,dx
	 \,ds
	\\
	&=: I_{7a} + I_{7b}.
\end{align*}
To discuss $I_{7a}$, we note that  convexity implies 
\begin{align*}
 & \int_{(i-1)h}^{ih} \Big|\frac{\sqmob(u^h)(x+h)-\sqmob(u^h)(x)}{h}\Big|^2 \,dx
 \\
 &\leq \frac1h\int_0^h\frac{x}{h}\left|\sqmob(u^h)(ih)-\sqmob(u^h)((i-1)h)\right|^2
\\ \notag 
&\qquad +\frac{h-x}{h}\left| \sqmob(u^h)((i+1)h))-\sqmob(u^h)(ih)\right|^2dx \\
  &=  \frac{h}{2}\Big(\Big|\frac{\sqmob(u^h)(ih)-\sqmob(u^h)((i-1)h)}{h}\Big|^2
+\Big|\frac{\sqmob(u^h)((i+1)h)-\sqmob(u^h)(ih)}{h}\Big|^2 \Big).
\end{align*}
Hence,
\begin{align}
  \label{eq:neusiebenA}
  \begin{split}
I_{7a}& \leq \kappa C \sum_{\ell \in \Z} \lambda_\ell^2
\int_0^{t\wedge T_h}
  h \sum_{i=1}^\Lh \frac{1}{2}\Big(\frac{1}{m_\sigma(u^h_i)}+\frac{1}{m_\sigma(u^h_{i-1})}\Big) \Big|\frac{\sqmob(u^h_i)-\sqmob(u^h_{i-1})}{h}\Big|^2 \,ds\\
&\qquad \leq C \delta  \kappa \sum_{\ell \in \Z} \lambda_\ell^2
\int_0^{t\wedge T_h} 
\sum_{i=1}^\Lh \dashint_{u^h_i}^{u^h_{i+1}} |\tau|^{-p-2} \,d\tau
\int_{ih}^{(i+1)h} |u^h_x|^2 \,dx
\,ds
\\
& 
\qquad\quad  +C C_\delta \kappa \sum_{\ell \in \Z} \ell^2 \lambda_\ell^2
\mathbb{E}\int_0^{t\wedge T_h} R(s)\, ds \, ,
  \end{split}
\end{align}
where we used Lemma~\ref{lem:appenHilfeSiebenA} in  the last step.

Using the estimate $\Big|\frac{g_\ell(x+h)-g_\ell(x)}{h}\Big|^2 \leq C\ell^2$ together with the Oscillation Lemma~\ref{lem:lowerbound} and the definition of $m_\sigma(\cdot)$, $I_{7b}$  is readily estimated by
\begin{align}
	\label{eq:neuSiebenA2}
	\begin{split}
	I_{7b}& \leq \kappa C \sum_{\ell \in \Z} \lambda_\ell^2\ell^2
	\int_0^{t\wedge T_h} h\sum_{i=1}^\Lh\frac{(\ui)^n}{m_\sigma(\ui)}\, ds\\
	& \leq  \kappa C \sum_{\ell \in \Z} \lambda_\ell^2\ell^2 \int_0^{t\wedge T_h} R(s)\, ds +1.
\end{split}
\end{align}
Hence, $I_7$ can be estimated against terms which can either be absorbed or which are of Gronwall-type.

Ad $I_8$: In the same spirit, using in particular inequality \eqref{eq:oscillation}, we have 
\begin{align}
\nonumber
&\frac{1}{2} \sum_{|\ell|\le N_h}
\int_0^{t\wedge T_h} \Big(F''(u^h),\Big(\sum_{i=1}^\Lh Z_i(\lambda_\ell g_\ell) e_i\Big)^2\Big)_h \,ds
\\
\nonumber
&\leq \delta \sum_{\ell \in \Z} \lambda_\ell^2
\int_0^{t\wedge T_h} \sum_{i=1}^\Lh \dashint_{u^h_i}^{u^h_{i+1}} \mathcal |\tau|^{-p-2} \,d\tau \int_{ih}^{(i+1)h} |u^h_x|^2 \,dx \,ds
\\&~~~
\label{eq:prop8}
+C_\delta \sum_{\ell \in \Z} \ell^2 \lambda_\ell^2 \int_0^{t\wedge T_h} R(s) \,ds =: I_{8a} + I_{8b}
\end{align}
with a constant $C_\delta(\bar p)$ depending on $\delta>0.$
For sufficiently large $\kappa$, $I_{8a}$ can be absorbed by $\mathbb{E}[I_{1a}]$, while $I_{8b}$ will become a Gronwall term.

Ad $I_9$: Using periodicity, the special form of the stiffness matrix, and \eqref{eq:integral5}, we obtain 
\begin{align}
	\nonumber
	&\frac{1}{2} \sum_{|\ell|\le N_h}
	\int_0^{t\wedge T_h} \int_\domain \Big(\sum_{i=1}^\Lh Z_i(\lambda_\ell g_\ell) (e_i)_x \Big)^2 \,dx \,ds
	\\& \nonumber
	=
	\frac{1}{2h}
	\sum_{|\ell|\le N_h} \lambda_\ell^2 \int_0^{t\wedge T_h}  \sum_{i=1}^\Lh \Big(\int_\domain \partial_h^- ((\sqmob(u^h) g_\ell)_x)e_{i+1} \,dx \Big)^2
	\,ds
	\\& 
	\leq
	\frac{1}{2h} \sum_{\ell \in \mathbb{Z}} \lambda_{\ell}^{2} \int_{0}^{t\wedge T_{h}} \sum_{i=1}^{L_{h}} \left( \int_\ort \partial_h^-((\sqmob(u^h)g_\ell)_x) e_{i+1} \dx \right)^2 \ds \, .
\end{align}
Applying Lemma~\ref{lem:nineAnew}, we infer for arbitrary positive numbers $\varepsilon$ and $\eta$
\begin{align}\notag
	&
	\frac{1}{2h} \sum_{\ell \in \mathbb{Z}} \lambda_{\ell}^{2} \int_{0}^{t\wedge T_{h}} \sum_{i=1}^{L_{h}} \left( \int_\ort \partial_h^-((\sqmob(u^h)g_\ell)_x) e_{i+1} \dx \right)^2 \ds
	\\ \notag
	&\le 
	(1+\eta) C_{Strat} \left(1 + \varepsilon \tfrac{n-2}{2} \right)
	C_{osc}^{n-2} \int_{0}^{t\wedge T_{h}} h \sum_{i=1}^{L_{h}}  (\ui)^{n-2}  
	|(\Delta_{h} \uh )_{i}|^{2} \, ds
	\\ \notag
	&\quad+(1+\eta) C_{Strat} \left(\tfrac{(n-2)^{2}}{4} + \tfrac{n-2}{2\varepsilon } \right)
	C_{osc}^{4-n} 
	\\ \notag
	&\quad \qquad \cdot \int_{0}^{t\wedge T_{h}} \discint (\ui)^{n-4}   \bigg\{ \left|\tfrac{u^{h}_{i+1}-u^{h}_{i}}{h}\right|^{4} +2 \left| \tfrac{u^{h}_{i+1}-u^{h}_{i}}{h} \right|^{2}  \left| \tfrac{u^{h}_{i}-u^{h}_{i-1}}{h} \right|^{2} + \left|\tfrac{u^{h}_{i}-u^{h}_{i-1}}{h}\right|^{4} \bigg\} \ds 
	\\  \notag
	&\quad+
	\tilde{C}_{\eta,1}
	\int_{0}^{t\wedge T_{h}}  \discint (u^{h}_{i})^{n-2} \left| \tfrac{u^{h}_{i+1} - u^{h}_{i}}{h} \right|^{2}  \, ds
	+
	\tilde{C}_{\eta,2}
	\int_{0}^{t\wedge T_{h}} 	\int_\ort (\uh)^{n}\, dx \, ds 
	\\
	&=: I_{9a} +\dots + I_{9d}\, .
\end{align} 
The terms $I_{9c}$ and $I_{9d} $ can be estimated, using Lemma~\ref{lem:poroussquare}
and Lemma~\ref{lem:uHochN},
and subsequently be absorbed or serve as Gronwall terms. To estimate $I_{9b}$ we note 
\begin{align}\notag \label{est:Absorber}
	&(1+\eta) C_{Strat}\left(\tfrac{(n-2)^{2}}{4} + \tfrac{n-2}{2\varepsilon } \right)
	C_{osc}^{4-n} 
	\\ \notag
	&\qquad \cdot\int_{0}^{t\wedge T_{h}} \discint (\ui)^{n-4}  \bigg\{ \left|\tfrac{u^{h}_{i+1}-u^{h}_{i}}{h}\right|^{4} +2 \left| \tfrac{u^{h}_{i+1}-u^{h}_{i}}{h} \right|^{2}  \left| \tfrac{u^{h}_{i}-u^{h}_{i-1}}{h} \right|^{2} + \left|\tfrac{u^{h}_{i}-u^{h}_{i-1}}{h}\right|^{4} \bigg\} \, ds  \, 
	\\ \notag
	&\le
	2(1+\eta) C_{Strat}\left(\tfrac{(n-2)^{2}}{4} + \tfrac{n-2}{2\varepsilon } \right)
	C_{osc}^{4-n}  \int_{0}^{t\wedge T_{h}} \discint (\ui)^{n-4} 
	\left| \tfrac{u^{h}_{i+1}-u^{h}_{i}}{h} \right|^{2}  \left| \tfrac{u^{h}_{i}-u^{h}_{i-1}}{h} \right|^{2}  \, ds  
	\\ 
	&\quad+(1+\eta) C_{Strat}\left(\tfrac{(n-2)^{2}}{4} + \tfrac{n-2}{2\varepsilon } \right)
	C_{osc}^{4-n} (1+C_{osc}^{n-4})\int_{0}^{t\wedge T_{h}}  \discint (\ui)^{n-4}  \left|\tfrac{u^{h}_{i+1}-u^{h}_{i}}{h}\right|^{4} \, ds \, ,
\end{align}
where we used 
\begin{align}\notag
	&\discint (\ui)^{n-4} \left(\left|\tfrac{u^{h}_{i+1}-u^{h}_{i}}{h}\right|^{4} + \left|\tfrac{u^{h}_{i}-u^{h}_{i-1}}{h}\right|^{4} \right)
	\\ \notag
	&\le \discint (\ui)^{n-4}\left|\tfrac{u^{h}_{i+1}-u^{h}_{i}}{h}\right|^{4} 
	+
	\discint C_{osc}^{n-4}(\uim)^{n-4} \discint \left|\tfrac{u^{h}_{i}-u^{h}_{i-1}}{h}\right|^{4}
	\\ 
	&= (1+ C_{osc}^{n-4})\discint(\ui)^{n-4}
	\left|\tfrac{u^{h}_{i+1}-u^{h}_{i}}{h}\right|^{4} \, .
\end{align}
According to \eqref{eq:discPmeSign}, we see that the terms in \eqref{est:Absorber} and $I_{9a}$ have to be absorbed in  
\begin{align}\notag
	&(C_{Strat}+S) \tfrac{(n-2)(3-n)}{3} \tfrac{(1+C_{osc})^{n-4}}{2} h \sum_{i=1}^{L_{h}}  (\ui)^{n-4} \left( \left|\tfrac{\uip -\ui}{h} \right|^{2} \left|\tfrac{\ui -\uim}{h} \right|^{2} + \left|\tfrac{\uip -\ui}{h} \right|^{4} \right)
	\\ 
	&+
	(C_{Strat}+S) \tThInt \discint (\ui)^{n-2}\left|(\Delta_h u^h)_i\right|^2 \, ds \, .
\end{align}
As $2> 1+C_{osc}^{n-4}$  
and $\eta$ may be chosen arbitrarily small, for absorption the conditions
\begin{align}\label{absorbcond1}
	C_{Strat}C_{osc}^{n-2} \left(1 + \varepsilon \tfrac{n-2}{2} \right) < (C_{Strat}+S) \,
	\Leftrightarrow \,
	C_{Strat} \left( C_{osc}^{n-2}  \left(1 + \varepsilon \tfrac{n-2}{2} \right) -1 \right) < S
\end{align} 
and
\begin{align}\label{absorbcond2} \notag
	&2C_{Strat}\left(\tfrac{(n-2)^{2}}{4} + \tfrac{n-2}{2\varepsilon } \right)
	C_{osc}^{4-n} 
	< (C_{Strat}+S)	
	\tfrac{(n-2)(3-n)}{3} \tfrac{(1+C_{osc})^{n-4}}{2}
	\\
	\Leftrightarrow \, 
	&C_{Strat}\left(\left(\tfrac{3(n-2)}{4(3-n)} + \tfrac{3}{2\varepsilon (3-n)} \right)
	\frac{4C_{osc}^{4-n} }{(1+C_{osc})^{n-4}} - 1\right)
	< S	
\end{align}
have to be met. 
Therefore, we need $S$ to satisfy
\begin{align}\label{AbsorbMax}
	C_{Strat}^{-1}S > \max \Big\{C_{osc}^{n-2} \left(1 + \varepsilon \tfrac{n-2}{2} \right) -1 , \left(\tfrac{3(n-2)}{4(3-n)} + \tfrac{3}{2\varepsilon (3-n)} \right)
	\frac{4C_{osc}^{4-n}}{(1+C_{osc})^{n-4}} - 1 \Big\} \,.
\end{align}
We note that $C_{osc}^{n-2} \left(1 + \varepsilon \tfrac{n-2}{2} \right)  $ and $\left(\tfrac{3(n-2)}{4(3-n)} + \tfrac{3}{2\varepsilon (3-n)} \right)
\frac{4C_{osc}^{4-n}}{(1+C_{osc})^{n-4}}$ are continuous, the former being monotone increasing  and the latter monotone decreasing as functions of $\varepsilon > 0$.
Therefore, setting $K_{1} := \frac{4 C_{osc}^{4-n} }{(1+C_{osc})^{n-4}}$ and $K_{2} := C_{osc}^{n-2}$,  
we find the minimum to be attained for
$
	\varepsilon = \frac{3}{2} \frac{K_{1}}{K_{2}} \frac{1}{3-n} \, .
$
Thus, we require
\begin{align}
	S > C_{Strat}  K_{1} \frac{3}{4}  \frac{n-2}{3-n} + C_{Strat} \left(K_{2}-1\right) \, 
\end{align}
which is our assumption on $S$, cf. (H5).

Next we take $\bar{p}$-th power, suprema, and expectation and arrive for arbitrary $t'\in [0,T_{max}]$ at the inequality
\begin{align}\label{eq:prop9}
\nonumber
&\mathbb{E}\bigg[\sup_{t\in [0,t' \wedge T_{h}]} R(t\wedge T_h)^{\bar p} \bigg]
\\& \nonumber
+ C_1
\mathbb{E}\Bigg[\bigg(\int_0^{t'\wedge T_h}  \int_\domain M_h(u^h(s)) | p_x^h(s)|^2\,dx \,ds \bigg)^{\bar{p}} \Bigg]
\\& \nonumber
+C_1 \kappa
\mathbb{E}\Bigg[\bigg(\int_0^{t'\wedge T_h}  ||\Delta_hu^h]||_h^2 \,ds\bigg)^{\bar{p}} \Bigg]
\\& \nonumber
+C_1 \kappa
\mathbb{E}\Bigg[\bigg(\int_0^{t'\wedge T_h}  \sum_{i=1}^\Lh \dashint_{u^h_{i-1}}^{u^h_i} |\tau|^{-p-2} \,d\tau \int_{(i-1)h}^{ih} |u^h_x(s)|^2 \,dx \,ds\bigg)^{\bar{p}} \Bigg]
\\& \nonumber
+C_1(C_{Strat}+S)\expect\Bigg[\bigg(\int_0^{t'\wedge T_h} \discint(\ui)^{n-4}\left|\tfrac{\uip-\ui}{h}\right|^4(s)\,ds\bigg)^{\bar{p}}\Bigg]
\\&\nonumber
+C_1(C_{Strat}+S)\expect\Bigg[\bigg(\int_0^{t'\wedge T_h} \discint(\ui)^{n-2}\left|(\Delta_h u^h)_i\right|^2(s)\,ds\bigg)^{\bar{p}}\Bigg]
\\&\nonumber
+C_1(C_{Strat}+S)\expect\Bigg[\bigg(\int_0^{t'\wedge T_h} \discint (\ui)^{n-p-4}\left|\tfrac{\uip-\ui}{h}\right|^2(s)\,ds\bigg)^{\bar{p}}\Bigg]
\\&\nonumber
+C_1\kappa(C_{Strat}+S)\expect\Bigg[\bigg(\int_0^{t'\wedge T_h} \discint (\ui)^{-2}\left|\tfrac{\uip-\ui}{h}\right|^2(s)\,ds\bigg)^{\bar{p}} \Bigg]
\\ \notag
&\le
\bar{C}(\bar{p},(\lambda_{\ell})_{\ell \in \Z}, u_{0}, T_{max}) \Bigg( 1 + \mathbb{E}\Bigg[\int_0^{t'\wedge T_h} R(s)^{\bar{p}}\,ds \Bigg] \Bigg)
\\& \qquad \qquad \qquad \qquad \quad
+
\mathbb{E}\bigg[\sup_{t\in [0,t'\wedge T_h]} (I_{4} + I_{6})^{\bar{p}} \bigg]
+
\mathbb{E}\bigg[\sup_{t\in [0,t'\wedge T_h]}I_{2}^{\bar{p}} \bigg] \, .
\end{align}
To establish \eqref{eq:proposition1}, we only have to estimate the expected values of the suprema with respect to time of the absolute $\bar{p}$-th moments of the stochastic integrals, i.e. the terms $I_2$, $I_4$, and $I_6$.
We note that $\expect[\sup_{s\in [0,T_h]} R(s)^{\bar p}]$ is finite due to the cut-off mechanism applied.

We begin with $I_4$ and $I_6$. 
Adapting the argumentation for $I_4+I_6$ of \cite{FischerGruen2018}, we get 
\begin{align}
I_4+I_6 
=
-\lambda_\ell \sum_{|\ell|\le N_h} \int_0^{t\wedge T_h} 
 \int_\domain \sqmob(u^h) p^h_x g_\ell \,dx \,d\beta_\ell.
\end{align}

By the Burkholder-Davis-Gundy inequality, we have for $\varepsilon > 0$ sufficiently small
\begin{align}
\nonumber
& \mathbb{E}\Bigg[\sup_{t\in [0,t'\wedge T_h]} \bigg|\int_0^{t\wedge T_h}   \sum_{|\ell|\le N_h} \lambda_\ell \int_\domain \sqmob(u^h) p^h_x g_\ell \,dx\,d\beta_\ell \bigg|^{\bar{p}}\Bigg]
\\&\le 
\varepsilon C_{BDG}^2 \sum_{\ell \in \Z} \lambda_\ell^2 \,
\mathbb{E}\Bigg[\bigg(\int_0^{t'\wedge T_h} 
 \int_\domain M_h(u^h) |p^h_x|^2 \,dx\,ds\bigg)^{\bar{p}}\Bigg] + C_{\varepsilon}\, .
\label{eq:prop16}
\end{align}
The first term can be absorbed by \eqref{eq:prop6}, the second one is independent of $h$.

Finally, let us discuss $I_2$.
Using \eqref{eq:prop3}, we get
\begin{align*}
I_2
&=
\kappa \sum_{|\ell|\le N_h} \int_0^{t\wedge T_h} 
\Big(g_h(u^h),\sum_{i=1}^\Lh Z_i(\lambda_\ell g_\ell) e_i \Big)_h \,d\beta_\ell
\\
&=
\kappa  \sum_{|\ell|\le N_h} \int_0^{t\wedge T_h} 
\lambda_\ell \int_\domain \big(\sqmob(u^h) g_\ell\big)_x \mathcal{I}_h[g_h(u^h)] \,dx \,d\beta_\ell.
\end{align*}
By Burkholder-Davis-Gundy, we get as above
\begin{align}
\nonumber
&
\kappa
\mathbb{E}\Bigg[ \sup_{t\in (0,t' \wedge T_h)} \bigg|\sum_{|\ell|\le N_h} \int_0^{t\wedge T_h} \Big(g_h(u^h),\sum_{i=1}^\Lh Z_i(\lambda_\ell g_\ell) e_i \Big)_h \,d\beta_\ell \bigg|^{\bar{p}} \Bigg]
\\ \nonumber
&\leq
\kappa C_{BDG} \mathbb{E}\Bigg[\bigg(
\int_0^{t'\wedge T_h} \sum_{\ell \in \Z} \lambda_\ell^2
\int_\domain |\sqmob(u^h)_x|^2|\mathcal{I}_h[g_h(u^h)]|^2  |g_\ell|^2 \,dx\,ds\bigg)^{\bar{p}/2}\Bigg]
\\ \label{eq:prop19}
&~~~
+\kappa C_{BDG} \mathbb{E}\Bigg[ \bigg(
\int_0^{t'\wedge T_h}  \sum_{\ell \in \Z} \lambda_\ell^2
\int_\domain |\sqmob(u^h) \mathcal{I}_h[g_h(u^h)]|^2 |(g_\ell)_x|^2 \,dx\,ds\bigg)^{\bar{p}/2}\Bigg]
&=:
I_{2a}+I_{2b}\, .
\end{align}

Now using the Oscillation Lemma~\ref{lem:lowerbound}, Lemma~\ref{lem:poroussquare}, and the calculus inequality $a^{-n}\leq\delta a^{-p-2}+C_\delta$ $ \forall a\in
\R^+ $, we estimate
\begin{align}
  \label{eq:newest101}
  \begin{split}
    |I_{2a}|\leq  \kappa C(\ort, \bar{p}, (\lambda_{\ell})_{\ell \in \Z})\Bigg\{ &\expect\Bigg[ \bigg( \delta\int_0^{t'\wedge T_h}\discint |\ui|^{n-4}\left|\tfrac{\uip-\ui}{h}\right|^4 ds \bigg)^{\bar{p}/2}\Bigg] \\
    + & \expect\Bigg[\bigg(\delta \int_0^{t'\wedge T_h} \discint \dashint_{\ui}^{\uip}|\tau|^{-p-2}d\tau\left|\tfrac{\uip-\ui}{h}\right|^2 ds \bigg)^{\bar{p}/2}\Bigg]\\
    + &C \expect \Bigg[\bigg(C_\delta \int_0^{t'\wedge T_h} R(s)\, ds \bigg)^{\bar{p}/2}\\
    &+ \bigg(C_\delta\bigg(\int_{0}^{t'\wedge T_{h}} \int_\ort u^h(s) \, dx \, ds\bigg)^{\tfrac{n+2}{4-n}}+C(T_{max})\bigg)^{\bar{p}/2}\Bigg]\Bigg\}\, .
  \end{split}
\end{align}
By Young's inequality terms in \eqref{eq:newest101} can either be absorbed, are independent of $h$, or will serve as Gronwall terms.
To control $I_{2b}$, we note the estimate
\begin{align}\notag
  \label{eq:neuest102}
    \int_\ort &\sqmob(u^h)^2|\Ih[g_h(u^h)]|^2 \left(g_\ell\right)_x^2 dx
    \\ \notag 
    &\leq  C\ell^2\discint \left((\ui)^n+ (\uim)^n\right)\left(1+(\ui)^{2-2n}+(\uip)^{2-2n}\right)\\
    & \leq C \ell^2\discint \left((\ui)^n+ (\ui)^{2-n}\right)
\end{align}
which comes due to the Oscillation Lemma~\ref{lem:lowerbound}. These terms can be absorbed similarly as the terms in $I_9$.

After absorption and using (H3) and the definition of the stopping time $T_{h}$,
we are left to find a bound of the the term $\mathbb{E}\left[\int_{0}^{t'\wedge T_{h}} R(s)^{\bar{p}} \, ds\right]$. From \eqref{eq:prop9} we infer
\begin{align}\notag \label{eq:prop20}
	\mathbb{E}\Bigg[\sup_{t\le t'} R(t\wedge T_{h})^{\bar{p}}\Bigg]
	&\le C(T_{max},\bar{p},u_{0}, (\lambda_{\ell})_{\ell \in \Z}) + C \mathbb{E}\Bigg[\int_{0}^{t' \wedge T_{h}} R(s)^{\bar{p}}ds\Bigg]
	\\ 
	&\le C(T_{max},\bar{p},u_{0}, (\lambda_{\ell})_{\ell \in \Z}) + C \mathbb{E}\Bigg[\int_{0}^{t'} \sup_{t\le s}R(t\wedge T_{h})^{\bar{p}}ds\Bigg] \,.
\end{align}
Owing to the definition of the stopping times $T_{h}$, we may apply Fubini's theorem in the last line of \eqref{eq:prop20}. Then, a Gronwall argument with respect to the mapping $t' \mapsto \mathbb{E} \left[ \sup_{t\le t'} R(t\wedge T_{h})^{\bar{p}} \right]$ entails the desired result,  since $t'\in [0,T_{max}]$ was chosen arbitrarily and $\uh(s)$ is constant for $s\in[T_{h},T_{max}]$.
\end{proof}

\subsection{Uniform H\"older estimates}

Let us prove that appropriate H\"older norms (with respect to space and time) of solutions to our semidiscrete scheme are square-integrable with respect to the probability measure.  
We adapt Lemma~4.7 of \cite{FischerGruen2018} to the case $n\neq 2$.
\begin{lemma}
\label{lem:Hoelder1}
Let $h\in (0,1]$, $T_{max}>0$, $\bar p>1$, $\alpha\in (0,\frac{1}{2})$.
Assume $u^h, p^h$ to be a solution  to \eqref{eq:1} and \eqref{eq:2} with initial data satisfying (H3) and (H4).

If $2\alpha\bar p>1$ holds, the stochastic integral
\begin{align}
\label{DefinitionIh}
I_h(t):=\sum_{i=1}^\Lh\sum_{|\ell|\le N_h}
\frac{1}{h} \int_0^{t\wedge T_h} \int_\domain \Big(\sqmob(u^h) \lambda_\ell g_\ell\Big)_x e_i \,dx \,d\beta_\ell(s) e_i
\end{align}
is contained in $L^{2\bar p}(\Omega;C^{\beta}([0,T_{max}];L^2(\domain)))$ with $\beta:=\alpha-\frac{1}{2\bar p}$ and there exists a constant $C_1$ independent of $h>0$ such that
\begin{align}
\label{eq:hoelder1}
||I_h||_{L^{2\bar p}(\Omega;C^{\beta}([0,T_{max}];L^2(\domain)))} \leq C_1
\end{align}
holds.
\end{lemma}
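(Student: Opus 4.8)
The plan is to apply the Kolmogorov continuity criterion for Banach-space valued processes to the $L^2(\domain)$-valued stochastic integral $I_h$, following the strategy of Lemma~4.7 in \cite{FischerGruen2018} but tracking the $n$-dependence of the mobility root $\sqmob$. First I would note that $I_h(t) = \int_0^{t\wedge T_h} \Phi(h,s)\,dW_Q^h$ written in the notation of \eqref{eq:integral7}--\eqref{eq:integral8a}, so by the Burkholder--Davis--Gundy inequality applied in $L^2(\domain)$ we have, for $0\le t_1 < t_2 \le T_{max}$,
\begin{align*}
\mathbb{E}\left[\,\|I_h(t_2)-I_h(t_1)\|_{L^2(\domain)}^{2\bar p}\,\right]
\le C_{BDG}\,\mathbb{E}\left[\left(\int_{t_1\wedge T_h}^{t_2\wedge T_h} \sum_{\ell}\lambda_\ell^2 \big\|\,(\sqmob(u^h)g_\ell)_x\,\big\|_{L^2(\domain)}^2 \,ds\right)^{\bar p}\right].
\end{align*}
The integrand here is exactly the quantity whose expected time integral is controlled in the appendix (the term highlighted in the introduction involving $\partial_h^-((\sqmob(u^h)g_\ell)_x)$), and via the Oscillation Lemma~\ref{lem:lowerbound} together with the stopping-time bounds on $E_h[u^h]$ and on $\overline{u^h}$ it is bounded, uniformly in $h$, by a constant multiple of $1 + R(s)$ up to the factor $\sum_\ell \ell^2\lambda_\ell^2 < \infty$ coming from (H4).

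Next I would invoke Proposition~\ref{prop:integral1} with exponent $\bar p$ to get $\mathbb{E}[\sup_{s\le T_{max}} R(s\wedge T_h)^{\bar p}] \le \bar C < \infty$, uniformly in $h$. Combining this with the BDG estimate and Hölder's inequality in time gives
\begin{align*}
\mathbb{E}\left[\,\|I_h(t_2)-I_h(t_1)\|_{L^2(\domain)}^{2\bar p}\,\right] \le C_1 |t_2-t_1|^{\bar p},
\end{align*}
with $C_1$ independent of $h$. Since the exponent of $|t_2-t_1|$ is $\bar p = 2\bar p \cdot \tfrac12$, the Kolmogorov--Čentsov theorem for Banach-valued processes (see e.g. \cite{BreitFeireislHofmanova}) yields that $I_h$ has a modification in $C^{\beta}([0,T_{max}];L^2(\domain))$ for any $\beta < \tfrac12 - \tfrac{1}{2\bar p}$, hence in particular for $\beta = \alpha - \tfrac{1}{2\bar p}$ since $\alpha < \tfrac12$, and that the corresponding Hölder-norm estimate \eqref{eq:hoelder1} holds with an $h$-independent constant; the hypothesis $2\alpha\bar p > 1$ guarantees $\beta > 0$ so that the statement is non-vacuous. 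One also needs the estimate on $\mathbb{E}[\|I_h(0)\|^{2\bar p}] = 0$ trivially, or on $\mathbb{E}[\sup_t \|I_h(t)\|^{2\bar p}]$ via BDG again, to upgrade the increment bound to the full $C^\beta$-norm bound.

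The main obstacle is the uniform-in-$h$ control of $\sum_\ell \lambda_\ell^2 \|(\sqmob(u^h)g_\ell)_x\|_{L^2}^2$: because $\sqmob(u^h) = \Ih[|u^h|^{n/2}]$ with $n/2 \in (1,\tfrac32)$, its discrete derivative involves factors like $(u^h_i)^{n/2-1}\partial_h^\pm u^h$ which, through the discrete product rule and the discrete Leibniz-type estimates of the appendix (Lemma~\ref{lem:poroussquare}), must be bounded by the energy-entropy quantities appearing in Proposition~\ref{prop:integral1}. This is precisely where the Oscillation Lemma is indispensable — it allows neighboring values $u^h_i, u^h_{i\pm1}$ to be comparable up to the constant $C_{osc}$, so that powers of $u^h$ at different nodes can be exchanged at bounded cost — and where the lower bound $\min u^h \ge h^{2/(p+2)}$ keeps the negative powers finite (though we must check these $h$-powers do not enter the final constant, which they do not because the genuinely singular contributions are absorbed into $R(s)$ via the calculus inequality $a^{-n}\le\delta a^{-p-2}+C_\delta$). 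Once this term-by-term estimate is in place, the remainder is the routine BDG-plus-Kolmogorov argument outlined above.
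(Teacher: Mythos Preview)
Your approach is correct and essentially coincides with the paper's: the paper invokes Lemma~4.7 of \cite{FischerGruen2018}, which packages exactly the BDG-plus-Kolmogorov machinery you spell out, reducing the problem to a uniform $L^{2\bar p}(\Omega\times[0,T_{max}];L_2(L^2;L^2))$ bound on the integrand $\tilde Z$, and then controls the Hilbert--Schmidt norm via $\int|\sqmob(u^h)|^2$ and $\int|\sqmob(u^h)_x|^2$ using Lemmas~\ref{lem:uHochN}, \ref{lem:poroussquare}, \ref{lem:lowerbound} together with Proposition~\ref{prop:integral1}. Two minor imprecisions in your write-up: the appendix estimate you point to (on $\partial_h^-((\sqmob(u^h)g_\ell)_x)$) is a second-order quantity used for $I_9$ in the energy estimate, whereas here only the first-order term $(\sqmob(u^h)g_\ell)_x$ is needed, which is simpler; and the pointwise bound on the integrand is polynomial in $R(s)$ and $\overline{u^h}$ rather than linear, but this is harmless since Proposition~\ref{prop:integral1} controls arbitrary moments.
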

\begin{remark}
\label{rem:hoelbetter}
Choosing $\alpha$ and $\bar p$ sufficiently large, we infer the estimate 
\begin{equation}
\label{eq:hoelbetter}
\norm{I_h}{L^2(\Omega;C^{1/4}([0,T_{max}];L^2(\ort)))}\leq C_1
\end{equation}
with an $h$-independent positive constant $C_1$.
\end{remark}
\begin{proof}[Proof of Lemma~\ref{lem:Hoelder1}]
Similarly as in Lemma~4.7 of \cite{FischerGruen2018}, it is sufficient to show that
\begin{align*}
\tilde Z(s)(w)=\chi_{T_h}(s) \frac{1}{h} \sum_{i=1}^\Lh \int_\domain \Big(\sqmob(u^h) \sum_{|\ell|\le N_h} \lambda_\ell (g_\ell,w)_{L^2(\domain)} g_\ell \Big)_x e_i \,dx ~ e_i
\end{align*}
is progressively measurable and contained in
\begin{align*}
L^{2\bar p}(\Omega \times [0,T_{max}];L_2(L^2(\domain);L^2(\domain)))
\end{align*}
with a uniform bound in $h$. 

For the Hilbert-Schmidt norm of $\tilde Z$ we find
\begin{align*}
||\tilde Z(s)||_{L_2(L^2(\domain);L^2(\domain))}^2
&
\leq \frac{C}{h}\chi_{T_h}(s) \sum_{\ell \in \Z} \lambda_\ell^2
\sum_{i=1}^\Lh \int_{(i-1)h}^{ih} |(\sqmob(u^h) g_\ell)_x|^2 \,dx \int_\domain e_i^2  \,dx
\\
&
\leq C\chi_{T_h}(s)  \bigg(\sum_{\ell \in \Z} \ell^2 \lambda_\ell^2 \int_\domain |\sqmob(u^h)|^2 \,dx 
+\sum_{\ell \in \Z} \lambda_\ell^2 \int_\domain |\sqmob(u^h)_x|^2 \,dx
\bigg).
\end{align*}
Combining Lemma~\ref{lem:uHochN}, Lemma~\ref{lem:poroussquare}, and Lemma~\ref{lem:lowerbound} with Proposition~\ref{prop:integral1}  and (H4), we may prove 
\begin{align*}
\tilde Z(\cdot,\cdot)\in L^{2\bar p}(\Omega \times [0,T_{max}];L_2(L^2(\domain);L^2(\domain)))
\end{align*}
with a uniform bound in $h$. 
We have
\begin{align}\notag
	&||\tilde{Z}||^{2\bar{p}}_{L^{2\bar{p}}(\Omega \times [0,T_{max}];L_2(L^2(\domain);L^2(\domain)))}
	=
	\mathbb{E} \Bigg[\int_{0}^{T_{h}} ||\tilde{Z}(s)||^{2\bar{p}}_{L_2(L^2(\domain);L^2(\domain)))} \, ds \Bigg]
	\\ \notag
	&\le 
	C \mathbb{E} \Bigg[\int_{0}^{T_{h}}  \bigg( \int_\domain |\sqmob(u^h)|^2 \,dx \bigg)^{\bar{p}}\, ds \Bigg] 
	+C \mathbb{E} \Bigg[  \int_{0}^{T_{h}}\bigg(\int_\domain |\sqmob(u^h)_x|^2 \,dx
	\bigg)^{\bar{p}} \, ds \Bigg]
	\\ \notag
	&=: I_{1}+I_{2}\, .
\end{align} 

Ad $I_{1}$: By Lemmas~\ref{lem:uHochN} and \ref{lem:lowerbound} as well as the definition of the stopping times $T_{h}$ we have
\begin{align}\notag
	I_{1} &\le C \mathbb{E} \Bigg[\int_{0}^{T_{h}}  \bigg( \delta \int_\domain |\uh_{x}|^2 \,dx + C_{\delta} \left(\int \uh \, dx\right)^{\frac{n+2}{4-n}} +1 \bigg)^{\bar{p}}\, ds \Bigg] 
	\\ \notag 
	&\le C T_{max}\,
	\mathbb{E} \Bigg[ \delta^{\bar{p}} \bigg(  \sup_{t\in [0,T_{max}]} \int_\domain |\uh_{x}|^2 \,dx \bigg)^{\bar{p}} + C_{\delta}^{\bar{p}} \left(\sup_{t \in [0,T_{max}]}\intort \uh \, dx\right)^{\frac{\bar{p}(n+2)}{4-n}} +1  \Bigg] 
	\\ \notag
	&\le C(T_{max}, \bar{p}, n, u_{0})\,.
\end{align}

Ad $I_{2}$: We find
\begin{align*}
	I_{2} &\le C \mathbb{E} \Bigg[\int_{0}^{T_{h}} 
	\bigg( \discint (\ui)^{n-2} \bigg| \frac{\uip - \ui}{h}\bigg|^{2}	\bigg)^{\bar{p}} \, ds	
	\Bigg]
	\\
	&\le 
	C
	\mathbb{E} \Bigg[\int_{0}^{T_{h}} 
	\bigg( \sup_{x \in \ort} \uh(s)\bigg)^{\bar{p}(n-2)}
	\bigg( \discint  \bigg| \frac{\uip - \ui}{h}\bigg|^{2}	\bigg)^{\bar{p}} \, ds	
	\Bigg]
	\\
	&\le 
	C
	\mathbb{E} \Bigg[\int_{0}^{T_{h}} 
	\bigg( \intort \uh(s) \, dx + \left(\intort |\uh_{x}|^{2}(s) \, dx\right)^{\frac{1}{2}} \bigg)^{\bar{p}(n-2)}
	\left( \intort | \uh_{x} |^{2}(s) \dx	\right)^{\bar{p}} \, ds	
	\Bigg]
	\\
	&\le 
	C \, T_{max} \,
	\mathbb{E} \Bigg[ 
	\bigg( \sup_{s \in [0,T_{max}]} \intort \uh(s) \, dx \bigg)^{\bar{p}(n-2)} \bigg( \sup_{s \in [0,T_{max}]}  \intort | \uh_{x} |^{2}(s)\, dx	\bigg)^{\bar{p}} 
	\\
	& ~~~~~~~~~~~~~~~~+ \bigg(\sup_{s \in [0,T_{max}]} \intort |\uh_{x}|^{2}(s) \, dx\bigg)^{\frac{\bar{p}n}{2}} \Bigg]
	\\
	&\le 
	C \, T_{max} \,
	\Bigg\{
	\mathbb{E} 
	\Bigg[ 
	\bigg( \sup_{s \in [0,T_{max}]} \intort \uh(s) \, dx \bigg)^{2\bar{p}(n-2)} \Bigg]
	+
	\mathbb{E} \Bigg[ \bigg( \sup_{s \in [0,T_{max}]}  \intort | \uh_{x} |^{2}(s)\, dx	\bigg)^{2\bar{p}} \Bigg]
	\\
	& ~~~~~~~~~~~~~~~+
	\mathbb{E} \Bigg[ \bigg(\sup_{s \in [0,T_{max}]} \intort |\uh_{x}|^{2}(s) \, dx\bigg)^{\frac{\bar{p}n}{2}} \Bigg]
	\Bigg\}
	\\ 
	&\le 
	C(T_{max},\bar{p},n,u_{0}) \, ,
\end{align*}
which, altogether, gives the desired estimate.
Finally, we note that $\tilde Z$ is progressively measurable as it is a continuous composition of terms having this property. This gives the assertion of the lemma.
\end{proof}

\begin{lemma}
\label{lem:hoelder2}
Under the assumptions of Lemma~\ref{lem:Hoelder1} solutions $u^h$ of \eqref{eq:1} are contained  and uniformly bounded in the space
\begin{align*}
L^2(\Omega;C^{1/4}([0,T_{max}];L^2(\domain))).
\end{align*}
 In particular, a positive constant $C_2$ independent of $h>0$ exists such that
\begin{align}
\label{eq:hoelder3}
\mathbb{E}\bigg[\sup_{t_1,t_2\in [0,T_{max}]} \frac{||u^h(t_1)-u^h(t_2)||_h^2}{|t_1-t_2|^{2/4}}\bigg]
\leq C_2.
\end{align}
\end{lemma}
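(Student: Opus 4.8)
The estimate \eqref{eq:hoelder3} is the substance of the statement: once it is available, boundedness in $L^2(\Omega;C^{1/4}([0,T_{max}];L^2(\domain)))$ follows by combining it with the pathwise continuity of $u^h$ from Lemma~\ref{ExistenceDiscrete}, the elementary bound $\|u^h(t)\|_{L^2}\le C(1+E_h[u^h(t)]^{1/2})$, the energy estimate $\mathbb{E}[\sup_t R(t\wedge T_h)^{\bar p}]\le C$ of Proposition~\ref{prop:integral1}, and the equivalence of $\|\cdot\|_h$ and $\|\cdot\|_{L^2}$ on $X_h$. The guiding idea for \eqref{eq:hoelder3} is to test the evolution equation against the increment $v:=u^h(t_2)-u^h(t_1)\in X_h$ \emph{itself}, so that the singular drift is never estimated in $\|\cdot\|_h$ in isolation. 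Fixing $0\le t_1<t_2\le T_{max}$, formulae \eqref{eq:integral7}, \eqref{eq:integral8a}, \eqref{DefinitionIh} identify the stochastic part of the increment with $I_h(t_2)-I_h(t_1)$, so \eqref{eq:integral8} gives $v=\int_{t_1\wedge T_h}^{t_2\wedge T_h}(\varphi(h,s)+\psi(h,s))\,ds+(I_h(t_2)-I_h(t_1))$. Expanding $\|v\|_h^2=(v,v)_h$ and inserting this identity into the second slot, \eqref{eq:prop333} and \eqref{eq:prop4} turn $\|v\|_h^2$ into the sum of a mobility term $-\int_{t_1\wedge T_h}^{t_2\wedge T_h}\int_\domain M_h(u^h)p^h_xv_x\,dx\,ds$, a porous-medium term $-(C_{Strat}+S)\int_{t_1\wedge T_h}^{t_2\wedge T_h}(\porad(u^h,v)+\porb(u^h,v))\,ds$, and a stochastic term $(I_h(t_2)-I_h(t_1),v)_h$.

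Each term is then bounded so as to expose a power of $|t_2-t_1|$. For the mobility term, discrete Cauchy--Schwarz gives $\left|\int_\domain M_h(u^h)p^h_xv_x\,dx\right|\le\left(\int_\domain M_h(u^h)|p^h_x|^2\,dx\right)^{1/2}\|M_h(u^h)\|_{L^\infty}^{1/2}\|v_x\|_{L^2}$, where $\|M_h(u^h)\|_{L^\infty}\le C\|u^h\|_{L^\infty}^n$ by \eqref{eq:defmobility} and Lemma~\ref{lem:lowerbound}; Cauchy--Schwarz in time bounds this contribution by $|t_2-t_1|^{1/2}\|v_x\|_{L^2}\sup_t\|u^h\|_{L^\infty}^{n/2}\left(\int_0^{T_h}\int_\domain M_h(u^h)|p^h_x|^2\right)^{1/2}$. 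For the porous-medium term, I would use the explicit formulae \eqref{eq:definePorad}, \eqref{eq:definePorb}, discrete Cauchy--Schwarz, and the oscillation bound \eqref{eq:oscillation} (which makes neighbouring nodal powers comparable and allows reindexing the backward difference quotients) to obtain $|\porad(u^h,v)|+|\porb(u^h,v)|\le C\|u^h\|_{L^\infty}^{(n-2)/2}\Big(\big(\discint(\ui)^{n-4}|\tfrac{\uip-\ui}{h}|^4\big)^{1/2}+\big(\discint(\ui)^{n-2}|(\Delta_hu^h)_i|^2\big)^{1/2}\Big)\|v\|_h$; a Cauchy--Schwarz in time then produces a factor $|t_2-t_1|^{1/2}$ times the square roots of $\int_0^{T_h}\discint(\ui)^{n-4}|\tfrac{\uip-\ui}{h}|^4$ and $\int_0^{T_h}\discint(\ui)^{n-2}|(\Delta_hu^h)_i|^2$, which are precisely among the quantities controlled in Proposition~\ref{prop:integral1}. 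For the stochastic term, $|(I_h(t_2)-I_h(t_1),v)_h|\le C\|I_h(t_2)-I_h(t_1)\|_{L^2}\|v\|_h\le C|t_2-t_1|^{1/4}\|I_h\|_{C^{1/4}([0,T_{max}];L^2(\domain))}\|v\|_h$ by Remark~\ref{rem:hoelbetter}.

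Combining the three bounds and using $\|v_x\|_{L^2}\le2\sqrt2\sup_tE_h[u^h(t)]^{1/2}\le2\sqrt2\sup_tR(t\wedge T_h)^{1/2}$, one arrives at $\|v\|_h^2\le|t_2-t_1|^{1/2}A+|t_2-t_1|^{1/2}B\|v\|_h+|t_2-t_1|^{1/4}C\|v\|_h$ with nonnegative random variables $A,B,C$ independent of $t_1,t_2$. Since $|t_2-t_1|\le T_{max}$, Young's inequality absorbs the $\|v\|_h$-factors (e.g. $|t_2-t_1|^{1/4}C\|v\|_h\le\tfrac14\|v\|_h^2+T_{max}^{1/2}C^2|t_2-t_1|^{1/2}$), giving $\|v\|_h^2\le|t_2-t_1|^{1/2}(2A+2T_{max}^{1/2}B^2+2C^2)$ and hence $\sup_{t_1\ne t_2}\|u^h(t_1)-u^h(t_2)\|_h^2|t_1-t_2|^{-1/2}\le2A+2T_{max}^{1/2}B^2+2C^2$. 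Taking expectations and applying H\"older's inequality in $\Omega$ together with Proposition~\ref{prop:integral1} for a sufficiently large $\bar p$ (to close the products of integral factors), Lemma~\ref{lem:lowerbound} and Lemma~\ref{lem:uHochN} (for the powers of $\sup_t\|u^h\|_{L^\infty}$), and Remark~\ref{rem:hoelbetter} (for $\mathbb{E}[\|I_h\|_{C^{1/4}(L^2)}^2]$) yields \eqref{eq:hoelder3} with an $h$-independent constant $C_2$.

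I expect the main obstacle to be the deterministic drift: in $\|\cdot\|_h$ it is singular as $h\to0$, being a discrete counterpart of the fourth-order operator $-(M(u)p_x)_x+(u^{n-2}u_x)_x$, so it cannot be estimated on its own uniformly in $h$. Pairing it only with $v=u^h(t_2)-u^h(t_1)$ is what rescues uniformity: the operators $\porad,\porb$ are deliberately designed (cf. \eqref{eq:definePorad}, \eqref{eq:definePorb}) so that the test function enters without a difference quotient, while the mobility term leaves exactly one difference quotient on the test function, matched by the uniform bound $\|u^h_x\|_{L^2}^2\le2E_h[u^h]$ from the energy estimate. The remaining bookkeeping subtlety --- the mismatch between the H\"older exponent $1/2$ coming from the drift and the exponent $1/4$ coming from the noise --- is harmless and is handled by Young's inequality as above.
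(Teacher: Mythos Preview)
Your proposal is correct and follows essentially the same route as the paper: both test the increment equation against $v=u^h(t_2)-u^h(t_1)$, split into the mobility, porous-medium, and stochastic contributions, estimate each by Cauchy--Schwarz in $x$ and then in $s$ using the Oscillation Lemma~\ref{lem:lowerbound} and the a-priori bounds of Proposition~\ref{prop:integral1}, and control the stochastic piece via Remark~\ref{rem:hoelbetter}. The only cosmetic difference is bookkeeping: the paper absorbs the $\|v\|_h$-factor from the stochastic term immediately (so that $\|I_h(t_2)-I_h(t_1)\|_{L^2}^2$ appears already in \eqref{eq:hoelder4}) and bounds the $\|v\|_h$-factor arising from $\porad,\porb$ directly by $\sup_t\|u^h(t)\|_h$, whereas you keep all $\|v\|_h$-factors until the end and eliminate them by Young's inequality; both yield the same conclusion.
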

\begin{proof}
Following the lines of the proof in Lemma~4.10 of \cite{FischerGruen2018} we find the existence of a constant $C$ independent of $h$ such that
\begin{align}\label{eq:hoelder4}
  \begin{split}
||u^h(t_2)-u^h(t_1)||_{h}^2
&\leq C \left| \int_{t_1\wedge T_h}^{t_2\wedge T_h}\int_\domain M_h(u^h) p^h_x (u^h(t_2)-u^h(t_1))_x \,dx\,ds\right|
\\&~~~
+ C\left|\int_{t_1\wedge T_h}^{t_2\wedge T_h}\porad(u^h, u^h(t_2)-u^h(t_1))ds\right|
\\&~~~
+ C\left|\int_{t_1\wedge T_h}^{t_2\wedge T_h}\porb(u^h, u^h(t_2)-u^h(t_1))ds\right|
\\&~~~
+\left|\left|
I_h(t_2)-I_h(t_1)
\right|\right|_{L^2(\domain)}^2
\end{split}
\end{align}
is satisfied.
From Remark~\ref{rem:hoelbetter}, we infer the existence of a function $\mathcal{C}\in L^{2}(\Omega)$ such that
\begin{align}
||I_h(t_2)(\omega)-I_h(t_1)(\omega)||_{L^2(\domain)} \leq \mathcal{C}(\omega) |t_2-t_1|^{1/4}
\label{eq:hoelder5}
\end{align}
holds for all $t_1,t_2\in [0,T_{max}]$ with $t_1\leq t_2$, $\mathbb{P}$-almost surely.
For the first term in \eqref{eq:hoelder4} we find 
\begin{align*}
&\left| \int_{t_1\wedge T_h}^{t_2\wedge T_h}\int_\domain M_h(u^h) p^h_x (u^h(t_2)-u^h(t_1))_x \,dx\,ds \right|
\\ 
&\le
\left(\int_{t_1\wedge T_h}^{t_2\wedge T_h}\int_\domain M_h(u^h) (p^h_x)^{2} \, dx\, ds \right)^{\frac{1}{2}} 
\left(\int_{t_1\wedge T_h}^{t_2\wedge T_h}\int_\domain M_h(u^h)
(u^h(t_2)-u^h(t_1))^{2}_x \,dx\,ds\right)^{\frac{1}{2}} 
\\ &=: A_{1} A_{2} \, .
\end{align*}
For $A_{2}$ we have due to Poincar\'e's inequality
\begin{align*}
A_{2} &\le
C \left(\sup_{t \in [0,T_{h}]}\sup_{x \in \domain}M_h(u^h)(t,x) \sup_{t \in [0,T_{h}]}\int_\domain 
(u^h_x)^{2}(t) \,dx\right)^{\frac{1}{2}}
\sqrt{t_2-t_1}
\\&
\le
C\left(\sup_{t \in [0,T_{h}]}||\uh(t) ||^{n}_{H^{1}(\domain)} \sup_{t \in [0,T_{h}]}\int_\domain 
(u^h_x)^{2}(t) \,dx\right)^{\frac{1}{2}}
\sqrt{t_2-t_1}
\\&
\le
C\left(\sup_{t \in [0,T_{h}]} \left( C \int_{\domain} \uh(t) \, dx  + 2||\uh_{x}(t)||_{L^{2}(\domain)}  \right)^{n}\sup_{t \in [0,T_{h}]}\int_\domain 
(u^h_x)^{2}(t) \,dx\right)^{\frac{1}{2}}
\sqrt{t_2-t_1}
\\&
\le
C\sup_{t \in [0,T_{h}]} \left(  \left( \int_{\domain} \uh(t) \, dx \right)^{n+2}  + ||\uh_{x}(t)||^{n+2}_{L^{2}(\domain)} \right)^{\frac{1}{2}}
\sqrt{t_2-t_1}
\end{align*}
For the second term in \eqref{eq:hoelder4} we consider first the first summand in $\porad$ and estimate 
\begin{align*}
	&\left|-\tfrac{n-2}{6} \int_{t_1\wedge T_h}^{t_2\wedge T_h} \discint (\ui)^{n-3}\left(\left|\tfrac{\ui-\uim}{h}\right|^2+\left|\tfrac{\uip-\ui}{h}\right|^2\right)(\ui(t_{2}) - (\ui(t_{1})) \, ds \right|
	\\
	&\le
	C 
	\left(
	\int_{t_1\wedge T_h}^{t_2\wedge T_h} \discint (\ui)^{2(n-3)}\left(\left|\tfrac{\ui-\uim}{h}\right|^2+\left|\tfrac{\uip-\ui}{h}\right|^2\right)^{2} \, ds
	\right)^{\frac{1}{2}}
	\\
	&\quad \cdot \left(
	\int_{t_1\wedge T_h}^{t_2\wedge T_h} \discint (\ui(t_{2}) - \ui(t_{1}))^{2} \, ds
	\right)^{\frac{1}{2}}
	\\
	&\le 
	C \sqrt{t_{2} - t_{1}} 
	\left(
	\int_{t_1\wedge T_h}^{t_2\wedge T_h} \discint C_{osc}(\ui)^{2(n-3)}\left|\tfrac{\ui-\uim}{h}\right|^4 \, ds 
	\right)^{\frac{1}{2}}
	\cdot \left( \sup_{t \in [0, T_{h}]} 
	\discint (\ui)^{2}(t) 
	\right)^{\frac{1}{2}}
	\\
	&\le 
	C \sqrt{t_{2} - t_{1}} 
	\sup_{t \in [0,T_{h}] } ||\uh||^{\frac{n}{2}}_{H^{1}(\ort)}
	\left(\int_{t_1\wedge T_h}^{t_2\wedge T_h} \discint (\ui)^{n-4}\left|\tfrac{\ui-\uim}{h}\right|^4 \, ds 
	\right)^{\frac{1}{2}}
	\\ &\le 
	C \sqrt{t_{2} - t_{1}} 
             \sup_{t \in [0,T_{h}] } \left(  ||\uh_{x}(t) ||^{n}_{L^{2}(\ort)} +  \left(\int_{\ort} \uh(t) \, dx \right)^{n} \right)^{\frac{1}{2}}\cdot\\
  &\qquad\qquad\qquad\qquad\qquad\qquad\qquad\cdot \left(\int_{t_1\wedge T_h}^{t_2\wedge T_h} \discint (\ui)^{n-4}\left|\tfrac{\ui-\uim}{h}\right|^4 \, ds 
	\right)^{\frac{1}{2}} \, .
\end{align*}
Contributions of the other terms in $\porad$ are estimated similarly, using the Oscillation Lemma~\ref{lem:lowerbound} and $\frac{\uip -\uim}{2h} = \frac{\uip - \ui}{2h} + \frac{\ui - \uim}{2h}$ combined with Young's inequality.

The third term in \eqref{eq:hoelder4} gives 
\begin{align*}
	&\left| -\int_{t_1\wedge T_h}^{t_2\wedge T_h} \discint (\ui)^{n-2}\left(\tfrac{\uip-2 \ui+\uim}{h^2}\right) (\ui(t_{2}) - \ui(t_{1})) \, ds	\right|
	\\
	&\le
	C \sqrt{t_{2}-t_{1}} \sup_{t \in [0,T_{h}]}\left( ||\uh_{x}(t)||^{n} + \left( \int_{\ort} \uh(t) \, dx \right)^{n}\right)^{\frac{1}{2}} 
	\left(\int_{t_1\wedge T_h}^{t_2\wedge T_h} \discint (\ui)^{n-2}|(\Delta_{h}\uh)_{i}|^{2} \, ds \right)^{\frac{1}{2}} \, .
\end{align*}

Inequality \eqref{eq:hoelder4} then entails $\mathbb{P}$-almost surely
\begin{align}\label{eq:neu1414}
	\begin{split}
		&||u^h(t_2,\omega)-u^h(t_1,\omega)||_h^2
		\\&
		\leq C \sup_{t \in [0,T_{h}]}  \left(R(t)^{\frac{n+2}{2}} + \left(\intort \uh(t) \, dx \right)^{n+2}
		\right)^{\frac{1}{2}}
		\bigg(\int_{t_1\wedge T_h}^{t_2\wedge T_h}  \int_\domain M_h(u^h) |p^h_x|^2 \, dx \, ds \bigg)^{1/2} \sqrt{t_2-t_1}
		\\&
		~~~
		+C \sup_{t \in [0,T_{h}]}\left(R(t)^{\frac{n}{2}}+\left(\intort \uh(t) \, dx\right)^{n}\right)^{\frac{1}{2}}\sqrt{t_2-t_1} \\
		& \qquad \cdot\Bigg(\bigg(\int_{t_1\wedge T_h}^{t_2\wedge T_h}\discint (u^h_i)^{n-2}(\Delta_h u^h)_i^2 \, ds \bigg)^{1/2}+\bigg(\int_{t_1\wedge T_h}^{t_2\wedge T_h}\discint (u^h_i)^{n-4}\left|\tfrac{\uip-\ui}{h}\right|^4 \, ds\bigg)^{1/2}\Bigg)\\
		\\&~~~
		+C\mathcal{C}^2(\omega)\sqrt{t_2-t_1}.
	\end{split}
\end{align}

Dividing by $|t_2-t_1|^{1/2}$, taking the supremum with respect to $t_1$ and $t_2$, and taking expectations, we get

\begin{align*}
	&\mathbb{E}\Bigg[
	\sup_{t_1,t_2\in [0,T_{max}]}
	\frac{||u^h(t_2\wedge T_h)-u^h(t_1\wedge T_h)||_h^2}{|t_2-t_1|^{2/4}}
	\Bigg]
	\\&
	\leq C \mathbb{E}\Bigg[\sup_{s\in [0,T_h]} \bigg(R(s)^{\frac{n+2}{2}} + \left(\intort \uh(s) \, dx \right)^{n+2}\bigg)\Bigg]
	\\&~~~~
	+ C \mathbb{E}\Bigg[\int_0^{T_h}   \int_\domain M_h(u^h) |p^h_x|^2 \,dx \,ds\Bigg]
	+C \mathbb{E}[\mathcal{C}^2(\omega)]
	\\&~~~~
	+C\expect\Biggl[\int_0^{T_h}\discint (u^h_i)^{n-2}(\Delta_h u^h)_i^2ds \Bigg]
	+C\mathbb{E} \Bigg[\int_0^{ T_h}\discint (u_i^h)^{n-4}\left|\tfrac{\uip-\ui}{h}\right|^4ds \Bigg]
	\\&~~~~ +\expect\Bigg[\sup_{s\in [0,  T_h]}\bigg(R(s)^{\frac{n}{2}}+\bigg(\intort \uh(s) \, dx\bigg)^{n}\bigg)\Bigg] \, .
\end{align*}
By Proposition~\ref{prop:integral1}, the definition of the stopping times $T_{h}$ and since solutions $\uh$ are constant in the time interval $[T_{h}, T_{max}]$, the result follows, as the spatial H\"older property is a consequence of the standard embedding $H^1(\ort)\subset C^{1/2}(\ort)$.
\end{proof}

Similarly, as in \cite{FischerGruen2018} Lemma~4.11, we get
\begin{lemma}
\label{lem:hoelder3}
Under the assumptions of Lemma~\ref{lem:hoelder2}, solutions $u^h$ to \eqref{eq:1} are space-time H\"older-continuous almost surely. In particular, there is a positive constant $C_3$ independent of $h>0$ for which we have
\begin{align}
\mathbb{E} \big[ ||u^h||_{C^{1/2,1/8}(\domain\times [0,T_{max}])}^2 \big]
\leq C_3.
\label{eq:hoelder7}
\end{align}
\end{lemma}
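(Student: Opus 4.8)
The plan is to follow Lemma~4.11 of \cite{FischerGruen2018}: combine the uniform temporal $C^{1/4}([0,T_{max}];L^2(\domain))$-regularity obtained in Lemma~\ref{lem:hoelder2} with a uniform spatial $H^1$-bound, and then interpolate between the two. The output is an $h$-independent bound for the space--time Hölder norm in $L^2(\Omega)$, which in particular forces $u^h\in C^{1/2,1/8}(\domain\times[0,T_{max}])$ $\mathbb{P}$-almost surely.

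First I would record the two inputs. Applying Proposition~\ref{prop:integral1} with $\bar p=1$ and using $F\ge 0$ (hence $\Ih[F(u^h)]\ge 0$ and $E_h[u^h]\ge\tfrac12\int_\domain|u^h_x|^2\,dx$) gives $\mathbb{E}\big[\sup_{t\in[0,T_{max}]}\int_\domain|u^h_x(t)|^2\,dx\big]\le C$ uniformly in $h$; here the supremum over $[0,T_{max}]$ equals the one over $[0,T_h]$ because $u^h$ is constant afterwards. Together with the uniform two-sided bound on $\overline{u^h}(t)$ (remark after Lemma~\ref{ExistenceDiscrete}) and the one-dimensional embedding $H^1_{per}(\domain)\hookrightarrow C^{1/2}(\domain)$, this yields
$$\mathbb{E}\Big[\,\sup_{t\in[0,T_{max}]}\|u^h(t)\|_{C^{1/2}(\domain)}^2\,\Big]\le C .$$
From Lemma~\ref{lem:hoelder2} and the $h$-uniform equivalence of $\|\cdot\|_h$ with the $L^2(\domain)$-norm on $X_h$ I also have $\mathbb{E}\big[\,[u^h]^2_{C^{1/4}([0,T_{max}];L^2(\domain))}\,\big]\le C$.

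The core is then a deterministic interpolation estimate. For fixed $\omega$, set $A:=\sup_t\|u^h(t)\|_{C^{1/2}(\domain)}$ and $B:=[u^h]_{C^{1/4}([0,T_{max}];L^2(\domain))}$. For $x\in\domain$, $t_1<t_2$, and $\rho\in(0,\diam\domain]$ I would insert the spatial mean of $u^h(\cdot,t_i)$ over the periodically extended interval $B_\rho(x)=(x-\rho,x+\rho)$ and estimate, using Hölder's inequality in one space dimension,
$$|u^h(x,t_2)-u^h(x,t_1)|\ \le\ C A\rho^{1/2}+\Big|\,\fint_{B_\rho(x)}\big(u^h(y,t_2)-u^h(y,t_1)\big)\,dy\,\Big|\ \le\ C A\rho^{1/2}+C\rho^{-1/2}B\,|t_2-t_1|^{1/4}.$$
Optimizing with $\rho=\min\{|t_2-t_1|^{1/4},\diam\domain\}$ — and using the crude bound $\|u^h\|_{L^\infty}\le C+A\,(\diam\domain)^{1/2}$ in the remaining regime $|t_2-t_1|^{1/4}\ge\diam\domain$ — gives $|u^h(x,t_2)-u^h(x,t_1)|\le C(\domain,T_{max})(1+A+B)\,|t_2-t_1|^{1/8}$. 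Combined with the spatial $C^{1/2}$-bound this shows $\|u^h\|_{C^{1/2,1/8}(\domain\times[0,T_{max}])}\le C(\domain,T_{max})(1+A+B)$ $\mathbb{P}$-a.s., and squaring and taking expectations yields $\mathbb{E}\big[\|u^h\|_{C^{1/2,1/8}(\domain\times[0,T_{max}])}^2\big]\le C\big(1+\mathbb{E}[A^2]+\mathbb{E}[B^2]\big)\le C_3$ with $C_3$ independent of $h$, by the first step. The only ingredient that is not pure bookkeeping is the interpolation inequality above, which is deterministic and classical; the piecewise-linear periodic structure of $X_h$ causes no difficulty since the spatial averages are taken for the periodic extension and every constant that enters is $h$-independent. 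I therefore do not anticipate a genuine obstacle — the lemma essentially repackages Proposition~\ref{prop:integral1} and Lemma~\ref{lem:hoelder2} into an $h$-uniform Hölder estimate.
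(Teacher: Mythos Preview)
Your proposal is correct and follows exactly the approach the paper intends: the paper simply refers to Lemma~4.11 of \cite{FischerGruen2018}, and your argument---combining the uniform spatial $H^1$-bound from Proposition~\ref{prop:integral1} (via the embedding $H^1_{per}(\domain)\hookrightarrow C^{1/2}(\domain)$) with the temporal $C^{1/4}([0,T_{max}];L^2(\domain))$-bound from Lemma~\ref{lem:hoelder2}, followed by the classical pointwise interpolation using spatial averages---is precisely that proof spelled out in detail.
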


\subsection{Estimates on pressure and evolution of discrete masses}
Finally, in our passage to the limit we need an uniform estimate on the pressures $p^h$ as well as on the evolution in time of discrete masses.
\begin{lemma}\label{lem:EstDiscMass}
	Let $h$ be such that 
	\begin{align}\label{lem:EstDiscMass1}
		\overline{\uh_{0}} \in (\overline{u_{0}}(1-\varepsilon), \overline{u_{0}}(1+\varepsilon))
	\end{align}
	for an $\varepsilon << 1$.
	Then there is a constant $C(T_{max})$ such that
	\begin{align}\label{eq:mass0}
		\mathbb{E} \left[ \sup_{t\in[0,T_{max}]} \left| \overline{\uh}(t) - \overline{{u}_{0}}	\right| \right] \le hC(T_{max}) \, .
	\end{align}
\end{lemma}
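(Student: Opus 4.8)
The plan is to test the discrete equation~\eqref{eq:1} against the constant function $\phi^h\equiv 1\in X_h$. Since $(1)_x=0$, the mobility term and the stochastic term vanish, and, using $(\uh(t),1)_h=\int_\ort\uh(t)\,dx=L\,\overline{\uh}(t)$ together with the fact that the right-hand side is frozen for $t\ge T_h$, one obtains for every $t\in[0,T_{max}]$
\begin{align*}
  \overline{\uh}(t)-\overline{\uh_0}
  =-\frac{\Cstr+S}{L}\int_0^{t\wedge T_h}\Big(\porad(\uh(s),1)+\porb(\uh(s),1)\Big)\,ds .
\end{align*}
The right-hand side, in modulus, is bounded by the $t$-independent quantity $\tfrac{\Cstr+S}{L}\int_0^{T_h}|\porad(\uh(s),1)+\porb(\uh(s),1)|\,ds$, so, after adding the elementary bound $|\overline{\uh_0}-\overline{u_0}|\le Ch\,\|(u_0)_x\|_{L^2(\ort)}$ from~\eqref{mean2} — whose expectation is $\le Ch$ because (H3) bounds $E[\Ih[u_0]]$, hence (letting $h\to0$) $\|(u_0)_x\|_{L^2(\ort)}$, uniformly $\mathbb{P}$-almost surely — the claim~\eqref{eq:mass0} reduces to showing $\mathbb{E}\big[\int_0^{T_h}|\porad(\uh(s),1)+\porb(\uh(s),1)|\,ds\big]\le h\,C(T_{max})$.

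The main point is the pointwise estimate, valid on $\{s\le T_h\}$,
\begin{align*}
  \big|\porad(\uh(s),1)+\porb(\uh(s),1)\big|\le C\,h\,\mathcal{Q}(s) ,
\end{align*}
where $\mathcal{Q}(s)$ is a finite linear combination, with constants depending only on $n$ and $C_{osc}$, of the nonnegative quantities $\discint(\ui)^{n-2}|(\Delta_h\uh)_i|^2$, $\discint(\ui)^{n-4}\big|\tfrac{\uip-\ui}{h}\big|^4$, $\discint(\ui)^{-2}\big|\tfrac{\uip-\ui}{h}\big|^2$, $R(s)$ and $1+\|\uh(s)\|_{L^\infty(\ort)}^{2p}$. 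Granting this, one integrates in $s$ and takes expectation: the first three quantities (as well as $\discint(\ui)^{n-4}|\tfrac{\uip-\ui}{h}|^2$, which also appears after applying Young's inequality below and which is absorbed by writing $(\ui)^{n-4}\le\|\uh\|_{L^\infty(\ort)}^{n-2}(\ui)^{-2}$, using $n-2>0$, $\|\uh(s)\|_{L^\infty(\ort)}\le C(1+\sqrt{R(s)})$ and Cauchy--Schwarz in $\omega$) are controlled through Proposition~\ref{prop:integral1} with $\bar p\in\{1,2\}$, and likewise $\mathbb{E}[\int_0^{T_h}R(s)\,ds]$ and $\mathbb{E}[\int_0^{T_h}(1+\|\uh\|_{L^\infty(\ort)}^{2p})\,ds]$ are bounded uniformly in $h$ by Proposition~\ref{prop:integral1}, (H3), (H4) and the definition of $T_h$. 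This gives $\mathbb{E}\big[\int_0^{T_h}|\porad(\uh,1)+\porb(\uh,1)|\,ds\big]\le h\,C(T_{max})$, hence, by the previous paragraph, $\mathbb{E}\big[\sup_{t\le T_{max}}|\overline{\uh}(t)-\overline{u_0}|\big]\le h\,C(T_{max})$.

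To prove the pointwise bound, one writes $\porad(\uh,1)$ and $\porb(\uh,1)$ out in nodal values, recasts $\porb(\uh,1)=-(\Ih[(\uh)^{n-2}]\Delta_h\uh,1)_h=h\sum_i\partial_h^-((\uh)^{n-2})_i\,\partial_h^-\ui$ by discrete summation by parts, and expands the increments of $(\uh)^{n-2}$ and $(\uh)^{n-3}$ between neighbouring nodes about the value $\ui$ via the mean value theorem; by the oscillation Lemma~\ref{lem:lowerbound} all intermediate values lie in $[C_{osc}^{-1}\ui,\,C_{osc}\ui]$, so each such expansion carries a relative factor of order $h\,(\ui)^{-1}(|\tfrac{\uip-\ui}{h}|+|\tfrac{\ui-\uim}{h}|)$. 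One then checks that the $O(1)$-size contributions of $\porad(\uh,1)$ and of $\porb(\uh,1)$ both equal $\pm(n-2)h\sum_i(\ui)^{n-3}|\tfrac{\uip-\ui}{h}|^2+O(h)$ and therefore cancel in the sum — the discrete reflection of $\int_\ort(u^{n-2}u_x)_x\,dx=0$ — leaving a remainder of the form $h^2\sum_i(\cdot)$ or $h^3\sum_i(\cdot)$ whose summand is a product of at most one discrete second difference $(\Delta_h\uh)_i$ with up to three discrete first differences of $\uh$, times a power of $\ui$. For the $h^2\sum_i$-terms one keeps the second difference and uses Young's inequality, e.g.\ $(\ui)^{n-3}|(\Delta_h\uh)_i||\tfrac{\uip-\ui}{h}|\le\tfrac12(\ui)^{n-2}|(\Delta_h\uh)_i|^2+\tfrac12(\ui)^{n-4}|\tfrac{\uip-\ui}{h}|^2$ and $(\ui)^{n-4}|\tfrac{\uip-\ui}{h}|^3\le\tfrac12(\ui)^{n-4}|\tfrac{\uip-\ui}{h}|^2+\tfrac12(\ui)^{n-4}|\tfrac{\uip-\ui}{h}|^4$; for the $h^3\sum_i$-terms one first replaces a second difference by first differences via $h|(\Delta_h\uh)_i|\le|\tfrac{\uip-\ui}{h}|+|\tfrac{\ui-\uim}{h}|$, losing one power of $h$ but reducing to an $h^2\sum_i$-term. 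I expect this last step to be the main obstacle: one has to verify that in each of the finitely many remainder terms the powers of $\ui$ can be rearranged so that the term is dominated by one of the quantities appearing in~\eqref{eq:proposition1} (in particular, trading the marginal power $(\ui)^{n-4}$ for $(\ui)^{-2}$ whenever it occurs against a squared first difference). The surplus powers of $h$ are harmless, since an $h^k\sum_i(\cdot)$-term with $k\ge2$ equals $h^{k-1}\discint(\cdot)$, which for $h<1$ is at most $h$ times a controlled quantity.
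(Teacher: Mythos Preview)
Your proposal is correct and follows essentially the same strategy as the paper: test \eqref{eq:1} with $\phi^h\equiv 1$, show that $\porad(\uh,1)+\porb(\uh,1)$ is of order $h$ by exhibiting cancellation of the leading contributions, and control the remainder via the a-priori bounds of Proposition~\ref{prop:integral1}. The paper organizes the cancellation step through the explicit identity of Lemma~\ref{lem:MassOpAB}, which shows that the sum equals a combination of differences $(\ui)^{n-3}-(\uip)^{n-3}$ (each carrying an extra factor $h$ via the mean-value theorem) plus a term involving the discrete Laplacian; your derivation of the cancellation is less structured but amounts to the same thing. A minor technical difference: to estimate the residual $\discint(\ui)^{n-4}|\tfrac{\uip-\ui}{h}|^2$, the paper trades $(\ui)^{n-4}$ for $\|\uh\|_{L^\infty}^{n+p-2}(\ui)^{-p-2}$ and invokes the bound on $\discint(\ui)^{-p-2}|\tfrac{\uip-\ui}{h}|^2$ in \eqref{eq:proposition1}, whereas you trade it for $\|\uh\|_{L^\infty}^{n-2}(\ui)^{-2}$ and use the corresponding $(\ui)^{-2}$-term; both routes are available from Proposition~\ref{prop:integral1}.
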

\begin{proof}
	Using Lemma~\ref{lem:MassOpAB} and \eqref{eq:1}, we find
	\begin{align}\notag
		&\tfrac{1}{(C_{Strat}+S)} \, \mathbb{E} \left[\sup_{t\in[0,T_{max}]} \left|	\intort \uh(t) \, dx  
		- \intort \uh_{0} \, dx \right| \right]
		\\ \notag
		&\le (n-2)\mathbb{E} \Bigg[ \sup_{t\in[0,T_{max}]} \bigg| \int_{0}^{t \wedge T_{h}} \discint \bigg|\frac{\ui -\uim}{h}\bigg|^{2}
		\bigg\{ 
		\uh(\theta(i-1,i))^{n-3} 
		\\ \notag 
		&\qquad \qquad \qquad \qquad \qquad \qquad- \tfrac{5}{12} ((\ui)^{n-3} + (\uim)^{n-3})
		\\ \notag 
		&\qquad \qquad \qquad \qquad \qquad \qquad- \tfrac{1}{12} ((\uip)^{n-3} + (\ui)^{n-3})
		\bigg\}	\, ds \bigg| \Bigg]
		\\ \notag
		&\quad + \tfrac{(n-2)}{12} \mathbb{E} \Bigg[
		\sup_{t\in[0,T_{max}]}
		\bigg| 
		\int_{0}^{t \wedge T_{h}}  \discint \bigg\{ \left((\uip)^{n-3} + 2(\ui)^{n-3} + (\uim)^{n-3}\right)
		\\ \notag
		&\qquad \qquad \qquad \qquad \qquad \qquad \bigg(  \frac{\uip -2\ui + \uim}{h^{2}}\bigg) (\ui - \uim) \bigg\} \, ds \bigg| \Bigg]
		\\
		&=:
		I + II \, .
	\end{align}
	for $\uh(\theta(i-1,i)) \in (\uim,\ui)$.
	
	As $x \mapsto x^{n-3}$ is continuous and monotone on $\mathbb{R}^{+}$, we have 
	\begin{align}
		\uh(\theta(i-1,i))^{n-3} \in [\max\{ \uim,\ui \}^{n-3}, \min \{ \uim,\ui\}^{n-3}] 
	\end{align} 
	and find $\tilde{\theta}_{i-1} \in [0,1]$ such that
	\begin{align}
		\uh(\theta(i-1,i)) = \tilde{\theta}_{i-1} (\uim)^{n-3} + (1-\tilde{\theta}_{i-1}) (\ui)^{n-3} \, .
	\end{align}
	Then we get
	\begin{align}\notag
		& 
		\uh(\theta(i-1,i))^{n-3} 
		- \tfrac{5}{12} ((\ui)^{n-3} + (\uim)^{n-3})
		- \tfrac{1}{12} ((\uip)^{n-3} + (\ui)^{n-3})
		\\ \notag
		&= 
		(\tilde{\theta}_{i-1} - \tfrac{5}{12}) (\uim)^{n-3}
		+ (1-\tilde{\theta}_{i-1} - \tfrac{6}{12}) (\ui)^{n-3} - \tfrac{1}{12}(\uip)^{n-3} 
		\\ 
		&=  
		-(\tilde{\theta}_{i-1} - \tfrac{5}{12}) \left((\ui)^{n-3} - (\uim)^{n-3}\right)
		- \tfrac{1}{12} \left((\uip)^{n-3} - (\ui)^{n-3}\right)
		 \, .
	\end{align}
	Inserting this in $I$, gives
	\begin{align}\notag 
		I   &= (n-2) \mathbb{E} \Bigg[ \sup_{t\in[0,T_{max}]} \bigg| \int_{0}^{t \wedge T_{h}} \discint \bigg|\frac{\ui -\uim}{h}\bigg|^{2}
		\bigg\{ 
			-(\tilde{\theta}_{i-1} - \tfrac{5}{12}) \left((\ui)^{n-3} - (\uim)^{n-3}\right)
			\\ \notag
			&\qquad \qquad \qquad \qquad - \tfrac{1}{12} \left((\uip)^{n-3} - (\ui)^{n-3}\right)
		\bigg\}	\, ds \bigg| \Bigg] \, .
	\end{align}
	Applying the mean-value theorem, the Oscillation Lemma~\ref{lem:lowerbound} and Hölder's inequality, we infer 
	\begin{align}\notag
		&\tfrac{n-2}{12}\, \mathbb{E} \Bigg[ \sup_{t\in[0,T_{max}]}
			\Big| \int_{0}^{t \wedge T_{h}}  \discint \bigg| \frac{\ui - \uim}{h} \bigg|^{2}  ((\uip)^{n-3} - (\ui)^{n-3}) \, ds \Big|
		\Bigg]
		\\ \notag
		&\le
		C \mathbb{E} \Bigg[  \sup_{t\in[0,T_{max}]}
		\Big| \int_{0}^{t \wedge T_{h}} \discint \bigg| \frac{\ui - \uim}{h} \bigg|^{2}  \frac{((\uip)^{n-3} - (\ui)^{n-3})}{\uip -\ui} (\uip -\ui) \, ds \Big|
		\Bigg]
		\\ \notag
		&\le
		C \mathbb{E} \Bigg[ \sup_{t\in[0,T_{max}]}
		\Big| \int_{0}^{t \wedge T_{h}} \discint \bigg| \frac{\ui - \uim}{h} \bigg|^{2}  \uh(\hat{\theta}(i,i+1))^{n-4} |\uip -\ui| \, ds \Big|
		\Bigg]
		\\ \notag
		&\le
		C \mathbb{E} \Bigg[ \sup_{t\in[0,T_{max}]}
		\Big| \int_{0}^{t \wedge T_{h}} \discint \bigg| \frac{\ui - \uim}{h} \bigg|^{2}  C_{osc}^{4-n} (\uip)^{n-4} |\uip -\ui| \, ds \Big|
		\Bigg]
		\\ \notag
		&\le
		C \mathbb{E} \Bigg[ 
		\int_{0}^{T_{h}} \discint \bigg| \frac{\ui - \uim}{h} \bigg|^{4}   C_{osc}^{2(4-n)}(\uim)^{n-4} \, ds 
		\Bigg]^{\frac{1}{2}}
		\\ \notag 
		&\qquad \cdot
		\mathbb{E} \Bigg[ 
		\int_{0}^{T_{h}} \discint  C_{osc}^{4-n}(\uip)^{n-4} |\uip -\ui|^{2} \, ds 
		\Bigg]^{\frac{1}{2}} \, .
	\end{align}
	Since the first term is controlled uniformly by Proposition~\ref{prop:integral1}, we estimate the second one and get
	\begin{align}\notag \label{lem:EstDiscMass2}
		&\mathbb{E} \Bigg[ 
		\int_{0}^{T_{h}} \discint  (\uip)^{n-4} |\uip -\ui|^{2} \, ds 
		\Bigg]^{\frac{1}{2}}
		\\ \notag
		&\le
		C h \mathbb{E} \Bigg[ 
		||\uh||_{L^{\infty}([0,T_{max}]\times\ort)}^{n+p-2} 
		\int_{0}^{T_{h}} \discint  (\uip)^{-p-2} \bigg|\frac{\uip -\ui}{h}\bigg|^{2} \, ds \, 
		\Bigg]^{\frac{1}{2}}
		\\ \notag 
		&\le
		C h \Bigg\{ \mathbb{E} \Bigg[ 
		\bigg(\int_{0}^{T_{h}} \discint  (\uip)^{-p-2} \bigg|\frac{\uip -\ui}{h}\bigg|^{2} \, ds \bigg)^{2}
		\Bigg]
		+
		\mathbb{E} \Bigg[  ||\uh||_{L^{\infty}([0,T_{max}]\times\ort)}^{2(n+p-2)}  
		\Bigg] \Bigg\}^{\frac{1}{2}}
		\\
		&< h C(T_{max}) \, .
	\end{align}
	For the last estimate we used $n+p-2 > 0$ as well as Proposition~\ref{prop:integral1}. Similarly, 
	\begin{align}\notag
		&(n-2)|\tfrac{5}{12} - \tilde{\theta}_{i-1}| \, \mathbb{E} \Bigg[ 
		\sup_{t\in [0,T_{max}]}
		\Big| \int_{0}^{t \wedge T_{h}} \discint \bigg| \frac{\ui - \uim}{h} \bigg|^{2}  ((\ui)^{n-3} - (\uim)^{n-3}) \, ds \Big|
		\Bigg]
		\\ \notag
		&\le h C(T_{max})
		\, .
	\end{align}
	Regarding $II$, \eqref{lem:EstDiscMass2} and again Proposition~\ref{prop:integral1} show
	\begin{align}\notag
		& \frac{n-2}{12} \, \mathbb{E} \Bigg[ \sup_{t \in [0,T_{max}]}\bigg|  \int_{0}^{t \wedge T_{h} }  \discint \bigg\{ ((\uip)^{n-3} + 2(\ui)^{n-3} + (\uim)^{n-3})
		\\ \notag
		&\qquad \qquad \qquad \qquad \qquad \qquad \bigg(  \frac{\uip -2\ui + \uim}{h^{2}}\bigg) (\ui - \uim) \bigg\} \, ds \bigg| \Bigg]
		\\ \notag
		&\le C \mathbb{E} \Bigg[ \sup_{t \in [0,T_{max}]} \int_{0}^{t \wedge T_{h}}  \discint  C_{osc}^{3-n}(\ui)^{n-3} \bigg|  \frac{\uip -2\ui + \uim}{h^{2}}\bigg| |\ui - \uim|  \, ds  \Bigg]
		\\ \notag
		&\le C \mathbb{E} \Bigg[ \int_{0}^{T_{h} }  \discint (\ui)^{n-2} |  (\Delta_{h} \uh)_{i} |^{2}  \, ds  \Bigg]^{\frac{1}{2}} \cdot
		\mathbb{E} \Bigg[ \int_{0}^{T_{h} }  \discint (\ui)^{n-4} |  \ui-\uim |^{2}  \, ds  \Bigg]^{\frac{1}{2}}
		\\ 
		&\le h C(T_{max})\, .
	\end{align}
	Combining the estimates above, we showed
	\begin{align}
		\mathbb{E} \left[ \sup_{t \in [0,T_{max}]} \left| \intort \uh(t)\, dx - \intort \uh_{0} \, dx	\right|	\right] \le h C(T_{max}) \, .
	\end{align}
	Then, 
	\begin{align}\notag
		\mathbb{E} \left[ \sup_{t \in [0,T_{max}]}| \overline{\uh}(t) - \overline{u_{0}}| \right] 
		&\le 
		\mathbb{E} \left[ \sup_{t \in [0,T_{max}]} | \overline{\uh}(t) - \overline{{u}^{h}_{0}}| \right] + \mathbb{E} \left[ |\overline{{u}^{h}_{0}} - \overline{u_{0}}| \right]
		\\
		&\le 
		h C(T_{max}) + O(h) \, , 
	\end{align}
	where we used \eqref{lem:EstDiscMass1}. Thus,
	we have shown \eqref{eq:mass0}.
\end{proof}

Similarly, as in \cite{FischerGruen2018} Lemma~4.12, we get
\begin{lemma}
\label{lem:pbound}
For any $q\in [1,2)$ there exists some $C>0$ such that
\begin{align*}
\mathbb{E}\left[\left(\int_0^{T_{max}}  \int_\domain |p^h_x|^2 + |p^h|^2 \,dx \,dt\right)^{q/2}\right] \leq C
\end{align*}
holds for all $h\in (0,1]$.
\end{lemma}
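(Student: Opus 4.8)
\emph{Proof proposal.} The plan is to combine the explicit representation of the pressure furnished by \eqref{eq:2} with the arbitrary-moment estimates of Proposition~\ref{prop:integral1}, paying for the degeneracy of the discrete mobility through the sub-critical exponent $q/2<1$ and Hölder's inequality in $\omega$. First I would record that, since the characteristic function $\chi_{T_h}$ multiplies the whole right-hand side of \eqref{eq:2}, one has the (pointwise in time) identity $p^h=\chi_{T_h}\bigl(-\Delta_h u^h+\Ih[F'(u^h)]\bigr)$ in $X_h$; in particular $p^h\equiv 0$ on $(T_h,T_{max}]$, so every space--time integral below may be restricted to $[0,T_h]$, on which, by Lemma~\ref{lem:lowerbound} and the remark following Lemma~\ref{ExistenceDiscrete}, the quantities $\min_\domain u^h$ and $\fint_\domain u^h\,dx$ are bounded below uniformly in $h$ and $E_h[u^h(s)]\le R(\kappa,h,s)$.

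For the zero-order term I would use the $h$-uniform equivalence of $\|\cdot\|_h$ with the $L^2(\domain)$-norm on $X_h$ to get, for $t\le T_h$, $\int_\domain|p^h(t)|^2\,dx\le C\|\Delta_h u^h(t)\|_h^2+C\|\Ih[F'(u^h(t))]\|_h^2$. Since $F'(u)=-p\,c_F\,u^{-p-1}$, the nodal values give $\|\Ih[F'(u^h)]\|_h^2=p^2c_F^2\,\discint(\ui)^{-2(p+1)}\le C(\min_\domain u^h)^{-2(p+1)}$, which by \eqref{eq:lowerboundbyenergy} (together with the uniform lower bound on $\fint_\domain u^h\,dx$) is $\le C\bigl(1+E_h[u^h]^{4(p+1)/(p-2)}\bigr)\le C\bigl(1+R(\kappa,h,\cdot)^{4(p+1)/(p-2)}\bigr)$. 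Integrating over $[0,T_h]$, raising to the power $q/2$ (which is sub-additive, as $q<2$) and taking expectations, the $\Delta_h u^h$-contribution is controlled by Jensen's inequality and the $\|\Delta_h u^h\|_h$-bound in Proposition~\ref{prop:integral1}, while the $F'$-contribution is controlled by the arbitrary-moment bound for $\sup_t R(t\wedge T_h)$ there.

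For the gradient term I would estimate $\int_\domain|p^h_x|^2\,dx\le\bigl(\sup_\domain M_h(u^h)^{-1}\bigr)\int_\domain M_h(u^h)|p^h_x|^2\,dx$, using that $M_h(u^h)$ and $p^h_x$ are elementwise constant. On each element $M_h(u^h)$ is a mean of $s\mapsto s^n$ over an interval whose endpoints are $\ge\min_\domain u^h\ge h^{2/(p+2)}>\sigma$, hence $\sup_\domain M_h(u^h)^{-1}\le(\min_\domain u^h)^{-n}\le C\bigl(1+E_h[u^h]^{2n/(p-2)}\bigr)$, again by \eqref{eq:lowerboundbyenergy}. Therefore
\[
\int_0^{T_{max}}\!\!\int_\domain|p^h_x|^2\,dx\,dt\ \le\ C\Bigl(1+\sup_{t\in[0,T_{max}]}R(t\wedge T_h)^{2n/(p-2)}\Bigr)\int_0^{T_h}\!\!\int_\domain M_h(u^h)|p^h_x|^2\,dx\,dt .
\]
Raising this to the power $q/2$ and applying Hölder's inequality in $\omega$ with the conjugate exponents $\tfrac{2}{2-q}$ and $\tfrac{2}{q}$, the first factor is dominated by a sufficiently high moment of $\sup_t R(t\wedge T_h)$ and the second by $\mathbb{E}\bigl[\int_0^{T_h}\int_\domain M_h(u^h)|p^h_x|^2\,dx\,dt\bigr]^{q/2}$; both are finite uniformly in $h$ by Proposition~\ref{prop:integral1}. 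Adding the two contributions gives the claim.

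The \emph{one genuinely delicate point}, and the reason the statement is restricted to $q<2$, is precisely the last passage from $\int_\domain M_h(u^h)|p^h_x|^2\,dx$ (which is $h$-uniformly bounded in expectation) to $\int_\domain|p^h_x|^2\,dx$: the elementary lower bound on $M_h(u^h)$ degenerates like $h^{2n/(p+2)}$ as $h\to0$, so one cannot absorb it into an $h$-independent constant and must instead split it off as a power of the energy and trade it against $\int M_h(u^h)|p^h_x|^2$ by Hölder in $\omega$ --- which is possible only because $2-q>0$ and because \emph{all} moments of $\sup_t R(t\wedge T_h)$ are at our disposal from Proposition~\ref{prop:integral1}.
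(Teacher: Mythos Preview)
Your proof is correct and follows the same approach as the paper's reference (Lemma~4.12 in \cite{FischerGruen2018}): identify $p^h=\chi_{T_h}(-\Delta_h u^h+\Ih[F'(u^h)])$ from \eqref{eq:2}, bound $(\min_\domain u^h)^{-1}$ via \eqref{eq:lowerboundbyenergy} together with the $h$-uniform lower bound on the mean, and split off the degenerate factor $M_h(u^h)^{-1}$ from $|p^h_x|^2$ via H\"older in $\omega$ --- exploiting $q<2$ so that the energy factor lands in a space where all moments from Proposition~\ref{prop:integral1} are available. One small correction: in your opening paragraph you assert that $\min_\domain u^h$ is bounded below \emph{uniformly in $h$} on $[0,T_h]$, but Lemma~\ref{lem:lowerbound} only gives $\min_\domain u^h\ge h^{2/(p+2)}\to 0$; fortunately you never actually use this uniform lower bound, since your argument correctly goes through \eqref{eq:lowerboundbyenergy} to control $(\min_\domain u^h)^{-n}$ by a power of the energy.
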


\section{Convergence of the scheme}
\label{sec:convergence}
\subsection{Compactness results}

We apply the Jakubowski-Skorokhod theorem \cite{Jakubowski} 
to identify a stochastic basis such that  a subsequence of the solutions to the semidiscrete scheme \eqref{eq:1}, \eqref{eq:2} almost surely converges in topologies which are appropriate for a passage to the limit in the nonlinearities of equation \eqref{eq:stfeSingPot}.

In the subsections to follow, we shall show that this limit is indeed a weak martingale solution to the stochastic thin-film equation \eqref{eq:stfeSingPot} in the sense of Definition~\ref{DefinitionMartingaleSolution}.

In our setting, we consider for $\gamma\in(0,1/2)$ the path spaces
\begin{align*}
\mathcal X_u&:=C^{\gamma,\gamma/4}(\domain \times [0,T_{max}]),
\\
\mathcal X_{u_{x}}&:=(L^2(\ort\times [0,T_{max}] ))_{weak},
\\ 
\mathcal X_{\Delta u}&:=(L^2(\ort\times [0,T_{max}] ))_{weak},
\\
\mathcal X_{p}&:=(L^2([0,T_{max}];H^1_{per}(\domain)))_{weak},
\\
\mathcal X_J&:=L^2(\domain \times [0,T_{max}])_{weak},
\\
\mathcal X_W&:=C([0,T];L^2(\domain)),
\\
\mathcal X_{u_0}&:= H^1_{per}(\ort),
\end{align*}
associated with the solutions to our semidiscrete scheme $u^h$, $p^h$, as well as with $u^{h}_{x}$, $\Delta_{h}\uh$, the corresponding pseudo-fluxes
\begin{align}
\label{DefinitionPseudoFlux}
J^h:=\chi_{T_h} \sqrt{M_h(u^h)} p^h_x,
\end{align}
and the Wiener process $W$,
respectively. 
By standard arguments (compare Lemma~5.2 in \cite{FischerGruen2018}) we get the following result.
\begin{lemma}
\label{lem:comp2}
On the path space $\mathcal X:=\mathcal X_u\times \mathcal X_{u_{x}}\times  \mathcal X_{\Delta u} \times \mathcal{X}_p \times \mathcal X_J \times \mathcal X_W\times \mathcal X_{u_0}  $, the joint laws $\mu_h$,
\begin{align*}
&\quad \mu_h(A\times B\times C\times D\times E \times F \times G)\\&:=\mathbb{P}\big[\{u^h\in A\}\cap \{u_x^h\in B\} \cap \{\Delta_{h} \uh \in C\}\cap \{p^h\in D\} \\ \notag 
&\qquad\cap \{J^h\in E\}
 \cap \{W \in F\} \cap \{u_0^{h}  \in G \}\big],
\end{align*}
for $h\in (0,1]$ are tight.
\end{lemma}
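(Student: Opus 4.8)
The plan is to establish tightness on the product space $\mathcal X$ by verifying tightness of each marginal law separately and then invoking the fact that a finite product of tight families is tight (equivalently, by Jakubowski's theorem, the relevant spaces are all quasi-Polish or separable metrizable, so tightness of marginals suffices to conclude tightness of the joint laws after passing to subsequences if needed). So the proof reduces to producing, for each component, a family of compact sets (or balls in compactly embedded spaces, or balls in weak topologies) that carry mass at least $1-\eta$ uniformly in $h$.

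First I would treat $\mathcal X_u = C^{\gamma,\gamma/4}(\domain\times[0,T_{max}])$: by Lemma~\ref{lem:hoelder3} we have $\mathbb{E}\big[\|u^h\|_{C^{1/2,1/8}(\domain\times[0,T_{max}])}^2\big]\le C_3$ uniformly in $h$, and since $C^{1/2,1/8}$ embeds compactly into $C^{\gamma,\gamma/4}$ for $\gamma<1/2$ (and $\gamma/4<1/8$), the balls $\{\|\cdot\|_{C^{1/2,1/8}}\le R\}$ are compact in $\mathcal X_u$; Chebyshev's inequality then gives $\mathbb{P}[\|u^h\|_{C^{1/2,1/8}}>R]\le C_3/R^2$, so choosing $R$ large makes this uniformly small. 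For the weak-topology spaces $\mathcal X_{u_x}$, $\mathcal X_{\Delta u}$, $\mathcal X_p$, $\mathcal X_J$ the argument is even simpler: bounded sets in a reflexive separable Banach space are relatively compact in the weak topology, which is metrizable on bounded sets, so it suffices to have uniform-in-$h$ bounds $\mathbb{E}\|u_x^h\|_{L^2}^2$, $\mathbb{E}\|\Delta_h u^h\|_{L^2}^2$ (from Proposition~\ref{prop:integral1}, noting $\|\Delta_h u^h\|_h$ is equivalent to an $L^2$-type norm and controls the piecewise-constant second difference), $\mathbb{E}[(\int\!\int |p_x^h|^2+|p^h|^2)^{q/2}]$ (from Lemma~\ref{lem:pbound}), and for $J^h=\chi_{T_h}\sqrt{M_h(u^h)}p_x^h$ the bound $\mathbb{E}[(\int_0^{T_{max}}\!\int_\domain M_h(u^h)|p_x^h|^2\,dx\,dt)^{\bar p}]$ from Proposition~\ref{prop:integral1}. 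Again Chebyshev converts each moment bound into the required smallness of the complement of a ball. For $\mathcal X_W=C([0,T];L^2(\domain))$, the law of $W$ does not depend on $h$ at all — it is a single fixed Radon measure on a Polish space, hence automatically tight (inner regularity). For $\mathcal X_{u_0}=H^1_{per}(\ort)$, the law is $\Lambda=\mathbb{P}\circ u_0^{-1}$, again $h$-independent and Radon on a Polish space, hence tight.

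Having produced, for any $\eta>0$, compact sets $K_u,K_{u_x},K_{\Delta u},K_p,K_J,K_W,K_{u_0}$ in the respective factors each with complement of $\mu_h$-marginal-measure at most $\eta/7$ uniformly in $h$, I would take $K:=K_u\times K_{u_x}\times K_{\Delta u}\times K_p\times K_J\times K_W\times K_{u_0}$, which is compact in $\mathcal X$ by Tychonoff, and estimate $\mu_h(\mathcal X\setminus K)\le\sum \mathbb{P}[(\text{component})\notin(\text{its }K)]\le 7\cdot\eta/7=\eta$ by the union bound, uniformly in $h\in(0,1]$. Since $\eta$ was arbitrary this is exactly tightness of $\{\mu_h\}$.

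The main obstacle — though it is really just a matter of bookkeeping rather than a genuine difficulty — is making sure each component's uniform moment bound is actually available from the results cited: the Hölder bound for $u^h$ is Lemma~\ref{lem:hoelder3}, the $\|\Delta_h u^h\|_h$ bound and the $M_h(u^h)|p_x^h|^2$ bound are among the many terms on the left-hand side of \eqref{eq:proposition1} in Proposition~\ref{prop:integral1}, the pressure bound is Lemma~\ref{lem:pbound}, and the laws of $W$ and $u_0$ are fixed. One should also note that $\Delta_h u^h$, $u_x^h$ and $J^h$ live in fixed $L^2$ spaces independent of $h$ (they are piecewise-polynomial functions on $\domain$, uniformly bounded in the relevant norms), so the weak-topology compactness arguments apply without any $h$-dependence of the ambient space. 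With these observations the proof is a direct assembly along the lines above, and this is why the authors simply refer to Lemma~5.2 of \cite{FischerGruen2018} for the details.
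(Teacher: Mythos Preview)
Your approach is correct and is precisely the ``standard argument'' the paper invokes by reference to Lemma~5.2 of \cite{FischerGruen2018}. One slip, however: the initial datum entering the joint law is $u_0^h=\mathcal{I}_h[u_0]$, not $u_0$, so its law \emph{does} depend on $h$ and your claim ``the law is $\Lambda=\mathbb{P}\circ u_0^{-1}$, again $h$-independent'' is not accurate. Tightness of the laws of $u_0^h$ in $H^1_{per}(\ort)$ follows instead from the almost-sure convergence $\mathcal{I}_h u_0\to u_0$ in $H^1_{per}(\ort)$ (the one-dimensional nodal interpolant is $H^1$-stable and converges strongly on $H^1$-data by density of smooth functions), which yields weak convergence of the laws and hence their tightness via Prokhorov. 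With this correction your assembly via Chebyshev, Tychonoff, and the union bound goes through exactly as written.
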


The application of Jakubowski's theorem (i.e. Theorem~5.1 in \cite{FischerGruen2018}) yields the following.
\begin{proposition}
\label{prop:conv1}
Let $\gamma\in (0,1/2) $ be given and assume $u^h$, $p^h$, $T_h$ to be a sequence of solutions to our semidiscrete scheme \eqref{FaedoGalerkinScheme} in the sense of Lemma~\ref{ExistenceDiscrete}, defined on the same stochastic basis $\Basis$ with respect to the Wiener process $W$. Then there exist a subsequence (not relabeled), a stochastic basis $(\tilde\Omega, \tilde{\mathcal F},  \tprobab)$, sequences of random variables
\begin{align*}
&\tilde u^h:\tilde\Omega \rightarrow \uspace,
\\
&\tilde w^h:\tilde\Omega \rightarrow L^{2}(\ort \times [0,T_{max}]),
\\
&\tilde z^h:\tilde\Omega \rightarrow L^{2}(\ort \times [0,T_{max}]),
\\
&\tilde p^h:\tilde\Omega \rightarrow \pspace,
\\
&\tilde J^h:\tilde\Omega \rightarrow L^{2}(\ort \times [0,T_{max}]),\\
&\tilde u_0^h:\tilde\Omega\rightarrow H^1_{per}(\ort),
\end{align*}
a sequence of $L^2(\domain)$-valued processes $\tilde W^h$ on $\tilde\Omega$, and random variables
\begin{align*}
\tilde u&\in L^2(\tilde\Omega;\uspace),
\\
\tilde{w}&\in L^{2}(\tilde\Omega;L^2(\domain \times [0,T_{max}])),
\\
\tilde{z}&\in L^{2}(\tilde\Omega;L^2(\domain \times [0,T_{max}])),
\\
\tilde p&\in L^{3/2}(\tilde\Omega;L^2([0,T_{max}];H^1_{per}(\domain))),
\\
\tilde J&\in L^{2}(\tilde\Omega;L^2(\domain \times [0,T_{max}])),
\\
\tilde u_0&\in L^2(\tilde\Omega; H^1_{per}(\ort)),
\end{align*}
as well as an $L^2(\domain)$-valued process $\tilde W$ on $\tilde\Omega$ such that the following holds:
\begin{itemize}
\item[i)] The law of $(\tilde u^h,\tilde w^h, \tilde{z}^{h}, \tilde p^h,\tilde J^h, \tilde W^h,\tilde u_0^h)$ on $\uspace \times \Jspace \times \Jspace \times \pspace \times \Jspace \times \wspace\times H^1_{per}(\ort)$ under $\tprobab$ coincides for any $h$ with the law of $(u^h,u^{h}_{x}, \Delta_{h}\uh, p^h,J^h, W, u_0^{h})$ under $\probab$.
\item[ii)] The sequence $(\tilde u^h, \tilde{w}^{h}, \tilde{z}^{h}, \tilde p^h,\tilde J^h, \tilde W^h,\tilde u_0^{h})$ converges $\tprobab$-almost surely to \newline $(\tilde u, \tilde{w}, \tilde{z}, \tilde p,\tilde J, \tilde W,\tilde u_0)$ in the topology of $\mathcal X$.
\end{itemize}
\end{proposition}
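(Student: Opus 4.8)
The plan is to invoke Jakubowski's generalization of the Skorokhod representation theorem \cite{Jakubowski}, which is valid on topological spaces that carry a countable family of real-valued continuous functions separating points (so-called quasi-Polish spaces). First I would check that the path space $\mathcal X$ has this property: the factors $\mathcal X_u=C^{\gamma,\gamma/4}(\domain\times[0,T_{max}])$, $\mathcal X_W=C([0,T];L^2(\domain))$ and $\mathcal X_{u_0}=H^1_{per}(\ort)$ are separable metric spaces, while each of the weak factors $\mathcal X_{u_x}$, $\mathcal X_{\Delta u}$, $\mathcal X_J$, $\mathcal X_p$ is separable and admits the countable separating family $v\mapsto\langle v,\phi_k\rangle$, $(\phi_k)_k$ dense in the corresponding (separable Hilbert) dual, these maps being weakly continuous; a finite product inherits the property. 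Since Lemma~\ref{lem:comp2} provides tightness of the joint laws $\mu_h$ on $\mathcal X$, Jakubowski's theorem applies along a subsequence.

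Next I would record the output of the theorem: a stochastic basis $(\tilde\Omega,\tilde{\mathcal F},\tprobab)$, random variables $\tilde u^h,\tilde w^h,\tilde z^h,\tilde p^h,\tilde J^h,\tilde W^h,\tilde u_0^h$ whose joint law on $\mathcal X$ agrees for every $h$ with that of $(u^h,u^h_x,\Delta_h u^h,p^h,J^h,W,u_0^h)$ — this is assertion i) — together with a limit point to which the tilde-sequence converges $\tprobab$-almost surely in the topology of $\mathcal X$, which is assertion ii). It then remains to check the integrability of the limits: equality of laws transfers the uniform bounds of Proposition~\ref{prop:integral1}, Lemma~\ref{lem:hoelder3} and Lemma~\ref{lem:pbound} to the tilde-quantities, and combining these with lower semicontinuity of the relevant norms under the strong (for $\tilde u$) respectively weak (for $\tilde w,\tilde z,\tilde J,\tilde p$) convergence, together with Fatou's lemma in $\omega$, yields $\tilde u\in L^2(\tilde\Omega;\mathcal X_u)$, $\tilde w,\tilde z,\tilde J\in L^2(\tilde\Omega;L^2(\domain\times[0,T_{max}]))$ and $\tilde u_0\in L^2(\tilde\Omega;H^1_{per}(\ort))$; since Lemma~\ref{lem:pbound} supplies $q$-th moments of the pressure only for $q<2$, one concludes just $\tilde p\in L^{3/2}(\tilde\Omega;L^2([0,T_{max}];H^1_{per}(\ort)))$.

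The main obstacle is really the non-metrizability of the weak factors: the classical Skorokhod representation theorem is unavailable, so one must genuinely verify the quasi-Polish hypotheses behind Jakubowski's result and keep in mind that ``$\tprobab$-a.s.\ convergence in $\mathcal X$'' delivers strong convergence only in $\mathcal X_u$, $\mathcal X_W$, $\mathcal X_{u_0}$ and merely weak $L^2$-convergence for $\tilde w^h,\tilde z^h,\tilde p^h,\tilde J^h$ — no strong compactness for the derivatives or the pressure. Consequently the identifications $\tilde w=\tilde u_x$, $\tilde z=\Delta\tilde u$, the pointwise interpretation of $\tilde J$, and the fact that $\tilde W$ is again a $Q$-Wiener process adapted to a suitable filtration are deliberately not part of this proposition; they are established in the subsequent subsections. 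A further small point of care is that all measurability and equality-of-law statements must be formulated for the finite product topology, so that Jakubowski's theorem is applied to the full tuple rather than coordinatewise.
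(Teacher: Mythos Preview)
Your proposal is correct and follows essentially the same approach as the paper, which simply invokes Jakubowski's theorem on the tight family of laws from Lemma~\ref{lem:comp2}. You have in fact spelled out more detail than the paper does---particularly the quasi-Polish verification and the Fatou argument for integrability of the limits---whereas the paper merely refers to Theorem~5.1 in \cite{FischerGruen2018} and states the proposition without further proof.
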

Furthermore, we introduce the random times
\begin{align*}
	\tilde T_h:=\tilde{T}^{E}_{h} \wedge \tilde{T}^{M}_{h},
\end{align*}
where
\begin{align*}
\tilde{T}^{E}_{h}:=T_{max}\wedge \inf\{t\geq 0: E_h[\tilde u^h(t)]\geq E_{max,h}\}
\end{align*}
and
\begin{align*}
	\tilde{T}_{h}^{M} := T_{max}\wedge\inf\{t\in [0,\infty):|\overline{\tuh}(t)-\overline{\tuh}(0)|\geq \frac{\overline{\tuh}(0)}{2}\} \, .
\end{align*}
Their behavior for $h\to 0$ is the content of the following lemma.

\begin{lemma}\label{lem:subsequence}
Along a subsequence, the convergence $\lim_{h\rightarrow 0} \tilde T_{h}=T_{max}$ holds ${\tprobab}$-almost surely.
\end{lemma}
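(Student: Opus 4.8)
The plan is to show that $\tilde{T}_{h}\to T_{max}$ in probability, with an explicit algebraic rate, and then to extract an almost surely convergent subsequence; by choosing $h$ along a geometric sequence and invoking the Borel--Cantelli lemma one in fact obtains the stated almost sure convergence directly. Since $\tilde{T}_{h}=\tilde{T}_{h}^{E}\wedge\tilde{T}_{h}^{M}\le T_{max}$, it suffices to bound $\tprobab[\tilde{T}_{h}^{E}<T_{max}]$ and $\tprobab[\tilde{T}_{h}^{M}<T_{max}]$. Moreover, since both stopping times are measurable functionals of the trajectory of $\tilde{u}^{h}$ and, by Proposition~\ref{prop:conv1}(i), the law of $\tilde{u}^{h}$ on $\uspace$ coincides with that of $u^{h}$, it is enough to prove the analogous bounds for $T_{h}^{E}$ and $T_{h}^{M}$ on the original stochastic basis.

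\emph{Energy threshold.} First I would record the elementary inclusion $\{T_{h}^{E}<T_{max}\}\subseteq\{T_{h}=T_{h}^{E}\}$: if instead $T_{h}^{M}<T_{h}^{E}$, then $E_{h}[u^{h}(t)]<E_{max,h}$ for every $t\le T_{h}=T_{h}^{M}$, hence --- since $u^{h}$ is frozen after $T_{h}$ --- for every $t\in[0,T_{max}]$, forcing $T_{h}^{E}=T_{max}$. By continuity of $t\mapsto E_{h}[u^{h}(t)]$ it follows that on $\{T_{h}^{E}<T_{max}\}$ one has $E_{h}[u^{h}(T_{h})]=E_{max,h}$, so $\sup_{t\in[0,T_{max}]}R(t\wedge T_{h})\ge E_{max,h}$ (recall $R\ge E_{h}\ge 0$). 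Markov's inequality combined with the $h$-independent estimate of Proposition~\ref{prop:integral1} (used with $\bar{p}=1$) then yields
\[
\probab[T_{h}^{E}<T_{max}]\le\frac{\expect\!\left[\sup_{t\in[0,T_{max}]}R(t\wedge T_{h})\right]}{E_{max,h}}\le\frac{2\bar{C}}{c_{F}}\,h^{\frac{p-2}{p+2}},
\]
which tends to $0$ as $h\to 0$ because $E_{max,h}=\tfrac12 c_{F}h^{-(p-2)/(p+2)}$ and $p>n>2$. (Taking $\bar{p}$ larger only improves the exponent, which is convenient for the Borel--Cantelli step.)

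\emph{Mass threshold.} By (H3), \eqref{mean2}, and the remark following Lemma~\ref{ExistenceDiscrete}, $\overline{\uh}(0)=\overline{u_{0}^{h}}\ge c>0$ with $c$ independent of $h$, and for $h$ small the hypothesis \eqref{lem:EstDiscMass1} of Lemma~\ref{lem:EstDiscMass} is satisfied. As in the previous step, $\{T_{h}^{M}<T_{max}\}\subseteq\{T_{h}=T_{h}^{M}\}$, and there, by continuity of $t\mapsto\overline{\uh}(t)$, we have $|\overline{\uh}(T_{h})-\overline{\uh}(0)|=\tfrac12\overline{\uh}(0)\ge c/2$; together with $|\overline{u_{0}^{h}}-\overline{u_{0}}|\le Ch$ from \eqref{mean2} this gives $\sup_{t\in[0,T_{max}]}|\overline{\uh}(t)-\overline{u_{0}}|\ge c/4$ once $Ch\le c/4$. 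Hence, by Markov's inequality and Lemma~\ref{lem:EstDiscMass},
\[
\probab[T_{h}^{M}<T_{max}]\le\frac{4}{c}\,\expect\!\left[\sup_{t\in[0,T_{max}]}|\overline{\uh}(t)-\overline{u_{0}}|\right]\le\frac{4\,C(T_{max})}{c}\,h\longrightarrow 0\quad(h\to 0).
\]

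\emph{Conclusion.} Transferring the two bounds to the new basis via Proposition~\ref{prop:conv1}(i), we obtain $\tprobab[\tilde{T}_{h}<T_{max}]\le C'\big(h^{(p-2)/(p+2)}+h\big)$. Choosing a subsequence $h=h_{k}\downarrow 0$ with $\sum_{k}\big(h_{k}^{(p-2)/(p+2)}+h_{k}\big)<\infty$ (for instance $h_{k}=2^{-k}$), the Borel--Cantelli lemma shows that $\tprobab$-almost surely $\tilde{T}_{h_{k}}=T_{max}$ for all $k$ large enough, and in particular $\lim_{k\to\infty}\tilde{T}_{h_{k}}=T_{max}$ $\tprobab$-almost surely. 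I do not expect a genuine obstacle in this argument: the analytic substance is already carried by Proposition~\ref{prop:integral1} and Lemma~\ref{lem:EstDiscMass}, and what remains is the stopping-time bookkeeping --- the two inclusions $\{T_{h}^{E}<T_{max}\}\subseteq\{T_{h}=T_{h}^{E}\}$ and $\{T_{h}^{M}<T_{max}\}\subseteq\{T_{h}=T_{h}^{M}\}$, which rely on the freezing of $u^{h}$ past $T_{h}$ and on continuity of $E_{h}[u^{h}(\cdot)]$ and $\overline{\uh}(\cdot)$ --- together with the routine verification that \eqref{lem:EstDiscMass1} holds for $h$ small.
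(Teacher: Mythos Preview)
Your proof is correct and follows essentially the same approach as the paper: both bound $\probab[T_h^E<T_{max}]$ via Markov's inequality and Proposition~\ref{prop:integral1}, bound $\probab[T_h^M<T_{max}]$ via Markov's inequality and Lemma~\ref{lem:EstDiscMass}, transfer to the new basis by equality of laws, and then pass from convergence in probability to almost sure convergence along a subsequence. The paper is terser---it writes the energy step as $\probab(\{T_h^E<\tau\})=\probab(\sup_{t<\tau}E_h[u^h(t)]\ge E_{max,h})\le Ch^{(p-2)/(p+2)}$ without spelling out your inclusion $\{T_h^E<T_{max}\}\subseteq\{T_h=T_h^E\}$, and it dispatches the subsequence step with ``the assertion follows in a standard way'' rather than invoking Borel--Cantelli explicitly---but the content is the same.
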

\begin{proof}
We have for each $\tau\in (0,T_{max}]$ the estimate

\begin{align*}
&\tprobab(\{\tilde T_h<\tau\})=
\mathbb{P}(\{T_h<\tau\})
= \mathbb{P}(\{T_h ^{E}\wedge T_h ^{M} < \tau\})
\le \mathbb{P}(\{T_h ^{E} < \tau\}) + \mathbb{P}(\{T_h ^{M} < \tau\})\, .
\end{align*}
By Markov's inequality, \eqref{eq:proposition1}, and Lemma~\ref{lem:EstDiscMass}, we infer
\begin{align*}
	\probab(\{T^{E}_h<\tau\})
	=\mathbb{P}\Bigg(\bigg\{\omega\Big|\sup_{t\in[0,\tau)} E_h(u^h(\cdot,t))\geq \tfrac{c_{F} }{2}  h^{-\frac{p-2}{p+2}}\bigg\}\Bigg)\leq C h^{\tfrac{p-2}{p+2}}.
\end{align*}
and
\begin{align*}
	\mathbb{P}(\{T_h ^{M} < \tau\})
	&=
	\mathbb{P}\Bigg(\bigg\{\omega \Big| \sup_{t\in[0,\tau)} |\overline{\uh}(t)-\overline{\uh}(0)|\geq \frac{\overline{\uh}(0)}{2}\bigg\}\Bigg)
	\\
	&=
	\mathbb{P}\Bigg(\bigg\{\omega \Big|(\overline{\uh}(0))^{-1}\sup_{t\in[0,\tau)} |\overline{\uh}(t)-\overline{\uh}(0)|\geq \frac{1}{2}\bigg\}\Bigg)
	\\
	&\le 
	2\essup_{\omega \in \Omega}(\overline{\uh}(0))^{-1}
	  \expect \Bigg[ \sup_{t\in[0,T_{max}]}|\overline{\uh}(t)-\overline{\uh}(0)| \Bigg]  
	\\
	&\leq 2\essup_{\omega \in \Omega}(\overline{\uh}(0))^{-1} h \, C(T_{max})\, .
\end{align*}
By (H2) and the bound from (H3) which also holds for $\uh_{0}$, we infer $\tilde T_h\to T_{max} $ in probability for $h\to 0.$ The assertion  follows in a standard way.
\end{proof}
 
The relationship between  $J^{h}, p^{h}$, and $\uh$ is preserved for the  $\tilde{J}^{h}, \tilde{p}^{h}$, and $\tuh$.
It is not difficult to establish the following result.
\begin{lemma}
\label{lem:conv2}
Under the assumptions of Proposition~\ref{prop:conv1}, we identify 
$\tilde{w}^{h}$ and $\tilde{z}^{h}$ as 
\begin{align}\label{eq:path0}
	\tilde{w}^{h} &= \tilde{u}_{x}^{h}
	\\ \label{eq:path1}
	\tilde{z}^{h} &= \Delta_{h}\tilde{u}^{h}
\end{align}
and 
$\tilde{J}^{h}$ as
\begin{align}
\label{eq:path2}
\tilde{J}^h =\chi_{\tilde T_h} M_h(\tilde u^h)^{1/2} \tilde p^h_x
\end{align}
Furthermore, $\tilde p^h$ satisfies
\begin{align}
\label{eq:path3}
(\tilde p^h,\phi)_h =\chi_{\tilde T^h}\int_\domain \tilde u^h_x \phi_x \,dx + \chi_{\tilde T^h} \left(\mathcal{I}_h[F'(\tilde u^h)],\phi \right)_h 
\end{align}
for all $\phi\in X_h$ and $\tprobab$-almost all $\omega$.
\end{lemma}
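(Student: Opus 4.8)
The plan is to transfer each of the identities \eqref{eq:path0}--\eqref{eq:path3} from the sequence $(u^h,u^h_x,\Delta_h u^h,p^h,J^h,W,u_0^h)$ to $(\tilde u^h,\tilde w^h,\tilde z^h,\tilde p^h,\tilde J^h,\tilde W^h,\tilde u_0^h)$ by exploiting the equality in law provided by part~i) of Proposition~\ref{prop:conv1}. First I would record two structural facts. Since $u^h$ takes values in $C([0,T_{max}];X_h)$ $\probab$-almost surely and $C([0,T_{max}];X_h)$ is a closed subset of $\mathcal X_u$ (uniform limits of mesh-wise affine periodic functions are again mesh-wise affine periodic), the process $\tilde u^h$ takes values in $C([0,T_{max}];X_h)$ $\tprobab$-almost surely; in particular $\tilde u^h(t)$ is for every $t$ a periodic linear finite-element function, so its spatial derivative $\tilde u^h_x$ and its discrete Laplacian $\Delta_h\tilde u^h$ are well-defined elements of $L^2(\ort\times[0,T_{max}])$. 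Moreover, by the energy threshold built into $\tilde T_h$ together with \eqref{eq:assump}--\eqref{eq:lowerbound} of Lemma~\ref{lem:lowerbound}, $\tilde u^h$ is bounded below by $h^{2/(p+2)}$ up to time $\tilde T_h$ and is constant afterwards, so that $F'(\tilde u^h)$ and $M_h(\tilde u^h)$ are well-defined; finally $\tilde T_h$ coincides with the deterministic functional obtained by inserting $\tilde u^h$ into the defining formulas for $T_h^E$ and $T_h^M$, and this functional is Borel since $u\mapsto E_h[u(\cdot)]$ is continuous on finite-element paths and $C([0,T_{max}];\R)\ni C\mapsto\inf\{t:C(t)\ge E_{max,h}\}$ is measurable.

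Next I would prove \eqref{eq:path0}. Fix a countable set $\mathcal D\subset C^1(\bar\ort\times[0,T_{max}])$ of space-periodic test functions that is dense in the $C^1$-norm, and introduce
\[
	B_0:=\Big\{(u,w)\in\mathcal X_u\times\mathcal X_{u_x}\ :\ \int_0^{T_{max}}\!\!\int_\ort\big(w\,\psi+u\,\psi_x\big)\,dx\,dt=0\ \ \forall\,\psi\in\mathcal D\Big\}.
\]
For each fixed $\psi$ the map $w\mapsto\int_0^{T_{max}}\!\int_\ort w\,\psi$ is continuous on $\mathcal X_{u_x}=(L^2(\ort\times[0,T_{max}]))_{weak}$ and $u\mapsto\int_0^{T_{max}}\!\int_\ort u\,\psi_x$ is continuous on $\mathcal X_u$, so $B_0$ is a countable intersection of closed sets and hence Borel. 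By periodicity of $u^h$ and integration by parts, $(u^h,u^h_x)\in B_0$ $\probab$-surely; since $(\tilde u^h,\tilde w^h)$ has the same law, $(\tilde u^h,\tilde w^h)\in B_0$ $\tprobab$-almost surely, and the density of $\mathcal D$ forces $\tilde w^h$ to be the weak spatial derivative of $\tilde u^h$, which is \eqref{eq:path0}. Identity \eqref{eq:path1} follows in exactly the same way, replacing $B_0$ by the Borel set expressing the variational characterisation $(\Delta_h v^h,\Ih[\psi])_h=-\int_\ort v^h_x\,(\Ih[\psi])_x\,dx$ of the discrete Laplacian tested against $\Ih[\psi]$ for $\psi\in\mathcal D$, together with the relation $\tilde w^h=\tilde u^h_x$ already established.

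Finally, \eqref{eq:path2} and \eqref{eq:path3} are obtained by the same equality-in-law principle applied to the defining relations \eqref{DefinitionPseudoFlux} and \eqref{eq:2}. For \eqref{eq:path3}, testing \eqref{eq:2} against the finite-element basis functions $e_i$ and pairing with $\mathcal D$ in time exhibits a Borel subset of $\mathcal X_u\times\mathcal X_p$ on which the identity holds; it contains $(u^h,p^h)$ $\probab$-surely by Lemma~\ref{ExistenceDiscrete}, hence it contains $(\tilde u^h,\tilde p^h)$ $\tprobab$-almost surely. For \eqref{eq:path2} one uses that $J=\chi_{T_h(u)}\,M_h(u)^{1/2}p_x$ defines a Borel relation on $\mathcal X_u\times\mathcal X_p\times\mathcal X_J$: the factor $\chi_{T_h(u)}$ is a Borel functional of $u$ (its single temporal discontinuity at $t=T_h(u)$ being Lebesgue-null), $u\mapsto M_h(u)^{1/2}$ is continuous on finite-element paths (the shift by $\sigma$ keeping $m_\sigma$ bounded away from $0$), $p\mapsto p_x$ is continuous on $X_h$-valued paths, and the product is again paired against $\mathcal D$. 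The only point that requires genuine care is the Borel measurability of all these functionals with respect to the weak $L^2$-topologies carried by $\mathcal X_{u_x}$, $\mathcal X_{\Delta u}$, $\mathcal X_J$ and $\mathcal X_p$, which is precisely the reason for working throughout with pairings against the fixed countable family $\mathcal D$ rather than with pointwise-in-time evaluations.
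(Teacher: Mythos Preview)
Your proposal is correct and follows exactly the standard equality-in-law transfer argument that the paper has in mind when it writes ``It is not difficult to establish the following result'' (the paper gives no proof; the analogous step is carried out in \cite{FischerGruen2018}). One cosmetic point: in your treatment of \eqref{eq:path1} the Borel set should be phrased via the ordinary $L^2$-pairing $\int_0^{T_{max}}\!\int_\ort z\,\psi\,dx\,dt=\int_0^{T_{max}}\!\int_\ort(\Delta_h u)\,\psi\,dx\,dt$ for $\psi\in\mathcal D$, since $\tilde z^h$ is a priori only an $L^2$-element and the lumped product $(z,\Ih[\psi])_h$ requires nodal evaluation; the right-hand side is well defined because you have already shown $\tilde u^h\in C([0,T_{max}];X_h)$, and $u\mapsto\Delta_h u$ is continuous on $X_h$-valued paths.
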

The next step is to verify that $\tilde W$ and $\tilde W^h$ are $Q$-Wiener processes adapted to suitably defined filtrations $(\tilde {\mathcal{F}}_t)_{t\geq 0}$ and $(\tilde{\mathcal{F}}_t^h)_{t\geq 0}$:

We define $(\tilde {\mathcal{F}}_t)_{t\geq 0}$ to be the ${\tprobab}$-augmented canonical filtration associated with $(\tilde u,\tilde W, \tilde u_0)$, i.e.
\begin{align}
\label{eq:path4}
\tilde {\mathcal{F}}_t :=\sigma(\sigma(r_t\tilde u,r_t \tilde W)\cup \{N\in {\tilde{\mathcal{F}}}:\tprobab(N)=0\}\cup\sigma(\tu_0)).
\end{align}
Here, $r_t$ is the restriction of a function defined on $[0,T_{max}]$ to the interval $[0,t]$, $t\in [0, T_{max}]$.

Note that we do not need an explicit dependence of the filtration on $r_t \tilde J$ and $r_t \tilde p$, as the fluxes $\tilde J^h$ and the pressures $\tilde p^h$ depend in a measurable way on $\tilde u^h$ (cf.\ Lemma~\ref{lem:conv2}) and  -- later on -- we will identify
$\tilde J=\lim_{h\rightarrow 0} \tilde J^h = \lim_{h\rightarrow 0} \chi_{\tilde T_h} M_h^{1/2}(\tilde u^h) \tilde p^h_x= M^{1/2}(\tilde u) \tilde p_x$
and $\tilde p=\lim_{h\rightarrow 0}\tilde p^h = -\tilde u_{xx}+F'(\tilde u)$.

Analogously, we introduce the filtrations $(\tilde {\mathcal{F}}^h_t)_{t\geq 0}$ as the $\tprobab$-augmented canonical filtration associated with $(\tilde u^h,\tilde W^h, \tilde u_0^h)$
\begin{align}
\label{eq:path4h}
\tilde {\mathcal{F}}^h_t :=\sigma(\sigma(r_t\tilde u^h,r_t \tilde W^h)\cup \{N\in\tilde {\mathcal{F}}:\tprobab(N)=0\}\cup\sigma(\tilde u_0^h)).
\end{align}
Similarly, as in \cite{FischerGruen2018} Lemma~5.7, we obtain
\begin{lemma}
\label{lem:noise-ident}
The processes $\tilde W^h$ and $\tilde W$ are $Q$-Wiener processes  adapted to the filtrations $(\tilde {\mathcal F}^h_t)_{t\geq 0}$ and $(\tilde {\mathcal{F}}_t)_{t\geq0}$, respectively. They can be written as
\begin{align}
\label{eq:pathnew}
\tilde W^h(t)=\sum_{\ell\in \Z} \lambda_\ell \tilde \beta^h_\ell(t) g_\ell
\end{align}
and
\begin{align}
\label{eq:path5}
\tilde W(t)=\sum_{\ell\in \Z} \lambda_\ell \tilde \beta_\ell(t) g_\ell, 
\end{align}
respectively.
Here, $(\tilde\beta_\ell^h)_{\ell\in\Z}$ and
$(\tilde \beta_\ell)_{\ell\in \Z}$ are families of i.\,i.\,d.\ Brownian motions with respect to $(\tilde {\mathcal F}^h_t)_{t\geq 0}$ and $(\tilde {\mathcal{F}}_t)_{t\geq 0}$, respectively.
\end{lemma}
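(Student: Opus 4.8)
The plan is to run the standard martingale-characterisation argument from the stochastic compactness method, exactly as in \cite{FischerGruen2018}, Lemma~5.7, and as systematically developed in \cite{BreitFeireislHofmanova, BrzezniakOndrejat, HofmanovaSeidler}. I begin with the discrete processes $\tilde W^h$. By part~i) of Proposition~\ref{prop:conv1}, the triple $(r_t\tilde u^h, r_t\tilde W^h, \tilde u_0^h)$ has, for every $t$, the same law as $(r_t u^h, r_t W, u_0^h)$. Since $W$ is a $Q$-Wiener process with decomposition $W=\sum_{\ell\in\Z}\lambda_\ell g_\ell\beta_\ell$, adapted to $(\mathcal F_t)_{t\ge0}$, to which $u^h$ is adapted and $u_0^h$ is $\mathcal F_0$-measurable, the increment $W(t)-W(s)$ is independent of $\sigma(r_su^h, r_sW, u_0^h)$. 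Transferring this under the equality of laws, and noting that augmenting with $\tprobab$-null sets preserves independence, we obtain that $\tilde W^h$ is a $Q$-Wiener process adapted to $(\tilde{\mathcal F}_t^h)_{t\ge0}$; the decomposition \eqref{eq:pathnew} with i.i.d.\ $(\tilde{\mathcal F}_t^h)$-Brownian motions $\tilde\beta_\ell^h := \lambda_\ell^{-1}(\tilde W^h,g_\ell)_{L^2(\domain)}$ (for $\lambda_\ell\neq0$) then follows from the corresponding decomposition of $W$.

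The substantial part is the limit process $\tilde W$. I would characterise it via two martingale properties with respect to $(\tilde{\mathcal F}_t)_{t\ge0}$: for every pair $\phi,\psi$ in a countable set of test functions dense in $L^2(\domain)$ (say finite linear combinations of the $g_\ell$), the processes $t\mapsto (\tilde W(t),\phi)_{L^2(\domain)}$ and $t\mapsto(\tilde W(t),\phi)_{L^2(\domain)}(\tilde W(t),\psi)_{L^2(\domain)} - t\,(Q\phi,\psi)_{L^2(\domain)}$ are $(\tilde{\mathcal F}_t)$-martingales. These hold for $\tilde W^h$ relative to $(\tilde{\mathcal F}_t^h)$ by the Wiener-process property just established. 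To pass to the limit, I fix $0\le s\le t$, a bounded continuous functional $\Theta$ on the appropriate restriction spaces depending only on $(r_s\tilde u, r_s\tilde W, \tilde u_0)$, and the defining identities $\tilde{\mathbb{E}}\big[\Theta\cdot(\tilde W^h(t)-\tilde W^h(s),\phi)_{L^2(\domain)}\big]=0$ together with their quadratic counterpart. Using the $\tprobab$-almost sure convergences $\tilde u^h\to\tilde u$ in $\mathcal X_u$ and $\tilde W^h\to\tilde W$ in $C([0,T_{max}];L^2(\domain))$ from Proposition~\ref{prop:conv1}, together with uniform integrability supplied by the $h$-independent bound $\sup_h\tilde{\mathbb{E}}\big[\|\tilde W^h\|_{C([0,T_{max}];L^2(\domain))}^4\big]<\infty$ (inherited from the Gaussian law of $W$), I let $h\to0$ to obtain the same identities for $\tilde W$; these give the martingale properties with respect to the raw canonical filtration, and a routine argument extends them to the augmentation $(\tilde{\mathcal F}_t)$ defined in \eqref{eq:path4}.

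Once the two martingale properties are in place, the Lévy-type characterisation of $Q$-Wiener processes in Hilbert space (see \cite{BrzezniakOndrejat} or \cite{DebusscheHofmanovaVovelle}) yields that $\tilde W$ is a $Q$-Wiener process adapted to $(\tilde{\mathcal F}_t)_{t\ge0}$; projecting onto the eigenbasis of $Q$ produces \eqref{eq:path5} with $\tilde\beta_\ell := \lambda_\ell^{-1}(\tilde W(\cdot),g_\ell)_{L^2(\domain)}$ an i.i.d.\ family of $(\tilde{\mathcal F}_t)$-Brownian motions (the modes with $\lambda_\ell=0$ being absent). I expect the principal obstacle to be the limit passage in the quadratic identity while keeping the conditioning compatible with $(\tilde{\mathcal F}_t)$: one must arrange that $\Theta$ depends only on the restrictions $r_s$, so that its continuity under the path-space topology combines with the uniform-integrability bound to exchange limit and expectation, and one must verify that the $\tprobab$-null sets adjoined in the augmentation neither spoil the martingale property nor the independence of increments.
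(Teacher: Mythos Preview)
Your proposal is correct and follows precisely the approach the paper indicates: the paper itself does not spell out a proof but simply refers to \cite{FischerGruen2018}, Lemma~5.7, which is exactly the martingale/L\'evy-characterisation argument you have outlined (equality of laws for the $h$-level, then passage to the limit in the linear and quadratic martingale identities using the $\tprobab$-almost sure convergences of Proposition~\ref{prop:conv1} together with uniform integrability from the common Gaussian law). The only cosmetic point is that in the $h$-level identities the test functional $\Theta$ should be evaluated at $(r_s\tilde u^h, r_s\tilde W^h, \tilde u_0^h)$ rather than at the limit objects, with continuity of $\Theta$ and the almost sure convergences then yielding the limit identities.
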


\subsection{Convergence of the deterministic terms}\label{convdeter}
In this subsection, we prove higher regularity of $\tu$ as well as the identification of $\tilde{w}=\tilde{u}_{x}$ and $\tilde{z}=\tilde{u}_{xx}$, the pseudo-flux $\tilde J$
\begin{align}
\tJ=\tu^{\frac{n}{2}}\tp_x\label{eq:ident1} \, ,
\end{align}
and the identification of the pressure $\tp$
\begin{align}
\tp=-\tu_{xx}+F'(\tu).\label{eq:ident2}
\end{align}
For the ease of presentation, let us collect the convergence and boundedness results established so far:
\begin{align}
\tuh &\rightarrow\tu &&\text{ in }\uspace~\tprobab\text{-almost surely},
\label{eq:ident3}
\\
\tilde{w}^{h}&=\tilde{u}^{h}_{x} \rightharpoonup \tilde{w}&&\text{ weakly in } \Jspace~\tprobab\text{-almost surely}
\label{eq:ident3a}
\\
\tilde{z}^{h}&=\Delta_{h}\tuh \rightharpoonup \tilde{z}&&\text{ weakly in } \Jspace~\tprobab\text{-almost surely}
\label{eq:ident3b}
\\
\tilde p^h &\rightharpoonup \tilde p &&\text{ weakly in }\pspace~\tprobab\text{-almost surely},
\\
\tJh&=\chi_{\tilde T_h} M_h(\tuh)^\frac{1}{2}\tp^h_x\rightharpoonup \tilde{J}&&\text{ weakly in } \Jspace~\tprobab\text{-almost surely},
\label{eq:ident4}
\end{align}
\begin{align}
&\E\left[\sup_{t\in [0,T_{max}]}\left(\intort{|\tu^h_x|^2(t)\,dx}+\intort{\Ih [F(\tuh)(t)]}\,dx\right)^{\overline{p}}\right]\leq C(\overline{p},u_0)<\infty\text{ for every } \overline{p}\geq 1,
\label{eq:ident5}
\\
&\E\left[\sup_{t\in [0,T_{max}]}\left(\intort{\tilde u^h(t) \,dx}\right)^{\overline{p}}+\sup_{t\in [0,T_{max}]} \left(\intort{\tilde u^h(t) \,dx}\right)^{-\overline{p}}\right]\leq C(\overline{p},u_0)<\infty\text{ for every } \overline{p}\geq 1,
\label{eq:ident5b}
\\
&\E \Bigg[ \int_0^{T_{max}}\norm {\Delta_h\tuh}{h}^2dt \Bigg]\leq C(u_0)<\infty,\label{eq:ident6}
\\
&\E \Bigg[\int_0^{T_{max}}\int_\domain |\tilde J^h|^2 \,dx\,dt \Bigg]\leq C(u_0)<\infty,\label{eq:ident7}
\\
&\E\Bigg[\bigg(\int_0^{T_{max}} \int_\domain |\tilde p^h_x|^2 + |\tilde p^h|^2 \,dx \,dt\bigg)^{3/4} \Bigg] \leq C(u_0)<\infty,
\label{eq:ident8}
\end{align}
where $\Delta_h\tuh$ satisfies the identity 
\begin{align}
\Delta_h\tuh = \partial_h^{+}(\partial_h^{-}\tuh)=-\tph+\Ih [F'(\tuh)] \label{eq:ident8a}
\end{align}
for $t\in[0,\tilde{T}_h]$, cf. \eqref{eq:2}. 
Moreover, we have for every $\bar p\geq 1$ the estimates
\begin{equation}
\label{eq:nDelta}
\E\Bigg[\bigg(\int_0^{\tilde T_h}\discint (\tui)^{n-2}|(\Delta_h \tuh)_i|^2ds\bigg)^{\bar p}\Bigg] \leq C(\bar p, u_0)
\end{equation}
and
\begin{equation}
\label{eq:nNabla}
\E\Bigg[\bigg(\int_0^{\tilde T_h}\discint (\tui)^{n-4}\left|\tfrac{\tuip-\tui}{h}\right|^4ds\bigg)^{\bar p}\Bigg]\leq C(\bar p, u_0).
\end{equation}

Similarly, as in \cite{FischerGruen2018} Lemma~5.8, we have 
\begin{lemma}
\label{LowerBound}
We have almost surely
\begin{align*}
\inf_{x\in \domain, t\in [0,T_{max}]} \tilde u(x,t)>0.
\end{align*}
\end{lemma}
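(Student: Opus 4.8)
The plan is to transfer the deterministic pointwise lower bound \eqref{eq:lowerboundbyenergy} of Lemma~\ref{lem:lowerbound} from the discrete approximations $\tilde u^h$ to the limit $\tilde u$, using the uniform energy bound \eqref{eq:ident5} together with the strong convergence \eqref{eq:ident3}.

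First I would observe that for every fixed $t$ the function $\tilde u^h(\cdot,t)$ belongs to $X_h$ and is $\tprobab$-almost surely strictly positive: by the equality of laws in Proposition~\ref{prop:conv1} this property is inherited from $u^h(\cdot,t)$, which is strictly positive in view of the cut-off mechanism built into $T_h$ and estimate \eqref{eq:lowerbound}. Moreover the mean value obeys $\overline{\tilde u^h}(t)\ge c>0$ with $c$ independent of $h,\omega,t$ (again by equality of laws with $u^h$). Hence \eqref{eq:lowerboundbyenergy}, applied pointwise in $\omega$ and $t$, gives
\begin{align*}
	\tilde u^h(x,t)^{-1}\ \le\ \frac{C}{c}+C\Big(\sup_{s\in[0,T_{max}]}E_h[\tilde u^h(s)]\Big)^{2/(p-2)}
	\qquad\text{for all }x\in\ort,\ t\in[0,T_{max}].
\end{align*}

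Next, the a priori bound \eqref{eq:ident5} yields $\mathbb{E}\big[\sup_{t\in[0,T_{max}]}E_h[\tilde u^h(t)]\big]\le C(u_0)$ uniformly in $h$. By Fatou's lemma, $\liminf_{h\to0}\sup_{t}E_h[\tilde u^h(t)]<\infty$ holds $\tprobab$-almost surely, so for $\tprobab$-almost every $\omega$ there is an ($\omega$-dependent) subsequence $h_k\to0$ with $\sup_{t}E_{h_k}[\tilde u^{h_k}(t)]\le M(\omega)<\infty$. Inserting this into the previous display produces the $h$-uniform lower bound
\begin{align*}
	\tilde u^{h_k}(x,t)\ \ge\ \Big(\tfrac{C}{c}+C\,M(\omega)^{2/(p-2)}\Big)^{-1}\ =:\ \delta(\omega)\ >\ 0
\end{align*}
for all $x\in\ort$, $t\in[0,T_{max}]$ and all $k$. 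Since $\tilde u^{h}\to\tilde u$ $\tprobab$-almost surely in $C^{\gamma,\gamma/4}(\ort\times[0,T_{max}])$ by \eqref{eq:ident3}, in particular uniformly, letting $k\to\infty$ gives $\tilde u(x,t)\ge\delta(\omega)>0$ for all $(x,t)$, which is the assertion. The only point needing a word of care is that the subsequence extracted from Fatou's lemma is $\omega$-dependent; this is harmless, because the convergence $\tilde u^{h}\to\tilde u$ holds along the full (relabelled) sequence, hence along every subsequence, so the limit may be evaluated along $(h_k)$.

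I expect this Fatou/subsequence bookkeeping to be the main (and rather mild) subtlety; everything else is a direct transcription of \eqref{eq:lowerboundbyenergy}. Should one prefer to avoid the $\omega$-dependent subsequence, an alternative is to combine Fatou's lemma with weak lower semicontinuity to obtain $\sup_{t}\int_\ort F(\tilde u(t))\,dx<\infty$ $\tprobab$-almost surely, i.e. $\tilde u(\cdot,t)^{-1}\in L^p(\ort)$ uniformly in $t$, and then to rule out interior zeros through the spatial H\"older continuity of $\tilde u$: a zero of $\tilde u(\cdot,t_0)$ would force $\tilde u(\cdot,t_0)^{-p}$ to behave like $|x-x_0|^{-p/2}$ near that point once the $C^{1/2}$-in-space bound inherited from Lemma~\ref{lem:hoelder3} is used, which is non-integrable because $p>n>2$; this contradicts the finiteness of $\int_\ort F(\tilde u(t_0))\,dx$.
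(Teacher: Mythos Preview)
Your proof is correct and coincides with the approach the paper defers to in \cite{FischerGruen2018}, Lemma~5.8: one combines the pointwise lower bound \eqref{eq:lowerboundbyenergy} with the uniform-in-$h$ energy estimate \eqref{eq:ident5}, the uniform mass bound, and the $\tprobab$-almost sure uniform convergence \eqref{eq:ident3}. The Fatou/subsequence bookkeeping you flag is indeed the only subtlety, and it is handled exactly as you describe.
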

We have the following result.
\begin{lemma}\label{lem:identificationJandp}
For the limits $\tilde{w}$, $\tilde{z}$, $\tilde J$, and $\tilde p$, we have the identification
\begin{align*}
\tilde{w} = \tilde{u}_{x} \, 
\end{align*}
\begin{align*}
	\tilde{z} = \tilde{u}_{xx} \, 
\end{align*}
\begin{align*}
\tilde J = \tilde u^{n/2} \tilde p_x \,
\end{align*}
and
\begin{align*}
\tilde p = - \tilde u_{xx} + F'(\tilde u)
\end{align*}
pointwise a.\,e.\ almost surely. Furthermore, we have $\tilde u_{xxx}\in L^2(\domain \times [0,T_{max}])$ almost surely.
\end{lemma}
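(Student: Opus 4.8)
\emph{Overall strategy.} The plan is to identify the four limits successively, exploiting: the $\tprobab$-almost sure uniform convergence $\tuh\to\tu$ in $\uspace$ (cf.\ \eqref{eq:ident3}); the strict positivity $\inf\tu>0$ a.s.\ from Lemma~\ref{LowerBound} (which also makes $\tu$ bounded, so $F'(\tu)$ and $F''(\tu)$ are bounded a.s.); the weak $\Jspace$-convergences \eqref{eq:ident3a}--\eqref{eq:ident4}; the convergence $\chi_{\tilde T_h}\to1$ along a subsequence from Lemma~\ref{lem:subsequence}; and the standard fact that on $X_h$ the lumped-mass product $(\cdot,\cdot)_h$ agrees with $(\cdot,\cdot)_{L^2(\domain)}$ up to a quadrature error of order $O(h)$, which disappears in the limit thanks to the uniform bounds from Proposition~\ref{prop:integral1}. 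The scheme of argument parallels Lemma~5.9 of \cite{FischerGruen2018}; the genuinely new points are the $n$-dependent discrete mobility $M_h(\cdot)$ and the porous-medium entries.

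\emph{Identification of $\tilde w$ and $\tilde z$.} First I would pass to the limit in $\int_\domain\tuh_x\varphi\,dx=-\int_\domain\tuh\varphi_x\,dx$ for periodic smooth $\varphi$: since $\tuh\to\tu$ strongly in $\Jspace$ and $\tuh_x\rightharpoonup\tilde w$ weakly in $\Jspace$, this forces $\tilde w=\tu_x$ distributionally, hence $\tu\in L^2([0,T_{max}];\Hper{1})$. For $\tilde z$ I would test the variational identity $(\Delta_h\tuh,\psi^h)_h=-\int_\domain\tuh_x\psi^h_x\,dx$ with $\psi^h=\Ih[\psi]$, $\psi\in C^\infty_{per}(\domain)$, and integrate against $\eta\in C^\infty_c(0,T_{max})$: on the left $\Delta_h\tuh=\tilde z^h\rightharpoonup\tilde z$ weakly in $\Jspace$ (with the $L^2$-bound \eqref{eq:ident6}) and the quadrature error vanishes, while on the right $\tuh_x\rightharpoonup\tu_x$ weakly and $(\Ih[\psi])_x\to\psi_x$ strongly in $\Jspace$. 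This yields $\tilde z=\tu_{xx}$ and $\tu\in L^2([0,T_{max}];\Hsobper{2}{\domain})$.

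\emph{Identification of $\tp$.} Next I would start from \eqref{eq:path3}, test with $\Ih[\psi]$, $\psi\in\Hper{1}$, and integrate against $\eta\in C^\infty_c(0,T_{max})$. Using Lemma~\ref{LowerBound} and the uniform convergence $\tuh\to\tu$, for $h$ small $\tuh$ is bounded below by an $h$-independent positive constant, so $\Ih[F'(\tuh)]\to F'(\tu)$ uniformly on $\domain\times[0,T_{max}]$; together with $\tph\rightharpoonup\tp$ weakly in $\Jspace$ (finite a.s.\ by \eqref{eq:ident8}), $\tuh_x\rightharpoonup\tu_x$ weakly, $(\Ih[\psi])_x\to\psi_x$ strongly, $\chi_{\tilde T_h}\to1$, and the vanishing quadrature error, the limit equation reads $\int_\domain\tp\psi\,dx=\int_\domain\tu_x\psi_x\,dx+\int_\domain F'(\tu)\psi\,dx$ for a.e.\ $t$ and all $\psi\in\Hper{1}$. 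Since $\tu_{xx}\in\Jspace$ by the previous step, an integration by parts yields $\tp=-\tu_{xx}+F'(\tu)$ pointwise a.e., a.s.

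\emph{Identification of $\tJ$ and of $\tu_{xxx}$.} The crucial step is $\tJ$: I would first show $M_h(\tuh)\to m(\tu)=\tu^{n}$ uniformly on $\domain\times[0,T_{max}]$ a.s. For $h$ small the lower cut-off $\sigma=\tfrac12h^{2/(p+2)}$ in $m_\sigma$ is inactive at the relevant node values because $\tuh\ge c_0>0$, while the a.s.\ spatial H\"older bound on $\tuh$ (or the oscillation estimate of Lemma~\ref{lem:lowerbound}) gives $|\tui-\tuip|\to0$ uniformly, so the harmonic-type elementwise means in \eqref{eq:defmobility} stay within $o(1)$ of $m(\tuh)$, and $m(\tuh)\to m(\tu)$ uniformly. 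Hence $\chi_{\tilde T_h}M_h(\tuh)^{1/2}\to\tu^{n/2}$ in $L^\infty$ a.s., and multiplying by $\tph_x\rightharpoonup\tp_x$ (weak in $\Jspace$) gives $\tJh\rightharpoonup\tu^{n/2}\tp_x$ weakly in $\Jspace$; comparing with \eqref{eq:ident4} and using uniqueness of weak limits, $\tJ=\tu^{n/2}\tp_x$. Finally, differentiating $\tp=-\tu_{xx}+F'(\tu)$, using $\tp\in L^2([0,T_{max}];\Hper{1})$ a.s., $F''(\tu)$ bounded a.s., and $\tu_x\in L^\infty([0,T_{max}];L^2(\domain))$ a.s.\ (from \eqref{eq:ident5}), one gets $\tu_{xxx}=F''(\tu)\tu_x-\tp_x\in\Jspace$ a.s. The main obstacle is precisely this passage from the averaged mobility $M_h(\tuh)$ in \eqref{eq:defmobility} to the pointwise value $\tu^{n}$: it simultaneously requires uniform convergence of $\tuh$, strict positivity of the limit (so that both $m_\sigma$ and the cut-off $\sigma$ become inactive), and control of the discrete oscillations.
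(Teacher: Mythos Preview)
Your proposal is correct and follows essentially the same route as the paper. The paper argues the identifications $\tilde w=\tu_x$ and $\tilde z=\tu_{xx}$ exactly as you do (integration by parts, the variational definition of $\Delta_h$, and the $O(h)$ quadrature error between $(\cdot,\cdot)_h$ and $(\cdot,\cdot)_{L^2}$), and then simply defers the remaining identifications of $\tp$, $\tJ$, and the regularity $\tu_{xxx}\in L^2$ to Lemma~5.9 of \cite{FischerGruen2018}; what you have written for those parts is precisely the adaptation of that lemma to the $n$-dependent discrete mobility, using the uniform lower bound from Lemma~\ref{LowerBound} to guarantee $M_h(\tuh)\to\tu^n$ in $L^\infty$.
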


\begin{proof}
	Due to Lemma~\ref{lem:conv2} and integration by parts, we find $\tilde{\mathbb{P}}$-almost surely for any $\phi\in C^\infty(\domain)$
	\begin{align*}
		\int_0^{T_{max}} \int_\domain \tilde{w}^{h} \phi \,dx\,dt 
		=
		\int_0^{T_{max}} \int_\domain \tuh_{x} \phi \,dx\,dt
		&\rightarrow
		\int_0^{T_{max}} \int_\domain \tu_{x} \phi \,dx\,dt
	\end{align*}
	which gives the identification of $\tilde{w}$.
	Regarding $\tilde{z}$ we note
	\begin{align*}
		\mathbb{E}\bigg[
		\bigg|\int_0^{T_{max}} \int_\domain \tilde{z}^h 	\phi^{h} \,dx\,dt
		-\int_0^{T_{max}} \int_\domain  	\Delta_{h}\tuh \phi^{h} \,dx\,dt \bigg|
		\bigg]=0
	\end{align*}
	for any $\phi^{h} \in X_{h}$ due to \eqref{eq:path1}.
	By Proposition~\ref{prop:conv1} we have for periodic $\phi\in C^\infty(\domain)$ and $\phi^h:=\mathcal{I}_h[\phi]\in X_h$
	\begin{align*}
		\int_0^{T_{max}} \int_\domain \tilde{z}^{h} \phi^{h} \,dx\,dt
		&\rightarrow
		\int_0^{T_{max}} \int_\domain \tilde{z} \phi\,dx\,dt \, .
	\end{align*}
	For the discrete Laplacian, we have for $\phi^{h}$ as above
	\begin{align*}
		\int_0^{T_{max}} (\Delta_{h}\tilde{u}^h, \phi^{h})_{h} \,dt
		&=
		- \int_0^{T_{max}} \intort \tuh_{x} \phi_{x}^{h} \,dx\,dt
		\\
		&\rightarrow
		-\int_0^{T_{max}} \int_\domain \tilde{u}_{x} \phi_{x} \,dx\,dt
		= 
		\int_0^{T_{max}} \int_\domain \tilde{u}_{xx} \phi \,dx\,dt.
	\end{align*}
	due to the strong convergence of $\phi_{x}^{h} \to \phi_{x}$ in $L^{\infty}(\ort)$. Moreover,
	\begin{align*}
		&\left|\int_{0}^{T_{max}} (\Delta_{h}\tilde{u}^h,\phi^h)_h \,dt
		-\int_{0}^{T_{max}} \int_\domain \Delta_{h}\tilde{u}^h \phi^{h} \,dx \,dt\right|
		\\
		&\le 
		\int_{0}^{T_{max}} h ||\phi^{h}_{x}||_{L^{2}(\ort)} ||\Delta_{h}\tuh||_{L^{2}(\ort)} \, ds 
		 \to 0 
	\end{align*}
	for $h \to 0$. Combined with Fatou's lemma, this provides 
	\begin{align*}
		\mathbb{E}\bigg[
		\bigg|\int_0^{T_{max}} \int_\domain \tilde z \phi \,dx\,dt
		-\int_0^{\tilde T_h} \int_\domain \tu_{xx} \phi \,dx\,dt \bigg|
		\bigg]=0
	\end{align*}
     for any $\phi\in C^\infty(\domain\times [0,T_{max}])$ which gives the identification of $\tilde{z}$.
	The remaining assertions follow in the same way as in Lemma~5.9 of \cite{FischerGruen2018}.
\end{proof}

\subsection{Convergence of the stochastic integral and proof of the main result} \label{sec:stochint}
Consider for $v\in \Hper{2}$  arbitrary, but fixed, the operator $\Mh:\Omega\times [0,T_{max}]\rightarrow\R$ defined by
\begin{align}
\Mh(t):=&(\uh(t)-u^h_0,\Ph v)_h+\int_0^{t\land T_h} \intort{(M_h(\uh))^{1/2} J^{h} (\Ph v)_x}\,dx\,ds \notag
\\ \notag
& + (C_{Strat}+S) \left(\tThInt\porad(u^h, \Ph v)\, ds+\tThInt \porb(u^h, \Ph v) \, ds\right)
\\
=&~\sum_{|\ell|\le N_h}\int_0^{t\land T_h}\intort{\left( \sqmob(\uh)\lambda_\ell g_\ell \right)_x \Ph v }\,dx\,d\beta_l(s). \label{eq:stident1}
\end{align}
Here, $\Ph:\Hper{1}\rightarrow X_h$ is a projection operator satisfying
\begin{align}
\lim_{h\rightarrow 0} \norm{\Ph v-v}{H^1_{per}}=&0\label{eq:stident2}
\end{align}
for all $v\in H^1_{per}(\domain)$. Observe that by the optional stopping theorem, $\Mh$ is a real valued martingale; that is, denoting by $r_s$ the restriction of a function on $[0,T_{max}]$ onto $[0,s]$, we have
\begin{align}
\E\left([\Mh(t)-\Mh(s)]\Psi(r_s\uh,r_sW)\right)=&0\label{eq:stident3}
\end{align}
for all $0\leq s\leq t\leq T_\text{max}$ and for all $[0,1]$-valued continuous functions $\Psi$ defined on $C^{\gamma,\gamma/4}(\domain\times [0,s])\times C([0,s];L^2(\domain))$.
\begin{lemma}\label{lem:stochident1}
For the quadratic variation of $\Mh$, we have
\begin{align}
&\crossl \Mh\crossr_t=\int_0^{t\land T_h}\sum_{|\ell|\le N_h}\lambda_\ell^2 \left( \intort  (\sqmob(\uh) g_\ell)_x\Ph v \,dx \right)^2\,ds
\notag
\\
&\leq C\chi_{T_h}(t) \norm{v}{H^1_{per}}^2T_{max} \Bigg\{\sup_{\tau\in[0,t\wedge T_h]}\norm{(u^h(\tau))_x}{L^2(\ort)}^2+\sup_{\tau\in[0,t\wedge T_h]}\left(\int_\ort \uh(\tau) \, dx\right)^{\tfrac{n+2}{4-n}}+1\Bigg\}.
\label{eq:stident4}
\end{align}
\end{lemma}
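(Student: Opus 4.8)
The plan is to read off the quadratic variation directly from the Itô-integral representation of $\Mh$ in the last line of \eqref{eq:stident1}, and then to estimate the resulting integrand by elementary inequalities together with Lemma~\ref{lem:uHochN}.

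\emph{Step 1 (identity for $\crossl\Mh\crossr$).} Write $\Mh(t)=\sum_{|\ell|\le N_h}\int_0^{t\wedge T_h} f_\ell(s)\,d\beta_\ell(s)$ with $f_\ell(s):=\lambda_\ell\intort(\sqmob(\uh(s))g_\ell)_x\,\Ph v\,dx$. Each $f_\ell$ is progressively measurable (it depends continuously and measurably on $\uh$ and on the fixed element $\Ph v\in X_h$) and, thanks to the cut-off at $T_h$, square integrable after stopping, so all these integrals are well-defined real-valued martingales. Since the driving Brownian motions $(\beta_\ell)_\ell$ are independent, the mixed covariations $\crossl\int_0^{\cdot} f_\ell\,d\beta_\ell,\,\int_0^{\cdot} f_k\,d\beta_k\crossr_t$ vanish for $\ell\neq k$ and equal $\int_0^t f_\ell^2\,ds$ for $\ell=k$, whence
\[
\crossl\Mh\crossr_t=\int_0^{t\wedge T_h}\sum_{|\ell|\le N_h}\lambda_\ell^2\Bigl(\intort(\sqmob(\uh(s))g_\ell)_x\,\Ph v\,dx\Bigr)^2 ds,
\]
which is the claimed identity. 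Since each summand is stopped at $T_h$, both $\Mh$ and $\crossl\Mh\crossr$ are constant on $[T_h,T_{max}]$; this is precisely what the factor $\chi_{T_h}(t)$ on the right-hand side of \eqref{eq:stident4} records.

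\emph{Step 2 (pointwise estimate of the integrand).} Integrating by parts in $x$ — no boundary term arises by periodicity — gives $\intort(\sqmob(\uh)g_\ell)_x\,\Ph v\,dx=-\intort\sqmob(\uh)\,g_\ell\,(\Ph v)_x\,dx$. By Cauchy--Schwarz and the uniform bound $\|g_\ell\|_{L^\infty(\ort)}^2\le\tfrac2L$ (see \eqref{def:L2-Basis}),
\[
\Bigl(\intort(\sqmob(\uh)g_\ell)_x\,\Ph v\,dx\Bigr)^2\le\tfrac2L\,\|\sqmob(\uh)\|_{L^2(\ort)}^2\,\|(\Ph v)_x\|_{L^2(\ort)}^2.
\]
Summing over $|\ell|\le N_h$, using $\sum_{\ell\in\Z}\lambda_\ell^2<\infty$ (a consequence of the colored-noise hypothesis $\sum_\ell\ell^4\lambda_\ell^2<\infty$ in (H4)) and the $h$-uniform $H^1$-stability of the projection $\Ph$, i.e.\ $\|(\Ph v)_x\|_{L^2(\ort)}\le C\,\|v\|_{H^1_{per}}$ (cf.\ \eqref{eq:stident2}), we arrive at
\[
\crossl\Mh\crossr_t\le C\,\|v\|_{H^1_{per}}^2\int_0^{t\wedge T_h}\|\sqmob(\uh(s))\|_{L^2(\ort)}^2\,ds.
\]

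\emph{Step 3 (controlling $\|\sqmob(\uh)\|_{L^2}^2$ and conclusion).} Since $\sqmob(\uh)=\Ih[|\uh|^{n/2}]$, Lemma~\ref{lem:uHochN} together with the oscillation bound from Lemma~\ref{lem:lowerbound} gives, for $s\le T_h$ and $n\in(2,3)$,
\[
\|\sqmob(\uh(s))\|_{L^2(\ort)}^2\le C\intort|\uh_x(s)|^2\,dx+C\Bigl(\intort\uh(s)\,dx\Bigr)^{\frac{n+2}{4-n}}+C.
\]
Inserting this into the bound of Step 2, estimating $\int_0^{t\wedge T_h}(\cdot)\,ds\le (t\wedge T_h)\,\sup_{\tau\in[0,t\wedge T_h]}(\cdot)\le T_{max}\,\sup_{\tau\in[0,t\wedge T_h]}(\cdot)$, and keeping the factor $\chi_{T_h}(t)$ as in Step 1, we obtain exactly \eqref{eq:stident4}.

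\emph{Expected main difficulty.} There is no genuine obstacle; the lemma is essentially bookkeeping preparing the martingale-identification argument of Section~\ref{sec:stochint}. The only mildly technical point is Step 3 — passing from $\|\Ih[|\uh|^{n/2}]\|_{L^2}^2$ to $\intort|\uh|^n$ (via the norm equivalence on $X_h$ and the oscillation bound) and invoking the Gagliardo--Nirenberg-type Lemma~\ref{lem:uHochN}, which is what produces the precise combination $\|\uh_x\|_{L^2}^2+(\intort\uh)^{(n+2)/(4-n)}+1$ on the right-hand side. Everything else (independence of the $\beta_\ell$ for the quadratic variation, periodic integration by parts, Cauchy--Schwarz, summability of $(\lambda_\ell^2)_\ell$, and $H^1$-stability of $\Ph$) is immediate.
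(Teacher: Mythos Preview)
Your proof is correct and follows essentially the same route as the paper's: integrate by parts to move the derivative onto $\Ph v$, use $\|g_\ell\|_{L^\infty}\le C$ together with $\sum_\ell\lambda_\ell^2<\infty$, and then invoke Lemma~\ref{lem:uHochN} (via the norm equivalence on $X_h$) to produce the combination $\|\uh_x\|_{L^2}^2+(\int_\ort\uh)^{(n+2)/(4-n)}+1$. The only cosmetic difference is that the paper phrases Step~1 through the Hilbert--Schmidt norm of an operator $R(\uh,v)$ and cites Lemma~2.4.3 in \cite{PreRoe}, whereas you read off the quadratic variation directly from the independence of the driving Brownian motions; the content is the same.
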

\begin{proof}
Consider $R(\uh,v):\Omega\times [0,T_{max}]\times L^2(\domain)\rightarrow\R$ defined by
\begin{align*}
(\omega,t,z)\mapsto \chitth (t,\omega) \intort{ (\sqmob(\uh) \mathcal P_{N_h}z)_x\Ph v}\,dx(t,\omega),
\end{align*}
where $\mathcal P_{N_h}$ denotes the orthogonal $L^2$-projection onto $\operatorname{span} \set{g_1,\cdots,g_{N_h}}$. For the Hilbert-Schmidt norm, we get using $||g_\ell||_{L^{\infty}(\cO)}\leq C(L)$ and $\sum_{\ell \in \Z} \lambda_\ell^2 < \infty$
\begin{align*}
	&\norm{R(\uh,v)(t,\omega)}{L_2(\WienerProcessSpace;\R)}^2=\chitth(t)\sum_{|\ell|\le N_h}\lambda_\ell^2 \left(\intort{(\sqmob(\uh) g_\ell)\partial_x\Ph v}\,dx\right)^2
	\\
	&\leq C\chitth(t) \norm{v}{H^1_{per}}^2\int_\ort\left(\sqmob(u^h)\right)^2dx\\
	&\leq C\chitth(t) \norm{v}{H^1_{per}}^2\Bigg\{\int_\ort|u^h_x|^2 \, dx+\left(\int_\ort u_h \, dx\right)^{\tfrac{n+2}{4-n}}+1\Bigg\}, 
\end{align*}
with the last step being based on equivalence of the lumped-masses scalar product and Lemma~\ref{lem:uHochN}.
By Lemma~2.4.3 in \cite{PreRoe}, the result is obtained.
\end{proof}
\begin{remark}
From \eqref{eq:proposition1}, \eqref{eq:stident4}, and our assumptions on initial data and stopping times, we infer that $\Mh$ is a square-integrable martingal.
\end{remark}
Similarly, $\mathcal{M}_{h,v}^2-\int_0^{(\cdot)\land T_h}\sum_{|\ell|\le N_h}\lambda_\ell^2 \left( \intort  (\sqmob(\uh) g_\ell)_x\Ph v \,dx \right)^2\,ds$ is a martingale. For the identification of the stochastic integral in the limit $h\rightarrow 0$, we will  study the processes
\begin{align}
\beta_\ell(t)=&\intort \int_0^t \frac{1}{\lambda_\ell}g_\ell \,dW \,dx \label{eq:stident5}
\end{align}
and their cross variations with $\Mh$.
By the same argumentation as in Lemma~5.12 of \cite{FischerGruen2018}, we get the following result.
\begin{lemma}\label{lem:stochident2}
For $\ell\in\N$ the cross variation $\crossl\Mh,\beta_\ell\crossr_t$ is given by the formula
\begin{align}
\crossl\Mh,\beta_\ell\crossr_t=&
\begin{cases}
	\lambda_\ell\int_0^{t\land T_h}  \intort{(\sqmob(\uh) g_\ell)_x\Ph v} \,dx\,ds, &\ell\leq N_h,
	\\
	0,&\ell>N_h.
\end{cases}\label{eq:stident6}
\end{align}
\end{lemma}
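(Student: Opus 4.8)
The plan is to reduce the computation to the finite-dimensional Itô integral that $\Mh$ actually is. By \eqref{eq:stident1} together with \eqref{eq:1}, the process $\Mh$ coincides with
\begin{align*}
\Mh(t) = \sum_{|\ell'|\le N_h}\int_0^{t\wedge T_h} a_{\ell'}(s)\,d\beta_{\ell'}(s), \qquad a_{\ell'}(s) := \lambda_{\ell'}\intort \big(\sqmob(\uh(s))\,g_{\ell'}\big)_x \Ph v\,dx,
\end{align*}
and, by Lemma~\ref{lem:stochident1} and the remark following it, $\Mh$ is a square-integrable martingale with respect to $(\mathcal F_t)_{t\ge 0}$; the coefficients $a_{\ell'}$ are adapted and square-integrable over $\Omega\times[0,T_{max}]$, the latter being exactly the Hilbert--Schmidt bound carried out in the proof of Lemma~\ref{lem:stochident1}. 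First I would note that each $\beta_\ell$ defined in \eqref{eq:stident5} is nothing but the $\ell$-th Brownian motion appearing in $W=\sum_{\ell'\in\Z}\lambda_{\ell'}g_{\ell'}\beta_{\ell'}$: expanding $dW$ and using the $L^2(\ort)$-orthonormality of $(g_{\ell'})_{\ell'\in\Z}$ gives $\intort\int_0^t \tfrac1{\lambda_\ell}g_\ell\,dW\,dx = \beta_\ell(t)$, so $\beta_\ell$ is itself an Itô integral against the same driving family, with unit coefficient against $\beta_\ell$ and zero against all other $\beta_{\ell'}$.

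Second, I would compute the cross variation by the covariation rule for Itô integrals against a family of independent scalar Brownian motions: since distinct members of $(\beta_{\ell'})_{\ell'\in\Z}$ have vanishing mutual cross variation while $\crossl\beta_{\ell'}\crossr_s=s$, only the diagonal term survives, and it is present only if $\ell\le N_h$ (for $\ell>N_h$ the Brownian motion $\beta_\ell$ does not drive $\Mh$ at all). This gives $\crossl\Mh,\beta_\ell\crossr_t=\int_0^{t\wedge T_h}a_\ell(s)\,ds=\lambda_\ell\int_0^{t\wedge T_h}\intort(\sqmob(\uh)g_\ell)_x\Ph v\,dx\,ds$ for $\ell\le N_h$ and $0$ for $\ell>N_h$, which is precisely \eqref{eq:stident6}. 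Equivalently, and more in line with the martingale-characterization method used afterwards for the limit passage, one may take the right-hand side of \eqref{eq:stident6} as a candidate $A_t$ and verify via the Itô product rule that $\Mh\beta_\ell-A$ is a continuous local martingale; uniqueness of the continuous finite-variation part of the semimartingale $\Mh\beta_\ell$ then identifies $A$ as the cross variation.

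There is no serious obstacle here — the cutoff $N_h$ makes $\Mh$ depend on only finitely many Brownian motions, so no infinite-dimensional subtlety intervenes. The only point demanding a little care, and it is routine, is the bookkeeping around the stopping time $T_h$: since $T_h$ is an $(\mathcal F_t)$-stopping time, $\chi_{T_h}$ is predictable and $t\mapsto t\wedge T_h$ commutes with the stochastic integral and preserves the square-integrable martingale property, so the truncation simply passes through all of the above and re-emerges as the upper limit $t\wedge T_h$ in \eqref{eq:stident6}. The remaining quantitative inputs — integrability of $\sqmob(\uh)$ together with its spatial difference quotient — are supplied by Lemma~\ref{lem:uHochN}, Lemma~\ref{lem:poroussquare}, the Oscillation Lemma~\ref{lem:lowerbound}, and Proposition~\ref{prop:integral1}, exactly as in the proof of Lemma~\ref{lem:stochident1}.
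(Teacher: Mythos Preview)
Your proposal is correct and is exactly the standard argument one would expect; the paper itself omits the proof entirely, deferring to Lemma~5.12 of \cite{FischerGruen2018}, whose content is precisely the covariation computation you carry out. The one cosmetic point is that the sum defining $\Mh$ runs over $|\ell'|\le N_h$ with $\ell'\in\Z$, while the statement restricts to $\ell\in\N$; your argument handles this without change, since independence of the $(\beta_{\ell'})_{\ell'\in\Z}$ is all that is used.
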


In particular, $\Mh \beta_\ell - \lambda_\ell\int_0^{(\cdot)\land T_h}  \intort{(\sqmob(\uh) g_\ell)_x\Ph v} \,dx\,ds$ for $\ell\leq N_h$ and $\Mh \beta_\ell$ for $\ell>N_h$ are martingales, too.

By equality of laws, we deduce that
\begin{align}
\label{eq:stident7}
\begin{split}
\tMh(t):=&(\tuh(t)-\tuh(0),\Ph v)_h
+\int_0^{t\land \tilde T_h}\intort{(M_h(\tuh))^{1/2} \tilde{J}^{h} (\Ph v)_x }\,dx\,ds\\
& +(C_{Strat}+S)\left(\int_0^{t\wedge \tilde T_h}\porad(\tuh,\Ph v) \, ds+\int_0^{t\wedge \tilde T_h}\porb(\tuh,\Ph v) \, ds \right),
\end{split}
\end{align}
\begin{gather}
\tMhq (t)
-\int_0^{t\land \tilde T_h}\sum_{|\ell|\le N_h}\lambda_\ell^2 \left( \intort  (\sqmob(\tuh) g_\ell)_x\Ph v \,dx \right)^2\,ds,
\label{eq:stident71}
\\
\label{eq:stident72}
\tMh (t)\tilde \beta^h_\ell(t)
-\lambda_\ell\int_0^{t\land \tilde T_h} \intort{(\sqmob(\tuh) g_\ell)_x\Ph v} \,dx\,ds
\quad\quad \text{for }\ell\leq N_h,
\\
\tMh (t)\tilde \beta^h_\ell(t)\quad\quad \text{for }\ell>N_h,
\end{gather}
are $(\tilde{\mathcal{F}_t^h})$-martingales, where $\tilde\beta^h_\ell(t):=\int_\ort \int_0^t\lambda_\ell^{-1} g_\ell d\tilde W^h \,dx$. In particular,
\begin{align}
\crossl \tMh \crossr_t =&\int_0^{t\land \tilde T_h}\sum_{|\ell|\le N_h}\lambda^2_\ell\left(\intort{ \sqmob(\tuh) g_\ell (\Ph v)_x }\,dx \right)^2 \,ds\label{eq:stident8}
\end{align}
and
\begin{equation}
  \crossl\tMh,{\tilde \beta^h}_\ell\crossr_t=\begin{cases}
  \lambda_\ell \int_0^{t\land \tilde T_h} \intort{(\sqmob(\tuh) g_\ell)_x \Ph v }\,dx \,ds & \text{ if } \ell\leq N_h
  \\
  0 & \text{ if } \ell> N_h.
  \end{cases}
  \label{eq:stident9}
\end{equation}

Starting point for the passage to the limit $h\rightarrow 0$ are the identities
{\footnotesize{
\begin{gather}
\E ((\tMh(t)-\tMh(s))\Psi(r_s\tuh,r_s\tilde W^h))=0,\label{eq:stident10}
\\
\E \left(\left( \tMhq(t)-\tMhq(s) -\int_{s\land \tilde T_h}^{t\land \tilde T_h}\sum_{|\ell|\le N_h}\lambda_\ell^2 \left( \intort (\sqmob(\tuh) g_\ell)(\Ph v)_x \,dx\right)^2 d\tau\right)\Psi(r_s\tuh,r_s\tilde W^h)\right)=0 \label{eq:stident11}
\end{gather}
}}
and
{\footnotesize{\begin{equation}
\E\left( \left(\tMh(t)\tilde \beta^h_\ell(t)-\tMh(s)\tilde \beta^h_\ell(s)-\int_{s\land \tilde T_h}^{t\land \tilde T_h}\lambda_\ell \intort (\sqmob(\tuh) g_\ell)_x \Ph v \,dx \,d\tau\right)\Psi(r_s\tuh,r_s\tilde W^h)\right)=0\label{eq_stident12}
\end{equation}}}
for all $s\leq t\in [0,T_{\mathrm{max}}]$ and for all $[0,1]$-valued continuous functions $\Psi$ defined on $C^{\gamma,\gamma/4}(\domain\times [0,s])\times C([0,s];L^2(\domain))$.

Let us pass to the limit in equation~\eqref{eq:stident10}.
\begin{lemma}\label{lem:stochconv1}
  For all $[0,1]$-valued continuous functions $\Psi$ defined on $C^{\gamma,\gamma/4}(\domain\times [0,s])\times C([0,s];L^2(\domain))$, we have
{\footnotesize{
\begin{align} \notag
& \E\bigg[\bigg( \intort (\tu(t)-\tu(s))v \,dx +\int_s^t\intort m(\tu)\tp_x v_x \,dx\,d\tau\\
& - (C_{Strat}+S)\left((n-2)\int_s^t\int_\ort\tu^{n-3}\tu_x^2v\,dx\,d\tau+\int_s^t\int_\ort \tu^{n-2}\tu_{xx}v \, dx \, d\tau\right)\bigg)\Psi(r_s\tu,r_s\tilde W)\bigg]=0\label{eq:stident13}
\end{align}}}
for all $0\leq s\leq t< T_{\mathrm{max}}$.
\end{lemma}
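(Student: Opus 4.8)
The strategy is to pass to the limit $h\to 0$ in the identity \eqref{eq:stident10} term by term, using the $\tprobab$-almost sure convergences collected in Proposition~\ref{prop:conv1} together with the uniform bounds \eqref{eq:ident5}--\eqref{eq:nNabla}, the identifications of Lemma~\ref{lem:identificationJandp}, and Lemma~\ref{lem:subsequence} which gives $\tilde T_h\to T_{max}$ $\tprobab$-almost surely. The overall structure mirrors Lemma~5.10 in \cite{FischerGruen2018}, the genuinely new contributions being the porous-medium terms $\porad$ and $\porb$, which are absent in the case $n=2$. To justify interchanging the limit with the expectation, I would first establish uniform integrability: by \eqref{eq:stident4} and the a-priori bounds, each of the random variables appearing inside the expectation in \eqref{eq:stident10} has a $\tprobab$-moment of some order strictly larger than $1$, uniformly in $h$, so Vitali's convergence theorem applies once we have the $\tprobab$-a.s.\ convergence of the integrands.

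\emph{The deterministic diffusion and flux terms.} The term $(\tuh(t)-\tuh(0),\mathcal P_h v)_h$ converges to $\intort(\tu(t)-\tu(0))v\,dx$ using \eqref{eq:ident3} (uniform convergence of $\tuh$), \eqref{eq:stident2}, and the equivalence of $(\cdot,\cdot)_h$ with the $L^2$-scalar product. For the flux term $\int_0^{t\wedge\tilde T_h}\intort (M_h(\tuh))^{1/2}\tilde J^h(\mathcal P_h v)_x\,dx\,ds$, I would combine the weak convergence $\tilde J^h\rightharpoonup\tilde J$ in $L^2(\domain\times[0,T_{max}])$ from \eqref{eq:ident4} with the strong convergence of $(M_h(\tuh))^{1/2}(\mathcal P_h v)_x$: since $\tuh\to\tu$ uniformly and $\tu$ is bounded away from $0$ by Lemma~\ref{LowerBound}, $M_h(\tuh)\to m(\tu)=\tu^n$ uniformly (using the explicit form of $M_h$ recorded after \eqref{eq:defmobility} and continuity of the mobility on compact subsets of $(0,\infty)$), while $(\mathcal P_h v)_x\to v_x$ strongly in $L^2$ by \eqref{eq:stident2}; the product of a strongly and a weakly convergent sequence converges, and after using the identification $\tilde J=\tu^{n/2}\tp_x$ from Lemma~\ref{lem:identificationJandp} the limit is $\int_s^t\intort m(\tu)\tp_x v_x\,dx\,d\tau$ (with $\tilde T_h\to T_{max}$ absorbing the cutoff and $s=0$; for general $s$ one uses the martingale property in the usual way).

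\emph{The porous-medium terms --- the main obstacle.} This is where the new analysis enters. I must show
$$\int_0^{t\wedge\tilde T_h}\porad(\tuh,\mathcal P_h v)\,ds \to (n-2)\int_0^t\intort \tu^{n-3}\tu_x^2\,v\,dx\,d\tau,\qquad \int_0^{t\wedge\tilde T_h}\porb(\tuh,\mathcal P_h v)\,ds\to \int_0^t\intort \tu^{n-2}\tu_{xx}\,v\,dx\,d\tau.$$
For $\porb(\tuh,\mathcal P_h v)=-(\Ih[(\tuh)^{n-2}]\Delta_h\tuh,\mathcal P_h v)_h$, I would write $\Delta_h\tuh=\tilde z^h\rightharpoonup\tilde z=\tu_{xx}$ (weak $L^2$, \eqref{eq:ident3b} and Lemma~\ref{lem:identificationJandp}), and pair it against $\Ih[(\tuh)^{n-2}]\,\mathcal P_h v$, which converges strongly in $L^2$ because $(\tuh)^{n-2}\to\tu^{n-2}$ uniformly (again via Lemma~\ref{LowerBound} and uniform convergence of $\tuh$), the nodal interpolation error is controlled by the uniform continuity of $\tu^{n-2}$, and $\mathcal P_h v\to v$ in $H^1\hookrightarrow L^\infty$; the discrepancy between $(\cdot,\cdot)_h$ and the true $L^2$-product is $O(h\|\Delta_h\tuh\|_h\|\mathcal P_h v\|_{H^1})$ and vanishes after taking expectations by \eqref{eq:ident6} and Cauchy--Schwarz. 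For $\porad(\tuh,\mathcal P_h v)$, whose two summands involve $(\tui)^{n-3}$ times difference quotients of $\tuh$, I would rewrite the difference quotients $\frac{\tuip-\tui}{h}$, $\frac{\tui-\tuim}{h}$ in terms of $\tuh_x$ on the appropriate element (they are piecewise constant and equal $\tuh_x$ there), so that $\porad(\tuh,\mathcal P_h v)$ is, up to quadrature and interpolation errors, a Riemann-sum approximation of $-(n-2)\intort (\tuh)^{n-3}|\tuh_x|^2 v\,dx$; the quadratic quantity $|\tuh_x|^2$ converges only weakly, but it is paired against the \emph{strongly} convergent factor $(\tuh)^{n-3}\mathcal P_h v\to\tu^{n-3}v$ in $L^\infty$, so the product converges, and the error between the nodal values appearing in $\porad$ and the elementwise values is handled with the Oscillation Lemma~\ref{lem:lowerbound} (ratios $\tui/\tuip\le C_{osc}$) and is quantitatively $O(h)$ in the relevant norm, controlled uniformly by \eqref{eq:nNabla}; the term $\frac{\tuip-\tuim}{2h}=\frac12(\frac{\tuip-\tui}{h}+\frac{\tui-\tuim}{h})$ is split accordingly. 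The hardest bookkeeping is ensuring that all the mismatch terms --- differences between $(\tui)^{n-3}$ and $(\tuip)^{n-3}$, between nodal and elementwise evaluations, between $(\cdot,\cdot)_h$ and $L^2$ --- are each $o(1)$ $\tprobab$-a.s.\ and uniformly integrable; for this one repeatedly invokes Lemma~\ref{lem:lowerbound}, the uniform convergence $\tuh\to\tu$, Lemma~\ref{LowerBound}, and the moment bounds \eqref{eq:nDelta}--\eqref{eq:nNabla}. Once each term has been passed to the limit $\tprobab$-a.s., Vitali's theorem and the continuity and boundedness of $\Psi$ together with $\tilde W^h\to\tilde W$ in $C([0,T];L^2(\domain))$ yield \eqref{eq:stident13}.
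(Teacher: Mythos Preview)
Your overall strategy is the same as the paper's, and the treatment of $(\tuh(t)-\tuh(s),\Ph v)_h$, of the flux term, and of $\porb$ is essentially correct and matches the paper closely. There is, however, a genuine gap in your handling of $\porad$.

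You write that ``the quadratic quantity $|\tuh_x|^2$ converges only weakly, but it is paired against the strongly convergent factor $(\tuh)^{n-3}\mathcal P_h v\to\tu^{n-3}v$ in $L^\infty$, so the product converges.'' This reasoning fails: weak convergence of $\tuh_x$ in $L^2$ (which is all you have from \eqref{eq:ident3a}) does \emph{not} imply that $|\tuh_x|^2$ converges weakly to $|\tu_x|^2$---passing nonlinear functions through weak limits is exactly where concentration or oscillation effects can produce a strictly larger limit. Pairing against an $L^\infty$-strongly convergent factor does not cure this; you would identify the wrong limit. The paper closes this gap by invoking Lemma~\ref{lem:nablastrong}, which upgrades $\tuh_x\rightharpoonup\tu_x$ to \emph{strong} convergence in $L^2(\ort\times[0,T_{max}])$ $\tprobab$-almost surely. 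This upgrade comes from the uniform $L^2$-bound on $\Delta_h\tuh$ in \eqref{eq:ident6} (combined with the uniform bound on $\tuh_x$), via a standard interpolation/compactness argument. With strong convergence of $\tuh_x$ in hand, the paper writes the discrete sum as an integral involving $\tuh_x(x)\,\tuh_x(x-h)$ and decomposes the error telescopically: the $(\tuh)^{n-3}\to\tu^{n-3}$ part is controlled by uniform convergence, the $\tuh_x-\tu_x$ part by the newly obtained strong $L^2$ convergence, and the shift $\tu_x(\cdot-h)-\tu_x$ by Fr\'echet--Kolmogorov. Without this strong-convergence step, your argument for $\porad$ does not go through.
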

\begin{proof}
Note that due to Lemma~\ref{lem:subsequence}, we have $\chi_{\tilde T_h}=1$ on $[s,t]$ for $h$ sufficiently small depending on $\omega$.
 By definition \eqref{eq:stident7}, we first discuss the term $(\tuh(t)-\tuh(s),\Ph v)_h$. By the strong convergence of $\tuh$ in $\uspace~ \tprobab$-almost surely and of $\Ph v$ towards $v$ in $L^2(\domain)$, we readily identify
\begin{align}\label{eq:stident131}
\lim_{h\rightarrow 0}(\tuh(t)-\tuh(s),\Ph v)_h=\intort (\tu(t)-\tu(s))v\,dx.
\end{align}
In addition, $\Psi [r_s\tuh, r_s\tilde W^h]$ converges $\tprobab$-almost surely to $\Psi[r_s\tu,r_s\tilde W]$ in $\R$ by continuity of $\Psi$, the $\uspace$-convergence of $\tuh\rightarrow \tu$, and the $C([0,T_{max}];L^2(\ort))$-convergence of $\tilde W^h$.

To discuss the convergence behavior of
\begin{align}
\int_{s\land \tilde T_h}^{t\land \tilde T_h}\intort (M_h(\tuh))^{1/2} \tilde{J}^{h} (\Ph v)_x
\,dx\,d\tau\,
\Psi (r_s\tuh,r_s\tilde W^h),
\end{align}
we proceed as follows.

Due to \eqref{eq:ident3}, $M_{h}(\tuh)$ converges to $\tu^{n}$ in $L^\infty (\ort\times [0,T_{max}])~\tprobab$-almost surely. By \eqref{eq:ident5}, \eqref{eq:ident5b},
and Poincar\'{e}'s inequality, we have 
$\mathbb{E} [ |\sup_{(t,x)\in [0,T_{max}\times \ort]}\tuh|^{2\bar{p}}] < \infty$. 
By Vitali's theorem, 
\begin{align}
||M_h(\tuh)||_{L^{\infty}(\ort \times I)} \rightarrow ||\tu^n||_{L^{\infty}(\ort \times I)}\text{ in } L^{\overline{p}}(\tilde\Omega).
\label{eq:stident14}
\end{align}
Slightly modifying the corresponding results in \cite{FischerGruen2018}, p. 449, we convince ourselves that
\begin{align}
\sqrt{M_h(\tuh)} (\Ph v)_x \Psi (r_s\tuh,r_s\tilde W^h)\rightarrow
\tu^{n/2} v_x \Psi(r_s\tu,r_s \tilde W)
\label{eq:stident15}
\end{align}
strongly in $L^2(\tilde\Omega\times\domain\times [0,T_{max}])$. 

By the weak convergence of $\tilde{J}^{h}$ towards $\tJ$ in $L^2(\domain\times [0,T_{max}])$ $\tprobab$-almost surely, cf. \eqref{eq:ident4}, we infer
\begin{align}
&
\lim_{h\rightarrow 0}~ \int_{s\land \tilde T_h}^{t\land \tilde T_h}\intort \sqrt{M_h(\tuh)}\tilde{J}^{h} (\Ph v)_x \,dx\,d\tau \,\Psi(r_s\tuh,r_s\tilde W^h)\notag
\\
&=  \int_s^t\intort \tu^{n/2} v_x \tJ \,dx\,d\tau \, \Psi (r_s\tu,r_s\tilde W)\label{eq:stident16}
\end{align}
$\tprobab$-almost surely.
We note that the estimate 
$
	M_h(\tuh) \le C \left(\big|\sup_{[0,T_{max}]\times \ort} \tuh \big|^{n} + 1	\right) \, 
$
and the control of any moment of $\sup_{[0,T_{max}]\times \ort} \tuh$, cf. \eqref{eq:ident5b}, implies a uniform bound on moments of $\sup_{[0,T_{max}]\times \ort} M_h(\tuh)$.
\color{black}
Setting 
\begin{align*}
&B_h(\tuh,\tph,v,s,t):=\int_{s\land \tilde T_h}^{t\land \tilde T_h}\intort
\sqrt{M_h(\tuh)}\tilde{J}^{h} (\Ph v)_x \,dx\,d\tau \,\Psi(r_s\tuh,r_s\tilde W^h)
\end{align*}
and using the estimate
\begin{align*}
&|B_h(\tuh,\tph,v,s,t)|
\\
&\leq \left(\int_{s\land \tilde T_h}^{t\land \tilde T_h}\intort|\tilde{J}^{h}|^2\,dx\,d\tau\right)^{1/2}\left(\sup_{\ort\times [0,T_{max}]}M_h(\tuh)\right)^{1/2}\left(\int_0^{T_{max}}\int_{\ort}|(\Ph v)_x|^2\,dx\,d\tau\right)^{1/2}
\end{align*}
as well as 
\eqref{eq:proposition1}, we infer uniform integrability of a $q$-moment of $B_h(\tuh,\tph,v,s,t)$ for a number $q>1.$ Vitali's theorem then entails
\begin{align}\label{eq:stident16a}
\lim_{h\to 0}\expect\left(B_h(\tuh,\tph,v,s,t)\right) = 
\E\left( \int_s^t\intort \tu^{n/2} v_x \tJ \,dx\,d\tau \, \Psi (r_s\tu,r_s\tilde W)\right).
\end{align} 
By Lemma~\ref{lem:identificationJandp},  
we get, in particular, $\tJ=\tu^{n/2}\tp_x$. 
Let us now discuss the convergence of the correction term.
First, we show the convergence of 
\begin{align}\label{eq:convAh}
\int_{0}^{t} A_{\Delta}^{h}(\tuh, \Ph v ) \, ds \rightarrow -(n-2) \int_{0}^{t} \intort (\tu)^{n-3}(\tu_{x})^{2}v \, \,dx \,ds
\end{align} 
for $t \in [0,T_{max}]$.
According to the definition of $A_{\Delta}^{h}$, cf. \eqref{eq:definePorad}, and taking $(\tuip - \tuim ) = (\tuip - \tui)+(\tui - \tuim)$ into account, we see that it is sufficient to discuss terms of the form 
$$h \sum_{i=1}^{L_{h}} (\tuh_{i + k_{0}})^{n-3} \left(\tfrac{\tuip-\tui}{h}\right)\left(\tfrac{\tui-\tuim}{h}\right) \Ph v(ih)$$
for $k_{0} \in \{0,1,2,3\}$. We find
\begin{align}\notag
	&\left|\int_{0}^{t} \discint (\tuh_{i + k_{0}})^{n-3} \left(\tfrac{\tuip-\tui}{h}\right)\left(\tfrac{\tui-\tuim}{h}\right) \Ph v(ih) ds - \int_{0}^{t}\intort \tu^{n-3} (\tu_{x})^{2} v\,  \,dx \,ds \right|
	\\ \notag
	&=
	\left|\int_{0}^{t} \sum_{i=1}^{L_{h}} \int_{ih}^{(i+1)h} (\tuh_{i + k_{0}})^{n-3}  \tuh_{x} \, \tuh_{x}(\cdot -h) \Ph v(ih) dx  ds - \int_{0}^{t}\intort \tu^{n-3} (\tu_{x})^{2} v\,  \,dx \,ds \right|
	\\ \notag
	&\le
	\left|\int_{0}^{t} \sum_{i=1}^{L_{h}} \int_{ih}^{(i+1)h}
	\left((\tuh_{i + k_{0}})^{n-3} - \tu^{n-3}(x) \right)
	 \tuh_{x} \, \tuh_{x}(\cdot -h) \Ph v(ih) \,dx \,ds \right|
	\\ \notag
	&\quad+
	\left|\int_{0}^{t} \sum_{i=1}^{L_{h}} \int_{ih}^{(i+1)h} \tu^{n-3}(x)
	 \left(\tuh_{x} - \tu_{x}\right) \tuh_{x}(\cdot -h) \Ph v(ih) \,dx \,ds\right|
	 \\ \notag
	 &\quad+
	 \left|\int_{0}^{t} \sum_{i=1}^{L_{h}} \int_{ih}^{(i+1)h} \tu^{n-3}(x)
	 \tu_{x} \left(\tuh_{x}(\cdot -h) - \tu_{x} \right) \Ph v(ih) \,dx \,ds \right|
	\\ \notag
	&\quad+
	\left|\int_{0}^{t} \sum_{i=1}^{L_{h}} \int_{ih}^{(i+1)h} \tu^{n-3}  \left(\tu_{x}\right)^{2} \left(\Ph v(ih) -v(x) \right) \,dx \,ds \right|
	\\ \notag
	&=: 
	I_{1}+I_{2}+I_{3}+I_{4} \, .
\end{align}
Due to the positivity almost surely of $\tuh$ and $\tu$ and the uniform convergence $\tuh \rightarrow \tu$, also $(\tuh)^{n-3} \rightarrow \tu^{n-3}$ uniformly. Moreover, $\Ph v$ is uniformly bounded in  $L^{\infty}(\ort)$. Hence, we find the following convergence for $h \rightarrow 0$ almost surely, respectively.
\begin{align}\notag
	|I_{1}| \le &|| (\tuh)^{n-3}(\cdot + k_{0}h) - \tu^{n-3}||_{L^{\infty}([0,T] \times \ort)} ||\Ph v ||_{L^{\infty}(\ort)} 
	\\
	&\cdot \left(\int_{0}^{T_{max}}\intort|\tu_{x}|^{2} \,dx \,ds \right)^{1/2}
	\left(\int_{0}^{T_{max}}\intort|\tu_{x}(\cdot -h)|^{2} \,dx \,ds \right)^{1/2}
	\rightarrow 0 \, .
\end{align}
Using Lemma~\ref{lem:nablastrong}, we have
\begin{align}\notag
|I_{2}| \le &||\tu^{n-3}||_{L^{\infty}([0,T] \times \ort)} ||\Ph v ||_{L^{\infty}(\ort)}
\\ &\cdot  \left(\int_{0}^{T_{max}}\intort|\tu_{x} - \tu_{x}|^{2} \,dx \,ds\right)^{1/2}
\left(\int_{0}^{T_{max}}\intort|\tu_{x}(\cdot -h)|^{2} \,dx \,ds \right)^{1/2}
\rightarrow 0.
\end{align}
The Frechet-Kolmogorov theorem shows
\begin{align}\notag
|I_{3}| \le &||\tu^{n-3}||_{L^{\infty}([0,T] \times \ort)} ||\Ph v ||_{L^{\infty}(\ort)} 
\\ &\cdot 
\left(\int_{0}^{T_{max}}\intort|\tu_{x} |^{2} \,dx \,ds \right)^{1/2}
\left(\int_{0}^{T_{max}}\intort|\tu_{x}(\cdot -h)-\tu_{x}|^{2} \,dx \,ds \right)^{1/2}
\rightarrow 0,
\end{align}
and finally by the assumptions on $\Ph$ and $v$ 
\begin{align}
|I_{4}| \le ||\tu^{n-3}||_{L^{\infty}([0,T] \times \ort)} ||\Ph v -v ||_{L^{\infty}(\ort)} \left(\int_{0}^{T_{max}}\intort|\tu_{x} |^{2} \,dx \,ds \right)
\rightarrow 0
\end{align}
holds.
This shows \eqref{eq:convAh}. Let us show now that $\tilde{\mathbb{P}}$-almost surely
\begin{align}\label{eq:convBh}
	\int_{0}^{t} B_{\Delta}^{h}(\tuh,\Ph v)\, ds \rightarrow - \int_{0}^{t}\intort \tu^{n-2} \Delta \tu v  \,dx \,ds \, . 
\end{align}
Writing $B_{\Delta}^{h}(\tuh, \Ph v)$ as $-\left((\tuh)^{n-2} \Delta_{h} \tuh, \Ph v\right)_{h}$, we find
\begin{align}\notag
	&\left|\int_{0}^{t} B_{\Delta}^{h}(\tuh, \Ph v) ds - \int_{0}^{t} \intort \tu^{n-2} \Delta \tu \, v \,dx \,ds \right|
	\\ \notag 
	&\le 
	\left| - \int_{0}^{T_{max}}\left((\tuh)^{n-2} \Delta_{h} \tuh, \Ph v\right)_{h} ds + \int_{0}^{T_{max}} \intort (\tuh)^{n-2} \Delta_{h}\tuh \Ph \, v \,dx \,ds\right|
	\\\notag
	&\quad +
	\left| - \int_{0}^{T_{max}} \intort (\tuh)^{n-2} \Delta_{h}\tuh \Ph \, v \,dx \,ds + \int_{0}^{T_{max}} \intort (\tu)^{n-2} \Delta\tu \, v \,dx \,ds\right|
	\\
	&=: R_{1} + R_{2}  \,.
\end{align}

ad $R_{1}$:
\begin{align}\notag
	&|R_{1}| \le h \cdot \int_{0}^{T_{max}} ||(\tuh)^{n-2} \Delta_{h} \tuh ||_{L^{2}(\ort)} || \left(\Ph v\right)_{x} ||_{L^{2}(\ort)}\, ds
	\\ \notag 
	&\le
	h \cdot \int_{0}^{T_{max}} ||(\tuh)^{\frac{n-2}{2}}||_{L^{\infty}([0,T]\times \ort)} ||(\tuh)^{\frac{n-2}{2}} \Delta_{h} \tuh ||_{L^{2}(\ort)} || \left(\Ph v\right)_{x} ||_{L^{2}(\ort)}\, ds
	\\ \notag 
	&\le
	h\, C(T_{max})   ||(\tuh)^{n-2}||_{L^{\infty}([0,T]\times \ort)} \left(\int_{0}^{T_{max}} \intort(\tuh)^{n-2} |\Delta_{h} \tuh|^{2} \,dx \,ds\right)^{1/2} || \left(\Ph v\right)_{x} ||_{L^{2}(\ort)} 
	\\ \notag 
	&\rightarrow 0 
\end{align}
by the uniform bounds provided in Proposition~\ref{prop:integral1}.

Ad $R_{2}$:
The weak convergence of $\Delta_{h}\tuh \rightarrow \Delta \tu$ in $L^{2}([0,T_{max}] \times \ort)$ together with the uniform convergence $(\tuh)^{n-2} \rightarrow \tu^{n-2}$  shows $R_{2} \rightarrow 0$, $\tilde{\mathbb{P}}$-almost surely respectively.
Using additionally the arguments above regarding convergence of $\Psi$, we see
\begin{align}
	\int_{s\wedge T_{h}}^{t\wedge T_{h}} A_{\Delta}^{h}(\tuh, \Ph v )\, d\tau \, \Psi (r_s\tuh,r_s\tilde W^h)
	\rightarrow
	 -(n-2) \int_{s}^{t} \intort (\tu)^{n-3}(\tu_{x})^{2}v\,dx\,d\tau \, \Psi (r_s\tuh,r_s\tilde W^h)
\end{align}
and
\begin{align}
	\int_{s\wedge T_{h}}^{t \wedge T_{h}} B_{\Delta}^{h}(\tuh,\Ph v)\, d\tau \,\Psi (r_s\tuh,r_s\tilde W^h)
	\rightarrow 
	- \int_{s}^{t}\intort \tu^{n-2} \Delta \tu \,v \, dx\,d\tau \, \Psi (r_s\tuh,r_s\tilde W^h) \, 
\end{align}
$\tilde{\mathbb{P}}$-almost surely.
To find a bound of appropriate moments, we use Proposition~\ref{prop:integral1} and the Oscillation Lemma~\ref{lem:lowerbound} to obtain 
\begin{align}\notag
	&\left| \int_{s\wedge T_{h}}^{t\wedge T_{h}} A_{\Delta}^{h}(\tuh, \Ph v ) \, d\tau \, \Psi (r_s\tuh,r_s\tilde W^h) \right|
	\\ 
	&\le
	\left|\int_{s\wedge T_{h}}^{t\wedge T_{h}} \discint (\tuh)^{n-4} \left|\tfrac{\tuip-\tui}{h}\right|^{4} \, d\tau\right|^{1/2}
	\left|\int_{s\wedge T_{h}}^{t\wedge T_{h}} \intort (\tuh)^{n-2} \,dx\, d\tau\right|^{1/2}
	||\Ph v||_{L^{\infty}(\ort)} \, 
\end{align}
and
\begin{align}\notag
	&\left|\int_{s\wedge T_{h}}^{t \wedge T_{h}} B_{\Delta}^{h}(\tuh,\Ph v) \, d\tau \, \Psi (r_s\tuh,r_s\tilde W^h)\right|
	\\  
	&\le
	\left|\int_{s\wedge T_{h}}^{t\wedge T_{h}} \discint (\tui)^{n-2} \left|\Delta_{h}\tui\right|^{2} \, d\tau\right|^{1/2}
	\left|\int_{s\wedge T_{h}}^{t\wedge T_{h}} \intort (\tuh)^{n-2} \,dx \, d\tau\right|^{1/2} ||\Ph v||_{L^{\infty}(\ort)} \,.
\end{align}
Together with $n \in (2,3)$ and the energy-entropy estimate the integrability of a $q$-th absolute moment for $q>1$ follows. Thus, we may apply Vitali's theorem to derive the desired convergence properties for
\begin{align}
	\mathbb{E} \Big[\int_{s\wedge T_{h}}^{t\wedge T_{h}} A_{\Delta}^{h}(\tuh, \Ph v ) d\tau \, \Psi (r_s\tuh,r_s\tilde W^h) \Big]
\end{align}
and
\begin{align}
	\mathbb{E} \Big[\int_{s\wedge T_{h}}^{t \wedge T_{h}} B_{\Delta}^{h}(\tuh,\Ph v) d\tau\,\Psi (r_s\tuh,r_s\tilde W^h) \Big] \, .
\end{align}
Combining this with \eqref{eq:stident131} and \eqref{eq:stident16a}, the lemma is proven.
\end{proof}

By straightforward modifications in the proof of Lemma~5.14 in \cite{FischerGruen2018}, we get
\begin{lemma}\label{lem:stochconv2}
For all $[0,1]$-valued continuous functions $\Psi$ defined on $C^{\gamma,\gamma/4}(\domain\times [0,s])\times C([0,s];L^2(\domain))$, we have
\begin{align}
\E \left(\left( \tMq(t)-\tMq(s)-\int_s^t \sum_{|\ell|\le N_h} \lambda_\ell^2 \left(\intort \tu^{n/2} g_\ell v_x \,dx\right)^2 \,d\tau\right)\Psi(r_s\tu,r_s\tilde W)\right)=0
\label{eq:stident17}
\end{align}
for all $0\leq s\leq t<T_{\mathrm{max}}$, where
\begin{align*}
\tM(t):=\int_\ort \left(\tilde u(t)-\tilde u(0)\right)v \,dx +\int_0^t\int_\ort\tu^n\tilde p_x v_x \,dx \,d\tau.
\end{align*}
\end{lemma}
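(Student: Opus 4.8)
The plan is to obtain \eqref{eq:stident17} by passing to the limit $h\to 0$ in the martingale identity \eqref{eq:stident11}. Three convergences have to be combined: (a) $\tMhq\to\tM^2$ in $L^1(\tilde\Omega)$ after multiplication by $\Psi$; (b) the compensator $\int_{s\wedge\tilde T_h}^{t\wedge\tilde T_h}\sum_{|\ell|\le N_h}\lambda_\ell^2\big(\intort(\sqmob(\tuh)g_\ell)(\Ph v)_x\,dx\big)^2\,d\tau$ converges, again after multiplication by $\Psi$, in $L^1(\tilde\Omega)$ to $\int_s^t\sum_{\ell}\lambda_\ell^2\big(\intort\tu^{n/2}g_\ell v_x\,dx\big)^2\,d\tau\,\Psi(r_s\tu,r_s\tilde W)$; and (c) $\Psi(r_s\tuh,r_s\tilde W^h)\to\Psi(r_s\tu,r_s\tilde W)$, which is immediate from continuity of $\Psi$ together with the $\uspace$-convergence $\tuh\to\tu$ and the $\wspace$-convergence $\tilde W^h\to\tilde W$ provided by Proposition~\ref{prop:conv1}. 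Since $N_h\to\infty$, the cutoff in the compensator disappears in the limit by dominated convergence over the index $\ell$.

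For (a), the $\tprobab$-almost sure convergence $\tMh(t)\to\tM(t)$ for each fixed $t$ has essentially been carried out in the course of proving Lemma~\ref{lem:stochconv1}: the terms in \eqref{eq:stident7} converge pointwise in $\tilde\Omega$ to the corresponding terms of $\tM$, using $\chi_{\tilde T_h}=1$ on $[s,t]$ for $h$ small by Lemma~\ref{lem:subsequence}, the uniform convergence $M_h(\tuh)\to\tu^n$, the weak $L^2$-convergence of $\tJh$, and the treatment of $A_\Delta^h$, $B_\Delta^h$ given there. To upgrade this to $L^1(\tilde\Omega)$-convergence of $\tMhq$ we establish equi-integrability: since $\tMh$ is a square-integrable $(\tilde{\mathcal F}^h_t)$-martingale, Doob's and the Burkholder--Davis--Gundy inequalities give, for any $\bar p>1$,
\[
\mathbb{E}\Big[\sup_{t\in[0,T_{max}]}|\tMh(t)|^{2\bar p}\Big]\le C_{\bar p}\,\mathbb{E}\big[\crossl\tMh\crossr_{T_{max}}^{\bar p}\big],
\]
and by \eqref{eq:stident8} together with the bound \eqref{eq:stident4} (valid verbatim with $\tuh$ in place of $\uh$) the right-hand side is controlled by $C\,\|v\|_{H^1_{per}}^{2\bar p}$ times the $\bar p$-th moment of $\sup_\tau\|\tuh_x(\tau)\|_{L^2(\ort)}^2+\sup_\tau\big(\intort\tuh(\tau)\,dx\big)^{(n+2)/(4-n)}+1$, which is bounded uniformly in $h$ by Proposition~\ref{prop:integral1} (via equality of laws) and by \eqref{eq:ident5}, \eqref{eq:ident5b}. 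Taking $\bar p$ slightly larger than $1$ and using that $\Psi$ is $[0,1]$-valued yields equi-integrability of $\big(\tMhq\,\Psi(r_s\tuh,r_s\tilde W^h)\big)_h$, so Vitali's theorem gives (a).

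For (b), one first notes $\sqmob(\tuh)=\Ih[|\tuh|^{n/2}]\to\tu^{n/2}$ uniformly on $\ort\times[0,T_{max}]$ $\tprobab$-almost surely, using $\tuh\to\tu$ uniformly, $\inf_{\ort\times[0,T_{max}]}\tu>0$ from Lemma~\ref{LowerBound}, and elementary interpolation-error estimates; combined with $(\Ph v)_x\to v_x$ in $L^2(\ort)$ from \eqref{eq:stident2}, $\|g_\ell\|_{L^\infty(\ort)}\le C(L)$, $\tilde T_h\to T_{max}$ $\tprobab$-a.s. (Lemma~\ref{lem:subsequence}), and the pathwise domination $\lambda_\ell^2\big(\intort(\sqmob(\tuh)g_\ell)(\Ph v)_x\,dx\big)^2\le C\lambda_\ell^2\|v\|_{H^1_{per}}^2\sup_\tau\|\tuh(\tau)\|_{L^\infty(\ort)}^{n}$, which is summable over $\ell$ since $\sum_\ell\lambda_\ell^2<\infty$, dominated convergence in $\ell$ and in $\tau$ provides the $\tprobab$-almost sure convergence of the compensator integral to $\int_s^t\sum_{\ell}\lambda_\ell^2\big(\intort\tu^{n/2}g_\ell v_x\,dx\big)^2\,d\tau$. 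A moment bound for this family — again obtained from \eqref{eq:stident4}, Proposition~\ref{prop:integral1}, \eqref{eq:ident5} and \eqref{eq:ident5b} via the embedding $H^1_{per}(\ort)\hookrightarrow L^\infty(\ort)$ — furnishes the equi-integrability needed to pass to the limit under $\mathbb{E}[\,\cdot\;\Psi(r_s\tuh,r_s\tilde W^h)]$. Inserting (a), (b), (c) into \eqref{eq:stident11} then yields \eqref{eq:stident17}.

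\emph{The main obstacle} is (a): a bare $L^2(\tilde\Omega)$-bound on $\tMh$ only gives boundedness, not equi-integrability, of the squares $\tMhq$, so one genuinely needs the arbitrary-moment estimates of Proposition~\ref{prop:integral1}, fed through \eqref{eq:stident4} and the Burkholder--Davis--Gundy inequality, to bound $\mathbb{E}[\sup_t|\tMh(t)|^{2\bar p}]$ for some $\bar p>1$. The remaining points — matching the stopping times $\tilde T_h$, handling the noise cutoff $N_h$, and the discrete-to-continuum identifications of $\sqmob(\tuh)$, $M_h(\tuh)$, $(\Ph v)_x$ — are routine given Lemma~\ref{lem:subsequence}, Lemma~\ref{LowerBound}, Lemma~\ref{lem:identificationJandp} and the convergences already recorded in \eqref{eq:ident3}–\eqref{eq:ident8}.
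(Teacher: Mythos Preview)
Your proposal is correct and follows exactly the approach the paper indicates (the paper's own proof is only the remark ``by straightforward modifications in the proof of Lemma~5.14 in \cite{FischerGruen2018}''); you have written out in detail what those modifications are. The decomposition into (a)~almost-sure convergence plus equi-integrability of $\tMhq$ via BDG and the arbitrary-moment bounds of Proposition~\ref{prop:integral1} fed through \eqref{eq:stident4}, (b)~convergence of the compensator by dominated convergence in $\ell$ and $\tau$, and (c)~convergence of $\Psi$, is precisely the template of \cite{FischerGruen2018}.

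One minor remark: the definition of $\tM$ displayed in the statement omits the porous-medium correction terms $-(C_{Strat}+S)\big((n-2)\int_0^t\int_\ort\tu^{n-3}\tu_x^2v\,dx\,d\tau+\int_0^t\int_\ort\tu^{n-2}\tu_{xx}v\,dx\,d\tau\big)$ that appear in Lemma~\ref{lem:stochconv1} and in the proof of Theorem~\ref{MainResult}; this is evidently an editorial slip in the paper, and you have handled it correctly by carrying the limits of $A_\Delta^h$ and $B_\Delta^h$ along (as you note, ``the treatment of $A_\Delta^h$, $B_\Delta^h$ given there''). Without those terms $\tMh$ would not converge to $\tM$, so your reading is the right one.
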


Furthermore, we have the following result on the cross-variation of $\tM$ and $\tilde \beta_\ell$, the proof of which we omit as it is similar to the preceding proofs.
\begin{lemma}\label{cor:stochconv3}
For all $[0,1]$-valued continuous functions $\Psi$ defined on $C^{\gamma,\gamma/4}(\domain\times [0,s])\times C([0,s];L^2(\domain))$, we have
\begin{align}
\E\left(\left(\tM(t)\tilde \beta_\ell(t)-\tM(s)\tilde \beta_\ell(s)-\int_s^t\lambda_\ell \intort (\tu^{n/2} g_\ell)_x v\,dx\,d\tau\right)\Psi(r_s\tu,r_s\tilde W)\right)=0 \label{eq:stident23}
\end{align}
for all $\ell\in \N$ and all $s\leq t\in [0,T_{\mathrm{max}})$.
\end{lemma}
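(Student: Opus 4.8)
The plan is to derive \eqref{eq:stident23} by passing to the limit $h\to 0$ in the discrete cross-variation identity \eqref{eq_stident12}, in exactly the same spirit as the proofs of Lemma~\ref{lem:stochconv1} and Lemma~\ref{lem:stochconv2}. First I would fix $\ell\in\N$ and $0\le s\le t<T_{\mathrm{max}}$. Since $N_h\to\infty$ as $h\to 0$, one has $\ell\le N_h$ for all sufficiently small $h$, and \eqref{eq_stident12} then reads
\[
\E\left(\left(\tMh(t)\tilde\beta^h_\ell(t)-\tMh(s)\tilde\beta^h_\ell(s)-\int_{s\land\tilde T_h}^{t\land\tilde T_h}\lambda_\ell\intort(\sqmob(\tuh)g_\ell)_x\Ph v\,dx\,d\tau\right)\Psi(r_s\tuh,r_s\tilde W^h)\right)=0 .
\]
It therefore suffices to (i) identify the $\tprobab$-almost sure limits of the three bracketed terms and of $\Psi(r_s\tuh,r_s\tilde W^h)$, and (ii) exhibit uniform integrability of a $q$-th moment, $q>1$, of each product appearing inside the expectation, so that Vitali's theorem yields convergence of the expectations.

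For step (i): the convergence $\tMh(\tau)\to\tM(\tau)$ for $\tau\in\{s,t\}$, $\tprobab$-almost surely, is already contained in the proof of Lemma~\ref{lem:stochconv1} — the lumped-mass term converges via \eqref{eq:stident131}, the pseudo-flux term via \eqref{eq:ident3}, \eqref{eq:ident4}, \eqref{eq:stident15} and the identification $\tJ=\tu^{n/2}\tp_x$ from Lemma~\ref{lem:identificationJandp}, and the porous-medium contributions via \eqref{eq:convAh} and \eqref{eq:convBh}; recall that $\tilde T_h\to T_{max}$ $\tprobab$-a.s. by Lemma~\ref{lem:subsequence}, so the stopped integrals coincide with the unstopped ones on $[0,\tau]$ for $h$ small depending on $\omega$. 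Next, $\tilde W^h\to\tilde W$ in $C([0,T_{max}];L^2(\ort))$ $\tprobab$-a.s. by Proposition~\ref{prop:conv1}, and since $\tilde\beta^h_\ell(\tau)=\lambda_\ell^{-1}(\tilde W^h(\tau),g_\ell)_{L^2(\ort)}$ and $\tilde\beta_\ell(\tau)=\lambda_\ell^{-1}(\tilde W(\tau),g_\ell)_{L^2(\ort)}$ by Lemma~\ref{lem:noise-ident}, one gets $\tilde\beta^h_\ell(\tau)\to\tilde\beta_\ell(\tau)$ $\tprobab$-a.s.; continuity of $\Psi$ together with the $C^{\gamma,\gamma/4}$-convergence $\tuh\to\tu$ and the $C([0,s];L^2)$-convergence $\tilde W^h\to\tilde W$ gives $\Psi(r_s\tuh,r_s\tilde W^h)\to\Psi(r_s\tu,r_s\tilde W)$ $\tprobab$-a.s. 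Finally, writing $\intort(\sqmob(\tuh)g_\ell)_x\Ph v\,dx=-\intort\sqmob(\tuh)\,g_\ell\,(\Ph v)_x\,dx$ and using the strong $L^2(\ort\times[0,T_{max}])$-convergence $\sqmob(\tuh)=\Ih[|\tuh|^{n/2}]\to\tu^{n/2}$ (a consequence of the uniform convergence $\tuh\to\tu$, Lemma~\ref{LowerBound}, and standard interpolation-error bounds), together with $(\Ph v)_x\to v_x$ in $L^2(\ort)$ by \eqref{eq:stident2} and a second integration by parts — legitimate since $\tu^{n/2}\in L^2([0,T_{max}];H^1_{per}(\ort))$ by Lemma~\ref{lem:identificationJandp} and Lemma~\ref{LowerBound} — one obtains
\[
\int_{s\land\tilde T_h}^{t\land\tilde T_h}\lambda_\ell\intort(\sqmob(\tuh)g_\ell)_x\Ph v\,dx\,d\tau\;\longrightarrow\;\int_s^t\lambda_\ell\intort(\tu^{n/2}g_\ell)_x v\,dx\,d\tau
\]
$\tprobab$-a.s.

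For step (ii): since $\Psi$ is bounded by $1$, it is enough to bound, uniformly in $h$ and for some $q>1$, the moments $\E\big[|\tMh(\tau)\tilde\beta^h_\ell(\tau)|^q\big]$ and $\E\big[\big|\int_{s\land\tilde T_h}^{t\land\tilde T_h}\lambda_\ell\intort(\sqmob(\tuh)g_\ell)_x\Ph v\,dx\,d\tau\big|^q\big]$. Since $\tilde\beta^h_\ell$ is a Brownian motion by Lemma~\ref{lem:noise-ident}, $\E[|\tilde\beta^h_\ell(\tau)|^r]=C_r\tau^{r/2}$ for every $r\ge 1$; choosing conjugate exponents and applying Hölder's inequality, the first bound reduces to controlling a sufficiently high moment of $\tMh(\tau)$, which is furnished by Proposition~\ref{prop:integral1} and Lemma~\ref{lem:EstDiscMass} together with the estimates already employed for the summands of $\tMh$ in the proof of Lemma~\ref{lem:stochconv1} (e.g.\ \eqref{eq:nDelta}, \eqref{eq:nNabla}, Lemma~\ref{lem:poroussquare}, Lemma~\ref{lem:uHochN}) and the bounds (H3), (H4) on the initial data; the second bound follows from the Oscillation Lemma~\ref{lem:lowerbound}, $\sum_{\ell\in\Z}\lambda_\ell^2<\infty$, and the energy-entropy estimate \eqref{eq:proposition1}. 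With (i) and (ii) in hand, Vitali's theorem lets us pass to the limit in the displayed discrete identity, arriving at precisely \eqref{eq:stident23}. The whole argument is parallel to the preceding proofs; the only point genuinely requiring attention is the bookkeeping of the moment bounds for the various pieces of $\tMh$, and since each of them has already appeared in the proofs of Lemma~\ref{lem:stochconv1} and Lemma~\ref{lem:stochconv2}, no new difficulty arises — which is why the detailed computation can safely be omitted.
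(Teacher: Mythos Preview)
Your proposal is correct and follows precisely the strategy the paper intends: the paper omits this proof as ``similar to the preceding proofs'', and you have filled it in by passing to the limit in \eqref{eq_stident12} via the almost-sure convergences already established in the proof of Lemma~\ref{lem:stochconv1}, the $C([0,T_{max}];L^2)$-convergence $\tilde W^h\to\tilde W$ for $\tilde\beta^h_\ell\to\tilde\beta_\ell$, and Vitali's theorem with the requisite uniform moment bounds. One point worth flagging explicitly: in taking the limit of $\tMh$ you (correctly) include the porous-medium contributions from \eqref{eq:convAh}--\eqref{eq:convBh}, whereas the definition of $\tM$ displayed below \eqref{eq:stident17} omits them; this is an oversight in the paper, and your reading is the right one, since with that truncated $\tM$ the identity \eqref{eq:stident23} would actually fail (the omitted finite-variation part multiplied by $\tilde\beta_\ell$ is not a martingale increment).
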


Straightforward adaptation of Lemma~5.16 in \cite{FischerGruen2018} lead to the following result.
\begin{lemma}\label{lem:stochconv4}
We have 
\begin{align}
\tM(t)=\sum_{\ell \in \Z}\int_0^t\lambda_\ell\intort (\tu^{n/2} g_\ell)_x v\,dx\,d\tilde \beta_\ell.\label{eq:stident24}
\end{align}
\end{lemma}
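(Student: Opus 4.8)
The plan is to identify the martingale $\tM$ with the stochastic integral by matching it, term by term, against the associated family of Brownian motions, following the classical martingale-representation argument used in \cite{HofmanovaSeidler,BreitFeireislHofmanova} and adapted in Lemma~5.16 of \cite{FischerGruen2018}. First I would fix $v\in\Hper{1}$ and pass to the limit $h\to 0$ in the identities \eqref{eq:stident10}, \eqref{eq:stident11}, and \eqref{eq_stident12}. The key inputs are already assembled: Lemma~\ref{lem:stochconv1} gives that $\tMh(t)$ converges (after testing against $\Psi$ and taking expectations) to
\begin{align*}
	\tM(t) = \intort(\tu(t)-\tu(0))v\,dx + \int_0^t\intort \tu^n\tp_x v_x\,dx\,d\tau,
\end{align*}
using the identification $\tJ=\tu^{n/2}\tp_x$ and $\tp=-\tu_{xx}+F'(\tu)$ from Lemma~\ref{lem:identificationJandp} together with the convergence of the porous-medium correction terms; Lemma~\ref{lem:stochconv2} gives the limit of the quadratic-variation compensator, $\int_0^t\sum_{\ell\in\Z}\lambda_\ell^2(\intort \tu^{n/2}g_\ell v_x\,dx)^2\,d\tau$ (the cutoff $N_h$ disappears since $N_h\to\infty$ and $\sum_\ell\ell^4\lambda_\ell^2<\infty$ by (H4)); and Lemma~\ref{cor:stochconv3} gives the limit of the cross-variation with each $\tilde\beta_\ell$, namely $\lambda_\ell\int_0^t\intort(\tu^{n/2}g_\ell)_x v\,dx\,d\tau$ (after an integration by parts that is licit by Lemma~\ref{lem:identificationJandp}, since $\tu_{xxx}\in L^2$ a.s.).

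Next I would invoke the standard criterion (as in Lemma~2.1 of \cite{HofmanovaSeidler} or the corresponding step in \cite{BreitFeireislHofmanova}) which says: if $\tM$ is a continuous square-integrable $(\tilde{\mathcal F}_t)$-martingale with $\tM(0)=0$, if $(\tilde\beta_\ell)_{\ell\in\Z}$ are i.i.d.\ $(\tilde{\mathcal F}_t)$-Brownian motions (Lemma~\ref{lem:noise-ident}), if $\crossl\tM,\tilde\beta_\ell\crossr_t = \lambda_\ell\int_0^t c_\ell(\tau)\,d\tau$ with $c_\ell(\tau)=\intort(\tu^{n/2}g_\ell)_x v\,dx$, and if moreover $\crossl\tM\crossr_t = \int_0^t\sum_\ell\lambda_\ell^2 c_\ell(\tau)^2\,d\tau$, then necessarily
\begin{align*}
	\tM(t) = \sum_{\ell\in\Z}\int_0^t \lambda_\ell\, c_\ell(\tau)\,d\tilde\beta_\ell(\tau),
\end{align*}
which is exactly \eqref{eq:stident24}. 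Concretely one checks that the difference $N(t):=\tM(t)-\sum_\ell\int_0^t\lambda_\ell c_\ell\,d\tilde\beta_\ell$ is a continuous martingale whose quadratic variation vanishes: expanding $\crossl N\crossr_t$ produces $\crossl\tM\crossr_t - 2\sum_\ell\lambda_\ell\int_0^t c_\ell\,d\crossl\tM,\tilde\beta_\ell\crossr + \sum_\ell\lambda_\ell^2\int_0^t c_\ell^2\,d\tau$, and the three terms cancel by the identities just listed. Hence $N\equiv 0$.

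The main obstacle is not the abstract representation step — that is essentially verbatim from \cite{FischerGruen2018} — but verifying the hypotheses of the representation lemma in the present (non-quadratic, $n\neq 2$) setting with sufficient uniform integrability to justify passing to the limit inside expectations. The delicate points are: (i) the additional porous-medium correction terms $A_\Delta^h,B_\Delta^h$ in the definition of $\tMh$, whose convergence and $q$-moment integrability ($q>1$) rest on Proposition~\ref{prop:integral1} (the bounds \eqref{eq:nDelta}, \eqref{eq:nNabla}) and the Oscillation Lemma~\ref{lem:lowerbound} — already handled in Lemma~\ref{lem:stochconv1}; (ii) ensuring $\tM$ really is a \emph{square-integrable} continuous martingale, which follows from the Remark after Lemma~\ref{lem:stochident1} combined with the limit passage, the continuity in $t$ being inherited from the $\uspace$-regularity of $\tu$ and the $L^2$-in-time integrability of $\tu^n\tp_x$; and (iii) handling the series over $\ell\in\Z$: one must argue that the sum in \eqref{eq:stident24} converges in $L^2(\tilde\Omega;C([0,T_{max}]))$, which it does because $\sum_\ell\lambda_\ell^2\E\int_0^{T_{max}}(\intort(\tu^{n/2}g_\ell)_x v\,dx)^2\,d\tau \le C\norm{v}{L^2}^2\sum_\ell\ell^2\lambda_\ell^2\,\E\int_0^{T_{max}}\intort(\tu^{n/2})^2 + \tu^n\,dx\,d\tau<\infty$ by (H4) and the energy-entropy bounds \eqref{eq:ident5}, \eqref{eq:ident5b}. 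Once these are in place, the conclusion \eqref{eq:stident24} follows directly, and — taking into account that $\tM(t)$ for varying $v\in\Hper{1}$ encodes the weak formulation — this yields the stochastic-integral term $-\sum_{\ell\in\Z}\lambda_\ell\int_0^t\intort\tu^{n/2}g_\ell\phi_x\,dx\,d\beta_\ell$ in \eqref{eq:weakformula}, completing the identification of all terms in Definition~\ref{DefinitionMartingaleSolution}.
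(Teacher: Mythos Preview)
Your proposal is correct and follows essentially the same route as the paper, which simply cites ``Straightforward adaptation of Lemma~5.16 in \cite{FischerGruen2018}'' without spelling out the argument; your outline---pass to the limit in \eqref{eq:stident10}, \eqref{eq:stident11}, \eqref{eq_stident12} via Lemmas~\ref{lem:stochconv1}, \ref{lem:stochconv2}, \ref{cor:stochconv3}, then show the difference $N(t)=\tM(t)-\sum_\ell\int_0^t\lambda_\ell c_\ell\,d\tilde\beta_\ell$ has vanishing quadratic variation---is precisely that adaptation. One small point: as in the paper, $v$ is initially taken in $\Hper{2}$ (cf.\ the line before \eqref{eq:stident1}); the extension to $\Hper{1}$ is done by density only after the weak formulation is assembled in the proof of Theorem~\ref{MainResult}.
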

It remains to establish Theorem~\ref{MainResult}.
\begin{proof}[Proof of Theorem~\ref{MainResult}]
From Proposition~\ref{prop:conv1}, Lemma~\ref{lem:noise-ident}, and Lemma~\ref{lem:identificationJandp}, we infer the existence of a stochastic basis $(\tilde\Omega, \tilde {\mathcal F}, (\tilde{\mathcal F}_t)_{t\geq 0}, \tprobab)$, of a Wiener process $\tilde W(t)=\sum_{|\ell|\le N_h} \lambda_\ell\tilde\beta_\ell(t)g_\ell$, and of random variables 
\begin{gather*}
	\tilde u\in L^2(\tilde\Omega; C^{\gamma, \gamma/4}(\ort\times [0,T_{max}])),
	\\
	\tilde p\in L^{3/2}(\tilde\Omega;L^2([0,T_{max}];H^1_{per}(\ort))),
	\\
	\tilde J\in L^2(\tilde\Omega\times\ort\times [0,T_{max}])
\end{gather*}
satisfying
\begin{align*}
	\tilde J&=\tu^{n/2}\tilde p_x \qquad &&\tprobab\text{-almost surely in } L^2(\ort\times [0,T_{max}]), \\
	\tilde p &=-\tu_{xx}+F'(\tu) \qquad &&\tprobab\text{-almost surely in } L^2(\ort\times [0,T_{max}]),\\
	\tu &\in L^2([0,T_{max}];H^3_{per}(\ort)) \qquad &&\tprobab\text{-almost surely.} 
\end{align*}
Furthermore, we have $\Lambda=\tprobab\circ \tu_0^{-1}$ by construction.
Lemma~\ref{lem:stochconv1} implies that for arbitrary $v\in  H^2_{per}(\ort)$ 
\begin{align*}
\tilde{\mathcal M}_v(t)=&\int_{\ort}(\tu(t)-\tu(0))v\,dx + \int_0^t\int_\ort \tu^n\tilde p_x v_x \,dx \,d\tau\\
&\quad - (C_{Strat}+S)\left((n-2)\int_0^t\int_\ort\tu^{n-3}\tu_x^2 v\,dx \,d\tau+\int_0^t\int_\ort\tu^{n-2}\tu_{xx} v \,dx \,d\tau\right)
\end{align*}
is an $(\tilde{\mathcal F}_t)_{t\geq 0}$-martingale. Due to Lemma~\ref{lem:stochconv4}, we conclude
\begin{align*}
	&\int_\ort \left(\tu(t)-\tu(0)\right)v \,dx
	+\int_0^t\int_\ort \tu^n \tilde p_x v_x \,dx \,d\tau\\
	&\qquad =(C_{Strat}+S)\left((n-2)\int_0^t\int_\ort\tu^{n-3}\tu_x^2 v\,dx \,d\tau+\int_0^t\int_\ort\tu^{n-2}\tu_{xx} v\,dx \,d\tau\right)\\
	&\qquad\qquad +\sum_{\ell\in \Z} \int_0^t\lambda_\ell\int_\ort (\tilde u^{n/2} g_\ell)_x v \,dx\,d\tilde\beta_\ell.
\end{align*}
Furthermore, Lemma~\ref{LowerBound} almost surely provides a positive lower bound for $\tilde u$.

Finally, by Fatou's lemma and Proposition~\ref{prop:integral1}, we have for any $\bar p\geq 1$
\begin{align*}
	&\mathbb{E} \left[\liminf_{h\rightarrow 0} \Big(\sup_{t\in [0,T_{max}]} E_h[\tu^h]^{\bar p}+ \left(\int_0^{T_{max}} \int_\ort |\tilde J^h|^2 \,dx\,dt\right)^{\bar{p}}\Big)\right]
	\\
	&\leq
	\liminf_{h\rightarrow 0}\mathbb{E} \left[\sup_{t\in [0,T_{max}]} E_h[\tu^h]^{\bar p}+ \left( \int_0^{T_{max}} \int_\ort |\tilde J^h|^2 \,dx\,dt\right)^{\bar{p}}\right]
	\\&
	\leq C(\bar p,u_0,T_{max}).
\end{align*}

By the almost-sure convergence of $\tu^h$ in $\uspace$ and the almost-sure strict positivity of the limit $\tu$, we deduce $\int_\ort \Ih[F(\tu^h)]\,dx \rightarrow \int_\ort F(\tu)\,dx$ in $L^\infty([0,T_{max}])$ almost surely. By the lower semicontinuity of the $L^2(\domain\times [0,T_{max}])$ norm with respect to weak convergence 
and the lower semicontinuity of the $L^\infty([0,T_{max}];H^1(\domain))$ norm with respect to convergence in the sense of distributions (for $\tu^h\rightarrow \tu$), we get  
\begin{align*}
&\mathbb{E} \left[\sup_{t\in [0,T_{max}]} E[\tu]^{\bar p}+\left(\int_0^{T_{max}} \int_\ort |\tilde J|^2 \,dx\,dt\right)^{\bar{p}}\right]
\leq C(\bar p,u_0,T_{max}).
\end{align*}
To use the same arguments as above regarding the other terms in estimate \eqref{mainresultest}, we need to derive estimates in suitable $L^{p}$-spaces. Let us for example consider the term $(\tuh)^{n-4} (\tuh)_{x}^{4}$. 
Using the positivity of $\tuh$ and convexity, we may estimate $\mathcal{I}_{h}[(\tuh)^{n-4}]\ge (\tuh)^{n-4} $ pointwise in $\ort$. Thus, from the a-priori estimates in Proposition~\ref{prop:integral1} and Lemma~\ref{LowerBound}, as well as the fact that $\tuh_{x}$ is constant on the intervals $(i-1,i)$, $i=1,\dots,L_{h}$, we deduce for any $q>1$
\begin{align}\notag \label{proofmainub}
	&\mathbb{E}\Bigg[ \left( \int_0^{T_{max}}\int_\domain  (\tuh)^{n-4} (\tuh)_{x}^{4} \, dx \, ds \right)^{q} \Bigg]
	\le
	\mathbb{E}\Bigg[ \left(\int_0^{t\wedge T_h} \int_\domain  \mathcal{I}_{h}[(\tuh)^{n-4}] (\tuh)_{x}^{4} \, dx \, ds \right)^{q} \Bigg]
	\\ \notag
	&=
	\mathbb{E}\Bigg[ \Bigg(\int_0^{t\wedge T_h} h \sum_{i=1}^{L_{h}} \frac{(\tuh_{i})^{n-4} + (\tuh_{i-1})^{n-4}}{2}  (\tuh)_{x}^{4}\vert_{((i-1)h,ih)} \, ds \Bigg)^{q}\Bigg]
	\\ 
	&\le
	C_{osc}
	\mathbb{E}\Bigg[ \bigg( \int_0^{t\wedge T_h} h \sum_{i=1}^{L_{h}} (\tuh_{i})^{n-4}  \bigg(\frac{\tuh_{i} - \tuh_{i-1}}{h}\bigg)^{4} \, ds \bigg)^{q} \Bigg] \le C(\bar{p}, u_{0}, T_{max}) 
\end{align}   
independently of $h$.
We deduce the weak convergence of $(\tuh)^{\frac{n-4}{4}} (\tuh)_{x}$ in $L^{4q}(\Omega ; L^{4}(\ort \times [0,T_{max}]))$ and identify the limit 
via the identity $(\tuh)^{\frac{n-4}{4}} (\tuh)_{x} = \frac{4}{n}\left((\tuh)^{\frac{n}{4}}\right)_{x}$. Then, we may use Fatou's lemma and lower-semi-continuity of the $L^{4q}(\Omega; L^{4}(\domain\times [0,T_{max}]))$ norm with respect to weak convergence to conclude.
As the other terms are treated similarly, we have finished the proof. 
\end{proof}

\section{Results of calculus related to the discrete Stratonovich correction term}\label{sec:CalcPorMed}

In the derivation of the energy-entropy estimate Proposition~\ref{prop:integral1}, we have used several identities and estimates regarding the operators $\porad$ and $\porb$, see \eqref{eq:definePorad} and \eqref{eq:definePorb}, i.e. the discrete version of the Stratonovich correction term.
In this section, we provide the corresponding arguments.
Moreover, we give a useful formula for the mass deviation due to the discretization of the Stratonovich correction term.
In order to simplify computations when testing \eqref{eq:definePorad} with powers of $u^{h}$, we introduce
\begin{align}\notag	\label{discgrad2}
	&A_{\nabla}^{h}(u^{h},v^{h}) 
	:= -\tfrac{(n-2)}{12} h \sum_{i=1}^{L_{h}}
	\left\{ \left((\ui)^{n-3} + (\uip)^{n-3}\right) \left|\tfrac{\uip - \ui}{h}\right|^{2} 
	\right. 
	\\ \notag
	& \left. +  \left((\ui)^{n-3} + (\uim)^{n-3}\right) \left|\tfrac{\ui - \uim}{h}\right|^{2}\right\} v_{i}^{h}
	\\ \notag
	&-\tfrac{(n-2)}{24}h \sum_{i=1}^{L_{h}} \left((\uip)^{n-3} + 2 (\ui)^{n-3} + (\uim)^{n-3}\right) \left|\tfrac{\uip-\uim}{h}\right|^{2} v_{i}^{h}
	\\ 
	&-\tfrac{(n-2)}{24}h \sum_{i=1}^{L_{h}} \left(2(\ui)^{n-3} - (\uip)^{n-3} - (\uim)^{n-3}\right) \left(\left|\tfrac{\uip-\ui}{h}\right|^{2}  + \left|\tfrac{\ui-\uim}{h}\right|^{2}\right)v_{i}^{h}  \,.
\end{align}
The following identity holds.
\begin{lemma}\label{lem:bilineq}
	For all $u^{h}, v^{h} \in X_{h}$, we have
	\begin{align}
		A_{\Delta}^{h}(u^{h}, v^{h}) = A_{\nabla}^{h}(u^{h}, v^{h}) \, . 
	\end{align}
\end{lemma}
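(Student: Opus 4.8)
The plan is to prove the identity by a direct algebraic expansion carried out \emph{index by index}: both $\porad(u^{h},v^{h})$ and $\poran(u^{h},v^{h})$ are of the form $\discint (\cdots)\, v_{i}^{h}$, so it suffices to show that for each fixed $i$ the coefficient of $v_{i}^{h}$ agrees on the two sides. No summation by parts or index shift is needed.

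First I would fix $i$ and introduce the shorthand $a:=(\uim)^{n-3}$, $b:=(\ui)^{n-3}$, $c:=(\uip)^{n-3}$, together with the difference quotients $\delta^{+}:=\tfrac{\uip-\ui}{h}$ and $\delta^{-}:=\tfrac{\ui-\uim}{h}$, and I would record the elementary identities $\tfrac{\uip-\uim}{2h}=\tfrac12(\delta^{+}+\delta^{-})$ and $\big|\tfrac{\uip-\uim}{h}\big|^{2}=(\delta^{+}+\delta^{-})^{2}$. Substituting the first of these into \eqref{eq:definePorad}, the $i$-th summand of $\porad(u^{h},v^{h})$ equals $-\tfrac{n-2}{6}h\,\big[\,b((\delta^{+})^{2}+(\delta^{-})^{2})+\tfrac12\big((b+c)\delta^{+}+(a+b)\delta^{-}\big)(\delta^{+}+\delta^{-})\,\big]\,v_{i}^{h}$; multiplying out the product and collecting the monomials $(\delta^{+})^{2}$, $(\delta^{-})^{2}$, $\delta^{+}\delta^{-}$ turns the bracket into $\tfrac12\big[(3b+c)(\delta^{+})^{2}+(a+3b)(\delta^{-})^{2}+(a+2b+c)\delta^{+}\delta^{-}\big]$, so that the $i$-th summand of $\porad$ is $-\tfrac{n-2}{12}h\big[(3b+c)(\delta^{+})^{2}+(a+3b)(\delta^{-})^{2}+(a+2b+c)\delta^{+}\delta^{-}\big]v_{i}^{h}$.

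Next I would run the same reduction for $\poran(u^{h},v^{h})$. Expanding $\big|\tfrac{\uip-\uim}{h}\big|^{2}=(\delta^{+})^{2}+2\delta^{+}\delta^{-}+(\delta^{-})^{2}$ in the middle line of \eqref{discgrad2} and adding the three groups of terms (after rewriting the first line with the common factor $-\tfrac{n-2}{24}h$), the coefficient of $v_{i}^{h}$ becomes $-\tfrac{n-2}{24}h$ times $2(b+c)(\delta^{+})^{2}+2(a+b)(\delta^{-})^{2}+(a+2b+c)\big((\delta^{+})^{2}+2\delta^{+}\delta^{-}+(\delta^{-})^{2}\big)+(2b-a-c)\big((\delta^{+})^{2}+(\delta^{-})^{2}\big)$. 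Collecting terms, the coefficients of $(\delta^{+})^{2}$, $(\delta^{-})^{2}$ and $\delta^{+}\delta^{-}$ come out to be $2(3b+c)$, $2(a+3b)$ and $2(a+2b+c)$ (the $a$- and $c$-contributions cancelling in the first two), so the $i$-th summand of $\poran$ equals $-\tfrac{n-2}{12}h\big[(3b+c)(\delta^{+})^{2}+(a+3b)(\delta^{-})^{2}+(a+2b+c)\delta^{+}\delta^{-}\big]v_{i}^{h}$ --- exactly the expression found for $\porad$. Summing over $i=1,\dots,L_{h}$ gives $\porad(u^{h},v^{h})=\poran(u^{h},v^{h})$.

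The argument is entirely elementary and I do not expect any genuine obstacle --- it is a matter of careful bookkeeping. The two points that require attention are keeping the two different normalizations $\tfrac{1}{12}$ and $\tfrac{1}{24}$ in \eqref{discgrad2} straight, and checking that the three separate sums in the definition of $\poran$ --- which individually are not manifestly of the symmetric ``$(\delta^{\pm})^{2},\ \delta^{+}\delta^{-}$'' form --- do recombine, after the substitution $\big|\tfrac{\uip-\uim}{h}\big|^{2}=(\delta^{+}+\delta^{-})^{2}$, into precisely the combination produced by $\porad$.
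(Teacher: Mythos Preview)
Your proof is correct and follows essentially the same route as the paper: both arguments are a direct pointwise algebraic expansion of the $i$-th summand, using $\tfrac{\uip-\uim}{h}=\delta^{+}+\delta^{-}$ and then matching the coefficients of $(\delta^{+})^{2}$, $(\delta^{-})^{2}$, and $\delta^{+}\delta^{-}$. The paper organizes the computation slightly differently --- it starts from the second term of $\porad$ and invokes the polarization identity $\delta^{+}\delta^{-}=\tfrac12(\delta^{+}+\delta^{-})^{2}-\tfrac12\big((\delta^{+})^{2}+(\delta^{-})^{2}\big)$ to rewrite it in the form of $\poran$ --- but the substance is identical; your version is arguably cleaner because it reduces both sides to the same canonical form rather than transforming one into the other.
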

\begin{proof}Starting with the second term of $A_{\Delta}^{h} (u^{h}, v^{h})$ and using the identities 
	\begin{align}\notag
		\left(\uip- \uim \right)=\left(\uip - \ui +\ui - \uim \right)
	\end{align}
	and
	\begin{align}\notag
		&(\uip - \ui)(\ui - \uim) \left(2(\ui)^{n-3} + (\uip)^{n-3} + (\uim)^{n-3}\right)  
		\\ \notag
		&
		=\tfrac{1}{2}\left(2(\ui)^{n-3} + (\uip)^{n-3} + (\uim)^{n-3}\right)  \left(\uip - \uim\right)^{2}
		\\ \notag
		&\quad
		- \tfrac{1}{2} \left(2(\ui)^{n-3} + (\uip)^{n-3} + (\uim)^{n-3}\right) \left(\left|\uip - \ui\right|^{2} + \left|\ui - \uim\right|^{2}\right)\, ,
	\end{align}
	a lengthy but straightforward computation gives the result.  
\end{proof}
\begin{lemma}\label{lem:discgradlapl}
	It holds
	\begin{align}\notag \label{lem:discgradlapl1}
		&A_{\Delta}^{h}(u^{h}, -\Delta_{h}u^{h}) = A_{\nabla}^{h}(u^{h}, -\Delta_{h}u^{h})
		\\ \notag
		&=
		\tfrac{1}{4}\tfrac{|(n-2)(n-3)|}{3} h \sum_{i=1}^{L_{h}} \theta_{i}(\ui,\uip)^{n-4} \left|\tfrac{\uip -\ui}{h} \right|^{2} \left[ \left|\tfrac{\ui -\uim}{h} \right|^{2} +
		2\left|\tfrac{\uip -\ui}{h} \right|^{2} +\left|\tfrac{u^{h}_{i+2} -\uip}{h} \right|^{2} \right]
		\\ 
		&\ge 	
		\tfrac{|(n-2)(n-3)|}{3} \tfrac{(1+C_{osc})^{n-4}}{2} h \sum_{i=1}^{L_{h}}  (\ui)^{n-4} \left( \left|\tfrac{\uip -\ui}{h} \right|^{2} \left|\tfrac{\ui -\uim}{h} \right|^{2} + \left|\tfrac{\uip -\ui}{h} \right|^{4} \right) 
	\end{align}
	with $\theta_{i}^{h}(\ui,\uip) = \sigma_{i}^{h} \ui + (1-\sigma_{i}^{h})\uip$ for $\sigma_{i}^{h} \in [0,1]$.
\end{lemma}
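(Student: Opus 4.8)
The plan is to reduce everything to the more tractable representation $A_\nabla^h$ via the preceding lemma and then carry out a discrete integration by parts. By Lemma~\ref{lem:bilineq} we have $A_\Delta^h(u^h,-\Delta_h u^h)=A_\nabla^h(u^h,-\Delta_h u^h)$, so it suffices to evaluate the right-hand side, and $A_\nabla^h$ in \eqref{discgrad2} is the convenient form because it carries the test function only through its nodal values $v_i^h$. First I would insert $v_i^h=-(\Delta_h u^h)_i=\tfrac1h\big(\tfrac{u_i^h-u_{i-1}^h}{h}-\tfrac{u_{i+1}^h-u_i^h}{h}\big)$ into the three groups of terms of \eqref{discgrad2}, using $\Delta_h u^h=\partial_h^{+}(\partial_h^{-}u^h)$.

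\textbf{The identity.} Next I would perform discrete (Abel) summation by parts; periodicity of $u^h$ removes all boundary contributions. Each contribution then splits into a part in which a difference of the weights $(u_j^h)^{n-3}$ at neighbouring nodes multiplies a product of first-order difference quotients, and a part in which the weights are frozen while a difference of squared difference quotients appears, the latter being reorganised by a second Abel summation. For the weight differences one uses that $t\mapsto t^{n-3}$ is $C^1$ on $(0,\infty)$, so by the mean value theorem $(u_{i+1}^h)^{n-3}-(u_i^h)^{n-3}=(n-3)\,\theta_i^{n-4}\,(u_{i+1}^h-u_i^h)$ for some $\theta_i$ between $u_i^h$ and $u_{i+1}^h$, which can be written as the convex combination $\theta_i^h=\sigma_i^h u_i^h+(1-\sigma_i^h)u_{i+1}^h$, $\sigma_i^h\in[0,1]$; the factor $u_{i+1}^h-u_i^h$ merges with $h^{-1}$ into a further difference quotient $\tfrac{u_{i+1}^h-u_i^h}{h}$. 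Collecting all terms, the prefactors $-(n-2)/12,\,-(n-2)/24$ of \eqref{discgrad2} combine with the factor $(n-3)$ and with the signs produced by $v_i^h=-(\Delta_h u^h)_i$ and the summations by parts into a single coefficient; since $(n-2)(n-3)<0$ for $n\in(2,3)$, this coefficient is positive and equals $\tfrac14\tfrac{|(n-2)(n-3)|}{3}$, while the remaining $u$-dependence is exactly $\sum_i\theta_i^{n-4}\big|\tfrac{u_{i+1}^h-u_i^h}{h}\big|^2\big[\,|\tfrac{u_i^h-u_{i-1}^h}{h}|^2+2|\tfrac{u_{i+1}^h-u_i^h}{h}|^2+|\tfrac{u_{i+2}^h-u_{i+1}^h}{h}|^2\,\big]$, yielding the asserted identity. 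This regrouping is the \emph{main obstacle}: it is elementary but lengthy, and the whole point of working with $A_\nabla^h$ rather than $A_\Delta^h$ is that in the former the cancellations conspire so that a single divided difference $\theta_i$ multiplies the full three-term bracket. The computation is the discrete counterpart of the integration by parts \eqref{eq:IntPartsCont}, and the resulting expression is a consistent discretisation of $\tfrac{|(n-2)(n-3)|}{3}\int_\ort u^{n-4}u_x^4\,dx\ge 0$.

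\textbf{The lower bound.} Finally I would estimate the right-hand side of the identity using the Oscillation Lemma~\ref{lem:lowerbound}. Since $\theta_i$ lies between $u_i^h$ and $u_{i+1}^h$, inequality \eqref{eq:oscillation} gives $\theta_i\le u_i^h+u_{i+1}^h\le(1+C_{osc})\,u_i^h$, and $n-4<0$ yields $\theta_i^{n-4}\ge(1+C_{osc})^{n-4}(u_i^h)^{n-4}$. Merely discarding the nonnegative cross term $2|\tfrac{u_{i+1}^h-u_i^h}{h}|^2$ would not recover the factor $\tfrac12$; instead one keeps it and reindexes $i\mapsto i-1$ in the summand carrying $|\tfrac{u_{i+2}^h-u_{i+1}^h}{h}|^2$ (again by periodicity, and using \eqref{eq:oscillation} once more in the form $\theta_{i-1}\le(1+C_{osc})\,u_i^h$). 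This produces two copies of $\sum_i(u_i^h)^{n-4}\,|\tfrac{u_{i+1}^h-u_i^h}{h}|^2\,|\tfrac{u_i^h-u_{i-1}^h}{h}|^2$ together with $2\sum_i(u_i^h)^{n-4}\,|\tfrac{u_{i+1}^h-u_i^h}{h}|^4$; multiplying by $\tfrac14\tfrac{|(n-2)(n-3)|}{3}h$ gives the claimed bound $\tfrac{|(n-2)(n-3)|}{3}\tfrac{(1+C_{osc})^{n-4}}{2}\,h\sum_i(u_i^h)^{n-4}\big(|\tfrac{u_{i+1}^h-u_i^h}{h}|^2|\tfrac{u_i^h-u_{i-1}^h}{h}|^2+|\tfrac{u_{i+1}^h-u_i^h}{h}|^4\big)$, completing the proof.
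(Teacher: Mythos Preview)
Your overall strategy is sound and reaches the right conclusions, but for the identity you take a different computational route than the paper. You propose to pass to $A_\nabla^h$ via Lemma~\ref{lem:bilineq} and then perform Abel summation on the three groups of terms in \eqref{discgrad2}. The paper instead works directly with the two-term form $A_\Delta^h$ from \eqref{eq:definePorad}: it rewrites the first summand tested against $(\Delta_h u^h)_i$ by shifting the index and applying the product rule \eqref{eq:productrule}, obtaining a clean split $R_1+R_2$, where $R_1$ already carries the single factor $(u_i^h)^{n-3}-(u_{i+1}^h)^{n-3}$ (hence one mean value $\theta_i^{n-4}$ multiplying the full three-term bracket), and $R_2$ cancels exactly against the second summand of $A_\Delta^h$. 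So the feature you attribute to $A_\nabla^h$ --- that ``a single divided difference $\theta_i$ multiplies the full three-term bracket'' --- is precisely what the $A_\Delta^h$ computation delivers with minimal bookkeeping; going through the three-term form \eqref{discgrad2} works in principle but requires tracking more cancellations rather than fewer.

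For the lower bound your argument is essentially the paper's: bound $\theta_i\le (1+C_{osc})\,u_i^h$ (resp.\ $\theta_i\le (1+C_{osc})\,u_{i+1}^h$) using \eqref{eq:oscillation}, exploit $n-4<0$, and reindex the $|\tfrac{u_{i+2}^h-u_{i+1}^h}{h}|^2$ contribution to produce the second copy of the cross term and the factor $\tfrac12$. The paper makes the two bounds $\theta_i^{n-4}\ge (1+C_{osc})^{n-4}(u_i^h)^{n-4}$ and $\theta_i^{n-4}\ge (1+C_{osc})^{n-4}(u_{i+1}^h)^{n-4}$ explicit and applies them term by term before reindexing; your variant (reindex first, then bound $\theta_{i-1}$ against $u_i^h$) is equivalent.
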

\begin{proof}
	Using periodicity as well as \eqref{eq:productrule}, we compute for the first term in \eqref{eq:definePorad}
	\begin{align}\notag \label{discgradlapl1}
		&\tfrac{n-2}{6} h \sum_{i=1}^{L_{h}}
		(\ui)^{n-3} \bigg(\left|\tfrac{\ui - \uim}{h}\right|^{2}+\left|\tfrac{\uip - \ui}{h}\right|^{2}\bigg) \Delta_{h}u^{h}
		\\ \notag
		&=
		\tfrac{n-2}{6} h \sum_{i=1}^{L_{h}} \Bigg\{ \tfrac{\uip - \ui}{h} \tfrac{1}{h} 
		\Bigg[(\ui)^{n-3} \bigg(\left|\tfrac{\ui - \uim}{h}\right|^{2}+\left|\tfrac{\uip - \ui}{h}\right|^{2}\bigg) 
		\\ \notag
		&\qquad \qquad	 - 
		(\uip)^{n-3} \bigg(\left|\tfrac{\uip - \ui}{h}\right|^{2}+\left|\tfrac{u_{i+2}^{h} - \uip}{h}\right|^{2}\bigg)
		\Bigg]\Bigg\}
		\\ \notag
		&=
		\tfrac{n-2}{6} h \sum_{i=1}^{L_{h}} \Bigg\{ \tfrac{\uip - \ui}{h}
		\Bigg[
		\tfrac{(\ui)^{n-3}- (\uip)^{n-3}}{h} 
		\tfrac{1}{2}
		\bigg(
		\left|\tfrac{
			\ui - \uim}{h}\right|^{2}
		+2\left|\tfrac{\uip - \ui}{h}\right|^{2}
		+\left|\tfrac{u_{i+2}^{h} - \uip}{h}\right|^{2}
		\bigg)
		\\ \notag
		&\qquad\qquad +\tfrac{1}{h} 
		\bigg(\tfrac{(\ui)^{n-3}+(\uip)^{n-3}}{2}\bigg)
		\bigg(\left|\tfrac{\ui - \uim}{h}\right|^{2}-\left|\tfrac{u_{i+2}^{h} - \uip}{h}\right|^{2}\bigg)  
		\Bigg] \Bigg\}
		\\ \notag
		&=
		\tfrac{1}{4}\tfrac{|(n-2)(n-3)|}{3} h \sum_{i=1}^{L_{h}} \theta_{i}^{h}(\ui,\uip)^{n-4} \left|\tfrac{\uip - \ui}{h}\right|^{2}
		\Bigg[
		\left|\tfrac{\ui - \uim}{h}\right|^{2}
		+2\left|\tfrac{\uip - \ui}{h}\right|^{2}
		+\left|\tfrac{u_{i+2}^{h} - \uip}{h}\right|^{2}
		\Bigg]
		\\ \notag
		&\quad+ \tfrac{n-2}{6}h \sum_{i=1}^{L_{h}} \tfrac{1}{h}
		\tfrac{\uip - \ui}{h} \cdot
		\tfrac{(\ui)^{n-3}+(\uip)^{n-3}}{2}
		\Bigg[
		\left|\tfrac{\ui - \uim}{h}\right|^{2} -\left|\tfrac{u_{i+2}^{h} - \uip}{h}\right|^{2}
		\Bigg] 
		\\ 
		&=: R_{1}+R_{2} \, .
	\end{align}
	Using  
	\begin{align}\notag
		&\left(u_{i+2}^{h}-\uip\right)^{2} - \left(\ui - \uim\right)^{2} 
		\\ \notag
		&=
		\left(u_{i+2}^{h} - 2 \uip +\ui\right)\left(u_{i+2}^{h}-\ui\right)
		+
		\left(\uip - 2 \ui +\uim\right)\left(\uip-\uim\right)\, 
	\end{align}
	as well as periodicity, we find for $R_{2}$
	\begin{align}\notag
		R_{2} = -\tfrac{n-2}{12} h \sum_{i=1}^{L_{h}}
		&\left\{\left((\ui)^{n-3} + (\uip)^{n-3}\right) \tfrac{\uip -\ui}{h} 
		+
		\left((\ui)^{n-3} + (\uim)^{n-3}\right) \tfrac{\ui -\uim}{h} 
		\right\}
		\\ \notag
		&\quad \cdot \bigg(\tfrac{\uip - \uim}{h}\bigg)\bigg( \tfrac{\uip - 2\ui +\uim}{h^{2}}\bigg)\, ,
	\end{align}
	which corresponds up to the sign to the second term on the right-hand side of \eqref{eq:definePorad}.
	Thus, the equality in \eqref{lem:discgradlapl1} is proven. 
	
	To show the estimate, we note that $n-4 < 0$ and thus
	\begin{align}\notag
		(\sigma_{i}^{h} \ui + (1-\sigma_{i}^{h})\uip)^{n-4} &\ge (\sigma_{i}^{h} \ui + (1-\sigma_{i}^{h}) C_{osc}\ui)^{n-4} 
		\\ \notag
		&\ge (\ui + C_{osc} \ui )^{n-4} 
		\\
		&= (1+C_{osc})^{n-4} (\ui)^{n-4}
	\end{align} 
	and similarly
	\begin{align}
		(\sigma_{i}^{h} \ui + (1-\sigma_{i}^{h})\uip)^{n-4} \ge (1 + C_{osc})^{n-4} (\uip)^{n-4} \, .
	\end{align}
	Then we have 
	\begin{align} \notag
		&\tfrac{1}{4}\tfrac{|(n-2)(n-3)|}{3} h \sum_{i=1}^{L_{h}} \theta_{i}(\ui,\uip)^{n-4} \left|\tfrac{\uip -\ui}{h} \right|^{2} \left[ \left|\tfrac{\ui -\uim}{h} \right|^{2} +
		2\left|\tfrac{\uip -\ui}{h} \right|^{2} +\left|\tfrac{u^{h}_{i+2} -\uip}{h} \right|^{2} \right]
		\\ \notag
		&\ge
		\tfrac{1}{4}\tfrac{|(n-2)(n-3)|}{3} \bigg\{
		h \sum_{i=1}^{L_{h}}  (1+C_{osc})^{n-4} (\uip)^{n-4} \left|\tfrac{\uip -\ui}{h} \right|^{2} \left|\tfrac{u^{h}_{i+2} -\uip}{h} \right|^{2}
		\\ \notag &\qquad \qquad \quad+ 2 h \sum_{i=1}^{L_{h}}  (1+C_{osc})^{n-4} (\ui)^{n-4} \left|\tfrac{\uip -\ui}{h} \right|^{4}
		\\ \notag 
		&\qquad \qquad \quad+ h \sum_{i=1}^{L_{h}}  (1+C_{osc})^{n-4} (\ui)^{n-4} \left|\tfrac{\uip -\ui}{h} \right|^{2} \left|\tfrac{\ui -\uim}{h} \right|^{2} \bigg\}
		\\ \notag
		&= 
		\tfrac{(1+C_{osc})^{n-4}}{2}\tfrac{|(n-2)(n-3)|}{3}
		\\ \notag  
		&\quad \cdot\bigg\{
		h \sum_{i=1}^{L_{h}}  (\ui)^{n-4} \left|\tfrac{\uip -\ui}{h} \right|^{4}	
		+ h \sum_{i=1}^{L_{h}}  (\ui)^{n-4} \left|\tfrac{\uip -\ui}{h} \right|^{2} \left|\tfrac{\ui -\uim}{h} \right|^{2} \bigg\} \, .
	\end{align}
\end{proof}

\begin{lemma}\label{lem:discsingular}
	For $n \in (2,3)$ there is a positive constant $c_{p}$ independent of $h>0$, such that
	\begin{align}\notag \label{eq:discsingular}
		A_{\Delta}^{h}(u^{h}, -\mathcal{I}_{h}[(u^{h})^{-p-1}]) 
		\ge
		c_{p} (n-2) h \sum_{i=1}^{L_{h}} (\ui)^{n-p-4} \left|\tfrac{\uip-\ui}{h}\right|^{2} \, .
	\end{align}
\end{lemma}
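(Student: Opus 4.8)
The plan is to compute $A_{\Delta}^{h}(u^h,-\Ih[(u^h)^{-p-1}]) = \porad(u^h,-\Ih[(u^h)^{-p-1}])$ directly from the definition \eqref{eq:definePorad}, inserting $v_i^h := -(\ui)^{-p-1}$ and reorganising the nodal sum into a manifestly signed quadratic form in the discrete gradients plus a single indefinite cross term, the latter to be absorbed using the Oscillation Lemma~\ref{lem:lowerbound}. Writing $\tfrac{\uip-\uim}{2h} = \tfrac12\big(\tfrac{\uip-\ui}{h}+\tfrac{\ui-\uim}{h}\big)$ in the second line of \eqref{eq:definePorad} and expanding, one gets
\begin{align*}
\porad\big(u^h,-\Ih[(u^h)^{-p-1}]\big)
&= \tfrac{n-2}{6}\discint (\ui)^{n-p-4}\left(\left|\tfrac{\uip-\ui}{h}\right|^2+\left|\tfrac{\ui-\uim}{h}\right|^2\right)
\\
&\quad + \tfrac{n-2}{12}\discint\Big[\big((\ui)^{n-3}+(\uip)^{n-3}\big)\left|\tfrac{\uip-\ui}{h}\right|^2
\\
&\qquad\qquad\qquad + \big((\ui)^{n-3}+(\uim)^{n-3}\big)\left|\tfrac{\ui-\uim}{h}\right|^2\Big](\ui)^{-p-1}
\\
&\quad + \tfrac{n-2}{12}\discint\big(2(\ui)^{n-3}+(\uip)^{n-3}+(\uim)^{n-3}\big)(\ui)^{-p-1}\,\tfrac{\uip-\ui}{h}\,\tfrac{\ui-\uim}{h}
\\
&=: \mathrm{I}+\mathrm{II}+\mathrm{III}.
\end{align*}

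Since $n>2$ and all nodal values are positive, the first two groups are nonnegative; keeping in $\mathrm{II}$ only the $(\ui)^{n-3}$-contributions and using $(\ui)^{n-3}(\ui)^{-p-1}=(\ui)^{n-p-4}$, one has
\begin{align*}
\mathrm{I}+\mathrm{II}\ \ge\ \tfrac{n-2}{4}\discint (\ui)^{n-p-4}\left(\left|\tfrac{\uip-\ui}{h}\right|^2+\left|\tfrac{\ui-\uim}{h}\right|^2\right).
\end{align*}

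The delicate term is $\mathrm{III}$, which has no sign on its own, and here the oscillation estimate is essential. Its coefficient $2(\ui)^{n-3}+(\uip)^{n-3}+(\uim)^{n-3}$ is positive, so Young's inequality $\tfrac{\uip-\ui}{h}\tfrac{\ui-\uim}{h}\ge-\tfrac12\big(\left|\tfrac{\uip-\ui}{h}\right|^2+\left|\tfrac{\ui-\uim}{h}\right|^2\big)$ together with the Oscillation Lemma~\ref{lem:lowerbound} -- which, since $n-3<0$ and $\uip,\uim\ge C_{osc}^{-1}\ui$ by \eqref{eq:oscillation}, gives $(\uip)^{n-3},(\uim)^{n-3}\le C_{osc}^{\,3-n}(\ui)^{n-3}$ -- yields
\begin{align*}
\mathrm{III}\ \ge\ -\tfrac{n-2}{12}\big(1+C_{osc}^{\,3-n}\big)\discint (\ui)^{n-p-4}\left(\left|\tfrac{\uip-\ui}{h}\right|^2+\left|\tfrac{\ui-\uim}{h}\right|^2\right).
\end{align*}
Adding the three bounds and then discarding the nonnegative backward-difference part gives
\begin{align*}
A_\Delta^h\big(u^h,-\Ih[(u^h)^{-p-1}]\big)\ \ge\ \tfrac{n-2}{12}\big(2-C_{osc}^{\,3-n}\big)\discint (\ui)^{n-p-4}\left|\tfrac{\uip-\ui}{h}\right|^2,
\end{align*}
i.e.\ the asserted inequality with $c_p:=\tfrac1{12}\big(2-C_{osc}^{\,3-n}\big)$, which is independent of $h$ because $C_{osc}=1+\sqrt{2c_F}$ is.

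I expect the genuine obstacle to be precisely the positivity of this constant: the groups $\mathrm{I}$ and $\mathrm{II}$ are automatically good, only $\mathrm{III}$ can spoil things, and its control is impossible without the oscillation bound \eqref{eq:oscillation}. Since $0<3-n<1$ forces $C_{osc}^{\,3-n}<C_{osc}$, one has $c_p\ge\tfrac1{12}\big(1-\sqrt{2c_F}\big)>0$ at least in the small-$c_F$ regime that matters for the singular limit $c_F\to 0$ of \cite{GruenKlein24Limit}, and more generally whenever $C_{osc}^{\,3-n}<2$. To get a sharper (in particular $n$-uniform) constant the refinement is to not estimate $\mathrm{III}$ in isolation but to first complete a square with $\mathrm{I}$, exploiting $\left|\tfrac{\uip-\ui}{h}\right|^2+\left|\tfrac{\ui-\uim}{h}\right|^2+2\tfrac{\uip-\ui}{h}\tfrac{\ui-\uim}{h}=\left|\tfrac{\uip-\uim}{h}\right|^2\ge0$ to discard a positive term, and then to apply a discrete mean-value estimate for $s\mapsto s^{n-3}$ to see that the leftover coefficient $(\uip)^{n-3}+(\uim)^{n-3}-2(\ui)^{n-3}$ is only of size $O\big((C_{osc}-1)(\ui)^{n-3}\big)$, so that the offending contribution becomes a small perturbation of the good terms.
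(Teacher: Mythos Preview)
Your expansion of $\porad(u^h,-\Ih[(u^h)^{-p-1}])$ into the three groups $\mathrm{I},\mathrm{II},\mathrm{III}$ is correct, and so is the arithmetic leading to $c_p=\tfrac{1}{12}(2-C_{osc}^{3-n})$. The issue is exactly the one you flag at the end: this constant is only positive when $C_{osc}^{3-n}<2$, i.e.\ under a smallness condition on $c_F$. The lemma as stated carries no such restriction --- the hypotheses (H1)--(H5) allow arbitrary $c_F>0$ --- so as written your argument does not prove the full claim. The crude Young step $a_ib_i\ge -\tfrac12(a_i^2+b_i^2)$ applied uniformly to $\mathrm{III}$ is where the loss occurs: it ignores that in many configurations the cross term already has the right sign.

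The paper's route is genuinely different. It first passes to the equivalent form $A_\nabla^h$ via Lemma~\ref{lem:bilineq}, which moves the indefiniteness from a cross term $a_ib_i$ into a coefficient $2(\ui)^{n-3}-(\uip)^{n-3}-(\uim)^{n-3}$ multiplying the \emph{diagonal} part $a_i^2+b_i^2$. It then performs a six--case analysis on the ordering of $\uim,\ui,\uip$. In four of the cases the sign is automatically good (either the coefficient is nonnegative, or the combination with the paper's $\mathrm{II}$-term completes to a square with the right sign). In the two remaining cases ($\ui$ is a local maximum) the negative contribution at node $i$ is absorbed by \emph{borrowing} a specific piece of $\mathrm{I}_{i\pm 1}$ from the adjacent node; crucially, one checks that the adjacent node is then forced to lie in one of the four good cases, so the borrowed piece was not needed there. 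This redistribution argument produces a strictly positive lower bound at every node \emph{without} any smallness assumption on $C_{osc}$; the Oscillation Lemma enters only at the very end to convert factors like $(\uim)^{n-3}(\ui)^{-p-1}$ into $(\ui)^{n-p-4}$, which costs a harmless factor $C_{osc}^{n-3}\in(0,1)$.

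Your suggested refinement --- completing a square with $\mathrm{I}$ and then observing that $(\uip)^{n-3}+(\uim)^{n-3}-2(\ui)^{n-3}=O((C_{osc}-1)(\ui)^{n-3})$ --- points in the right direction but still yields a constant that degenerates for large $C_{osc}$, because the $O(\cdot)$ bound grows with $c_F$. To get the unconditional result you really do need some mechanism that exploits the \emph{sign structure} case by case rather than a uniform worst-case estimate; the paper's borrowing trick is one such mechanism.
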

\begin{proof}
	Using $\mathcal{I}_{h}[(u^{h})^{-p-1}]$ as a testfunction in \eqref{discgrad2}, we obtain
	\begin{align}\notag
		&\tfrac{1}{n-2} A_{\Delta}^{h}(u^{h}, -\mathcal{I}_{h}[(u^{h})^{-p-1}]) = \tfrac{1}{n-2} A_{\nabla}^{h}(u^{h},-\mathcal{I}_{h}[(u^{h})^{-p-1}])
		\\ \notag	
		&=
		\tfrac{h}{12} \sum_{i=1}^{L_{h}}
		\left[ 
		\left((\ui)^{n-3}+(\uip)^{n-3}\right) \left|\tfrac{\uip -\ui}{h}\right|^{2}
		+
		\left((\ui)^{n-3}+(\uim)^{n-3}\right) \left|\tfrac{\ui -\uim}{h}\right|^{2} 
		\right] (\ui)^{-p-1}
		\\ \notag	
		&\quad+
		\tfrac{h}{24} \sum_{i=1}^{L_{h}}
		\left[ 
		\left((\uip)^{n-3}+2(\ui)^{n-3} + (\uim)^{n-3}\right) \left|\tfrac{\uip -\uim}{h}\right|^{2} 
		\right] (\ui)^{-p-1}
		\\ \notag	
		&\quad+
		\tfrac{h}{24} \sum_{i=1}^{L_{h}}
		\left[ 
		\left(2(\ui)^{n-3} -(\uip)^{n-3} - (\uim)^{n-3}\right) \left( \left|\tfrac{\uip -\ui}{h}\right|^{2} 
		+
		\left|\tfrac{\ui -\uim}{h}\right|^{2}\right) 
		\right] (\ui)^{-p-1}
		\\ \notag
		&=:
		\operatorname{I}+\operatorname{II}+\operatorname{III}
		\, .
	\end{align}
	While $\operatorname{I}$ and $\operatorname{II}$ have a good sign, the sign of $\operatorname{III}$ depends on the ordering of $\ui,\uip$, and $\uim$ according to size. We will discuss the six cases that may occur separately. In the following, the $i$-th summand of $\operatorname{I}$, $\operatorname{II}$, $\operatorname{III}$ is denoted by $\operatorname{I}_{i}$, $\operatorname{II}_{i}$, $\operatorname{III}_{i}$, respectively.
	
	\noindent
	\textbf{Case 1:} $\uim \le \ui \le \uip$
	
	Noting that 
	\begin{align}\notag
		\left(\uip - \uim \right)^{2} 
		=
		\left(\uip - \ui \right)^{2} + \left(\ui - \uim \right)^{2}
		+
		2\left(\uip - \ui \right)\left(\ui - \uim\right)\, , 
	\end{align} 
	we may rewrite $\operatorname{II}_{i}$ and $\operatorname{III}_{i}$
	\begin{align}\notag
		\operatorname{II}_{i}+\operatorname{III}_{i}
		&=
		\tfrac{h}{24} \left((\uip)^{n-3} +2(\ui)^{n-3} + (\uim)^{n-3}\right) 
		\\ \notag
		&\quad \cdot \left(\left|\tfrac{\uip -\ui}{h}\right|^{2} +\left|\tfrac{\ui -\uim}{h}\right|^{2} 
		+2\left(\tfrac{\uip -\ui}{h}\right)\left(\tfrac{\ui -\uim}{h}\right)  
		\right) (\ui)^{-p-1}
		\\ \notag
		&\quad 
		+ \tfrac{h}{24} \left(2(\ui)^{n-3} -(\uip)^{n-3} - (\uim)^{n-3}\right)
		\left(\left|\tfrac{\uip -\ui}{h}\right|^{2} +\left|\tfrac{\ui -\uim}{h}\right|^{2} \right) (\ui)^{-p-1}
		\\ \notag
		&=
		\tfrac{h}{6} (\ui)^{n-p-4}\left(\left|\tfrac{\uip -\ui}{h}\right|^{2} +\left|\tfrac{\ui -\uim}{h}\right|^{2} \right)
		\\ \notag 
		&\quad+  \tfrac{h}{12} \left((\uip)^{n-3} +2(\ui)^{n-3} + (\uim)^{n-3}\right)  
		\left(\tfrac{\uip -\ui}{h}\right)\left(\tfrac{\ui -\uim}{h}\right) (\ui)^{-p-1} \, ,
	\end{align}
	where the last line is positive by assumption. The term $\operatorname{I}$ remains unaffected.
	
	\noindent
	\textbf{Case 2:} $\uip \le \ui \le \uim$
	
	This case is symmetric to the first case and we obtain a similar positivity result.
	
	\noindent
	\textbf{Case 3:} $\ui \le \uim \le \uip$
	
	For the first factor in $\operatorname{III_{i}}$, we have
	\begin{align}\notag
		\left(2(\ui)^{n-3} -(\uip)^{n-3} - (\uim)^{n-3}\right)
		=
		(\ui)^{n-3} - (\uip)^{n-3} + (\ui)^{n-3} - (\uim)^{n-3} \ge 0 \, ,
	\end{align}
	due to $n\in(2,3)$. Hence, no absorption is necessary.
	
	\noindent
	\textbf{Case 4:} $\ui \le \uip \le \uim$
	
	The situation is analogous to Case 3.
	
	\noindent
	\textbf{Case 5:} $\ui \ge \uim \ge \uip$
	
	We have 
	$
	\ui - \uip \ge \ui - \uim \ge 0
	$
	and 
	$
	- (\uim)^{n-3} \ge -(\uip)^{n-3} \, ,
	$
	since $n-3 <0$.
	Therefore, we may estimate
	\begin{align}\notag
		\operatorname{III_{i}} 
		&\ge \tfrac{h}{12} \left((\ui)^{n-3} - (\uip)^{n-3}\right)
		\left(\left|\tfrac{\uip -\ui}{h}\right|^{2} 
		+
		\left|\tfrac{\ui -\uim}{h}\right|^{2}\right) 
		(\ui)^{-p-1}
		\\ 
		&\ge 
		\tfrac{h}{6} \left((\ui)^{n-3} - (\uip)^{n-3}\right)\left|\tfrac{\uip -\ui}{h}\right|^{2}(\ui)^{-p-1} 
	\end{align}
	and thus
	\begin{align}\notag \label{discsingular1}
		\operatorname{III_{i}} + \operatorname{I_{i}}  
		&\ge
		h \left(
		\left(\tfrac{3}{12}(\ui)^{n-3} - \tfrac{1}{12}(\uip)^{n-3}\right)
		\left|\tfrac{\uip -\ui}{h}\right|^{2} 
		\right. 
		\\
		&\left. \qquad+
		\tfrac{1}{12} \left((\ui)^{n-3} + (\uim)^{n-3}\right) \left|\tfrac{\ui -\uim}{h}\right|^{2}\right) 
		(\ui)^{-p-1} \, .
	\end{align}
	To absorb the negative term in \eqref{discsingular1}, we consider the index level $j=i+1$ and take the term $z_{j}=\tfrac{h}{12} (u_{j}^{h})^{n-3} \left|\tfrac{u_{j}^{h} - u_{j-1}^{h}}{h}\right|^{2}$ into account which is part of $\operatorname{I_{j}}$. Due to $\uip \le \ui$, we have $(\uip)^{-p-1} \ge (\ui)^{-p-1}$ and therefore 
	\begin{align}\notag
		\tfrac{h}{12} (u_{j}^{h})^{n-3} \left|\tfrac{u_{j}^{h} - u_{j-1}^{h}}{h}\right|^{2} (u_{j}^{h})^{-p-1} 
		\ge
		\tfrac{h}{12} (\uip)^{n-3} \left|\tfrac{\uip - \ui}{h}\right|^{2} (\ui)^{-p-1}\, .
	\end{align}
	Hence,
	\begin{align}\notag
		\operatorname{III_{i}} + \operatorname{I_{i}} +z_{j} >0 \, .
	\end{align}
	We note, that $z_{j}$ is not needed for absorption on level $j = i+1$. For $\ui \ge \uip$, we have for $j=i+1$
	one of the three cases
	$u_{j-1}^{h} \ge u_{j}^{h} \ge u_{j+1}^{h}$, $u_{j-1}^{h} \ge u_{j+1}^{h} \ge u_{j}^{h}$ or $u_{j+1}^{h} \ge u_{j-1}^{h} \ge u_{j}^{h}$, which correspond to the cases 2, 3 or 4 already discussed above. There no absorption was needed. 
	
	\noindent
	\textbf{Case 6:} $\ui \ge \uip \ge \uim$
	
	Taking into account the term $z_{j}$ at index level $j = i-1$, this case is treated very similarly to Case 5. We omit the details. 
	
	To conclude, we make use of periodicity and the Oscillation Lemma~\ref{lem:lowerbound} to bound the terms obtained above from below by the right-hand side of \eqref{eq:discsingular}.
\end{proof}

\begin{lemma}\label{lem:lemma68}
	There is a positive constant $c_{ent}$ independent of $h>0$ such that for any $\varepsilon >0$ 
	\begin{align}\notag \label{lem:discentr}
		&A_{\Delta}^{h}(u^{h}, \mathcal{I}_{h}[g_{h}(u^{h})]) = A_{\nabla}^{h}(u^{h}, \mathcal{I}_{h}[g_{h}(u^{h})])
		\\ \notag
		&\ge
		c_{ent}\Bigg(
		h \sum_{i=1}^{L_{h}} (\ui)^{-2} \left|\tfrac{\uip-\ui}{h}\right|^{2} 
		-\varepsilon
		h \sum_{i=1}^{L_{h}} (\ui)^{n-4} \left|\tfrac{\uip-\ui}{h}\right|^{4} 
		\\ 
		&
		\quad -h^{2} \sum_{i=1}^{L_{h}} (\ui)^{n-4} \left|\tfrac{\uip-\ui}{h}\right|^{4}
		-h^{2} \sum_{i=1}^{L_{h}} (\ui)^{n-4} \left|\tfrac{\uip-\ui}{h}\right|^{2}
		-C_{\varepsilon}\bigg(\discint \ui +1\bigg)
		\Bigg) \, .
	\end{align}
	
\end{lemma}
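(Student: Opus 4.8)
The plan is to reduce the estimate to the positivity result of Lemma~\ref{lem:discsingular} by exploiting the explicit form of the discrete entropy density. First, by Lemma~\ref{lem:bilineq} I may work with $\porad$ or $\poran$ interchangeably; both are linear in the second argument. Since $u^h$ is subject to the energy bound underlying Lemma~\ref{lem:lowerbound}, every nodal value satisfies $\ui\ge h^{2/(p+2)}>\sigma$, so $m_\sigma$ agrees with $(\cdot)^{n}$ on the range spanned by the $\ui$ and there $g_h(s)=\int_1^s \tau^{-n}\,d\tau=\tfrac{1}{n-1}\bigl(1-s^{1-n}\bigr)$. Hence $\Ih[g_h(u^h)]=\tfrac{1}{n-1}\mathbf 1-\tfrac{1}{n-1}\Ih[(u^h)^{-(n-1)}]$, where $\mathbf 1\in X_h$ denotes the constant function $1$, and by linearity
\[
\porad\bigl(u^h,\Ih[g_h(u^h)]\bigr)=\tfrac{1}{n-1}\porad(u^h,\mathbf 1)+\tfrac{1}{n-1}\porad\bigl(u^h,-\Ih[(u^h)^{-(n-1)}]\bigr).
\]

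For the second term I would reuse the proof of Lemma~\ref{lem:discsingular} verbatim. Its six-case analysis uses only that $n\in(2,3)$ (positivity, monotonicity and convexity of $s\mapsto s^{n-3}$), the monotonicity of the negative power appearing in the test function, and the Oscillation Lemma in the final step; in particular it does not use $p>n$. Running it with the exponent $p+1$ replaced by $n-1$, and noting $n-3-(n-1)=-2$, yields an $h$-independent constant $c>0$ with
\[
\porad\bigl(u^h,-\Ih[(u^h)^{-(n-1)}]\bigr)\ge c\,(n-2)\,\discint (\ui)^{-2}\Bigl|\tfrac{\uip-\ui}{h}\Bigr|^2,
\]
which already furnishes the leading good term of \eqref{lem:discentr}.

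For $\porad(u^h,\mathbf 1)$ I would argue directly from \eqref{eq:definePorad}. Inserting $v_i^h=1$, writing $\tfrac{\uip-\uim}{2h}=\tfrac12\tfrac{\uip-\ui}{h}+\tfrac12\tfrac{\ui-\uim}{h}$, estimating the products of difference quotients by Young's inequality, and using the Oscillation Lemma to dominate $(\uip)^{n-3}$ and $(\uim)^{n-3}$ by $C_{osc}^{3-n}(\ui)^{n-3}$ (after shifting the summation index), one gets $|\porad(u^h,\mathbf 1)|\le C\,\discint (\ui)^{n-3}\bigl|\tfrac{\uip-\ui}{h}\bigr|^2$. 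A further Young inequality $a^{n-3}b^2\le\tfrac{\varepsilon}{2}a^{n-4}b^4+\tfrac{1}{2\varepsilon}a^{n-2}$ together with the elementary bound $\discint(\ui)^{n-2}\le L^{3-n}\bigl(\discint\ui\bigr)^{n-2}\le C\bigl(\discint\ui+1\bigr)$ (Hölder, and $n-2<1$) then gives $|\porad(u^h,\mathbf 1)|\le\varepsilon\,\discint(\ui)^{n-4}\bigl|\tfrac{\uip-\ui}{h}\bigr|^4+C_\varepsilon\bigl(\discint\ui+1\bigr)$. Combining the two contributions, with $c_{ent}$ absorbing $\tfrac{1}{n-1}$ and the numerical constants, proves \eqref{lem:discentr}; in fact this argument yields \eqref{lem:discentr} even without the two $h^2$-terms on its right-hand side.

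The step I expect to be the main obstacle is the verification that the case-by-case absorption mechanism in the proof of Lemma~\ref{lem:discsingular} — where a negative contribution at one node is compensated by a positive term borrowed from a neighbouring node level — is genuinely insensitive to the value of the negative exponent of $u^h$ in the test function, so that it may be applied with $n-1$ in place of $p+1$. Once that is granted, the remaining work is routine Young-type bookkeeping and applications of the Oscillation Lemma.
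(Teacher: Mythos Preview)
Your proof is correct and follows essentially the same route as the paper: both split $g_h(u^h)=\tfrac{1}{n-1}-\tfrac{1}{n-1}(u^h)^{1-n}$, both invoke the case-by-case mechanism of Lemma~\ref{lem:discsingular} for the negative-power part (and that mechanism is indeed insensitive to the precise exponent, since it only uses monotonicity of $s\mapsto s^{-\alpha}$ for $\alpha>0$ in the absorption step), and both estimate the constant part by Young's inequality and the Oscillation Lemma.

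The one genuine difference is in the handling of $\porad(u^h,\mathbf 1)$. The paper switches to the $\poran$ representation and splits into three terms I, II, III; for III it uses the mean-value theorem on $(\ui)^{n-3}-(u^h_{i\pm1})^{n-3}$ to gain an extra factor of $h$, which is the origin of the two $h^2$-terms in the statement. You instead bound $\porad(u^h,\mathbf 1)$ directly from \eqref{eq:definePorad} by $C\discint(\ui)^{n-3}\bigl|\tfrac{\uip-\ui}{h}\bigr|^2$ and then apply Young, which is simpler and—as you observe—shows that the $h^2$-terms are not actually needed. Since those terms are subtracted on the right-hand side, your stronger estimate implies the stated one.
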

\begin{proof}
	Assuming an appropriate cut-off parameter in the discrete mobility combined with the criterion for stopping times, we may assume $g_{n}(u^{h}) = \tfrac{1}{1-n}((u^{h})^{1-n}-1)$.
	Writing this as
	\begin{align}\label{discentr1}
		g_{n}= \tfrac{1}{1-n} (u^{h})^{1-n} + \tfrac{1}{n-1} =: \hat{g}_{n}(u^{h}) + \tfrac{1}{n-1} \, ,
	\end{align}
	we find, using similar reasoning as in Lemma~\ref{lem:discsingular}, the estimate
	\begin{align}\notag
		& A^{h}_{\Delta}\left(u^{h},\mathcal{I}_{h}[\hat{g}_{n}(u^{h})]\right) 
		=
		A^{h}_{\nabla}\left(u^{h},\mathcal{I}_{h}[\hat{g}_{n}(u^{h})]\right)
		\\ \notag
		&\ge
		\tilde{c} \bigg(h \sum_{i=1}^{L_{h}} (\ui)^{-2} \left|\tfrac{\uip - \ui }{h}\right|^{2}\bigg)\, ,
	\end{align}
	$ \tilde{c} >0 $.
	For the second term in \eqref{discentr1}, we compute
	\begin{align}\notag
		&\tfrac{1}{n-2} A^{h}_{\nabla}\left(u^{h},\mathcal{I}_{h}[\tfrac{1}{n-1}]\right)
		\\ \notag
		&=
		- \tfrac{h}{12(n-1)} \sum_{i=1}^{L_{h}}
		\left\{ \left((\ui)^{n-3} + (\uip)^{n-3}\right) \left|\tfrac{\uip - \ui}{h}\right|^{2} 
		+
		\left((\ui)^{n-3} + (\uim)^{n-3}\right) \left|\tfrac{\ui - \uim}{h}\right|^{2} 
		\right\}
		\\ \notag
		&\quad-
		\tfrac{h}{24(n-1)} \sum_{i=1}^{L_{h}}
		\left((\uip)^{n-3} + 2(\ui)^{n-3} + (\uim)^{n-3}\right) \left|\tfrac{\uip - \uim}{h}\right|^{2} 
		\\ \notag
		&\quad-
		\tfrac{h}{24(n-1)} \sum_{i=1}^{L_{h}}
		\left(2(\ui)^{n-3} - (\uip)^{n-3} - (\uim)^{n-3}\right) \bigg(\left|\tfrac{\uip - \ui}{h}\right|^{2} +\left|\tfrac{\ui - \uim}{h}\right|^{2} \bigg)
		\\ 
		& =: \operatorname{I} + \operatorname{II} +\operatorname{III} \, .
	\end{align}
	
	Ad I: Using Lemma~\ref{lem:lowerbound}, periodicity, $n\in(2,3)$, and Young's inequality, we find for positive constants $\varepsilon, C_{\varepsilon}$ and $C$
	\begin{align}\notag
		|\operatorname{I}| 
		&\le C h \sum_{i=1}^{L_{h}} (\ui)^{n-3} \left|\tfrac{\uip -\ui}{h}\right|^{2}
		\\ \notag
		&\le
		\varepsilon h \sum_{i=1}^{L_{h}} (\ui)^{n-4} \left|\tfrac{\uip -\ui}{h}\right|^{4}
		+ C_{\varepsilon} h \sum_{i=1}^{L_{h}} (\ui)^{n-2}
		\\ \notag
		&\le
		\varepsilon h \sum_{i=1}^{L_{h}} (\ui)^{n-4} \left|\tfrac{\uip -\ui}{h}\right|^{4}
		+ C_{\varepsilon} \bigg(h \sum_{i=1}^{L_{h}} (\ui) + 1 \bigg) \, .
	\end{align} 
	Ad $\operatorname{II}$:
	We may estimate $\left|\uip - \uim\right|^{2} \le 2\left(\left|\uip - \ui\right|^{2}+ \left|\ui - \uim\right|^{2}\right)$. Then, with the help of Lemma~\ref{lem:lowerbound}, we may estimate $|\operatorname{II}|$ in a similar way as $|\operatorname{I}|$.
	
	Ad $\operatorname{III}$:
	The mean-value theorem combined with Lemma~\ref{lem:lowerbound} yields 
	\begin{align}
		\left|(\ui)^{n-3} - (\uip)^{n-3}\right| \le h C (\ui)^{n-4} \left|\tfrac{\ui - \uip}{h}\right|
	\end{align}
	and 
	\begin{align}
		\left|(\ui)^{n-3} - (\uim)^{n-3}\right| \le h C (\uim)^{n-4} \left|\tfrac{\ui - \uim}{h}\right| 
	\end{align}
	with a positive constant $C$.
	Then, using periodicity and $2(\ui)^{n-3} - (\uip)^{n-3} - (\uim)^{n-3} = (\ui)^{n-3} - (\uip)^{n-3} + (\ui)^{n-3} - (\uim)^{n-3}$, we have 
	\begin{align}\notag
		|\operatorname{III}| 
		&\le
		C  h^{2} \sum_{i=1}^{L_{h}} (\ui)^{n-4} \left|\tfrac{\uip - \ui}{h}\right| \left|\tfrac{\ui - \uim}{h}\right|^{2}
		\\ \notag
		&\quad+
		Ch^{2} \sum_{i=1}^{L_{h}} (\uim)^{n-4} \left|\tfrac{\uip - \ui}{h}\right|^{2} \left|\tfrac{\ui - \uim}{h}\right|
		\\ 	
		&\quad+
		Ch^{2} \sum_{i=1}^{L_{h}} (\ui)^{n-4} \left|\tfrac{\uip -\ui}{h}\right|^{3} 
	\end{align}
	Now, by means of Young's inequality, periodicity and Lemma~\ref{lem:lowerbound}, we get the assertion of the lemma.
\end{proof}

\begin{lemma}\label{lem:disclaplap}
	We have 
	\begin{align}
		B_{\Delta}^{h}(u^{h}, -\Delta_{h}u^{h}) = h \sum_{i=1}^{L_{h}} (\ui)^{n-2} \left|\tfrac{\uip - 2 \ui + \uim}{h^{2}}\right|^{2} \, .
	\end{align}
\end{lemma}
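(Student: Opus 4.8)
The plan is to argue by a direct substitution into the nodal representation of $\porb$ recorded in the second line of \eqref{eq:definePorb}. Recall first from the preliminaries of Section~\ref{sec:discr} that the discrete Laplacian obeys $\Delta_h u^h=\partial_h^{+}(\partial_h^{-}u^h)$, so that its nodal values are precisely the centered second difference quotients, $(\Delta_h u^h)_i=\tfrac{\uip-2\ui+\uim}{h^{2}}$ for $i=1,\dots,L_h$. In particular $-\Delta_h u^h$ is an element of $X_h$ and hence an admissible argument in the bilinear form $\porb(u^h,\cdot)$.

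Next I would take $v^h:=-\Delta_h u^h$ in \eqref{eq:definePorb}. Since the lumped-masses scalar product $(\cdot,\cdot)_h$ only sees nodal values, the (generally non-piecewise-linear) product $\Ih[(u^h)^{n-2}]\Delta_h u^h$ has to be evaluated only at the nodes $ih$, where it equals $(\ui)^{n-2}(\Delta_h u^h)_i$. Using that $v^h(ih)=-(\Delta_h u^h)_i$ this gives
\begin{align*}
\porb\big(u^h,-\Delta_h u^h\big)
&=-\discint (\ui)^{n-2}\Big(\tfrac{\uip-2\ui+\uim}{h^{2}}\Big)\big(-(\Delta_h u^h)_i\big)\\
&=\discint (\ui)^{n-2}\Big(\tfrac{\uip-2\ui+\uim}{h^{2}}\Big)^{2}
=h\sum_{i=1}^{L_{h}}(\ui)^{n-2}\Big|\tfrac{\uip-2\ui+\uim}{h^{2}}\Big|^{2},
\end{align*}
which is exactly the asserted identity.

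There is no genuine obstacle here: the statement is a purely algebraic consequence of the definition of $\porb$ together with the fact that it is built from the lumped-masses product, so the argument is one short computation. The only point worth emphasizing is that the right-hand side is manifestly nonnegative and is precisely the \emph{good-sign} contribution $(C_{Strat}+S)\tThInt\discint(\ui)^{n-2}|(\Delta_h u^h)_i|^{2}\,ds$ appearing in \eqref{eq:discPmeSign}, which is what makes this lemma useful in the proof of Proposition~\ref{prop:integral1}, namely for absorbing the It\^o-correction terms coming from $I_9$.
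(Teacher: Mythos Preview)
Your proof is correct and takes essentially the same approach as the paper: both simply substitute $v_i^h=-(\Delta_h u^h)_i=-\tfrac{\uip-2\ui+\uim}{h^2}$ into the second line of \eqref{eq:definePorb} and read off the identity. Your additional remarks about the role of the lemma in absorbing parts of $I_9$ via \eqref{eq:discPmeSign} are also accurate.
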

\begin{proof}
	We may choose $v_{i}^{h} = - \tfrac{\uip -2\ui + \uim}{h^{2}}$ in \eqref{eq:definePorb} to obtain the result.
\end{proof}

\begin{lemma}\label{lem:discLapPow}
	Let $\sigma \neq 2-n$. Then for $\alpha \in \R$
	\begin{align}\notag
		B_{\Delta}^{h}\left(u^{h},\alpha \mathcal{I}_{h}[(u^{h})^{\sigma}]\right) 
		= - 	\alpha h \sum_{i=1}^{L_{h}} \tfrac{(\uip)^{\sigma+n-2} - (\ui)^{\sigma+n-2}}{\uip - \ui} \left|\tfrac{\uip - \ui}{h}\right|^{2} \, .
	\end{align}
	In particular, if $\alpha(\sigma+n-2) >0$, then 
	\begin{align}\notag \label{eq:discLapPow}
		B_{\Delta}^{h}\left(u^{h},\alpha \mathcal{I}_{h}[(u^{h})^{\sigma}]\right) 
		&=
		\alpha(\sigma+n-2) h \sum_{i=1}^{L_{h}} (\sigma_{i}^{h} \ui + (1-\sigma_{i}^{h})\uip)^{\sigma+n-3} \left|\tfrac{\uip - \ui}{h}\right|^{2}
		\\ 
		&\ge 
		Ch\sum_{i=1}^{L_{h}}(\ui)^{\sigma+n-3} \left|\tfrac{\uip - \ui}{h}\right|^{2} \, ,
	\end{align}
	where $C>0$ and $\sigma_{i}^{h}\in [0,1], \forall i = 1,\dots,L_{h}$, are appropriate parameters.
\end{lemma}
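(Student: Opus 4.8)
The plan is a direct computation. First I would insert $\alpha\Ih[(u^{h})^{\sigma}]$ as test function in the definition \eqref{eq:definePorb} of $\porb$. Since the nodal interpolant satisfies $\Ih[(u^{h})^{n-2}](ih)=(\ui)^{n-2}$ and $\Ih[(u^{h})^{\sigma}](ih)=(\ui)^{\sigma}$, while $(\Delta_{h}u^{h})_{i}=\tfrac{\uip-2\ui+\uim}{h^{2}}$, this immediately reduces $\porb(u^{h},\alpha\Ih[(u^{h})^{\sigma}])$ to $-\alpha$ times $h\sum_{i=1}^{L_{h}}(\ui)^{\sigma+n-2}\tfrac{\uip-2\ui+\uim}{h^{2}}$.

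Next I would perform a discrete summation by parts. Writing $\uip-2\ui+\uim=(\uip-\ui)-(\ui-\uim)$ and shifting the index $i\mapsto i+1$ in the terms coming from the second bracket — legitimate because of periodicity, so that no boundary contributions appear — the sum collapses to $-\sum_{i=1}^{L_{h}}\bigl((\uip)^{\sigma+n-2}-(\ui)^{\sigma+n-2}\bigr)(\uip-\ui)$. Multiplying and dividing each summand by $\uip-\ui$ then produces the asserted identity (whose overall sign one simply reads off from \eqref{eq:definePorb}), with the understanding that the difference quotient $\tfrac{(\uip)^{\sigma+n-2}-(\ui)^{\sigma+n-2}}{\uip-\ui}$ is read as the removable limit $(\sigma+n-2)(\ui)^{\sigma+n-3}$ whenever $\uip=\ui$, in which case the summand vanishes anyway because of the factor $\bigl|\tfrac{\uip-\ui}{h}\bigr|^{2}$. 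The hypothesis $\sigma\neq 2-n$ is precisely what guarantees $\sigma+n-2\neq0$, so that $t\mapsto t^{\sigma+n-2}$ is non-constant and the difference-quotient representation is non-degenerate.

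For the ``in particular'' statement I would apply the mean value theorem to $t\mapsto t^{\sigma+n-2}$ on the interval with endpoints $\ui$ and $\uip$: this yields $\sigma_{i}^{h}\in[0,1]$ with $\tfrac{(\uip)^{\sigma+n-2}-(\ui)^{\sigma+n-2}}{\uip-\ui}=(\sigma+n-2)\bigl(\sigma_{i}^{h}\ui+(1-\sigma_{i}^{h})\uip\bigr)^{\sigma+n-3}$, which is the first displayed equality. Since $u^{h}>0$ the intermediate value $\sigma_{i}^{h}\ui+(1-\sigma_{i}^{h})\uip$ is positive, so under $\alpha(\sigma+n-2)>0$ every summand is nonnegative; to get the lower bound against $h\sum_{i}(\ui)^{\sigma+n-3}\bigl|\tfrac{\uip-\ui}{h}\bigr|^{2}$ I would invoke the Oscillation Lemma~\ref{lem:lowerbound}, which sandwiches $\sigma_{i}^{h}\ui+(1-\sigma_{i}^{h})\uip$ between $C_{osc}^{-1}\ui$ and $C_{osc}\ui$, whence $\bigl(\sigma_{i}^{h}\ui+(1-\sigma_{i}^{h})\uip\bigr)^{\sigma+n-3}\ge C_{osc}^{-|\sigma+n-3|}(\ui)^{\sigma+n-3}$ and one may take $C=\alpha(\sigma+n-2)C_{osc}^{-|\sigma+n-3|}>0$. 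I do not expect a genuine obstacle: the only delicate points are the bookkeeping of the periodic index shift in the summation by parts, which is where a sign could slip, and the harmless coincidence $\uip=\ui$.
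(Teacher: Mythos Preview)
Your proposal is correct and follows exactly the paper's own argument: insert the test function into the definition of $\porb$, perform the discrete summation by parts using periodicity, and then apply the mean value theorem together with the Oscillation Lemma~\ref{lem:lowerbound} for the lower bound. Your careful handling of the case $\uip=\ui$ and your remark about the sign bookkeeping are in fact more explicit than the paper's write-up.
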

\begin{proof}
	Using periodicity and the mean-value theorem, we have
	\begin{align}\notag
		B_{\Delta}^{h}\left(u^{h},\alpha \mathcal{I}_{h}[(u^{h})^{\sigma}]\right)
		&=
		-\alpha h \sum_{i=1}^{L_{h}} (\ui)^{\sigma+n-2} \left(\tfrac{\uip -2\ui +\uim}{h^{2}}\right)
		\\ \notag
		&=
		-\alpha h \sum_{i=1}^{L_{h}}  (\ui)^{\sigma+n-2}\left(\tfrac{\uip -\ui}{h^{2}} -\tfrac{\ui - \uim}{h^{2}}\right)
		\\ \notag
		&=
		-\alpha h \sum_{i=1}^{L_{h}}\left((\ui)^{\sigma+n-2} -(\uip)^{\sigma+n-2}\right) \left(\tfrac{\uip - \ui}{h^{2}}\right)
		\\ \notag
		&=
		\alpha h \sum_{i=1}^{L_{h}}\left(\tfrac{(\uip)^{\sigma+n-2} -(\ui)^{\sigma+n-2}}{\uip - \ui}\right) \left|\tfrac{\uip - \ui}{h}\right|^{2}
		\\ \notag
		&=
		\alpha (\sigma+n-2) h \sum_{i=1}^{L_{h}}
		\left(\sigma_{i}^{h} \ui + (1-\sigma_{i}^{h})\uip \right)^{\sigma+ n-3} \left|\tfrac{\uip - \ui}{h}\right|^{2} \, .
	\end{align}
	Then, \eqref{eq:discLapPow} follows using the assumption $\alpha (\sigma+n-2)>0$ and the Oscillation Lemma~\ref{lem:lowerbound}.
\end{proof}
\begin{remark}\label{lem:remark}
	The factor $\alpha(\sigma+n-2)$ in Lemma~\ref{lem:discLapPow} is positive when testing with the first derivative of the effective interface potential, since $-p(n-2-p-1) >0$. All the other terms, as destabilizing terms in the interface potential or derivatives of the entropy, are of lower order in the power of $u^{h}$. Therefore, those terms can be estimated against the former one and a Gronwall term $h \sum_{i=1}^{L_{h}} \left|\tfrac{\uip - \ui}{h}\right|^{2}$.  
\end{remark}

\begin{lemma}\label{lem:MassOpAB}
	It holds
	\begin{align}\notag \label{lem:MAssOpAB0}
		\porad(\uh,1) + \porb(\uh,1) = 
		&-\tfrac{5(n-2)}{12} \discint ((\ui)^{n-3} + (\uim)^{n-3}) \left| \frac{\ui - \uim}{h}\right|^{2}
		\\ \notag
		&- \tfrac{(n-2)}{12} \discint ((\uip)^{n-3} + (\ui)^{n-3}) \left| \frac{\ui - \uim}{h}\right|^{2}
		\\ \notag
		&-\tfrac{(n-2)}{12} \discint \bigg\{ \left((\uip)^{n-3} + 2(\ui)^{n-3} + (\uim)^{n-3}\right)
		\\\notag 
		&\qquad \qquad \qquad \qquad \left(  \frac{\uip -2\ui + \uim}{h^{2}}\right) (\ui - \uim) \bigg\}
		\\ 
		&+ (n-2) \discint \uh(\theta(i-1,i))^{n-3} \left| \frac{\ui -\uim}{h} \right|^{2}\, ,
	\end{align}
	where $\uh(\theta(i-1,i)) \in (\uim, \ui)$.
\end{lemma}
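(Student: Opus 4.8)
The plan is to establish the formula by a direct computation, splitting $\porad(\uh,1) + \porb(\uh,1)$ into its two constituents and treating each separately. The only tools required are periodicity — used in the form of discrete summation by parts, i.e.\ index shifts in the sums $\discint$ — together with elementary algebraic identities for difference quotients.

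I would first deal with $\porb(\uh,1) = -\discint (\ui)^{n-2}\,\tfrac{\uip - 2\ui + \uim}{h^2}$. Writing $\tfrac{\uip-2\ui+\uim}{h^2} = \tfrac{\uip-\ui}{h^2} - \tfrac{\ui-\uim}{h^2}$ and shifting the index in the first half by periodicity yields $\porb(\uh,1) = \discint \tfrac{(\ui)^{n-2}-(\uim)^{n-2}}{h^2}(\ui-\uim) = \discint \tfrac{(\ui)^{n-2}-(\uim)^{n-2}}{\ui - \uim}\bigl|\tfrac{\ui-\uim}{h}\bigr|^2$. Since $x \mapsto x^{n-2}$ is differentiable on $\R^+$, the mean value theorem supplies a point $\uh(\theta(i-1,i)) \in (\uim,\ui)$ with $\tfrac{(\ui)^{n-2}-(\uim)^{n-2}}{\ui-\uim} = (n-2)\,\uh(\theta(i-1,i))^{n-3}$, which is exactly the last term on the right-hand side of \eqref{lem:MAssOpAB0}.

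For $\porad(\uh,1)$ I would invoke Lemma~\ref{lem:bilineq} to replace $\porad = A_\Delta^h$ by $A_\nabla^h$ and then set $v_i^h \equiv 1$ in \eqref{discgrad2}. Expanding $\bigl|\tfrac{\uip-\uim}{h}\bigr|^2 = \bigl|\tfrac{\uip-\ui}{h}\bigr|^2 + 2\,\tfrac{\uip-\ui}{h}\tfrac{\ui-\uim}{h} + \bigl|\tfrac{\ui-\uim}{h}\bigr|^2$ and adding the second and third sums of $A_\nabla^h$, the identity $\bigl((\uip)^{n-3} + 2(\ui)^{n-3} + (\uim)^{n-3}\bigr) + \bigl(2(\ui)^{n-3} - (\uip)^{n-3} - (\uim)^{n-3}\bigr) = 4(\ui)^{n-3}$ collapses the coefficient of $\bigl|\tfrac{\uip-\ui}{h}\bigr|^2 + \bigl|\tfrac{\ui-\uim}{h}\bigr|^2$ to $-\tfrac{n-2}{6}(\ui)^{n-3}$, leaving the cross term $-\tfrac{n-2}{12}\bigl((\uip)^{n-3}+2(\ui)^{n-3}+(\uim)^{n-3}\bigr)\tfrac{\uip-\ui}{h}\tfrac{\ui-\uim}{h}$. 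Shifting indices by periodicity so that every sum containing $\bigl|\tfrac{\uip-\ui}{h}\bigr|^2$ is rewritten in terms of $\bigl|\tfrac{\ui-\uim}{h}\bigr|^2$, and then using $\tfrac{(\uip-\ui)(\ui-\uim)}{h^2} = \tfrac{\uip-2\ui+\uim}{h^2}(\ui-\uim) + \bigl|\tfrac{\ui-\uim}{h}\bigr|^2$ on the cross term, the contribution $\tfrac{\uip-2\ui+\uim}{h^2}(\ui-\uim)$ reproduces the third term of \eqref{lem:MAssOpAB0}, while the remaining $\bigl|\tfrac{\ui-\uim}{h}\bigr|^2$-terms collect with total coefficient $-\tfrac{n-2}{12}\bigl(5(\uim)^{n-3}+6(\ui)^{n-3}+(\uip)^{n-3}\bigr)$, which is precisely the sum of the first two terms of \eqref{lem:MAssOpAB0}.

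I expect this lemma to involve no conceptual difficulty; the proof is pure bookkeeping. The only genuine pitfall is keeping the index shifts consistent and collecting the rational coefficients of the various $(\cdot)^{n-3}$-weights correctly, so I would carry the whole computation in the abbreviations $p_i := (\ui)^{n-3}$, $a_i := \tfrac{\ui-\uim}{h}$ and only at the very end compare the resulting symbolic expression with the claimed right-hand side.
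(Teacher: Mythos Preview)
Your proposal is correct and follows essentially the same approach as the paper: both treat $\porb(\uh,1)$ via periodicity and the mean-value theorem, and both reduce $\porad(\uh,1)$ by index shifts together with the identity $(\uip-\ui)(\ui-\uim)=(\ui-\uim)^2+(\uip-2\ui+\uim)(\ui-\uim)$. The only organizational difference is that you first invoke Lemma~\ref{lem:bilineq} to pass to $A_\nabla^h$, whereas the paper works directly from the definition \eqref{eq:definePorad} of $A_\Delta^h$, splitting into three summands $A_1,A_2,A_3$; either way the bookkeeping lands on the same coefficients.
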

\begin{proof}
	Periodicity and the mean-value theorem yield  
	\begin{align}\notag \label{lem:MAssOpAB1}
		\porb(\uh, 1) 
		&=  \discint   \frac{(\ui)^{n-2} - (\uim)^{n-2}}{\ui - \uim}  \left|\frac{\ui -\uim}{h} \right|^{2}
		\\ 
		&= (n-2) \discint   \uh(\theta(i-1,i))^{n-3}  \left|\frac{\ui -\uim}{h} \right|^{2} \, ,
	\end{align}
	where $\uh(\theta(i-1,i)) \in (\uim, \ui)$.
	Regarding $\porad$, we write 
	\begin{align}\notag \label{lem:MAssOpAB2}
		\porad(u^h, 1)= -\tfrac{n-2}{6} 
		\bigg\{ 
		&\discint (\ui)^{n-3}\bigg(\left|\tfrac{\ui-\uim}{h}\right|^2+\left|\tfrac{\uip-\ui}{h}\right|^2\bigg)
		\\ \notag
		+&\discint\ \left((\ui)^{n-3}+(\uip)^{n-3}\right)\tfrac{\uip-\ui}{h} \tfrac{\uip-\uim}{2h}
		\\ \notag 
		+&\discint
		\left((\uim)^{n-3}+(\ui)^{n-3}\right)\tfrac{\ui-\uim}{h}
		\tfrac{\uip-\uim}{2h} \bigg\} 
		\\
		=:  -\tfrac{n-2}{6} &\left\{ A_{1} + A_{2} + A_{3}\right\}\, .
	\end{align}
	We have 
	\begin{align}
		A_{1} = \discint ((\uip)^{n-3}+(\ui)^{n-3}) \left|\frac{\uip - \ui}{h}\right|^{2}
	\end{align}
	by periodicity.
	For $A_{2}$ and $A_{3}$ we will make use of the formula 
	\begin{align}
		(\uip -\ui)(\ui - \uim) = (\ui -\uim)^{2} + (\uip - 2 \ui + \uim)(\ui - \uim) \, .
	\end{align}
	Then 
	\begin{align}\notag
		A_{2} 
		&= 
		\tfrac{1}{2} \discint ((\uip)^{n-3}+(\ui)^{n-3}) 
		\\ 
		&\qquad \quad \cdot\Bigg( \left|\frac{\uip -\ui}{h}\right|^{2} +
		\left|\frac{\ui -\uim}{h}\right|^{2}
		+ \frac{\uip - 2\ui +\uim}{h^{2}} (\ui - \uim)\Bigg) \, .
	\end{align}
	Similarly, we find 
	\begin{align} \notag
		A_{3} &= 
		\discint ((\uip)^{n-3}+(\ui)^{n-3}) \left|\frac{\uip -\ui}{h}\right|^{2}
		\\ 
		&\quad +\tfrac{1}{2} \discint ((\ui)^{n-3}+(\uim)^{n-3})  
		\frac{\uip - 2\ui +\uim}{h^{2}} (\ui - \uim) \, .
	\end{align}
	Hence, 
	\begin{align}\notag \label{lem:MAssOpAB3}
		A_{1} + A_{2} + A_{3} 
		&= 
		\tfrac{5}{2} \discint ((\uip)^{n-3}+(\ui)^{n-3}) \left|\frac{\uip - \ui}{h}\right|^{2}
		\\ \notag
		&\quad+ \tfrac{1}{2} \discint ((\uip)^{n-3}+(\ui)^{n-3}) \left|\frac{\ui - \uim}{h}\right|^{2}
		\\ 
		&\quad+\tfrac{1}{2} \discint ((\uip)^{n-3}+ 2 (\ui)^{n-3} + (\uim)^{n-3})  
		\frac{\uip - 2\ui +\uim}{h^{2}} (\ui - \uim) \, .
	\end{align}
	Combining \eqref{lem:MAssOpAB2} with \eqref{lem:MAssOpAB3} and adding \eqref{lem:MAssOpAB1}, we get \eqref{lem:MAssOpAB0}.
\end{proof}

\appendix
\section{Auxiliary results}
\label{sec:appendix}
\begin{lemma}\label{lem:appenHilfeSiebenA}
  Let the assumptions of Proposition~\ref{prop:integral1} be satisfied and assume $\delta$ to be a given positive number. Then there exist  positive constants $C_1$ and $C=C(\delta)$ such that
	\begin{align}
		\label{eq:neuAppen1}
		\begin{split}
			& \int_0^{t\wedge T_h}h \sum_{i=1}^{L_h}\left(\tfrac{1}{m_\sigma(\ui)}+\tfrac{1}{m_\sigma(\uim)}\right)\left|\frac{(\ui)^{n/2}-(\uim)^{n/2}}{h}\right|^2ds \\
			& \quad \leq C_1 \int_0^{t\wedge T_h}h \sum_{i=1}^{\Lh}\left(\frac{(u_i^h)^{n-2}}{m_\sigma(\ui)}+\frac{(\uim)^{n-2}}{m_\sigma(\uim)}\right)\left|\frac{\ui-\uim}{h}\right|^2ds\\
			& \quad \leq \delta \int_0^{t\wedge T_h}\sum_{i=1}^\Lh\dashint_{\uim}^{\ui}|\tau|^{-p-2}d\tau\int_{(i-1)h}^{ih}\left|\frac{\ui-\uim}{h}\right|^2\,dx \,ds 
			+ C(\delta) 
			\int_0^{t\wedge T_h}R(s) \,ds \,.
		\end{split}
	\end{align}
 \end{lemma}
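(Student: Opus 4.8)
The plan is to prove the two inequalities of \eqref{eq:neuAppen1} one after the other: the first by a pointwise estimate resting on the mean-value theorem and the Oscillation Lemma~\ref{lem:lowerbound}, the second by an elementary calculus inequality combined with the $h$-uniform lower bound on $\uh$. All manipulations will be carried out pointwise in $s\in[0,t\wedge T_h]$ and then integrated, so I first record that on $[0,T_h]$ the discrete mobility simplifies: for $s\le T_h$ the bound $E_h[\uh(s)]\le E_{max,h}\le c_F h^{-(p-2)/(p+2)}$ of \eqref{eq:assump} holds, so Lemma~\ref{lem:lowerbound} gives $\min_x\uh(s)\ge h^{2/(p+2)}>\tfrac12 h^{2/(p+2)}=\sigma$; hence $m_\sigma(\ui)=m(\ui)=(\ui)^n$ for every $i$, and in particular $\tfrac{(\ui)^{n-2}}{m_\sigma(\ui)}=(\ui)^{-2}$.

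For the first inequality I would use the mean-value theorem to write $(\ui)^{n/2}-(\uim)^{n/2}=\tfrac{n}{2}\,\xi_i^{\,n/2-1}(\ui-\uim)$ with $\xi_i$ between $\uim$ and $\ui$. Since $n\in(2,3)$ one has $n-2\in(0,1)$, so $\xi_i^{\,n-2}\le\max\{\ui,\uim\}^{\,n-2}$, and the Oscillation Lemma~\ref{lem:lowerbound} yields $\max\{\ui,\uim\}\le C_{osc}\ui$ as well as $\le C_{osc}\uim$. Squaring gives $|(\ui)^{n/2}-(\uim)^{n/2}|^2\le\tfrac{n^2}{4}C_{osc}^{\,n-2}(\ui)^{n-2}|\ui-\uim|^2$ and the analogous bound with $\uim$ in place of $\ui$. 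Multiplying the first by $(m_\sigma(\ui))^{-1}$, the second by $(m_\sigma(\uim))^{-1}$, adding, summing over $i$ and integrating over $[0,t\wedge T_h]$ yields the first inequality with $C_1=\tfrac{n^2}{4}C_{osc}^{\,n-2}$.

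For the second inequality, periodicity lets me rewrite $h\sum_{i}(\uim)^{-2}|\tfrac{\ui-\uim}{h}|^2=h\sum_{i}(\ui)^{-2}|\tfrac{\uip-\ui}{h}|^2$, and $(\ui)^{-2}\le C_{osc}^2(\uip)^{-2}$ from Lemma~\ref{lem:lowerbound} bounds this by $C_{osc}^2\,h\sum_{i}(\ui)^{-2}|\tfrac{\ui-\uim}{h}|^2$; hence $C_1 B\le C_1(1+C_{osc}^2)\int_0^{t\wedge T_h}h\sum_i(\ui)^{-2}|\tfrac{\ui-\uim}{h}|^2\,ds$. Since $p+2>2$, the calculus inequality $a^{-2}\le\eta\,a^{-p-2}+C_\eta$ holds for all $a>0$; moreover $\tau\mapsto|\tau|^{-p-2}$ is decreasing, so $\dashint_{\uim}^{\ui}|\tau|^{-p-2}d\tau\ge\max\{\uim,\ui\}^{-p-2}\ge C_{osc}^{-(p+2)}(\ui)^{-p-2}$ again by the Oscillation Lemma. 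Combining these, $(\ui)^{-2}\le\eta\,C_{osc}^{\,p+2}\dashint_{\uim}^{\ui}|\tau|^{-p-2}d\tau+C_\eta$; multiplying by $h|\tfrac{\ui-\uim}{h}|^2=\int_{(i-1)h}^{ih}|\tfrac{\ui-\uim}{h}|^2\,dx$, summing, and using $h\sum_i|\tfrac{\ui-\uim}{h}|^2=\int_\ort|\uh_x|^2\,dx\le 2E_h[\uh(s)]\le 2R(s)$ for the remainder term, I obtain that $C_1 B$ is bounded by $C_1(1+C_{osc}^2)\eta\,C_{osc}^{\,p+2}$ times the first term on the right of \eqref{eq:neuAppen1} plus $2C_1(1+C_{osc}^2)C_\eta\int_0^{t\wedge T_h}R(s)\,ds$. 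Choosing $\eta=\delta\big[C_1(1+C_{osc}^2)C_{osc}^{\,p+2}\big]^{-1}$ and setting $C(\delta)=2C_1(1+C_{osc}^2)C_\eta$ finishes the proof.

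I do not expect a substantial obstacle here; everything is driven by the Oscillation Lemma and the $h$-independent lower bound for $\uh$, both available on $[0,T_h]$. The only points requiring care are the justification of the reduction $m_\sigma(\uh)=(\uh)^n$ through the stopping time and the choice $\sigma=\tfrac12 h^{2/(p+2)}$, and the bookkeeping of constants so that the factor $C_1$ generated in the first step is absorbed into $\delta$ rather than left standing on the right-hand side of \eqref{eq:neuAppen1}.
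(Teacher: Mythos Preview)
Your proof is correct and follows essentially the same route as the paper's: mean-value theorem together with the Oscillation Lemma~\ref{lem:lowerbound} for the first inequality, and the calculus inequality $a^{-2}\le\eta\,a^{-p-2}+C_\eta$ for the second. The only cosmetic difference is that you first reduce $m_\sigma$ to $m$ via the stopping-time bound, whereas the paper formulates the calculus inequality directly as $\tfrac{s^{n-2}}{m_\sigma(s)}\le\delta s^{-p-2}+C(\delta)$ uniformly in $\sigma$ (which is immediate from $m_\sigma(s)\ge s^n$); both lead to the same conclusion.
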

  \begin{proof}
    The proof is based on a straightforward application of the mean-value equality, the Oscillation Lemma~\ref{lem:lowerbound} and the  fact that for every $\delta>0$ there is a constant $C=C(\delta)>0$ such that
    $$ \frac{s^{n-2}}{m_\sigma(s)}\leq \delta s^{-p-2} + C(\delta) $$
    for all $s>0$ independently of $\sigma \in (0,1).$
  \end{proof}

\begin{lemma}
\label{lem:uHochN}
Let $n\in [2,4).$
For every $\delta>0$,  there is a positive constant $C_\delta$ such that
\begin{equation}
\label{eq:uHochN}
\discint (u^h_i)^n \leq \delta \int_\ort |u^h_x|^2dx + C_\delta \Bigg(\left( \int_\ort u^hdx\right)^{\tfrac{n+2}{4-n}}+C \left(\int_{\ort}\uh \, dx \right)^{n}\Bigg).
\end{equation}
\end{lemma}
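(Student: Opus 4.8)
The plan is to reduce the discrete sum $\discint(\ui)^n$ to a power of the $L^\infty(\ort)$-norm of $u^h$ times the discrete mass, and then to interpolate. Since the exponent $n$ need not be an integer, the statement is meaningful only for $u^h\geq 0$, which is the case in all applications, so I would assume this throughout. First I would record the elementary estimate
\[
\discint(\ui)^{n}=h\sum_{i=1}^{L_h}(\ui)^{n-1}\,\ui\;\leq\;\Big(\max_{1\leq i\leq L_h}\ui\Big)^{n-1}\,h\sum_{i=1}^{L_h}\ui\;=\;\|u^h\|_{L^\infty(\ort)}^{\,n-1}\int_\ort u^h\,dx,
\]
using that a piecewise linear function attains its maximum at a node and that $h\sum_{i=1}^{L_h}\ui=\int_\ort u^h\,dx$ by periodicity (the trapezoidal rule being exact on each element).

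Next I would control $\|u^h\|_{L^\infty(\ort)}$ by the Gagliardo--Nirenberg inequality on the interval $\ort=(0,L)$. Writing $\bar u:=\tfrac1L\int_\ort u^h\,dx$, the mean-zero function $w:=u^h-\bar u$ vanishes somewhere; a standard ``drop-of-height'' argument (on an interval of length at least $c\,\|w\|_{L^\infty}^2/\|w_x\|_{L^2}^2$ one has $w\geq\tfrac12\|w\|_{L^\infty}$, hence $\|w\|_{L^1}\geq c\,\|w\|_{L^\infty}^3/\|w_x\|_{L^2}^2$) gives $\|w\|_{L^\infty}^{3}\leq C\|w\|_{L^1}\|w_x\|_{L^2}^{2}$. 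Since $w_x=u^h_x$, $\bar u\geq0$ and $\|w\|_{L^1}\leq 2\int_\ort u^h\,dx$, this yields
\[
\|u^h\|_{L^\infty(\ort)}\;\leq\;C\Big(\int_\ort u^h\,dx\Big)^{1/3}\Big(\int_\ort|u^h_x|^2\,dx\Big)^{1/3}+C\int_\ort u^h\,dx .
\]

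Inserting this into the first estimate and using $(a+b)^{n-1}\leq C_n(a^{n-1}+b^{n-1})$, I would arrive at
\[
\discint(\ui)^n\;\leq\;C\Big(\int_\ort u^h\,dx\Big)^{n}+C\Big(\int_\ort|u^h_x|^2\,dx\Big)^{\frac{n-1}{3}}\Big(\int_\ort u^h\,dx\Big)^{\frac{n+2}{3}} .
\]
Finally I would absorb the second term by Young's inequality with conjugate exponents $\tfrac{3}{n-1}$ and $\tfrac{3}{4-n}$ --- this is exactly where $n<4$ is used, as it ensures $\tfrac{3}{n-1}>1$ --- producing $\delta\int_\ort|u^h_x|^2\,dx+C_\delta\big(\int_\ort u^h\,dx\big)^{(n+2)/(4-n)}$ and hence the claim. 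All constants depend only on $\ort$ and $n$, so the bound is uniform in $h$; the only point that is not entirely routine is that one must use the sharp $2/3$-power bound for $\|u^h\|_{L^\infty}$ rather than the cruder estimate $\|u^h\|_{L^\infty}\lesssim\int_\ort u^h\,dx+\|u^h_x\|_{L^2(\ort)}$ employed elsewhere in the paper, which would only reach $n<3$.
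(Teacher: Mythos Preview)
Your proof is correct and follows essentially the same strategy as the paper: reduce the discrete sum to continuous norms, apply the Gagliardo--Nirenberg inequality with interpolation parameter $\theta=\tfrac{2(n-1)}{3n}$ (yielding exactly your exponents $\tfrac{n-1}{3}$ and $\tfrac{n+2}{3}$), and finish with Young's inequality using $n<4$. The only cosmetic difference is that the paper first bounds $\discint(\ui)^n\leq C\int_\ort (u^h)^n\,dx$ by a homogeneity argument and then applies Gagliardo--Nirenberg to $\|u^h\|_{L^n}$, whereas you factor out $\|u^h\|_{L^\infty}^{n-1}$ and apply Gagliardo--Nirenberg to $\|u^h\|_{L^\infty}$; both routes land on the identical intermediate estimate.
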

\begin{proof}
We have for $C>0$
\begin{align}\notag
	\discint (u^h_i)^n \le C \int_\ort (u^{h})^{n} dx 
\end{align}
due to a standard homogeneity argument.
Gagliardo-Nirenberg inequality shows for positive constants $C_{1}$ and $C_{2}$
\begin{align}\label{uHochN1}
	\int_\ort (u^{h})^{n} dx 
	\le 
	C_{1}
	\left( 
	\int_\ort |u_{x}^{h}|^{2} dx 
	\right)^{\frac{\beta n}{2}}
	\left(
	\int_\ort u^{h} dx 
	\right)^{(1-\beta)n}
	+
	C_{2}
	\left(
	\int_\ort u^{h} dx
	\right)^{n} \, ,   
\end{align}
where $\beta =\frac{2(n-1)}{3n}$.
Since $n<4$, this allows for the application of Young's inequality in the first term on the right-hand side of \eqref{uHochN1}, which gives the result.
\end{proof}
A combination of Young's inequality with Lemma~\ref{lem:uHochN} gives
\begin{lemma}
\label{lem:poroussquare}
Let $n\in [2,4).$
For every $\delta>0$, there is a positive constant $C_\delta$ such that
\begin{align}
\label{eq:poroussquare}
\begin{split}
\discint (u^{h}_i)^{n-2}\left|\tfrac{u^{h}_{i+1}-u^{h}_i}{h}\right|^2 & \leq \delta \discint \left| u^{h}_i\right|^{n-4}\left|\tfrac{u^{h}_{i+1}-u^{h}_i}{h}\right|^4 + \delta\int_\ort |u^h_x|^2dx \\
& \qquad + C_\delta\left(\left(\int_\ort u^hdx\right)^{\frac{n+2}{4-n}}+\left(\int_{\ort}\uh \, dx \right)^{n}\right)
\end{split}
\end{align}
\end{lemma}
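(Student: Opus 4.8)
The plan is to deduce the estimate from Lemma~\ref{lem:uHochN} by a single application of Young's inequality at the level of the individual summands, exploiting that $n\in[2,4)$.

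First I would record the elementary algebraic identity, valid for strictly positive $\ui$,
\[
  (\ui)^{n-2}\left|\tfrac{\uip-\ui}{h}\right|^{2}
  =\left((\ui)^{n-4}\left|\tfrac{\uip-\ui}{h}\right|^{4}\right)^{1/2}\cdot\left((\ui)^{n}\right)^{1/2},
\]
which holds because the exponents on $\ui$ sum to $\tfrac{n-4}{2}+\tfrac n2=n-2$. Applying Young's inequality $xy\le \tfrac{\varepsilon}{2}x^{2}+\tfrac1{2\varepsilon}y^{2}$ to each term and summing over $i$ gives, for every $\varepsilon>0$,
\[
  \discint (\ui)^{n-2}\left|\tfrac{\uip-\ui}{h}\right|^{2}
  \le \varepsilon\, \discint (\ui)^{n-4}\left|\tfrac{\uip-\ui}{h}\right|^{4}
  + C_{\varepsilon}\, \discint (\ui)^{n}.
\]

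Next I would invoke Lemma~\ref{lem:uHochN} (applicable since $n\in[2,4)$) to control the remaining term $\discint (\ui)^{n}$: for every $\eta>0$ there is $C_\eta>0$ with
\[
  \discint (\ui)^{n}\le \eta\int_\ort |u^h_x|^{2}\,dx
  + C_\eta\Bigg(\Big(\int_\ort u^h\,dx\Big)^{\tfrac{n+2}{4-n}}+\Big(\int_\ort u^h\,dx\Big)^{n}\Bigg).
\]
Combining the last two displays and then choosing first $\varepsilon:=\delta$ and afterwards $\eta:=\delta/C_\varepsilon$, one obtains exactly the asserted inequality, with $C_\delta$ built from $C_\varepsilon$, $C_\eta$ and the universal constant of Lemma~\ref{lem:uHochN}.

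The argument is entirely routine; there is no genuine obstacle. The only points deserving minimal care are the bookkeeping of the exponents in the Young splitting and the order in which the two small parameters are fixed, so that the $\varepsilon$-contribution matches the first term on the right-hand side of the claim while the $\int_\ort|u^h_x|^{2}\,dx$ contribution is kept below the prescribed $\delta$.
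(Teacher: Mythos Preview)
Your proof is correct and matches the paper's approach exactly: the paper merely states that the lemma follows from ``a combination of Young's inequality with Lemma~\ref{lem:uHochN},'' which is precisely your two-step argument (the pointwise Young splitting $(\ui)^{n-2}|\partial_h^+\uh|^2=\big((\ui)^{n-4}|\partial_h^+\uh|^4\big)^{1/2}\big((\ui)^n\big)^{1/2}$ followed by Lemma~\ref{lem:uHochN}). Your handling of the two small parameters is also the natural one.
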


\begin{lemma}
	\label{lem:aux1}
	We have the estimate
	\begin{align*}
		&\Big(\tfrac{1}{m_\sigma(u^h)},\Big(\sum_{i=1}^\Lh Z_i(g_\ell)e_i\Big)^2\Big)_h
		\\&
		\leq
		2\sum_{i=1}^\Lh \frac{1}{m_\sigma(u^h_i)}
		\bigg(
		\int_{(i-1)h}^{ih} \Big|\frac{\sqmob(u^h)(x+h)-\sqmob(u^h)(x)}{h}\Big|^2 g_\ell(x)^2 \,dx
		\\&~~~~~~~~~~~~~~~~~~~~~~~~
		+\int_{(i-1)h}^{ih} (\sqmob(u^h)(x+h))^2\Big|\frac{g_\ell(x+h)-g_\ell(x)}{h}\Big|^2 \,dx
		\bigg)
	\end{align*}
	for arbitrary $\ell\in\N$ and positive $u^h\in X_h.$
\end{lemma}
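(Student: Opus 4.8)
The plan is to reduce the assertion to a pointwise estimate at the grid nodes, obtained from a discrete product rule together with the Cauchy--Schwarz inequality. First I would observe that the lumped-mass scalar product only sees nodal values: since $e_j(ih)=\delta_{ij}$, the function $\big(\sum_{j=1}^{\Lh}Z_j(g_\ell)e_j\big)^2$ has nodal value $Z_i(g_\ell)^2$ at $x=ih$, whence
\[
\Big(\tfrac{1}{m_\sigma(u^h)},\Big(\sum_{i=1}^{\Lh} Z_i(g_\ell)e_i\Big)^2\Big)_h
= \sum_{i=1}^{\Lh}\frac{h}{m_\sigma(u^h_i)}\,Z_i(g_\ell)^2 ,
\]
so it suffices to bound $h\,Z_i(g_\ell)^2$ for each $i$.

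Second, I would rewrite $Z_i(g_\ell)$. Since $(g_m,g_\ell)_{L^2(\ort)}=\delta_{m\ell}$, the sum in \eqref{eq:integral5} collapses and $Z_i(g_\ell)=\chitth\,\tfrac1h\int_\ort(\sqmob(u^h)g_\ell)_x e_i\,dx$. Abbreviating $v:=\sqmob(u^h)g_\ell$ (a continuous, piecewise smooth, periodic function), integration by parts on the torus together with $(e_i)_x=\tfrac1h$ on $((i-1)h,ih)$, $(e_i)_x=-\tfrac1h$ on $(ih,(i+1)h)$ and the substitution $x\mapsto x+h$ gives
\[
Z_i(g_\ell)=\chitth\,\frac1h\int_{(i-1)h}^{ih}\frac{v(x+h)-v(x)}{h}\,dx .
\]
Applying Cauchy--Schwarz to the average over the interval of length $h$ then yields
\[
h\,Z_i(g_\ell)^2 \le \chitth\int_{(i-1)h}^{ih}\Big|\frac{v(x+h)-v(x)}{h}\Big|^2\,dx .
\]

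Third, I would use the discrete Leibniz splitting
\[
v(x+h)-v(x)=\big(\sqmob(u^h)(x+h)-\sqmob(u^h)(x)\big)g_\ell(x)+\sqmob(u^h)(x+h)\big(g_\ell(x+h)-g_\ell(x)\big),
\]
together with $(a+b)^2\le 2a^2+2b^2$, to obtain
\begin{align*}
h\,Z_i(g_\ell)^2 &\le 2\int_{(i-1)h}^{ih}\Big|\tfrac{\sqmob(u^h)(x+h)-\sqmob(u^h)(x)}{h}\Big|^2 g_\ell(x)^2\,dx\\
&\quad + 2\int_{(i-1)h}^{ih}\big(\sqmob(u^h)(x+h)\big)^2\Big|\tfrac{g_\ell(x+h)-g_\ell(x)}{h}\Big|^2\,dx .
\end{align*}
Dividing by $m_\sigma(u^h_i)>0$, summing over $i=1,\dots,\Lh$ and using $\chitth\le 1$ produces exactly the claimed inequality.

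Finally, the argument has no genuine obstacle; the only places requiring care are the integration-by-parts/shift identity for $Z_i(g_\ell)$ --- keeping track of the two factors $1/h$ and of the cutoff $\chitth$ --- and the initial observation that the lumped-mass pairing of $\tfrac1{m_\sigma(u^h)}$ with the non-$X_h$ function $\big(\sum_j Z_j(g_\ell)e_j\big)^2$ reduces to a sum of nodal contributions.
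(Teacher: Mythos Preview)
Your argument is correct and is precisely the natural route: reduce the lumped-mass pairing to nodal values, rewrite $Z_i(g_\ell)$ via integration by parts and the shift $x\mapsto x+h$ as an average of the forward difference quotient of $\sqmob(u^h)g_\ell$, apply Cauchy--Schwarz on the interval of length $h$, and split with the discrete Leibniz rule together with $(a+b)^2\le 2a^2+2b^2$. The paper itself omits the proof, referring to the analogous Lemma~4.5 in \cite{FischerGruen2018}; your write-up matches what that argument does.
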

The proof of Lemma~\ref{lem:aux1} is very similar to that one of Lemma~4.5 in \cite{FischerGruen2018} and is therefore omitted.

\begin{lemma}\label{lem:nineAnew} 
	Let $\varepsilon$ and $\eta$ be arbitrary positive numbers. Then, there exist positive constants $\tilde{C}_{\eta,1}$ and $\tilde{C}_{\eta,2}$ such that
	\begin{align}\notag
		\label{eq:nineAnew}
		&\frac{1}{2h} \sum_{\ell \in \mathbb{Z}} \lambda_{\ell}^{2} \int_{0}^{t\wedge T_{h}} \sum_{i=1}^{L_{h}} \left( \int_\ort \partial_h^-((\sqmob(u^h)g_\ell)_x) e_{i+1} \dx \right)^2 \ds
		\\ \notag
		&\le 
		(1+\eta)  C_{Strat} \Bigg\{ \left(1 + \varepsilon \tfrac{n-2}{2} \right)
		C_{osc}^{n-2} \int_{0}^{t\wedge T_{h}} h \sum_{i=1}^{L_{h}}  (\ui)^{n-2}  
		|(\Delta_{h} \uh )_{i}|^{2} \, ds
		\\ \notag
		&\quad+ \left(\tfrac{(n-2)^{2}}{4} + \tfrac{n-2}{2\varepsilon } \right)
		C_{osc}^{4-n} 
		\\ \notag
		&\quad \qquad \cdot \int_{0}^{t\wedge T_{h}} \discint (\ui)^{n-4} 
		\bigg\{ \left|\tfrac{u^{h}_{i+1}-u^{h}_{i}}{h}\right|^{4} +2 \left| \tfrac{u^{h}_{i+1}-u^{h}_{i}}{h} \right|^{2}  \left| \tfrac{u^{h}_{i}-u^{h}_{i-1}}{h} \right|^{2} + \left|\tfrac{u^{h}_{i}-u^{h}_{i-1}}{h}\right|^{4} \bigg\} \, ds \Bigg\} 
		\\  
		&\quad+
		\tilde{C}_{\eta,1}
		\int_{0}^{t\wedge T_{h}}  \discint (u^{h}_{i})^{n-2} \left| \tfrac{u^{h}_{i+1} - u^{h}_{i}}{h} \right|^{2}  \, ds
		+
		\tilde{C}_{\eta,2}
		\int_{0}^{t\wedge T_{h}} 	\int_\ort (\uh)^{n}\, dx \, ds  \, .
	\end{align}
\end{lemma}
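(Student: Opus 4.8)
The plan is to expand the discrete second derivative $\partial_h^-((\sqmob(u^h)g_\ell)_x)$ explicitly, using the product-rule-type decomposition for finite differences, and then square and sum against the $\lambda_\ell^2$, exploiting the colored-noise assumption $\sum_\ell \ell^4\lambda_\ell^2<\infty$ together with the frequency-balancing $\lambda_\ell=\lambda_{-\ell}$ built into (H4). First I would note that since $\sqmob(u^h)=\Ih[|u^h|^{n/2}]$ is piecewise linear, $(\sqmob(u^h)g_\ell)_x$ on an element is a sum of a piecewise-constant part (the difference quotient of $\sqmob(u^h)$ times $g_\ell$) and the smooth part $\sqmob(u^h)(g_\ell)_x$; applying $\partial_h^-$ produces a second difference quotient of $(u^h)^{n/2}$ convolved with $g_\ell$, a cross term pairing a first difference quotient of $(u^h)^{n/2}$ with $(g_\ell)_x$, and a term with $\sqmob(u^h)(g_\ell)_{xx}$. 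The dominant contribution is the one with the second difference quotient of $(u^h)^{n/2}$; the others carry at least one factor of $\ell$ from differentiating $g_\ell$ and will ultimately be absorbed into the $\tilde C_{\eta,i}$ terms using $\|g_\ell\|_{L^\infty}\le C$, $\|(g_\ell)_x\|_{L^\infty}\le C\ell$, and $\sum_\ell\ell^2\lambda_\ell^2<\infty$.

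Next I would treat the leading term. Writing $(u^h)^{n/2}$ at the nodes and using the elementary identity $a^{n/2}-b^{n/2}=\frac{n}{2}\xi^{n/2-1}(a-b)$ for an intermediate value $\xi$, the second difference quotient of $(u^h)^{n/2}$ becomes $\frac{n}{2}$ times a combination of $\xi^{n/2-1}(\Delta_h u^h)_i$ and a quadratic remainder in the first difference quotients of $u^h$ (coming from the variation of $\xi^{n/2-1}$ across neighbouring elements, which by a further mean-value step contributes $(n/2-1)\eta^{n/2-2}$ against $|\partial_h^\pm u^h|^2$). Squaring this expression gives precisely (i) a term $\propto (\text{intermediate value})^{n-2}|(\Delta_h u^h)_i|^2$, and (ii) terms $\propto (\text{intermediate value})^{n-4}$ times fourth powers and mixed products of the first difference quotients $|\partial_h^+ u^h|^2,|\partial_h^- u^h|^2$, together with the cross term $|(\Delta_h u^h)_i|\cdot|\partial_h u^h|^2$ which is split by Young's inequality with weight $\varepsilon$: this is exactly where the factor $(1+\varepsilon\frac{n-2}{2})$ in front of the Laplacian term and the factor $(\frac{(n-2)^2}{4}+\frac{n-2}{2\varepsilon})$ in front of the gradient terms arise, the $\frac{(n-2)^2}{4}$ being the square of the $\frac n2\cdot\frac{n-2}{n}$ coefficient. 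Then the Oscillation Lemma~\ref{lem:lowerbound} converts every intermediate value $\xi,\eta\in(\min,\max)$ of neighbouring nodal values into $(\ui)$ up to a factor $C_{osc}^{n-2}$ (respectively $C_{osc}^{4-n}$ for the negative exponent $n-4$, since $n-4<0$ reverses the inequality); this produces the constants $C_{osc}^{n-2}$ and $C_{osc}^{4-n}$ displayed in \eqref{eq:nineAnew}.

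After that I would collect the lower-order contributions. The cross term pairing a first difference quotient of $\sqmob(u^h)$ with $(g_\ell)_x$ carries one factor $\ell$ and one difference quotient of $u^h$; after squaring, summing in $\ell$ (using $\sum_\ell\ell^2\lambda_\ell^2<\infty$), and applying Young's inequality one can bound it by $\eta$ times the leading Laplacian/gradient terms above plus $\tilde C_{\eta,1}\discint(\ui)^{n-2}|\partial_h^+ u^h|^2$, after using the oscillation bound to replace the intermediate value of $(u^h)^{n/2-1}$ by $(\ui)^{(n-2)/2}$. The term with $\sqmob(u^h)(g_\ell)_{xx}$ carries a factor $\ell^2$ and no difference quotient of $u^h$; squaring and summing ($\sum_\ell\ell^4\lambda_\ell^2<\infty$) gives $\tilde C_{\eta,2}\int_\ort(u^h)^n\,dx$. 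The small multiplicative constant $(1+\eta)$ in front of the $C_{Strat}$ block absorbs the $\eta$-fractions of the leading terms that were generated along the way; $C_{Strat}=\frac12\frac{n^2}{4}(\frac{\lambda_0^2}{L}+\sum_{\ell\ge1}\frac{2\lambda_\ell^2}{L})$ appears because $\sum_{\ell\in\Z}\lambda_\ell^2 g_\ell(x)^2$ at each point equals $\frac{\lambda_0^2}{L}+\sum_{\ell\ge1}\frac{2\lambda_\ell^2}{L}$ after summing the squared sine and cosine basis functions over the balanced frequencies, and the $\frac12\cdot\frac{n^2}{4}$ comes from the $\frac12$ in front of the It\^o correction and the $\frac n2$ chain-rule factor squared.

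The main obstacle I expect is bookkeeping the combinatorics of the second difference quotient of $(u^h)^{n/2}$: one must carefully track which intermediate values appear in the Laplacian-type part versus the gradient-type part, keep the quadratic remainders under control, and ensure that when the Young splitting with parameter $\varepsilon$ is performed the resulting constants are \emph{exactly} $C_{Strat}(1+\varepsilon\frac{n-2}{2})C_{osc}^{n-2}$ and $C_{Strat}(\frac{(n-2)^2}{4}+\frac{n-2}{2\varepsilon})C_{osc}^{4-n}$ and not merely comparable, since these precise constants are what feed into the absorption conditions \eqref{absorbcond1}--\eqref{absorbcond2} and ultimately into the sharp lower bound on $S$ in (H5). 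A secondary technical point is handling the shifted index $e_{i+1}$ (rather than $e_i$) in the statement, which only amounts to a relabelling via periodicity but must be done consistently so that the $(\ui)^{n-4}$ weights line up with the correct difference quotients $\frac{u_{i+1}^h-u_i^h}{h}$ and $\frac{u_i^h-u_{i-1}^h}{h}$.
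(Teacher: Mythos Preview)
Your proposal is correct and follows essentially the same route as the paper: the paper first rewrites the inner integral via integration by parts against the hat function $e_{i+1}$ as $\int_{I_i}\partial_h^-\partial_h^+(\sqmob(u^h)g_\ell)\,dx$, then applies a discrete product rule for $\partial_h^-\partial_h^+$ (producing four terms rather than the three you sketch, but of the same types), and proceeds exactly as you describe --- mean-value theorem plus Young with parameter $\varepsilon$ on the leading $|\Delta_h(\sqmob(u^h))_i|^2$ term, the Oscillation Lemma for the $C_{osc}$ powers, and Young with parameter $\eta$ to push the cross terms into the $\tilde C_{\eta,1},\tilde C_{\eta,2}$ contributions. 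Your anticipated obstacle (getting the constants \emph{exactly} right rather than merely comparable) is precisely where the paper invests its care.
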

\begin{remark}
	Although $\varepsilon$ and $\eta$ are related in the proof of Lemma~\ref{lem:nineAnew} to a Young argument, the roles of $\eta$ on the one hand and of $\varepsilon$ on the other hand are different. 
	The parameter $\eta$ is used for interpolation estimates involving terms, which can be directly absorbed, or involving the last two terms in \eqref{eq:nineAnew}, respectively, which are estimated in the proof of Proposition~\ref{prop:integral1} against Gronwall terms.
	In contrast, the parameter $\varepsilon$ is used to balance the discrete versions of $\intort u^{n-2} u_{xx}^{2} \dx$ and of $\intort u^{n-4} u_{x}^{4} \dx$ against each other to determine the smallness condition on $S$, see the arguments starting with \eqref{est:Absorber}. 
\end{remark}
\begin{proof}
	We use the notation $I_{i} = [x_{i}, x_{i+1}]$, where $x_{i} := ih$, $i = 1, \dots, L_{h}$.
	Using integration by parts, we find
	\begin{align} \notag \label{nineAnew1}
		&\sum_{i=1}^{L_{h}} \left( \int_\ort \partial_h^-((\sqmob(u^h)g_\ell)_x) e_{i+1} \dx \right)^2 
		\\ \notag 
		&=
		\sum_{i=1}^{L_{h}} \Bigg( \int_{I_{i}} \frac{(\sqmob(u^h)g_\ell)_x(x) -(\sqmob(u^h)g_\ell)_x(x-h)}{h} \cdot \frac{x-x_{i}}{h} \dx
		\\ \notag
		&\qquad\qquad+
		\int_{I_{i+1}} \frac{(\sqmob(u^h)g_\ell)_x(x) -(\sqmob(u^h)g_\ell)_x(x-h)}{h} \cdot \frac{x_{i+2}-x}{h} \dx	 \Bigg)^2 
		\\ \notag 
		&=
		\sum_{i=1}^{L_{h}} \Bigg(
		- \int_{I_{i}} \frac{(\sqmob(u^h)g_\ell)(x) -(\sqmob(u^h)g_\ell)(x-h)}{h^{2}} \dx
		\\ \notag
		&\qquad\qquad \quad
		+
		\frac{(\sqmob(u^h)g_\ell)(x_{i+1}) -(\sqmob(u^h)g_\ell)(x_{i})}{h} 
		\\ \notag
		&\qquad \qquad +\int_{I_{i}}   \frac{(\sqmob(u^h)g_\ell)(x+h) -(\sqmob(u^h)g_\ell)(x)}{h^{2}}  \dx
		\\ \notag
		&\qquad\qquad \quad -
		\frac{(\sqmob(u^h)g_\ell)(x_{i+1}) -(\sqmob(u^h)g_\ell)(x_{i})}{h} \Bigg)^2 
		\\ \notag 
		&=
		\sum_{i=1}^{L_{h}} \Bigg(
		\int_{I_{i}}  \frac{(\sqmob(u^h)g_\ell)(x+h) -2(\sqmob(u^h)g_\ell)(x) + (\sqmob(u^h)g_\ell)(x-h) }{h^{2}}  
		\dx \Bigg)^2 
		\\ 
		&=
		\sum_{i=1}^{L_{h}} \Bigg(
		\int_{I_{i}}  \partial_h^- \partial_h^+  (\sqmob(u^h)g_\ell)(x) 
		\dx \Bigg)^2 \, .
	\end{align}
	We have the following product rule
	\begin{align}
		 \partial_h^- \partial_h^+ (uv) =  (\partial_h^- \partial_h^+ u) v(\cdot +h) +  \partial_h^+ v \, \partial_h^- u 
		 +  (\partial_h^- \partial_h^+ v) u
		 +  \partial_h^- u \,\partial_h^- v \,.
	\end{align}
	Hence, 
	\begin{align}\notag
		\partial_h^- \partial_h^+ (\sqmob(u^h)g_\ell) &=  (\partial_h^- \partial_h^+ (\sqmob(u^h))) g_\ell(\cdot +h) 
		+  \partial_h^+ g_\ell \, \partial_h^- (\sqmob(u^h))
		\\ 
		&\quad+  (\partial_h^- \partial_h^+ g_\ell) \sqmob(u^h)
		+  \partial_h^- (\sqmob(u^h)) \,\partial_h^- g_\ell \, .
	\end{align}
	Inserting this in \eqref{nineAnew1}, gives
	\begin{align}\notag
		&\sum_{i=1}^{L_{h}} \left( \int_\ort \partial_h^-((\sqmob(u^h)g_\ell)_x) e_{i+1} \dx \right)^2
		= 
		\sum_{i=1}^{L_{h}} \Bigg(
		\int_{I_{i}}  \partial_h^- \partial_h^+  (\sqmob(u^h)g_\ell)(x) 
		\dx \Bigg)^2
		\\ \notag
		&\le
		h\sum_{i=1}^{L_{h}} 
		\int_{I_{i}}  \left(\partial_h^- \partial_h^+  (\sqmob(u^h)g_\ell)(x) \right)^{2}
		\dx 
		\\ \notag
		&=
		h\sum_{i=1}^{L_{h}} \Bigg\{
		\int_{I_{i}}  
		|\partial_h^- \partial_h^+ (\sqmob(u^h))|^{2} g_\ell^{2}(\cdot +h) \dx
		+  \int_{I_{i}} |\partial_h^+ g_\ell|^{2} \, |\partial_h^- (\sqmob(u^h))|^{2} \dx
		\\ \notag
		&\qquad \qquad
		+ \int_{I_{i}}  |\partial_h^- (\sqmob(u^h))|^{2} \, |\partial_h^- g_\ell|^{2} \dx
		+ \int_{I_{i}}  |\partial_h^- \partial_h^+ g_\ell|^{2} (\sqmob(u^h))^{2}
		\dx 
		\\ \notag
		&\qquad \qquad
		+ 2
		\int_{I_{i}}  (\partial_h^- \partial_h^+ (\sqmob(u^h))) g_\ell(\cdot +h) 
		(\partial_h^+ g_\ell) \, (\partial_h^- (\sqmob(u^h)))
		\dx 
		\\ \notag
		&\qquad \qquad
		+ 2 \int_{I_{i}}
		 (\partial_h^- \partial_h^+ (\sqmob(u^h))) g_\ell(\cdot +h)
		  (\partial_h^- (\sqmob(u^h))) \, (\partial_h^- g_\ell)
		\dx 
		\\ \notag
		&\qquad \qquad
		+ 2\int_{I_{i}}
		(\partial_h^- \partial_h^+ (\sqmob(u^h))) g_\ell(\cdot +h)
		(\partial_h^- \partial_h^+ g_\ell) (\sqmob(u^h))
		\dx 
		\\ \notag
		&\qquad \qquad
		+ 2\int_{I_{i}}
		(\partial_h^+ g_\ell) \, (\partial_h^- (\sqmob(u^h)))
		(\partial_h^- (\sqmob(u^h))) \, (\partial_h^- g_\ell)
		\dx 
		\\ \notag
		&\qquad \qquad
		+ 2\int_{I_{i}}
		(\partial_h^+ g_\ell) \, (\partial_h^- (\sqmob(u^h)))
		(\partial_h^- \partial_h^+ g_\ell) (\sqmob(u^h))
		\dx 
		\\ \notag
		&\qquad \qquad
		+ 2\int_{I_{i}}
		(\partial_h^- (\sqmob(u^h))) \, (\partial_h^- g_\ell)
		(\partial_h^- \partial_h^+ g_\ell) (\sqmob(u^h))
		\dx \Bigg\}
		\\ &=: R_{1} + \dots + R_{10}
	\end{align}
	To discuss $R_{1}$, we use $\sqmob(\uh)$ to be continuous and elementwise linear by definition. If $a$ is such a function, we have, writing $a_{i} := a(x_{i})$,
	\begin{align}\label{newAnine2}
		\partial_h^- \partial_h^+  a \Big|_{I_{i}}
		=
		\frac{a_{i+1} - 2a_{i} + a_{i-1}}{h^{2}} \cdot \frac{x_{i+1} -x}{h} 
		+
		\frac{a_{i+2} - 2a_{i+1} + a_{i}}{h^{2}} \cdot \frac{x-x_{i}}{h} \, .
	\end{align}
	Using \eqref{newAnine2}
	and the properties of the basis function $(g_{\ell})_{\ell \in \Z}$, cf. \cite{GruenKlein22} Appendix~A, we find for $R_{1}$
	\begin{align}\notag \label{newAnine3}
		&\frac{1}{2h} \sum_{\ell \in \mathbb{Z}} \lambda_{\ell}^{2} h\sum_{i=1}^{L_{h}} 
		\int_{I_{i}}  
		|\partial_h^- \partial_h^+ (\sqmob(u^h))|^{2} g_\ell^{2}(\cdot +h) \dx 
		\\
		& =
		\frac{1}{2} \left(\frac{\lambda_0^{2}}{2} + \sum_{\ell=1}^{\infty} \frac{2\lambda_{\ell}^{2}}{L}\right)
		\sum_{i=1}^{L_{h}} 
		\int_{I_{i}}  
		\left( \Delta_{h}^{L}\sqmob(\uh) \lambda(x) +  \Delta_{h}^{R}\sqmob(\uh) (1-\lambda(x)) \right)^{2} \dx
		\, ,
	\end{align}
	where
	$\Delta_{h}^{L}\sqmob(\uh)\big|_{I_{i}} = \frac{\sqmob(u_{i+1}^{h})-2 \sqmob(u_{i}^{h}) + \sqmob(u_{i-1}^{h})}{h^{2}}$, 
	$\Delta_{h}^{R}\sqmob(\uh)\big|_{I_{i}} = \frac{\sqmob(u_{i+2}^{h})-2 \sqmob(u_{i+1}^{h}) + \sqmob(u_{i}^{h})}{h^{2}}$, and 
	$\lambda(x)\big|_{I_{i}} = \frac{x_{i+1}-x}{h}$ (which also implies $(1-\lambda(x))\big|_{I_{i}} = \frac{x-x_{i}}{h}$).
	From \eqref{newAnine3}, by convexity we infer
	\begin{align}\notag
		&\frac{1}{2h} \sum_{\ell \in \mathbb{Z}} \lambda_{\ell}^{2} h\sum_{i=1}^{L_{h}} 
		\int_{I_{i}}  
		|\partial_h^- \partial_h^+ (\sqmob(u^h))|^{2} g_\ell^{2}(\cdot +h) \dx  
		\\ \notag
		&\le
		\frac{1}{2}\left(\frac{\lambda_0^{2}}{2} + \sum_{\ell=1}^{\infty} \frac{2\lambda_{\ell}^{2}}{L}\right)
		\sum_{i=1}^{L_{h}} \Big\{
		\int_{I_{i}}  
		\lambda(x) |\Delta_{h}^{L}\sqmob(\uh)|^{2} \dx  
		+
		\int_{I_{i}} (1-\lambda(x)) |\Delta_{h}^{R}\sqmob(\uh)|^{2}\dx \Big\}
		\\
		&=
		\frac{1}{2}\left(\frac{\lambda_0^{2}}{2} + \sum_{\ell=1}^{\infty} \frac{2\lambda_{\ell}^{2}}{L}\right)
		h\sum_{i=1}^{L_{h}} 
		|(\Delta_{h}\sqmob(\uh))_{i}|^{2} \dx \, ,  
	\end{align}
	where in the last step we used $\lambda(x)\big|_{I_{i}} = \lambda(x)\big|_{I_{i+1}}$ and $\Delta_{h}^{L}\sqmob(\uh)\big|_{I_{i+1}} = \Delta_{h}^{R}\sqmob(\uh)\big|_{I_{i}}$.
	We infer for $\sigma_{i}^{h}, \sigma_{i-1}^{h} \in [0,1]$
	\begin{align}\notag \label{eq:nine2new}
		&h\sum_{i=1}^{L_{h}} 
		|(\Delta_{h}\sqmob(\uh))_{i}|^{2} \dx
				\\ \notag
		&=
		h \sum_{i=1}^{L_{h}} \tfrac{1}{h^{2}} \left(\tfrac{(u^{h}_{i+1})^{n/2} - (u_{i}^{h})^{n/2}}{u^{h}_{i+1} - u^{h}_{i}} \cdot \tfrac{u^{h}_{i+1}-u^{h}_{i}}{h} - \tfrac{(u_{i}^{h})^{n/2} - (u_{i-1}^{h})^{n/2}}{u^{h}_{i} - u^{h}_{i-1}} \cdot \tfrac{u^{h}_{i}-u^{h}_{i-1}}{h}\right)^{2}
		\\ \notag
		&=
		h \sum_{i=1}^{L_{h}} \tfrac{1}{h^{2}} \left(\tfrac{n}{2} \left(\sigma_{i}^{h} u^{h}_{i} + (1-\sigma_{i}^{h})u^{h}_{i+1}\right)^{\frac{n-2}{2}} \tfrac{u^{h}_{i+1}-u^{h}_{i}}{h} 
		\right.
		\\ \notag
		&\hspace{2cm} \left.
		- \tfrac{n}{2} \left((1-\sigma^{h}_{i-1}) u^{h}_{i} + \sigma^{h}_{i-1}u^{h}_{i-1}\right)^{\frac{n-2}{2}} \tfrac{u^{h}_{i}-u^{h}_{i-1}}{h}
		\right)^{2}
		\\ \notag
		&= h \sum_{i=1}^{L_{h}} \Bigg\{ 
		\vphantom{\tfrac{\left(\sigma_{i}^{h} u^{h}_{i} + (1-\sigma_{i}^{h})u^{h}_{i+1}\right)^{\frac{n-2}{2}} - \left((1-\sigma^{h}_{i-1}) u^{h}_{i} + \sigma^{h}_{i-1}u^{h}_{i-1}\right)^{\frac{n-2}{2}} }{h}}
		\tfrac{n}{4} \left[ \left(\sigma_{i}^{h} u^{h}_{i} + (1-\sigma_{i}^{h})u^{h}_{i+1}\right)^{\frac{n-2}{2}} + \left((1-\sigma^{h}_{i-1}) u^{h}_{i} +\sigma^{h}_{i-1}u^{h}_{i-1}\right)^{\frac{n-2}{2}} \right]
		\tfrac{u^{h}_{i+1} -2u^{h}_{i} + u^{h}_{i-1}}{h^{2}}
		\\ 
		&
		\qquad \qquad+  \tfrac{n}{4} \left( \tfrac{u^{h}_{i+1}-u^{h}_{i-1}}{h}\right)\cdot  \tfrac{\left(\sigma_{i}^{h} u^{h}_{i} + (1-\sigma_{i}^{h})u^{h}_{i+1}\right)^{\frac{n-2}{2}} - \left((1-\sigma^{h}_{i-1}) u^{h}_{i} + \sigma^{h}_{i-1}u^{h}_{i-1}\right)^{\frac{n-2}{2}} }{h}
		\Bigg\}^{2} \, ,
	\end{align}
	where we used 
	the formula 
	\begin{align}\label{eq:productrule}
		A_{i}B_{i}-A_{i+1}B_{i+1} = \tfrac{A_{i}+A_{i+1}}{2} \left(B_{i}-B_{i+1}\right)
		+
		\tfrac{B_{i}+B_{i+1}}{2} \left(A_{i}-A_{i+1}\right) \, .
	\end{align}
	To proceed, we note that
	\begin{align}\label{eq:nine31new}
		\sigma^{h}_{i} \ui + (1-\sigma^{h}_{i}) \uip 
		\le C_{osc}\ui \,
	\end{align}
	and similarly
	\begin{align}\label{eq:nine32new}
		(1-\sigma^{h}_{i-1}) \ui + \sigma^{h}_{i-1} \uip \le C_{osc}\ui \,.
	\end{align}
	Moreover,
	\begin{align}\notag \label{eq:nine3new}
		&\tfrac{\left(\sigma_{i}^{h} u^{h}_{i} + (1-\sigma_{i}^{h})u^{h}_{i+1}\right)^{\frac{n-2}{2}} - \left((1-\sigma^{h}_{i-1}) u^{h}_{i} + \sigma^{h}_{i-1}u^{h}_{i-1}\right)^{\frac{n-2}{2}}}{h}
		\\ 
		&=
		\tfrac{\left(\sigma_{i}^{h} u^{h}_{i} + (1-\sigma_{i}^{h})u^{h}_{i+1}\right)^{\frac{n-2}{2}} - \left((1-\sigma^{h}_{i-1}) u^{h}_{i} + \sigma^{h}_{i-1}u^{h}_{i-1}\right)^{\frac{n-2}{2}}}{\sigma_{i}^{h} u^{h}_{i} + (1-\sigma_{i}^{h})u^{h}_{i+1}  - (1-\sigma^{h}_{i-1}) u^{h}_{i} - \sigma^{h}_{i-1}u^{h}_{i-1}}
		\cdot	
		\tfrac{(1-\sigma_{i}^{h})(u^{h}_{i+1}-u^{h}_{i}) + \sigma^{h}_{i-1}(u^{h}_{i}-u^{h}_{i-1})}{h} \, .	
	\end{align}
	Using the mean-value theorem and $C_{osc}>1$, the first term in \eqref{eq:nine3new} gives for $\theta_{i}^{h}\in [0,1]$
	\begin{align}\notag
		&\tfrac{\left(\sigma_{i}^{h} u^{h}_{i} + (1-\sigma_{i}^{h})u^{h}_{i+1}\right)^{\frac{n-2}{2}} - \left((1-\sigma^{h}_{i-1}) u^{h}_{i} + \sigma^{h}_{i-1}u^{h}_{i-1}\right)^{\frac{n-2}{2}}}{\sigma_{i}^{h} u^{h}_{i} + (1-\sigma_{i}^{h})u^{h}_{i+1}  - (1-\sigma^{h}_{i-1}) u^{h}_{i} - \sigma^{h}_{i-1}u^{h}_{i-1}}
		\\ \notag
		&=
		\tfrac{n-2}{2}\left[\theta_{i}^{h} \left(\sigma_{i}^{h}u^{h}_{i} + (1-\sigma_{i}^{h})\uip \right) + (1-\theta_{i}^{h}) \left((1-\sigma^{h}_{i-1})u^{h}_{i}+\sigma^{h}_{i-1}\uim\right)\right]^{\tfrac{n-4}{2}}
		\\ \notag 
		&\le 
		\tfrac{n-2}{2}\left[\theta_{i}^{h} \ui \left(\sigma_{i}^{h} + (1-\sigma_{i}^{h})C_{osc}^{-1} \right) + (1-\theta_{i}^{h})\ui \left((1-\sigma^{h}_{i-1})+\sigma^{h}_{i-1}C_{osc}^{-1}\right)\right]^{\tfrac{n-4}{2}}
		\\ \notag
		&\le 
		\tfrac{n-2}{2}\left[\theta_{i}^{h} \ui C_{osc}^{-1} \left(C_{osc}\sigma_{i}^{h} + (1-\sigma_{i}^{h}) \right) + (1-\theta_{i}^{h})\ui C_{osc}^{-1} \left(C_{osc}(1-\sigma^{h}_{i-1})+\sigma^{h}_{i-1}\right)\right]^{\tfrac{n-4}{2}}
		\\ \notag
		&\le 
		\tfrac{n-2}{2}\left[\theta_{i}^{h} \ui C_{osc}^{-1} + (1-\theta_{i}^{h})\ui C_{osc}^{-1} \right]^{\tfrac{n-4}{2}}
		\\ 
		&=
		\tfrac{n-2}{2}C_{osc}^{\tfrac{4-n}{2}}(\ui)^{\tfrac{n-4}{2}} \, .
	\end{align}
	Finally, the second factor in \eqref{eq:nine3new}, multiplied with $\tfrac{u^{h}_{i+1}-u^{h}_{i-1}}{h} = \tfrac{u^{h}_{i+1}-u^{h}_{i}}{h}+\tfrac{u^{h}_{i}-u^{h}_{i-1}}{h}$ according to \eqref{eq:nine2new}, satisfies
	\begin{align}\notag \label{eq:nine33new}
		&\left(\tfrac{u^{h}_{i+1}-u^{h}_{i}}{h}+\tfrac{u^{h}_{i}-u^{h}_{i-1}}{h}\right)
		\left(\tfrac{(1-\sigma_{i}^{h})(u^{h}_{i+1}-u^{h}_{i}) + \sigma^{h}_{i-1}(u^{h}_{i}-u^{h}_{i-1})}{h}\right)
		\\ \notag
		&= 
		(1-\sigma_{i}^{h})\left|\tfrac{\uip-\ui}{h}\right|^{2} + \sigma_{i-1}^{h} \left|\tfrac{\ui-\uim}{h}\right|^{2} + 2 (\sigma_{i-1}^{h} + (1- \sigma^{h}_{i})) \left(\tfrac{\uip-\ui}{h}\right)\left(\tfrac{\ui-\uim}{h}\right) 
		\\ \notag
		&\le 
		\left|\tfrac{\uip-\ui}{h}\right|^{2} +  \left|\tfrac{\ui-\uim}{h}\right|^{2} + 2\left( \tfrac{1}{2} \left|\tfrac{\uip-\ui}{h}\right|^{2} + \tfrac{1}{2} \left|\tfrac{\ui-\uim}{h}\right|^{2} \right)
		\\
		&=
		2 \Bigg( \left|\tfrac{\uip-\ui}{h}\right|^{2} +  \left|\tfrac{\ui-\uim}{h}\right|^{2} \Bigg) \, .
	\end{align}
	Inserting \eqref{eq:nine31new},\eqref{eq:nine32new}, \eqref{eq:nine33new}, and \eqref{eq:nine3new} in \eqref{eq:nine2new}, gives
	\begin{align}\notag
		&h\sum_{i=1}^{L_{h}} 
		|(\Delta_{h}\sqmob(\uh))_{i}|^{2} \dx
		\\ \notag
		&\le
		\tfrac{n^{2}}{4} \discint \left[
		C_{osc}^{\frac{n-2}{2}} (\ui)^{\frac{n-2}{2}}  
		(\Delta_{h}\uh)_{i} \right]^{2}
		\\ \notag
		&\quad+ \tfrac{n^{2}}{2} \discint 
		C_{osc}^{\frac{n-2}{2}} (\ui)^{\tfrac{n-2}{2}}
		(\Delta_{h}\uh)_{i} 
		\\ \notag 
		&\quad \qquad \qquad \cdot
		\tfrac{n-2}{2}
		C_{osc}^{\frac{4-n}{2}} (\ui)^{\frac{n-4}{2}} 
		\bigg(\left|\tfrac{\uip-\ui}{h}\right|^{2} +  \left|\tfrac{\ui-\uim}{h}\right|^{2} \bigg)
		\\ \notag
		&\quad + \tfrac{n^{2}}{4} \discint \Bigg[\tfrac{n-2}{2}
		C_{osc}^{\frac{4-n}{2}} (\ui)^{\frac{n-4}{2}} 
		\cdot \bigg(\left|\tfrac{\uip-\ui}{h}\right|^{2} +  \left|\tfrac{\ui-\uim}{h}\right|^{2} \bigg) \Bigg]^{2}
		\\ \notag	
		&\le
		\tfrac{n^{2}}{4} \discint \left[
		C_{osc}^{\frac{n-2}{2}} (\ui)^{\frac{n-2}{2}}  
		(\Delta_{h}\uh)_{i} \right]^{2}
		\\ \notag
		&\quad+ \tfrac{n^{2}}{4} \discint\tfrac{\varepsilon(n-2)}{2}
		\left( C_{osc}^{\frac{n-2}{2}} (\ui)^{\tfrac{n-2}{2}}
		(\Delta_{h}\uh)_{i} \right)^{2}
		\\ \notag
		&\quad+ \tfrac{n^{2}}{4} \discint
		\tfrac{n-2}{2\varepsilon}
		\bigg(C_{osc}^{\frac{4-n}{2}} (\ui)^{\frac{n-4}{2}} 
		\cdot \bigg(\left|\tfrac{\uip-\ui}{h}\right|^{2} +  \left|\tfrac{\ui-\uim}{h}\right|^{2} \bigg) \bigg)^{2}
		\\ \notag
		&\quad+ \tfrac{n^{2}}{4} \discint \Bigg[\tfrac{n-2}{2}
		C_{osc}^{\frac{4-n}{2}} (\ui)^{\frac{n-4}{2}} 
		\cdot \bigg(\left|\tfrac{\uip-\ui}{h}\right|^{2} +  \left|\tfrac{\ui-\uim}{h}\right|^{2} \bigg) \Bigg]^{2}
		\\ \notag
		&=
		\tfrac{n^{2}}{4} \left(1+\tfrac{\varepsilon(n-2)}{2}\right) \discint 
		C_{osc}^{n-2} (\ui)^{n-2}  
		|(\Delta_{h}\uh)_{i}|^{2}
		\\ 
		&\quad+ \tfrac{n^{2}}{4} \left(\tfrac{(n-2)^{2}}{4} + 		\tfrac{n-2}{2\varepsilon} \right)\discint
		C_{osc}^{4-n} (\ui)^{n-4} 
		\cdot \left(\left|\tfrac{\uip-\ui}{h}\right|^{2} +   \left|\tfrac{\ui-\uim}{h}\right|^{2} \right)^{2} \, .
	\end{align}
	Recalling $\tfrac{1}{2} \tfrac{n^{2}}{4}\left(\frac{\lambda_0^{2}}{2} + \sum_{\ell=1}^{\infty} \frac{2\lambda_{\ell}^{2}}{L}\right) = C_{Strat}$, we get for $R_{1}$
	\begin{align}\notag
		&\frac{1}{2h} \sum_{\ell \in \mathbb{Z}} \lambda_{\ell}^{2} h\sum_{i=1}^{L_{h}} 
		\int_{I_{i}}  
		|\partial_h^- \partial_h^+ (\sqmob(u^h))|^{2} g_\ell^{2}(\cdot +h) \dx  		
		\\ \notag
		 &\le 
		C_{Strat} \Bigg\{(1 + \varepsilon \tfrac{n-2}{2} ) h \sum_{i=1}^{L_{h}} 
		C_{osc}^{n-2} (\ui)^{n-2}  
		|(\Delta_{h} \uh )_{i}|^{2} 
		\\ \notag
		&\quad + 
		\left(\tfrac{(n-2)^{2}}{4} + \tfrac{n-2}{2\varepsilon } \right)
		C_{osc}^{4-n}
		\\ 
		& \qquad \cdot \discint (\ui)^{n-4} 
		\bigg( \left|\tfrac{u^{h}_{i+1}-u^{h}_{i}}{h}\right|^{4} +2 \left| \tfrac{u^{h}_{i+1}-u^{h}_{i}}{h} \right|^{2}  \left| \tfrac{u^{h}_{i}-u^{h}_{i-1}}{h} \right|^{2} + \left|\tfrac{u^{h}_{i}-u^{h}_{i-1}}{h}\right|^{4} \bigg)  \Bigg\} \, .
	\end{align}
	Regarding $R_{2}$, we have
	\begin{align}\label{nineAnew4}
		\sum_{\ell \in \mathbb{Z}} \lambda_{\ell}^{2}
		 \sum_{i=1}^{L_{h}}\int_{I_{i}} |\partial_h^+ g_\ell|^{2} \, |\partial_h^- (\sqmob(u^h))|^{2} \dx
		 \le
		 \sum_{\ell =1}^{\infty}
		 \lambda_{\ell}^{2}\ell^{2}
		 \frac{8 \pi^{2}}{L^{3}} 
		 \sum_{i=1}^{L_{h}}
		 \int_{I_{i}}  |\partial_h^- (\sqmob(u^h))|^{2} \dx \, .
	\end{align}
	For a continuous and elementwise linear function $a$ we have
	\begin{align}
		\partial_{h}^{-} a \big|_{I_{i}} = \frac{a_{i+1} -a_{i}}{h} \lambda(x) + \frac{a_{i} -a_{i-1}}{h} (1-\lambda(x)) \, .
	\end{align}
	Then, similar as before
	\begin{align}\notag \label{nineAnew5}
	 &\sum_{i=1}^{L_{h}} \int_{I_{i}}  |\partial_h^-
	 (\sqmob(u^h))|^{2} \dx
	 \\ \notag
	 &=
	 \sum_{i=1}^{L_{h}}\int_{I_{i}}  \left(\frac{\sqmob(u^{h}_{i+1}) -\sqmob(u^{h}_{i})}{h} \lambda(x) + \frac{\sqmob(u^{h}_{i}) -\sqmob(u^{h}_{i-1})}{h} (1-\lambda(x))\right)^{2} \dx
	 \\ \notag
	 &\le
	 \sum_{i=1}^{L_{h}}\int_{I_{i}}  \left(\frac{\sqmob(u^{h}_{i+1}) -\sqmob(u^{h}_{i})}{h}\right)^{2} \lambda(x) + \left(\frac{\sqmob(u^{h}_{i}) -\sqmob(u^{h}_{i-1})}{h}\right)^{2} (1-\lambda(x)) \dx
	 \\
	 &=
	 \sum_{i=1}^{L_{h}}\int_{I_{i}}  (\sqmob(\uh))_{x}^{2} \dx \, .
	\end{align}
	Combining \eqref{nineAnew4} and \eqref{nineAnew5}, and using Lemma~\ref{lem:lowerbound} and the mean-value theorem, we find
	\begin{align}\notag
	\frac{1}{2h}\sum_{\ell \in \mathbb{Z}} \lambda_{\ell}^{2}	R_{2}
		&\le  \sum_{\ell =1}^{\infty} \lambda_{\ell}^{2}\ell^{2}\frac{8 \pi^{2}}{2L^{3}} 
		\sum_{i=1}^{L_{h}}
		\int_{I_{i}}  (\sqmob(\uh))_{x}^{2} \dx
		\\ \notag
		&=\sum_{\ell =1}^{\infty}  \lambda_{\ell}^{2}\ell^{2}\frac{8 \pi^{2}}{2L^{3}}  h \sum_{i=1}^{L_{h}} \left| \tfrac{(u^{h}_{i+1})^{n/2} - (u^{h}_{i})^{n/2}}{h} \right|^{2}
		\\ \notag
		&=\sum_{\ell =1}^{\infty}  \lambda_{\ell}^{2}\ell^{2}\frac{8 \pi^{2}}{2L^{3}}  \frac{n^{2}}{4} h \sum_{i=1}^{L_{h}} \left(\sigma_{i}^{h} u^{h}_{i} + (1-\sigma_{i}^{h}) u^{h}_{i+1}\right)^{n-2} \left| \tfrac{u^{h}_{i+1} - u^{h}_{i}}{h} \right|^{2}
		\\
		&\le
		\sum_{\ell =1}^{\infty}  \lambda_{\ell}^{2}\ell^{2}\frac{8 \pi^{2}}{2L^{3}} 
		C_{osc}^{n-2} \frac{n^{2}}{4} \discint (u^{h}_{i})^{n-2} \left| \tfrac{u^{h}_{i+1} - u^{h}_{i}}{h} \right|^{2}   \, .		
	\end{align}
	The Term $R_{3}$ can be estimated in a similar way as $R_{2}$. For $R_{4}$ we have 
	\begin{align}\notag
		\frac{1}{2h}\sum_{\ell \in \mathbb{Z}} \lambda_{\ell}^{2}  R_{4} &= \frac{1}{2}\sum_{\ell \in \mathbb{Z}} \lambda_{\ell}^{2} \sum_{i=1}^{L_{h}} \int_{I_{i}}  |\partial_h^- \partial_h^+ g_\ell|^{2} (\sqmob(u^h))^{2}
		\dx
		\le
		\sum_{\ell =1}^{\infty} 
		\lambda_{\ell}^{2}
		\ell^{4}
		\frac{32\pi^{4}}{2L^{5}} 
		\sum_{i=1}^{L_{h}} \int_{I_{i}} (\sqmob(u^h))^{2}
		\dx
		\\
		&\le \sum_{\ell =1}^{\infty} 
		\lambda_{\ell}^{2}
		\ell^{4}
		\frac{32\pi^{4}}{2L^{5}} \discint (\ui)^{n}\, .
	\end{align}
	The remaining terms $R_{5}, \dots,R_{10}$ are estimated via Young's inequality. 
	For arbitrary $\eta > 0$ we have 
	\begin{align}
		R_{5}&\le
		\frac{\eta}{3} R_{1} + C_{\eta} R_{2}
		\, ,
		\\
		R_{6}&\le
		\frac{\eta}{3} R_{1} + C_{\eta} R_{3}
		\, ,
		\\
		R_{7}&\le
		\frac{\eta}{3} R_{1} + C_{\eta} R_{4}
		\, ,
		\\
		R_{8}&\le
		R_{2} + R_{3}
		\, ,
		\\
		R_{9}&\le
		R_{2} + R_{4}
		\, ,
		\\
		R_{10}&\le
		R_{3} + R_{4}
		\, .
	\end{align}
	Estimating these terms as above, combining the constants, and integrating in time, the result follows. 
\end{proof}

\begin{lemma}\label{lem:nablastrong}
	We have $\tuh_{x} \rightarrow \tu_{x}$ strongly in $L^{2}(\ort\times [0,T_{max}])$ almost surely.
	\begin{proof}
In the light of the uniform bounds on $\tuh_{x}$ and $\Delta_{h}\tuh$ provided by Proposition~\ref{prop:integral1}, the claim follows by standard arguments.
	\end{proof}
\end{lemma}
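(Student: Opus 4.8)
The plan is to promote the weak convergence of $\tuh_x$ in $L^2(\ort\times[0,T_{max}])$ to strong convergence by showing that the $L^2$-norms converge; since $L^2(\ort\times[0,T_{max}])$ is a Hilbert space, weak convergence together with convergence of norms implies strong convergence. Recall that $\tprobab$-a.s.\ we already have $\tilde w^h=\tuh_x\rightharpoonup\tilde w$ weakly in $L^2(\ort\times[0,T_{max}])$ by \eqref{eq:ident3a}, with $\tilde w=\tu_x$ by Lemma~\ref{lem:identificationJandp}; likewise $\tilde z^h=\Delta_h\tuh\rightharpoonup\tu_{xx}$ weakly in $L^2(\ort\times[0,T_{max}])$ by \eqref{eq:ident3b} and Lemma~\ref{lem:identificationJandp}, while $\tuh\to\tu$ uniformly on $\ort\times[0,T_{max}]$ by \eqref{eq:ident3}, hence in particular strongly in $L^2(\ort\times[0,T_{max}])$. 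Finally $\tu_x,\tu_{xx}\in L^2(\ort\times[0,T_{max}])$ $\tprobab$-a.s., so that $\tu(s)\in\Hper{2}$ for a.e.\ $s$.

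The key identity comes from testing the variational definition of the discrete Laplacian, $(\Delta_h v^h,\psi^h)_h=-\int_\ort v^h_x\psi^h_x\,dx$, with $v^h=\psi^h=\tuh(s)$, which gives $\int_\ort|\tuh_x(s)|^2\,dx=-(\Delta_h\tuh(s),\tuh(s))_h$ for every $s$. I would then replace the lumped scalar product by the $L^2$ one, using the standard mass-lumping identity $(\phi^h,\psi^h)_h-\int_\ort\phi^h\psi^h\,dx=\tfrac{h^2}{6}\int_\ort\phi^h_x\psi^h_x\,dx$ on $X_h$; with $\phi^h=\Delta_h\tuh(s)$, $\psi^h=\tuh(s)$, together with the inverse inequality $\|(\Delta_h\tuh)_x\|_{L^2(\ort)}\le Ch^{-1}\|\Delta_h\tuh\|_{L^2(\ort)}$, integration in time and Cauchy--Schwarz yield
\[
\Big|\int_0^{T_{max}}\!\!(\Delta_h\tuh,\tuh)_h\,ds-\int_0^{T_{max}}\!\!\int_\ort\Delta_h\tuh\,\tuh\,dx\,ds\Big|\le Ch\Big(\int_0^{T_{max}}\!\!\|\Delta_h\tuh\|_{L^2(\ort)}^2\,ds\Big)^{1/2}\Big(\int_0^{T_{max}}\!\!\|\tuh_x\|_{L^2(\ort)}^2\,ds\Big)^{1/2}.
\]
Both time integrals on the right are $\tprobab$-a.s.\ bounded uniformly in $h$: the second by \eqref{eq:ident5} (the energy bound of Proposition~\ref{prop:integral1}), the first by \eqref{eq:ident6} together with the $h$-uniform equivalence of $\|\cdot\|_h$ and $\|\cdot\|_{L^2(\ort)}$, or simply because a weakly convergent sequence is bounded. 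Hence the error term is $O(h)\to 0$ $\tprobab$-a.s.

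It then remains to pass to the limit in $-\int_0^{T_{max}}\int_\ort\Delta_h\tuh\,\tuh\,dx\,ds$: the weak convergence $\Delta_h\tuh\rightharpoonup\tu_{xx}$ in $L^2(\ort\times[0,T_{max}])$ pairs with the strong (uniform) convergence $\tuh\to\tu$, so that
\[
-\int_0^{T_{max}}\!\!\int_\ort\Delta_h\tuh\,\tuh\,dx\,ds\;\longrightarrow\;-\int_0^{T_{max}}\!\!\int_\ort\tu_{xx}\,\tu\,dx\,ds=\int_0^{T_{max}}\!\!\int_\ort|\tu_x|^2\,dx\,ds,
\]
the last equality by integration by parts in space under the periodic boundary conditions. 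Combining the three displays gives $\int_0^{T_{max}}\int_\ort|\tuh_x|^2\,dx\,ds\to\int_0^{T_{max}}\int_\ort|\tu_x|^2\,dx\,ds$ $\tprobab$-a.s., the desired norm convergence, and the lemma follows. The argument is essentially routine; the only point that requires attention is that all the ingredients — the two weak limits, the identifications $\tilde w=\tu_x$, $\tilde z=\tu_{xx}$, and the $h$-uniform bounds — hold $\tprobab$-a.s.\ along the single subsequence extracted in Proposition~\ref{prop:conv1}, so the whole chain of limits is valid for $\tprobab$-a.e.\ $\omega$; transferring the moment bounds of Proposition~\ref{prop:integral1} into an almost-sure uniform-in-$h$ control (needed to kill the mass-lumping error) is the one mildly delicate step, and it is immediate from the boundedness of weakly convergent sequences.
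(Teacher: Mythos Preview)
Your argument is correct: weak convergence in $L^2$ together with convergence of norms gives strong convergence, and you obtain the latter cleanly via the identity $\int_\ort|\tuh_x|^2\,dx=-(\Delta_h\tuh,\tuh)_h$, the mass-lumping error estimate, and the weak--strong pairing $\Delta_h\tuh\rightharpoonup\tu_{xx}$, $\tuh\to\tu$. The paper only invokes ``standard arguments'' without details, so your write-up is precisely a rigorous implementation of what the authors have in mind; there is no genuine difference in approach to compare.
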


\noindent
\textbf{Acknowledgment.}
L.K. has been supported by the Graduiertenkolleg 2339 \textit{IntComSin} ''Interfaces, Complex Structures, and Singular Limits'' of the Deutsche Forschungsgemeinschaft (DFG, German Research Foundation) -- Project-ID 321821685. The support is gratefully acknowledged.

\bibliographystyle{abbrv}
\bibliography{thinfilm}

\end{document}